\newtheorem{proposition}{Proposition}[section]
\newtheorem{theorem}[proposition]{Theorem}
\newtheorem{lemma}[proposition]{Lemma}
\newtheorem{prop}[proposition]{Proposition}
\newtheorem{cor}[proposition]{Corollary}
\theoremstyle{definition}
\newtheorem{example}[proposition]{Example}
\theoremstyle{remark}
\newtheorem{remark}[proposition]{Remark}
\newtheorem{problem}[proposition]{Problem}
\newcommand{\newword}[1]{\textbf{\emph{#1}}}
\newcommand{\nc}{\operatorname{nc}}
\newcommand{\Fix}{\operatorname{Fix}}
\newcommand{\Stab}{\operatorname{Stab}}
\newcommand{\cov}{\mathrm{cov}}
\newcommand{\fs}{\mathrm{fs}}
\newcommand{\ufs}{\mathrm{ufs}}
\newcommand{\covers}{{\,\,\,\cdot\!\!\!\! >\,\,}}
\newcommand{\covered}{{\,\,<\!\!\!\!\cdot\,\,\,}}
\newcommand{\set}[1]{{\left\lbrace #1 \right\rbrace}}
\newcommand{\pidown}{\pi_\downarrow}
\newcommand{\br}[1]{\langle #1 \rangle}
\newcommand{\A}{{\mathcal A}}
\newcommand{\F}{{\mathcal F}}
\newcommand{\X}{{\mathcal X}}
\newcommand{\join}{\vee}
\newcommand{\meet}{\wedge}
\renewcommand{\Join}{\bigvee}
\newcommand{\Meet}{\bigwedge}
\newcommand{\closeleftq}[2]{\!\!\phantom{.}^{#1}\! {#2}}
\newcommand{\ck}{^\vee}
\newcommand{\st}{^\mathrm{st}}
\renewcommand{\th}{^\mathrm{th}}
\newcommand{\0}{{\hat{0}}}
\newcommand{\Vol}{\mathrm{Vol}}
\newcommand{\lleq}{\le\!\!\!\le}
\newcommand{\notlleq}{\le\!\!\!\!\not\,\le}
\newcommand{\RR}{\mathbb{R}}
\newcommand{\Tits}{\mathrm{Tits}}
\newcommand{\Cone}{\mathrm{Cone}}
\newcommand{\Star}{\mathrm{Star}}
\newcommand{\Lin}{\mathrm{Lin}}
\newcommand{\Cox}{\mathrm{Cox}}
\DeclareMathOperator{\supp}{supp}
\DeclareMathOperator{\inv}{inv}
\newcommand{\odd}{\mathrm{odd}}
\title{Sortable elements in infinite Coxeter groups}
\author{Nathan Reading}
\address{Department of Mathematics, North Carolina State University, Raleigh, NC 27695}
\curraddr{}
\email{nathan\_reading@ncsu.edu}
\thanks{}
\author{David E Speyer}
\address{Department of Mathematics, Massachusetts Institute of Technology, Cambridge, MA 02139}
\curraddr{}
\email{speyer@math.mit.edu}
\thanks{David E Speyer was supported by a Research Fellowship from the Clay Mathematics Institute.
}
\begin{document}

%    \subjclass is required.
\subjclass[2000]{Primary 20F55}

\begin{abstract}
In a series of previous papers, we studied sortable elements in finite Coxeter groups, and the related Cambrian fans. We applied sortable elements and Cambrian fans to the study of cluster algebras of finite type and the noncrossing partitions associated to Artin groups of finite type. In this paper, as the first step towards expanding these applications beyond finite type, we study sortable elements in a general Coxeter group $W$.
We supply uniform arguments which transform all previous finite-type proofs into uniform proofs (rather than type by type proofs), generalize many of the finite-type results and prove new and more refined results.  
The key tools in our proofs include a skew-symmetric form related to (a generalization of) the Euler form of quiver theory and the projection $\pidown^c$ mapping each element of $W$ to the unique maximal $c$-sortable element below it in the weak order. 
The fibers of $\pidown^c$ essentially define the $c$-Cambrian fan.  
The most fundamental results are, first, a precise statement of how sortable elements transform under (BGP) reflection functors and second, a precise description of the fibers of $\pidown^c$.
These fundamental results and others lead to further results on the lattice theory and geometry of Cambrian (semi)lattices and Cambrian fans.
\end{abstract}
\maketitle

\setcounter{tocdepth}{1}
\tableofcontents

\section{Introduction}

This paper is the continuation of a series of papers \cite{cambrian,sortable,sort_camb,camb_fan} devoted to the study of sortable elements and Cambrian lattices in a Coxeter group $W$.
The primary motivations behind this work are the connections to cluster algebras (see~\cite{ca4} and~\cite{camb_fan}) and to the study of Artin groups via noncrossing partitions (see e.g.\ \cite{Bessis,BWKpi}).
In the first three of these papers, specific connections were conjectured between sortable elements/Cambrian lattices and generalized associahedra (see~\cite{ga}) and thus between sortable elements and cluster algebras of finite type (see~\cite{ca2}).
Two bijections were established:
one from sortable elements to vertices of the (simple) generalized associahedron and another from sortable elements to noncrossing partitions.
In~\cite{camb_fan} the former bijection was shown to induce a combinatorial isomorphism between the Cambrian fan and the normal fan of the generalized associahedron, leading to further results connecting the combinatorics of cluster algebras to the geometry of Cambrian fans. 
This last paper proved all of the remaining earlier conjectures concerning relations between sortable elements and finite type cluster algebras. 

The results of the earlier papers were confined to the very special case where the Coxeter group $W$ is finite.
In this paper, we extend the theory to the case of arbitrary $W$.
Although the infinite case is a vastly larger arena where significantly fewer tools are available, we not only generalize the basic results on sortable elements, but also greatly improve them.
Among the improvements is an explicit pattern avoidance characterization of $c$-sortable elements.
In~\cite{sortable}, the first author gave a pattern avoidance condition based on a family of ``$c$-orientations,'' which were constructed explicitly type by type.
We now construct the orientations uniformly, via the restriction of a skew symmetric bilinear form $\omega_c$ to parabolic subgroups.
Another improvement is in our understanding of the Cambrian congruence, a certain equivalence relation on the Coxeter group $W$.
In~\cite{sort_camb}, the Cambrian congruence was constructed in two ways:  as the lattice congruence generated by a small explicit set of equivalences, and as the fibers of a recursively defined map $\pidown^c$.
In neither case, given a representative element of a Cambrian congruence class, was it clear how to describe all elements in that equivalence class.
In Theorem~\ref{pidown fibers}, we give a description of any Cambrian congruence class by a finite list of immediately checkable conditions.

The description of the equivalence classes is intimately related to a third significant improvement to the theory:
Using the combinatorics of a special ``sorting word'' for each sortable element, we construct a simplicial cone for each sortable element.
In finite type, the collection of all such cones is the Cambrian fan, which was shown in~\cite{camb_fan} to be isomorphic to the cluster complex of the corresponding cluster algebra of finite type.
We do not yet prove that this collection of cones is a fan in infinite case.
However, we do prove that intersecting each cone with the Tits cone produces a collection of cones whose intersections are well-behaved.
The combinatorics of sorting words gives very precise combinatorial control over the Cambrian cones.
This precise control will be used in a future paper to generalize and extend the results of~\cite{camb_fan} connecting the Cambrian fan to cluster algebras.
(See Section~\ref{future}.)

Although this paper improves the general theory of sortable elements, it by no means supersedes the previous papers.
The main point of each paper, and the bulk of the arguments, are not replaced by the arguments of the present paper.
(The main points of \cite{cambrian,sortable,sort_camb,camb_fan} are respectively:  a lattice-theoretic and fiber-polytopal approach to generalized associahedra; using sortable elements to make a combinatorial connection between generalized associahedra and noncrossing partitions; using sortable elements to explicitly construct Cambrian lattices; and making the connection between sortable elements/Cambrian fans and the combinatorics of cluster algebras of finite type.)
The current paper does greatly improve the previous papers by providing uniform proofs for a small number of lemmas in~\cite{sortable} which were previously proved using type by type checks. 
These lemmas occupy very little space in~\cite{sortable}, but all of the main results of \cite{sortable,sort_camb,camb_fan} ultimately depend on the lemmas.
With the addition of this paper, these lemmas, and thus all of the results of the previous papers, have uniform proofs.
(See Section~\ref{Past} for details.)

%While we do not claim to have found the optimal proofs, we hope that we have discovered the vocabulary in which the 
%optimal proofs will be written. 

% Furthermore, we introduce the tools which we believe will underlie the future study of sortable elements and Cambrian lattices. 
% In particular, we introduce a combinatorial idea which we think has broader promise: using a skew-symmetric form on the 
% root space to provide an orientation on the rank-two parabolic subgroups, and considering pattern avoidance conditions with respect
% to this orientation. The second author has already used this skew-symmetric form~\cite{reduced} 
% to give a short proof that powers of Coxeter elements in irreducible infinite Coxeter groups are reduced.
% Using the skew-symmetric form and a generalization of the Euler form of quiver theory, we obtain a precise 
% characterization of the walls of cones in the Cambrian fan in terms of the combinatorics of sortable elements.  

\subsection{Summary of results} \label{Summary}
We now summarize the results of the paper.
Most of the notation and definitions surrounding Coxeter groups are put off until Section~\ref{CoxeterConventions}.

An element $c \in W$ is called a \newword{Coxeter element} if it is of the form $s_1 s_2 \cdots s_n$ for some ordering of the set $S:=\{ s_1, s_2, \ldots, s_n \}$ of simple reflections. 
For any word~$z$ in the alphabet $S$, the \newword{support} $\supp(z)\subseteq S$ of $z$ is the set of letters occurring in~$z$.
An element $w \in W$ is called $c$-sortable if~$w$ has a reduced word which is the concatenation $z_1z_2\cdots z_k$ of words $z_i$ subject to the following conditions:
\begin{enumerate}
\item Each $z_i$ is a subword of $s_1\cdots s_n$.
\item The supports of the $z_i$'s satisfy $\supp(z_1)\supseteq \supp(z_2)\supseteq\cdots\supseteq\supp(z_k)$.
\end{enumerate}
This word is called a $c$-sorting word for $w$.
The definition of $c$-sortability depends on the choice of~$c$ but not on the choice of reduced word for $c$.
As described in Proposition~\ref{CSortRecursive}, $c$-sortability can also be characterized recursively by induction on the length of $w$ and on the rank of $W$.

Before summarizing our results in the order they appear, we wish to emphasize two fundamental results on sortable elements.
These results appear late in the paper and use most of the tools developed over the course of the paper.
Yet if these two results could be proven first, most of the other results on sortable elements would follow.

The first result that we wish to emphasize is the following:
For any $w\in W$ there exists a unique maximal $c$-sortable element below $w$. 
The map $\pidown^c$ mapping $w$ to this maximal element is of fundamental importance.
In particular, the fibers of $\pidown^c$ essentially define the $c$-Cambrian fan.
More precisely, the maximal cones in the $c$-Cambrian fan, intersected with the Tits cone, are the fibers of $\pidown^c$.
Proving the existence of a unique maximal $c$-sortable element below each $w$ requires all the tools developed in the first part of the paper.
We therefore begin our treatment of $\pidown^c$ with a recursive definition of $\pidown^c(w)$.
We only later establish, as Corollary~\ref{max}, that $\pidown^c(w)$ can be described as it is in this paragraph.

The second result describes what happens when we change Coxeter elements.
We say $s\in S$ is initial in a Coxeter element $c$ if $c$ has a reduced word starting with~$s$.
In this case $scs$ is another Coxeter element, obtained by removing $s$ from the beginning of the word and placing it at the end. 
This operation on $c$ corresponds to applying a (BGP) reflection functor in the sense of quiver theory. See \cite[Section~VIII.5]{ASS} or~\cite{BGP}.
If~$s$ is initial in~$c$ then the map
\[v\mapsto\left\lbrace\begin{array}{cl}
sv&\mbox{if }v\ge s\\
s\join v&\mbox{if }v\not\ge s
\end{array}\right.\]
is a bijection from the set $\set{\mbox{$c$-sortable elements $v$ such that $s\join v$ exists}}$ to the set of all $scs$-sortable elements. (Cf. \cite[Remark 3.8]{sort_camb} and the discussion preceding \cite[Lemma~7.5]{camb_fan}.)
This result, with an explicit inverse map, appears as Theorem~\ref{c to scs}.

In finite type this bijection implies that the number of $c$-sortable elements is independent of $c$, using the fact that the weak order is a lattice in finite type and the following (essentially graph-theoretic) result:
If $c$ and $c'$ are Coxeter elements in a finite Coxeter group~$W,$  then $c$ and $c'$ are related by a sequence of moves which remove an initial letter and append it at the end.

We now describe in order the main results of the paper. 
In Section~\ref{omega sec} we introduce a technical device known as the Euler form. 
This is a certain non-symmetric bilinear form, denoted $E_c$, which is inspired by constructions in quiver theory.  The symmetrization of $E_c$ is the ordinary inner product; the anti-symmetrization $\omega_c$ of $E_c$ is a skew-symmetric bilinear form. 
The main result of Section~\ref{omega sec} is Proposition~\ref{InversionOrdering}, which states that $w\in W$ is $c$-sortable if and only if $w$ has a reduced word whose associated sequence of reflections is compatible with $\omega_c$, in a sense we describe in that section.
In Section~\ref{align sec} we characterize $c$-sortability of $w$ directly in terms of the set of inversions of $w$, without requiring a particular reduced word. This description identifies the property of $c$-sortability as a condition of pattern avoidance in the sense of Billey and Braden~\cite{BillBrad} and extends the first author's results in~\cite[Sections~3--4]{sortable}.

We next define, in Section~\ref{skip sec}, a collection $C_c(v)$ of $n$ roots for each $c$-sortable element $v$. 
(Here $n$ is the rank of $W$.) 
This set of roots defines a dual cone $\Cone_c(v)$, the \newword{$c$-Cambrian cone} associated to $v$.  
Using Proposition~\ref{InversionOrdering} and the Euler form, we then characterize $C_c(v)$ in terms of the combinatorics of $c$-sorting words.
In Section~\ref{pidown sec} we give the recursive definition of $\pidown^c$ and use a complicated inductive argument in Section~\ref{pidown sec} to prove two major theorems (Theorems~\ref{order preserving} and~\ref{pidown fibers}).
The first of these states that $\pidown^c$ is order-preserving and the second states that each fiber $(\pidown^c)^{-1}(v)$ is the intersection of the Tits cone with $\Cone_c(v)$. 
Given the combinatorial characterization of $C_c(v)$, the latter assertion amounts to an extremely explicit description of the fibers of $\pidown^c$. 

At this point, the paper changes tone. 
Until now, the focus has been on technical results about sortable elements, proved via very detailed lemmas. 
The attention now shifts to the development of the lattice theory and polyhedral geometry of sortable elements. 
Each of our two primary objects, the $c$-Cambrian semilattice and the $c$-Cambrian fan, can be viewed as a way of organizing the set of fibers of $\pidown^c$.
The proofs are often short consequences of general principles and technical lemmas proved previously. 
We feel that the sortable elements are the girders of our theory, underlying all of our results, while the Cambrian semilattice and Cambrian fan are the doorways, through which connections to other mathematical fields will be made. 

The \newword{$c$-Cambrian semilattice} is defined as the weak order restricted to $c$-sortable elements.
In Section~\ref{lattice sec} we show that $c$-sortable elements are a sub-semilatice of the weak order (Theorem~\ref{meets and joins}).
Furthermore, Theorem~\ref{semilattice hom} states that the map $\pidown^c$ respects the meet operation and the (partially defined) join operation.  
In particular the $c$-Cambrian semilattice is isomorphic to the partial order induced by the weak order on the fibers of $\pidown^c$.
Theorem~\ref{meets and joins} leads to the proof of Theorem~\ref{c to scs} which establishes the bijection, described above, from (almost all of) the $c$-sortable elements to the $scs$-sortable elements.  
In Section~\ref{join rep sec} we discuss canonical join representations of sortable elements.
This discussion leads to a bijection between $c$-sortable join-irreducible elements and reflections in $W$ which can appear as inversions of $c$-sortable elements.
This bijection leads to a surprising lattice-theoretic interpretation of noncrossing partitions. As we explain in Section~\ref{join rep sec}, $c$-noncrossing partitions encode the canonical join representations of $c$-sortable elements. 

In Section~\ref{fan sec} we show that the Cambrian cones $\Cone_c(v)$ are the maximal cones of a fan inside the Tits cone, meaning that the intersections of Cambrian cones are well-behaved inside the Tits cone.
We call this fan the \newword{$c$-Cambrian fan}.
We conjecture a stronger statement that the Cambrian cones are the maximal cones of a fan, i.e.\ that they intersect nicely even outside of the Tits cone.
Section~\ref{rank3sec} consists of pictorial examples illustrating many of the results of the paper; we encourage the reader to consult Section~\ref{rank3sec} frequently while reading the prior sections. 

\subsection{Future directions}\label{future}
One of the primary motivations for studying sortable elements and Cambrian lattices in this and in earlier papers is the goal of using sortable elements to study the structure of cluster algebras.  We now briefly outline work in progress which will apply the results of this paper to cluster algebras.
We assume familiarity with cluster algebras for this discussion; to the reader looking for background we suggest the early sections of~\cite{ca4}. 

The initial data defining a cluster algebra (of geometric type) is an integer $m\times n$ matrix $\tilde{B}$ (with $m\ge n$) satisfying conditions given in \cite[Definition~2.12]{ca4}.
The most important part of $\tilde{B}$ is its top square submatrix $B$; the remaining entries of $\tilde{B}$ should be thought of as a choice of coefficients.
Specifying $B$ is equivalent to choosing a symmetrizable integer generalized Cartan matrix $A$ and choosing an orientation of the Coxeter graph~$\Gamma$ of the Coxeter group $W$ determined by $A$.
If the orientation of $\Gamma$ is acyclic then it defines a Coxeter element $c$ of $W$.
Different choices of $\tilde{B}$ can give isomorphic cluster algebras; the main result of~\cite{ca2} is that a cluster algebra is of finite type if and only if it can be specified by a matrix $\tilde{B}$ such that the associated Coxeter group $W$ is finite.
In this case the orientation of $\Gamma$ is necessarily acyclic and thus is equivalent to a choice of Coxeter element for $c$.  

A cluster algebra (of geometric type) is generated by a collection of multivariate rational functions, known as cluster variables, which are grouped into overlapping $n$-element sets called clusters. 
Given $\tilde{B}$ and the choice of an initial cluster, the remaining clusters are determined by local moves called \newword{mutations}.
The mutation moves also assign to each cluster its own $m\times n$ matrix.
Each cluster variable is assigned a vector known as the $g$-vector~\cite{ca4}. 
The \newword{$g$-vector fan} is a fan whose rays are the $g$-vectors.
A collection of rays spans a maximal cone of the fan if and only if the corresponding collection of cluster variables is a cluster. 

In~\cite{camb_fan} we proved two results concerning cluster algebras of finite type.  
The first result is that, for a special choice of $c$, the $c$-Cambrian fan coincides with the $g$-vector fan. 
Assuming~\cite[Conjecture~7.12]{ca4}, the fans coincide for all choices of $c$. 
In a future paper, we will use the results of this paper to remove the dependence on~\cite[Conjecture~7.12]{ca4} and to give a generalization to infinite groups; in the finite crystallographic case, this result also appears in~\cite{YZ}.

The second result asserts that, for each maximal cone in the $c$-Cambrian fan, the matrix of inner products of the normals to the walls of the cone is a so-called ``quasi-Cartan" companion for the corresponding cluster. This means that the geometry of the cone in the $c$-Cambrian fan determines the absolute values of the entries in the $B$-matrix of the cluster. 
In the future paper, we will also use the results of this paper to determine the signs of the entries of $B$ and to determine, in the case of a cluster algebra with principal coefficients (see \cite[Definition~3.1]{ca4}), the entries of $\tilde{B}$ which do not lie in the square submatrix $B$.

The broader goal of this line of research is to study cluster algebras without restriction to finite type.
The first step towards that broader goal is to extend the results of~\cite{camb_fan} beyond finite type.
Specifically:
\begin{problem}\label{inf problem}
For an arbitrary cluster algebra of geometric type, construct the $g$-vector fan and the $B$-matrices, in the context of sortable elements/Cambrian fans.
\end{problem}
In moving beyond finite type, there are two independent obstacles to overcome.  
First, while every cone of the Cambrian fan meets the Tits cone, examples show that there are many cones in the $g$-vector complex which do not meet the Tits cone at all. 
A future paper will overcome this obstacle in part:
We will show how to solve Problem~\ref{inf problem} in an intermediate case between finite type and the completely general case: the case where $A$ is of affine type.
The second obstacle to moving beyond finite type is that our methods, thus far, can only address acyclic orientations of the Coxeter diagram.  
An extension of the definition of sortable elements and Cambrian fans to the non-acyclic case will appear in a future paper.

\subsection{Uniform proofs of previous results} \label{Past}
This section is included for the benefit of those citing specific results from previous papers in the series \cite{sortable,sort_camb,camb_fan}. 
All of the nontrivial general results of these three papers depend on one or more of the following five results in~\cite{sortable} which are proved with a type-by-type check:  \cite[Propositions~3.1 and~3.2]{sortable} and \cite[Lemmas~4.6, 6.6 and~6.7]{sortable}.  
The proofs in \cite{sort_camb,camb_fan} are non-uniform only to the extent to which they quote results from~\cite{sortable}.

All five of these results are given uniform proofs in this paper, so that in particular all results of \cite{sortable,sort_camb,camb_fan} now have uniform proofs. 
Specifically, in light of Lemmas~\ref{OmegaInvariance} and~\ref{OmegaNegativity}, the forms $\omega_c$ (as $c$ varies) provide the family of ``orientations'' of $W$ whose existence is asserted in \cite[Proposition~3.1]{sortable}, and thus Lemma~\ref{OmegaRestriction} implies \cite[Proposition~3.2]{sortable}.
The function of \cite[Lemma~4.6]{sortable} is to provide one case in the inductive proof of \cite[Theorem~4.1]{sortable}, which is generalized by Theorem~\ref{sortable is aligned} in the present paper.
The proof of Theorem~\ref{sortable is aligned} has the same inductive structure as the proof of \cite[Theorem~4.1]{sortable} and, indeed the generalization of \cite[Lemma~4.6]{sortable} is proved in the course of the proof of Theorem~\ref{sortable is aligned}.
Finally, \cite[Lemmas~6.6 and~6.7]{sortable} are generalized by Propositions~\ref{Cc final} and~\ref{Cc initial}. 

We caution the reader that, while we now have a uniform proof of \cite[Theorem~1.4]{sort_camb}, the formulation of this result requires $W$ to be a lattice, and hence finite.
We have not found a correct generalization of \cite[Theorem~1.4]{sort_camb} to infinite $W$. 

\begin{samepage}
\addtocontents{toc}{\medskip \textbf{Part I: Fundamental results about sortable elements} \medskip}
\bigskip
\bigskip
\centerline{\Large{\sc{Part I: Fundamental results about sortable elements}}}

\section{Coxeter groups and sortable elements} \label{CoxeterConventions}

In this section we establish notation and conventions for Coxeter groups and root systems and then review the definition of sortable elements.
We assume familiarity with the most basic definitions and results on Coxeter groups.
(See, for example, \cite{Bj-Br,Bourbaki, Humphreys}.) 
All of the notations in this section are fairly standard. 
\end{samepage}

\subsection{Coxeter groups}
Throughout the paper,~$W$ will be a Coxeter group with simple generators $S$.
The identity element will be denoted by $e$.
The labels on the Coxeter diagram for~$W$ will be written $m(s,s')$.
We permit $m(s,s')=\infty$.
The \newword{rank} of~$W$ is the cardinality of $S$, which will be assumed to be finite and will be called $n$.
The \newword{length} of $w\in W$, written $\ell(w)$, is the number of letters in a \newword{reduced} (i.e.\ shortest possible) word for~$w$ in the alphabet $S$.
The \newword{support} of~$w$ is the set of generators occurring in a reduced word for~$w$.
This is independent of the choice of reduced word.

A \newword{reflection} in~$W$ is an element of~$W$ which is conjugate to some element of~$S$.
The symbol $T$ will stand for the set of reflections in~$W$.
An \newword{inversion} of~$w$ is a reflection $t$ such that $\ell(tw)<\ell(w)$.
The set of inversions of~$w$, written $\inv(w)$, uniquely determines~$w$.
Given any reduced word $a_1\cdots a_k$ for~$w$, the inversion set of~$w$ consists of $k$ distinct reflections.
These are of the form $t_i=a_1\cdots a_i\cdots a_1$ for $1\le i\le k$.
The sequence $t_1,\ldots,t_k$ is the \newword{reflection sequence} for the reduced word $a_1\cdots a_k$.
Since $w=t_kt_{k-1}\cdots t_1$, we have the following lemma.

\begin{lemma}\label{commutes}
Let $u,w\in W$ and suppose $u$ commutes with every inversion of~$w$.
Then $u$ commutes with~$w$. 
\end{lemma}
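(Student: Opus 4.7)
The plan is to exploit the identity recorded in the sentence immediately preceding the lemma: for any reduced word $a_1 \cdots a_k$ of $w$, we have $w = t_k t_{k-1} \cdots t_1$, where $t_1, \ldots, t_k$ is the reflection sequence of that reduced word. Crucially, each $t_i$ belongs to $\inv(w)$, so every factor in this product is an inversion of $w$.

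With this expression in hand, the proof becomes essentially formal. I would fix any reduced word for $w$, pass to the associated factorization $w = t_k \cdots t_1$, and then argue by induction on $k$ (or simply by iterated application of the commuting hypothesis) that $u$ commutes with the product. Concretely, $u t_i = t_i u$ for each $i$ by hypothesis (since $t_i \in \inv(w)$), so
\[
u w = u t_k t_{k-1} \cdots t_1 = t_k u t_{k-1} \cdots t_1 = \cdots = t_k t_{k-1} \cdots t_1 u = w u.
\]
No subtlety enters beyond invoking the displayed factorization, which is already part of the paper's setup.

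There is no real obstacle here: the lemma is essentially a restatement of the fact that the set of elements commuting with a given $u$ forms a subgroup, applied to the generating set for $w$ furnished by its inversion sequence. I would keep the proof to two or three lines and simply cite the identity $w = t_k \cdots t_1$ preceding the statement.
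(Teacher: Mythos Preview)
The proposal is correct and takes essentially the same approach as the paper: the paper's proof consists solely of the observation ``Since $w=t_kt_{k-1}\cdots t_1$, we have the following lemma,'' and your argument is exactly the unpacking of that remark.
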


A \newword{cover reflection} of $w\in W$ is an inversion $t$ of~$w$ such that $tw=ws$ for some $s\in S$.
The name ``cover reflection'' refers to the fact that~$w$ covers $tw$ in the weak order (defined below).
The set of cover reflections of~$w$ is written $\cov(w)$.
If $t$ is a cover reflection of~$w$ then $\inv(tw)=\inv(w)\setminus\set{t}$.

Suppose $s\in S$ and $w\in W$ such that $\ell(sw)<\ell(w)$.
Then the inversion sets of~$w$ and $sw$ are related as follows:
$\inv(w)=\set{s}\cup\set{sts:t\in\inv(sw)}$.
If $s$ is a cover reflection of~$w$ then $\inv(sw)=\inv(w)\setminus\set{s}$, so that $\inv(sw)=\set{sts:t\in\inv(sw)}$.

\subsection{Root systems} \label{Root subsec}
We now fix a reflection representation for~$W$ in the standard way by choosing a root system.  
A \newword{generalized Cartan matrix} for a Coxeter group~$W$ is a real matrix~$A$ with rows and columns indexed by $S$ such that:
\begin{enumerate}
\item[(i) ]$a_{ss}=2$ for every $s\in S$;
\item[(ii) ]$a_{ss'}\le 0$ with $\displaystyle a_{ss'}a_{s's}=4\cos^2\left(\frac{\pi}{m(s,s')}\right)$ when $s\neq s'$ and $m(s,s')<\infty$, and $\displaystyle a_{ss'}a_{s's} \ge 4$ if $m(s,s')=\infty$; and 
\item[(iii) ]$a_{ss'}=0$ if and only if $a_{s's}=0$.
\end{enumerate}
Most references, such as~\cite[Chapter~1]{Kac}, require that $A$ have integer entries. We permit $A$ to have real entries and say that $A$ is \newword{crystallographic} if it has integer entries.

Let $V$ be a real vector space of dimension $n$ with a basis $\alpha_s$ labeled by the simple reflections and let $V^*$ be the dual vector space; the set $\Pi:=\{ \alpha_s: s \in S \}$ is called the set of \newword{simple roots}. 
The canonical pairing between $x^*\in V^*$ and $y\in V$ will be written $\br{x^*,y}$.
The vector space $V$ carries a reflection representation of~$W$ such that $s\in S$ acts on the basis element $\alpha_{s'}$ by $s(\alpha_{s'})=\alpha_{s'}-a_{ss'} \alpha_s$. 
When necessary for clarity, we will denote $V$ and $V^*$ by $V(W)$ and $V^*(W)$. 
The action of $w\in W$ on a vector $x\in V$ will be denoted $wx$ and the same notation will be used for the dual action on $V^*$.
(This is the natural action on the dual, which is defined whether or not we have any way of identifying $V$ and $V^*$.)

We will restrict our attention to \newword{symmetrizable generalized Cartan matrices}. We say that $A$ is symmetrizable if there exists a positive real-valued function $\delta$ on $S$ such that $\delta(s) a_{s s'}=\delta(s') a_{s' s}$ and, if $s$ and $s'$ are conjugate, then $\delta(s)=\delta(s')$. (This second condition is non-standard; we will discuss it further in Section~\ref{RootConjugacy}.) We set $\alpha_s^{\vee}= \delta(s)^{-1} \alpha_s$; the $\alpha^{\vee}$ are called the simple co-roots.\footnote{The standard way of proceeding, described in~\cite[Chapters~1-3]{Kac}, is more sophisticated than this: one first constructs a pair of dual vector spaces $\mathfrak{h}$ and $\mathfrak{h^*}$, containing the roots and co-roots respectively, and later builds an isomorphism $\nu$ between $\mathfrak{h}$ and $\mathfrak{h}^*$. We start off by constructing the roots and co-roots in the same space to begin with, and only look at the subspace $V$ of $\mathfrak{h}$ which is spanned by the roots.}
Define a bilinear form $K$ on $V$ by $K(\alpha^{\vee}_{s}, \alpha_{s'})=a_{s s'}$. Note that $K(\alpha_s, \alpha_{s'})= \delta(s) K(\alpha^{\vee}_s, \alpha_{s'}) = \delta(s) a_{s s'} = \delta(s') a_{s' s} = K(\alpha_{s'}, \alpha_s)$ so $K$ is symmetric.  
One easily verifies that $W$ acts on the basis of simple co-roots by $s(\alpha\ck_{s'})=\alpha\ck_{s'}-a_{s's} \alpha\ck_s$.
%\[s(\alpha\ck_{s'})=\alpha\ck_{s'}-A(\alpha\ck_s,\alpha\ck_{s'}) \alpha_s=\alpha\ck_{s'}-A(\alpha\ck_{s'},\alpha\ck_s) \alpha_s=\alpha\ck_{s'}-A(\alpha\ck_{s'},\alpha_s) \alpha\ck_s=\alpha\ck_{s'}-a_{s's} \alpha\ck_s.\] 
The action of $W$ preserves $K$.

\begin{example} \label{B2 roots}
Consider the group $B_2$.
This is a Coxeter group of rank two, with generators $p$ and $q$, and $m(p,q)=4$.
We take our Cartan matrix to be $A = \left(\begin{smallmatrix} 2 & -2 \\ -1 & 2 \end{smallmatrix}\right)$. 
%We take our Cartan matrix to be $A ={\small\left[ \begin{array}{rr} 2 & -2 \\ -1 & 2 \end{array}\right]}$. 
So $A$ is crystallographic, and is symmetrizable with $\delta(p) = 1/2$ and $\delta(q)=2$.
On the left hand side of Figure~\ref{B2 root figure}, we depict the roots and co-roots of $B_2$
\end{example}

\begin{figure}[ht] 
\begin{tabular}{lccr}
\psfrag{aq}[cc][cc]{\Huge$\alpha_q=\alpha^{\vee}_q$}
\psfrag{bqpq}[cc][cc]{\Huge$\beta_{qpq}$}
\psfrag{bcqpq}[cc][cc]{\Huge$\beta^{\vee}_{qpq}$}
\psfrag{bpqp}[cc][cc]{\Huge$\beta_{pqp}=\beta^{\vee}_{pqp}$}
\psfrag{ap}[cc][cc]{\Huge$\alpha_p$}
\psfrag{acp}[cc][cc]{\Huge$\alpha^{\vee}_p$}
\scalebox{0.50}{\includegraphics{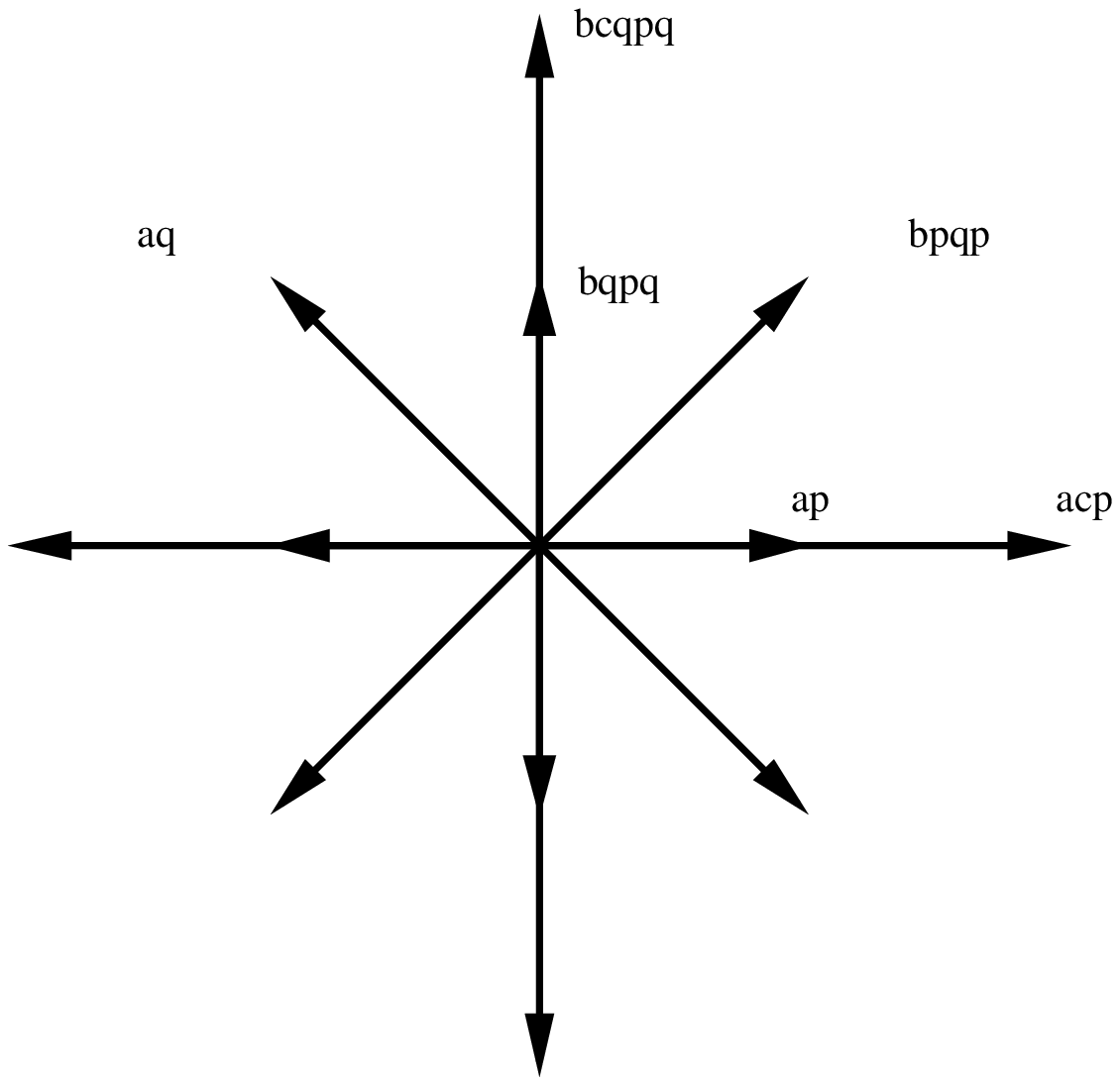}} &&&

\psfrag{e}[cc][cc]{\Huge$D$}
\psfrag{p}[cc][cc]{\Huge$p \cdot D$}
\psfrag{pq}[cc][cc]{\Huge$pq \cdot D$}
\psfrag{pqp}[cc][cc]{\Huge$pqp \cdot D$}
\psfrag{q}[cc][cc]{\Huge$q \cdot D$}
\psfrag{qp}[cc][cc]{\Huge$qp \cdot D$}
\psfrag{qpq}[cc][cc]{\Huge$qpq \cdot D$}
\psfrag{pqpq}[cc][cc]{\Huge$pqpq \cdot D$}
\scalebox{0.518}{\includegraphics{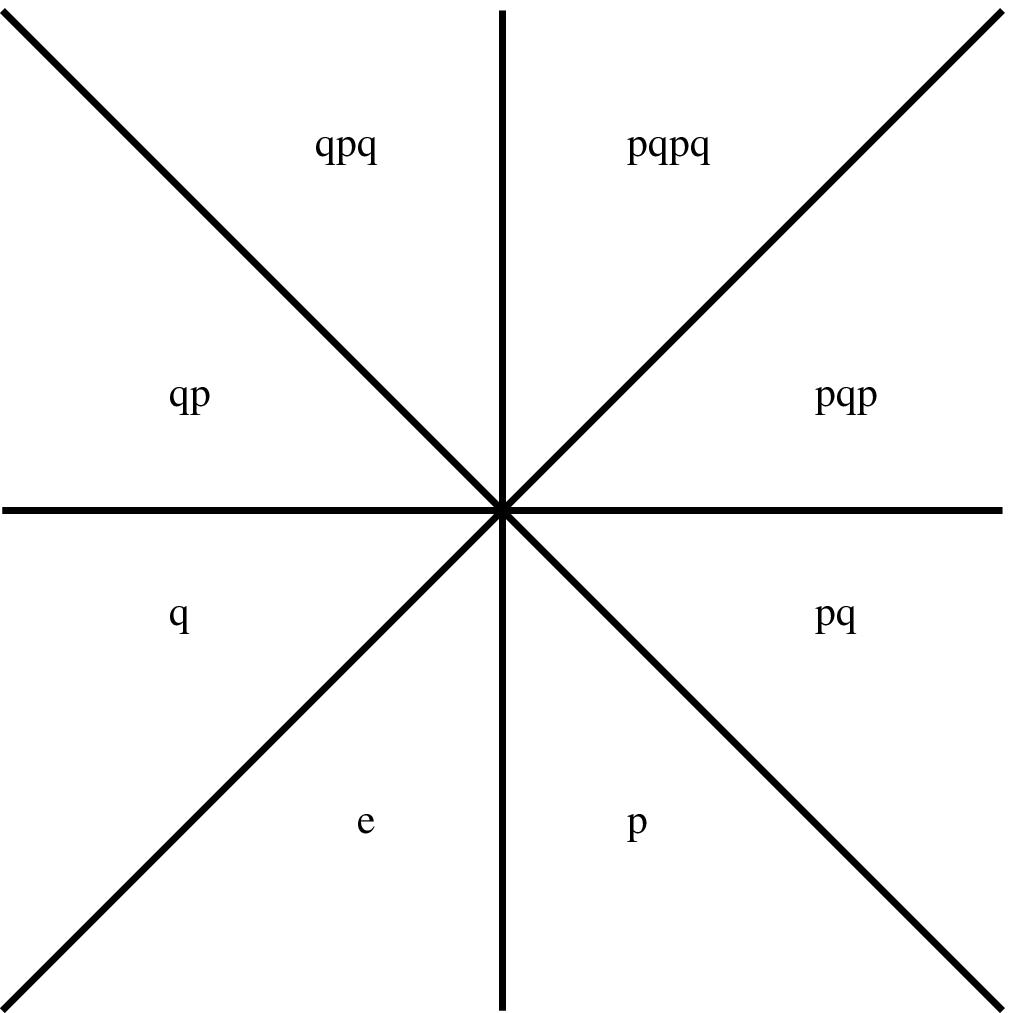}}
\end{tabular}
\caption{The $B_2$ root system and hyperplane arrangement.} \label{B2 root figure}
\end{figure}

We emphasize that we use symmetrizability only to prove combinatorial results which depend on the group $W$ but which are intrinsically independent of the choice of $A$.
In Section~\ref{fan sec} we describe the $c$-Cambrian fan, whose geometry (but not whose combinatorics) depends on the choice of Cartan matrix~$A$.  
The results on the Cambrian fan are valid for any choice of~$A$, since they rely only on the combinatorial results of the previous sections.  
For our present purposes, we could restrict further to the \newword{symmetric} case where $\delta$ is identically $1$. 
(Note, for any $W$, we can make a symmetric choice of $A$.)
However, we would then have to introduce a new set of conventions when relating the $c$-Cambrian fan to cluster algebras, so we work with the symmetrizable case from the beginning.

The set of vectors of the form $w\alpha_s$, for $s\in S$ and $w\in W$, is the \newword{root system}~$\Phi$ and its elements are called roots.\footnote{In some contexts, these are called \newword{real roots}.}
The positive roots $\Phi_+$ are the roots in $\Phi$ which are in the positive linear span of $\Pi$.
Since the simple roots are linearly independent, each positive root has a unique expression as a positive combination of simple roots.
The negative roots are $\Phi_-=-\Phi_+$ and $\Phi=\Phi_+\cup\Phi_-$.
There is a bijection $t\mapsto\beta_t$ between the reflections $T$ in~$W$ and the positive roots.\footnote{This statement depends on the assumption that $\delta(s)=\delta(s')$ if $s$ and $s'$ are conjugate.}
We define $\beta_t^{\vee} = (2/K(\beta_t, \beta_t)) \beta_t$. If $t = w s w^{-1}$, then $\beta^{\vee}_t = \delta(s)^{-1} \beta_t$.
The action of $t$ on $V$ is by the relation $t \cdot x=x - K(\beta_t^{\vee}, x) \beta_t=x - K(x, \beta_t) \beta^{\vee}_t$.
In the action of~$W$ on $V^*$, the element $t$ is represented by a reflection fixing the hyperplane $H_t=\set{x^*\in V^*:\br{x^*,\beta_t}=0}$.
Under this bijection, $\alpha_s=\beta_s$ and $\alpha^{\vee}_s=\beta^{\vee}_s$ for each $s\in S$.

For $w\in W$ and $t \in T$, the reflection $t$ is an inversion of $w$ if and only if $w^{-1} \beta_t$ is a negative root.
The action of a simple generator $s\in S$ on a positive root is $s\beta_t=\beta_{sts}$ unless $t=s$, in which case we have $s\alpha_s=-\alpha_s$.

We now review the polyhedral geometry of the representation of $W$ in $V^*$.
A \newword{polyhedral cone} in $V^*$ is the intersection of finitely many closed halfspaces whose boundary hyperplane contains the origin. 
A \newword{face} of a polyhedral cone $C$ is a set of the form $\{ x \in C: \lambda(x) = 0 \}$, where $\lambda$ is a real-valued linear function which is nonnegative on $C$.
%A \newword{face} of a polyhedral cone is the intersection of the cone with any hyperplane $H$ (through the origin) such that the cone is completely contained in one of the closed halfspaces defined by $H$.
A polyhedral cone is \newword{pointed} if its minimal face is the origin.
A \newword{simplicial cone} is a polyhedral cone of full dimension in $V^*$ such that the normal vectors to the defining halfspaces are a basis for $V$.
A set $\F$ of polyhedral cones in a real vector space is a \newword{fan} if 
\begin{enumerate}
\item[(i) ] for every $F\in\F$, the faces of $F$ are also in $\F$ and 
\item[(ii) ] for any subset $\X\subset\F$, the intersection $\bigcap_{G\in \X} G$ is a face of $F$ for each $F\in\X$.
\end{enumerate}

The \newword{dominant chamber in $V^*$} is the set 
\[D=\bigcap_{s \in S} \set{v^*\in V^*: \br{v^*,\alpha_s}\ge 0}\]
The dominant chamber is an $n$-dimensional simplicial cone. 
The set $\F(W)$ of all cones $wD$ and their faces is a fan in $V^*$ which we call the \newword{Coxeter fan}.
The union of the cones of $\F(W)$ is a convex subset of $V^*$ known as the Tits cone and denoted $\Tits(W)$. 
(Both $\F(W)$ and $\Tits(W)$ depend on the choice of reflection representation of $W$.)

Each cone $wD$ is a \newword{region} in the hyperplane arrangement $\A=\set{H_t:t\in T}$, meaning that $wD$ is a full-dimensional connected component of $V\setminus\left(\cup_{t\in T}H_t\right)$.
The map $w\mapsto wD$ is a bijection between elements of~$W$ and regions of $\A$ which lie in $\Tits(W)$. 
The inversion set of $w\in W$ equals $\set{t\in T:\br{x^*,\beta_t}\le 0\mbox{ for all }x^*\in wD}$, or in other words, the set of reflections $t$ such that $H_t$ separates $wD$ from $D$. 

\begin{example} \label{B2 cones}
On the right hand side of Figure~\ref{B2 root figure}, we illustrate the hyperplane arrangement $\A$ in $V^*$, in the case where $W$ is $B_2$.
Since $W$ is finite in this case, the Tits cone is all of $V^*$.
From this figure, the reader can check that, for example, the inversions of $pq$ are $p$ and $pqp$, and can similarly determine the inversions of every other element in $W$.

%The left half of Figure~\ref{B2 root figure} is drawn at an odd angle to emphasize that we do not identify $V$ and $V^*$.
\end{example}

\subsection{Conjugacy classes of reflections} \label{RootConjugacy}

In defining a symmetrizable generalized Cartan matrix, we required that $\delta(s)=\delta(s')$ whenever $s$ and $s'$ are conjugate. As stated, this condition appears difficult to check. In this section, we point out that this condition is very computable, and that it is automatic if $A$ is crystallographic.

Define $\Gamma^{\odd}$ to be the graph whose vertex set is $S$ and where there is an edge between $s$ and $s'$ if and only if $m(s, s')$ is odd. For this purpose, $\infty$ is even.  
The following proposition appears as Exercise~1.16.a of~\cite{Bj-Br}.
\begin{proposition} \label{even conj}
Two simple reflections $s$ and $s'$ are conjugate if and only if they lie in the same connected component of $\Gamma^{\odd}$.
\end{proposition}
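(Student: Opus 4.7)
The plan is to prove the two implications separately.

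For the ``if'' direction, it suffices by induction on path length in $\Gamma^{\odd}$ to handle the case of two simple reflections $s,s'$ joined by an edge, i.e.\ with $m:=m(s,s')$ finite and odd. Writing $m=2k+1$, I would work inside the dihedral subgroup $\br{s,s'}$ of order $2m$. Using the braid relation $(ss')^m=e$, one checks that the longest element of this dihedral group can be written both as $(ss')^k s$ and as $s'(ss')^k$, so
\[
(ss')^k\cdot s\cdot(ss')^{-k}=s'(ss')^k\cdot(ss')^{-k}=s'.
\]
Concatenating such conjugations along any path in $\Gamma^{\odd}$ yields a conjugating element for any pair of simple reflections in the same component.

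For the ``only if'' direction, I would construct a homomorphism that separates the components. Let $C_1,\ldots,C_k$ be the connected components of $\Gamma^{\odd}$ and define $\phi\colon W\to(\ZZ/2\ZZ)^k$ on generators by $\phi(s)=e_i$ whenever $s\in C_i$, where $e_i$ is the $i$-th standard basis vector. To see that $\phi$ extends to a group homomorphism, I need to verify the Coxeter relations $(ss')^{m(s,s')}=e$ for each pair with $m(s,s')$ finite (the quadratic relations $s^2=e$ hold automatically since $2\phi(s)=0$, and pairs with $m(s,s')=\infty$ impose no relation). If $m(s,s')$ is odd, then $s$ and $s'$ are adjacent in $\Gamma^{\odd}$, hence lie in the same component, so $\phi(s)=\phi(s')$ and $\phi\bigl((ss')^{m(s,s')}\bigr)=m(s,s')(\phi(s)+\phi(s'))=0$. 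If $m(s,s')$ is even, the same expression vanishes because its scalar coefficient is even. Hence $\phi$ descends to $W$. Since its target is abelian, any two conjugate elements of $W$ share the same image; in particular, conjugate simple reflections $s$ and $s'$ must satisfy $\phi(s)=\phi(s')$, forcing them into the same component of $\Gamma^{\odd}$.

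Neither direction poses a serious obstacle: the ``if'' direction reduces to a one-line computation in a dihedral group, and the ``only if'' direction is a clean application of the universal property of the Coxeter presentation. The mildest subtleties are handling the case $m(s,s')=\infty$ in the homomorphism check (where there is no relation to verify) and noticing that the even-$m$ case is automatic because the scalar coefficient is already zero modulo~$2$; once these small points are made explicit, the remainder of the proof is routine.
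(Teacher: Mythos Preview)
Your proof is correct. The paper does not actually supply its own proof of this proposition; it simply records that the result ``appears as Exercise~1.16.a of~\cite{Bj-Br}'' and moves on. Your argument is the standard one expected for that exercise: the ``if'' direction is the dihedral conjugation $(ss')^k s (ss')^{-k}=s'$ when $m(s,s')=2k+1$, and the ``only if'' direction is the abelianization-style homomorphism to $(\ZZ/2\ZZ)^k$ that sends each simple reflection to the indicator of its $\Gamma^{\odd}$-component. Both steps are carried out cleanly, including the edge cases $m(s,s')=\infty$ and $m(s,s')$ even.
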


\begin{cor}
Let $A$ be a crystallographic generalized Cartan matrix and let $\delta$ be a positive real-valued function on $S$ such that $\delta(s) a_{s s'}=\delta(s') a_{s' s}$ for all $s,s'\in S$.
Then $\delta(s)=\delta(s')$ whenever $s$ and $s'$ are conjugate. 
In particular, $A$ is symmetrizable.
\end{cor}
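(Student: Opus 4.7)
The plan is to use Proposition~\ref{even conj} to reduce the problem to an adjacency in $\Gamma^\odd$, and then to exploit the arithmetic constraint that the crystallographic hypothesis places on odd labels $m(s,s')$. By the proposition, any two conjugate simple reflections are joined by a path in $\Gamma^\odd$, so walking along such a path one edge at a time reduces the statement to showing $\delta(s)=\delta(s')$ whenever $m:=m(s,s')$ is finite and odd (equivalently, $m\ge 3$ and odd, since $s\neq s'$ forces $m\ge 2$).

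I would then fix such a pair $s,s'$ and argue that crystallographicity forces $m=3$. Condition (ii) in the definition of a generalized Cartan matrix gives $a_{ss'}a_{s's}=4\cos^2(\pi/m)$, and both factors are non-positive integers, so $4\cos^2(\pi/m)$ must be a non-negative integer. Writing $4\cos^2(\pi/m)=2+2\cos(2\pi/m)$, this amounts to $2\cos(2\pi/m)\in\ZZ$. Since $2\cos(2\pi/m)=\zeta+\zeta^{-1}$ for $\zeta=e^{2\pi i/m}$ a primitive $m$th root of unity, it is an algebraic integer whose minimal polynomial over $\QQ$ has degree $\varphi(m)/2$ for $m\ge 3$. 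Rationality therefore forces $\varphi(m)\le 2$, i.e.\ $m\in\{3,4,6\}$, of which $m=3$ is the only odd value.

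With $m=3$ we obtain $a_{ss'}a_{s's}=1$. The only two non-positive integers with product $1$ are $-1$ and $-1$, so $a_{ss'}=a_{s's}=-1$, and the hypothesis $\delta(s)a_{ss'}=\delta(s')a_{s's}$ collapses to $\delta(s)=\delta(s')$. The concluding ``in particular'' is then immediate: the hypothesized $\delta$ satisfies the defining relation of symmetrizability, and we have just verified that this same $\delta$ satisfies the paper's additional conjugacy condition.

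The main obstacle is the intermediate cyclotomic step ruling out odd $m\ge 5$. This is essentially the classical fact that the only finite crystallographic dihedral Coxeter groups have $m\in\{2,3,4,6\}$; for a self-contained argument one can either cite this classification or verify directly that $4\cos^2(\pi/m)$ is irrational for the small odd cases $m=5,7,9$ and then handle all larger odd $m$ uniformly by the cyclotomic degree formula $[\QQ(\zeta+\zeta^{-1}):\QQ]=\varphi(m)/2$.
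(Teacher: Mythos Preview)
Your proof is correct and follows essentially the same route as the paper: reduce via Proposition~\ref{even conj} to an edge of $\Gamma^{\odd}$, use crystallographicity to force $m(s,s')=3$, and then read off $a_{ss'}=a_{s's}=-1$ so that $\delta(s)=\delta(s')$. The only difference is that the paper simply asserts the well-known restriction $m(s,s')\in\{2,3,4,6,\infty\}$ without justification, whereas you supply the cyclotomic argument; your version is thus a more detailed rendering of the same proof.
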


\begin{proof}
 We may immediately reduce to the case that $s$ and $s'$ are joined by an edge of $\Gamma^{\odd}$. The requirement that $A$ be crystallographic implies that $m(s,s')$ is $2$, $3$, $4$, $6$ or $\infty$.  
Thus $m(s,s')$ is odd only when $m(s,s')=3$. 
In this case, $a_{s s'} a_{s' s} = 1$ for nonpositive integers $a_{s s'}$ and $a_{s' s}$.
Therefore $a_{s s'}=a_{s' s}=-1$. 
Since $\delta(s) a_{s s'}=\delta(s') a_{s' s}$, we have $\delta(s)=\delta(s')$ as desired.
\end{proof}

\subsection{Parabolic subgroups} \label{Parabolic Subsec}
A \newword{standard parabolic subgroup} of~$W$ is a subgroup $W_J$ generated by some subset $J\subseteq S$ of the simple generators.
The most important case will be when $J=S\setminus\set{s}$ for some $s\in S$. 
We use the notation $\br{s}=S\setminus\set{s}$. 
The standard parabolic subgroup $W_J$ is in particular a Coxeter group with simple generators $J$.
The reflections of $W_J$ are exactly $T_J=T\cap W_J$.

The following lemma about standard parabolic subgroups will be useful later.
\begin{lemma}\label{span inv s}
If $w\in W_{\br{s}}$, then the positive root $\beta_{wsw^{-1}}$ is in the positive linear span of $\set{\alpha_s}\cup\set{\beta_t:t \in \inv(w)}$.
\end{lemma}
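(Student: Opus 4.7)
The plan is to prove this by induction on $\ell(w)$. The base case $w = e$ is immediate: $wsw^{-1} = s$ and $\beta_s = \alpha_s$, which is visibly in the positive span of $\set{\alpha_s}$.

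For the inductive step, since $w \in W_{\br{s}}$ is nonidentity, choose $s' \in S \setminus \set{s}$ that is initial in $w$, and write $w = s'w'$ with $w' \in W_{\br{s}}$ and $\ell(w') = \ell(w) - 1$. The inductive hypothesis applied to $w'$ gives a nonnegative expansion
\[
\beta_{w's w'^{-1}} \;=\; c_0\,\alpha_s \,+\, \sum_{t \in \inv(w')} c_t\,\beta_t,
\]
and I would then apply $s'$ to both sides, using $\beta_{wsw^{-1}} = s' \beta_{w's w'^{-1}}$ on the left. On the right, two facts do the work: first, $s'\alpha_s = \alpha_s - a_{s's}\alpha_{s'}$ is a nonnegative combination of $\alpha_s$ and $\alpha_{s'} = \beta_{s'}$, because $a_{s's} \le 0$; second, since $w = s'w'$ is reduced, $s' \notin \inv(w')$, so for every $t \in \inv(w')$ we have $t \neq s'$ and hence $s'\beta_t = \beta_{s'ts'}$. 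By the inversion-set formula recalled just before the lemma, $\inv(w) = \set{s'} \cup \set{s'ts' : t \in \inv(w')}$, so $s'$ and each $s'ts'$ are inversions of $w$. Combining these, $\beta_{wsw^{-1}}$ becomes a nonnegative combination of $\alpha_s$ together with $\beta_{s'}$ and the $\beta_{s'ts'}$, which is exactly a nonnegative combination of $\alpha_s$ and $\set{\beta_{t''} : t'' \in \inv(w)}$.

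The only subtle point, and the potential obstacle, is the possibility that $w'sw'^{-1}$ equals $s'$ (since then $s'$ applied to $\beta_{w'sw'^{-1}}$ would flip sign rather than produce $\beta_{s't's'}$). This cannot actually occur: an easy secondary induction on $\ell(w')$, using the recursion $s''\alpha_s = \alpha_s - a_{s''s}\alpha_{s''}$ for $s'' \in \br{s}$, shows that the coefficient of $\alpha_s$ in $w'\alpha_s$ is always $1$, so $w'\alpha_s = \beta_{w'sw'^{-1}}$ can never equal $\alpha_{s'}$. Once this is noted, the inductive step goes through as sketched and the lemma follows.
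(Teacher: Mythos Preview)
Your proof is correct and follows essentially the same route as the paper's: induct on $\ell(w)$, peel off an initial simple reflection $s'=r_1$, apply the inductive hypothesis to $w'=s'w$, and then act by $s'$, using $\inv(w)=\{s'\}\cup s'\inv(w')s'$. Your treatment of the sign-flip issue (whether $w'sw'^{-1}$ could equal $s'$) is more explicit than the paper's; the paper's version implicitly rules out that case because the right-hand side after applying $r_1$ is a nonnegative combination of positive roots and hence cannot equal $-\alpha_{r_1}$.
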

\begin{proof}
We argue by induction on $\ell(w)$.
The base case $w=e$ is trivial.
If $\ell(w)>0$ then write a reduced word $r_1\cdots r_{\ell(w)}$ for~$w$ and let $t_i=r_1\cdots r_i\cdots r_1$ for $i=1,\ldots,\ell(w)$.
Consider the element $w' :=r_2\cdots r_{\ell(w)}$.  
By induction, $\beta_{w' s(w')^{-1}}$ is in the positive linear span of $\set{\alpha_s}\cup\set{\beta_{t}:t \in \inv(w')}$.
Thus $\beta_{wsw^{-1}}$ is in the positive linear span of $\set{\beta_{r_1 s r_1}}\cup\set{\beta_{t}:t \in r_1 \inv(w') r_1}$.
But $\beta_{r_1 s r_1}=\alpha_s-a_{s r_1} \alpha_{r_1}$ is in the positive linear span of $\alpha_s$ and $\alpha_{r_1}=\beta_{t_1}$. 
(Since $s \neq r_1$, we know that $a_{s r_1} \leq 0$.) 
Thus we see that $\beta_{wsw^{-1}}$ is in the positive span of $\beta_t$ for $t \in r_1 \inv(w') r_1 \cup \set{r_1, s}=\inv(w) \cup \set{s}$.
\end{proof}

Given $w\in W$ and $J\subseteq S$, the coset $W_Jw$ contains a unique element $\closeleftq{J}w$ of minimal length.
Equivalently, there is a unique factorization $w=w_J\cdot\closeleftq{J}w$ maximizing $\ell(w_J)$ subject to the requirement that $\ell(w_J)+\ell(\closeleftq{J}w)=\ell(w)$.
(For a dual formulation, see \cite[Section~2.4]{Bj-Br}.)
The map $w\mapsto w_J$ from~$W$ to $W_J$ is defined by the property that $\inv(w_J)=\inv(w)\cap W_J$. 
To see why, first write a reduced word $a_1\cdots a_k$ such that $a_1\cdots a_i$ is a reduced word for $w_J$, for some $i\le k$.
Then since the inversions of~$w$ are of the form $t_j=a_1\cdots a_j\cdots a_1$ for $1\le j \le k$, we have $\inv(w_J)\subseteq\inv(w)\cap W_J$. 
If there exists a reflection $t\in\left[\inv(w)\cap W_J\right]\setminus\inv(w_J)$ then there exists~$j$ with $i<j\le k$ such that $t=a_1\cdots a_j\cdots a_1$.
Since $tw=a_1\cdots\widehat{a_j}\cdots a_k$ (deleting $a_j$ from $a_1\cdots a_k$), the element $u$ given by $a_{i+1}\cdots\widehat{a_j}\cdots a_k$ is $(w_J)^{-1}\cdot t\cdot w$.
In particular, $u$ is a representative of the coset $W_Jw$, because $t$ and $w_J$ are in $W_J$.
This contradicts the fact that $\closeleftq{J}w$ is the minimal-length coset representative, thus proving that $\inv(w_J)=\inv(w)\cap W_J$.

A \newword{parabolic subgroup} of~$W$ is any subgroup $W'$ conjugate in~$W$ to a standard parabolic subgroup.
In particular, a parabolic subgroup $W'=wW_Jw^{-1}$ is generated by the reflections $\set{wsw^{-1}:s\in J}$.
Thus the following theorem applies.

\begin{theorem}\label{deodyer}
Let $W$ be a Coxeter group and let $W'$ be a subgroup of $W$ generated by reflections.
Then there is a unique subset $R\subseteq W' \cap T$ satisfying the following two conditions:
\begin{enumerate}
\item[(i) ]For any $t\in W'\cap T$, the positive root $\beta_t$ is in the positive span of $\set{\beta_r:r \in R}$.
\item[(ii) ]For any $r\in R$, the positive root $\beta_r$ is not in the positive span of the set $\set{\beta_t:t \in W' \cap T \setminus \set{r }}$.
\end{enumerate}
Furthermore $(W',R)$ is a Coxeter system with reflections $W'\cap T$.
\end{theorem}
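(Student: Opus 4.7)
My plan is to construct $R$ explicitly as an ``extremal'' set of reflections, verify conditions (ii) and uniqueness by elementary arguments, and invoke Dyer's theorem on reflection subgroups for the final Coxeter-system claim rather than reprove it.

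First I would set
\[R := \{r \in W'\cap T : \beta_r \notin \PosSpan\{\beta_t : t \in (W'\cap T) \setminus \{r\}\}\},\]
so that condition (ii) holds by construction. For uniqueness, suppose $R' \subseteq W'\cap T$ also satisfies (i) and (ii). Applying (i) for $R$ expresses each $\beta_{r'}$ with $r' \in R'$ as a positive combination of $\{\beta_r : r \in R\}$, and condition (ii) for $r' \in R'$ then forces $r' \in R$; hence $R' \subseteq R$, and the reverse inclusion follows symmetrically once (i) for $R$ is established.

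The substantive part is (i). Let $C := \PosSpan\{\beta_t : t \in W'\cap T\} \subseteq \PosSpan\Pi$. Any linear functional strictly positive on every simple root $\alpha_s$ is strictly positive on $C \setminus \{0\}$, so $C$ is pointed, and the roots $\beta_r$ for $r \in R$ span exactly the extremal rays of $C$. The goal is to show that every $\beta_t \in C$ lies in $\PosSpan\{\beta_r : r \in R\}$. I would argue by induction on a suitable invariant: in the crystallographic case the height of $\beta_t$ (the sum of coefficients in the basis $\Pi$) takes positive integer values, so if $t \notin R$ the defining property of $R$ lets us rewrite $\beta_t = \sum c_i \beta_{u_i}$ with $c_i > 0$ and each $\beta_{u_i}$ of strictly smaller height, and the induction terminates with a positive combination of roots indexed by $R$. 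In general, this termination is more delicate and relies on the root-system structure of $W'$ provided by Dyer's theorem.

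The final assertion that $(W',R)$ is a Coxeter system with reflections $W' \cap T$ is Dyer's theorem on reflection subgroups of Coxeter groups (M.~Dyer, ``Reflection subgroups of Coxeter systems,'' J.~Algebra 135 (1990), 57--73), which I would cite rather than reprove. The main obstacle lies in exactly this step: verifying the Exchange Condition for $R$ with respect to the length function on $W'$ defined by counting positive roots of $W'$ sent to negative roots. This is the technical heart of Dyer's paper, and once it is in hand, the canonical generators identified by Dyer are precisely the extremal reflections characterized by (ii), so our $R$ coincides with Dyer's and the theorem is complete.
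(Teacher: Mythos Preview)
The paper does not prove this theorem; it simply cites it as the main theorem of Deodhar (with essentially the same result due independently to Dyer). So there is no ``paper's proof'' to compare against. Your proposal attempts more than the paper does, but it has a genuine gap.

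The problematic step is your argument for condition~(i). Your height induction does not go through as stated, even in the crystallographic case. If $t\notin R$ and you write $\beta_t=\sum_i c_i\beta_{u_i}$ with $c_i>0$ and $u_i\in (W'\cap T)\setminus\{t\}$, there is no reason each $\beta_{u_i}$ must have strictly smaller height: the coefficients $c_i$ need not be integers (or even bounded below by~$1$), so some $\beta_{u_i}$ could have height larger than $\beta_t$, and the recursion need not terminate. More structurally, $C=\PosSpan\{\beta_t:t\in W'\cap T\}$ can be generated by infinitely many rays, and for such cones it is not automatic that every generator lies in the positive span of the extremal generators. You acknowledge this by saying the general case ``relies on the root-system structure of $W'$ provided by Dyer's theorem,'' but that makes the argument circular: condition~(i) is precisely part of what you are trying to establish.

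Since you are already prepared to cite Dyer, the clean route is to reverse the logic. Dyer's theorem gives canonical generators $R_{\mathrm{Dyer}}$ for which $(W',R_{\mathrm{Dyer}})$ is a Coxeter system whose positive roots are exactly $\{\beta_t:t\in W'\cap T\}$ and whose simple roots are $\{\beta_r:r\in R_{\mathrm{Dyer}}\}$. Standard Coxeter theory then gives~(i) (every positive root is a nonnegative combination of simple roots) and~(ii) (simple roots are linearly independent, hence extremal) for $R_{\mathrm{Dyer}}$. Your uniqueness argument, which is correct, then shows that any $R$ satisfying (i) and~(ii) equals $R_{\mathrm{Dyer}}$. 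This is essentially how the paper treats the matter: as a citation, with the geometric characterization of $R$ extracted from the proofs in the cited references.
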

We call $R$ the \newword{canonical generators} of $W'$.
Theorem~\ref{deodyer} is the main theorem of~\cite{Deodhar}; however, some details in the statement of Theorem~\ref{deodyer} occur only in the proof of the main theorem of~\cite{Deodhar}.
Essentially the same theorem was proved in~\cite{DyerReflection} at about the same time.

A standard parabolic subgroup $W_J$ has canonical generators $J$.
More generally, let $W'=wW_Jw^{-1}$ be a parabolic subgroup.
Then the set $\Phi'=w\Phi_Jw^{-1}$ is a root system for $W'$.
Define $\Phi'_+=\Phi'\cap\Phi_+$ and let $\Pi'$ be the unique minimal subset of $\Phi'_+$ such that any root in $\Phi'_+$ is contained in the positive linear span of $\Pi'$.
Theorem~\ref{deodyer} says that $\Pi'$ is the set of positive roots associated to the canonical generators of $W'$.
We have an additional characterization of the canonical generators of parabolic subgroups.
The characterization refers to the unique minimal coset representative $w^J$ of the coset $wW_J$.
(The left-right dual notation $\closeleftq{J}w$ was used above.)

\begin{prop}\label{can gen}
Let $R$ be the set of canonical generators of a parabolic subgroup $W'=w W_J w^{-1}$.
If $w = w^J$ then $R= w J w^{-1}$.
\end{prop}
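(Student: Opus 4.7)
The plan is to use the uniqueness part of Theorem~\ref{deodyer}: it suffices to show that $wJw^{-1}$ itself satisfies conditions (i) and (ii) of that theorem for the parabolic subgroup $W' = wW_Jw^{-1}$, and then $R = wJw^{-1}$ follows at once.

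The assumption $w = w^J$ is equivalent, by a standard fact about minimal coset representatives, to $\ell(ws) > \ell(w)$ for all $s \in J$, and hence to $w\alpha_s \in \Phi_+$ for all $s \in J$. In particular $\beta_{wsw^{-1}} = w\alpha_s$ is a positive root for every $s \in J$, so each element of $wJw^{-1}$ lies in $W' \cap T$ and has a clear corresponding positive root.

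To verify (i), let $t' \in W' \cap T$. Then $t' = wtw^{-1}$ for a unique $t \in W_J \cap T$, and the positive root $\beta_{t'}$ equals $\pm w\beta_t$. Since $\beta_t$ is a nonnegative combination $\sum_{s \in J} c_s \alpha_s$, and each $w\alpha_s$ is itself a positive root, $w\beta_t = \sum_{s \in J} c_s w\alpha_s$ is already positive; hence $\beta_{t'} = w\beta_t = \sum_{s \in J} c_s \beta_{wsw^{-1}}$, as desired. For (ii), suppose to the contrary that for some $s \in J$, the positive root $\beta_{wsw^{-1}} = w\alpha_s$ is a nonnegative combination of $\{\beta_{t'} : t' \in W' \cap T \setminus \{wsw^{-1}\}\}$. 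Using the sign analysis of the previous step (each such $\beta_{t'}$ equals $w\beta_t$ for a unique $t \in W_J \cap T \setminus \{s\}$) and applying $w^{-1}$ yields an expression of $\alpha_s$ as a nonnegative combination of $\{\beta_t : t \in W_J \cap T \setminus \{s\}\}$, contradicting condition (ii) of Theorem~\ref{deodyer} applied to the canonical generators $J$ of $W_J$.

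The only subtle point is the sign check that $w\beta_t$ is a positive root whenever $\beta_t \in \Phi_J \cap \Phi_+$; this is precisely where the hypothesis $w = w^J$ is essential, since it guarantees that conjugation by $w$ carries the positive root system of $\Phi_J$ into $\Phi_+$. Given this, both conditions of Theorem~\ref{deodyer} hold for $wJw^{-1}$, and uniqueness forces $R = wJw^{-1}$.
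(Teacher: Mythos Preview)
Your proof is correct and follows essentially the same approach as the paper's: both verify that $wJw^{-1}$ satisfies conditions (i) and (ii) of Theorem~\ref{deodyer} and invoke uniqueness, with the key step being that $w$ sends every positive root of $\Phi_J$ to a positive root. The only minor variation is that you establish this positivity by first checking it on the simple roots $\alpha_s$ for $s\in J$ (using $\ell(ws)>\ell(w)$) and then extending by linearity, whereas the paper argues directly for each reflection $u\in W_J\cap T$ via $\ell(wu)>\ell(w)$; both arguments are standard and amount to the same thing.
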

\begin{proof}
Let $w = w^J$.
We only need to show that $w J w^{-1}$ satisfies conditions (i) and (ii) of Theorem~\ref{deodyer}.
It is enough to show that, for each reflection $u\in T\cap W_J$, the root $w\beta_u$ is positive:
If so then conditions (i) and (ii) on $wJw^{-1}$ (for the subgroup $W'$) follow from conditions (i) and (ii) on $J$ (for the subgroup $W_J$).

Let $u\in T\cap W_J$.
Now $\ell(wu)>\ell(w)$ since $w$ is the unique minimal-length coset representative of $wW_J$.
Thus $wuw^{-1}\not\in\inv(w)$ (since $\ell(wuw^{-1}w)=\ell(wu)>\ell(w)$), so $w^{-1}\beta_{wuw^{-1}}$ is a positive root.
This positive root is necessarily $\beta_{w^{-1}wuw^{-1}w}=\beta_u$.
Since $w^{-1}\beta_{wuw^{-1}}=\beta_u$, the root $w\beta_u$ is the positive root $\beta_{wuw^{-1}}$.
\end{proof} 

Let $J\subseteq S$ and let $V_J$ be the subspace of $V$ spanned by $\set{\alpha_s:s \in J}$. 
Then $W_J$ preserves $V_J$ and acts on $V_J$ by the representation $V(W_J)$; henceforth we will identify $V(W_J)$ with $V_J$. 
In particular, if $t$ is a reflection of $W_J$, the notation $\beta_t$ denotes the same vector whether we think of it a root of~$W$ or of $W_J$. 
The root system associated to $W_J$ is $\Phi_J=\Phi\cap V_J$.
Dual to the inclusion of $V_J$ into $V$, there is a surjection from $V^*$ onto $(V_J)^*$, which we denote by $P_J$. The dominant chamber for $W_J$ in $V^*_J$ is the set 
\[D_J=\bigcap_{s \in J} \set{v^*\in V^*_J: \br{v^*,\alpha_s}\ge 0}=P_J(D).\]
The projection from~$W$ to $W_J$ satisfies $P_J(w D)\subseteq w_J(D_J)$ and furthermore 
\[\bigcup_{\substack{x\in W\\x_J=w_J}}xD=(P_J)^{-1}(w_JD_J)\cap\Tits(W).\]

We will need a generalization of the notion of a rank two parabolic subgroup. 
Let $t_1$ and $t_2$ be distinct reflections of $W$. 
Then the \newword{generalized rank two parabolic subgroup containing $t_1$ and $t_2$} is the subgroup $W'$ of $W$ generated by all reflections $t$ such that $\beta_t$ is in the vector space $U$ spanned by $\beta_{t_1}$ and $\beta_{t_2}$. 
%If $r_1$ and $r_2$ are distinct reflections in $W'\cap T$ then $r_1$ and $r_2$ are called the \newword{canonical generators} of $W'$ if, for every reflection $t \in W' \cap T$, the root $\beta_t$ is in the positive span of $\beta_{r_1}$ and $\beta_{r_2}$.
If distinct reflections $t_1$ and $t_2$ lie in a rank two  parabolic subgroup $W'$ then $W'$ is the generalized rank two parabolic subgroup containing $t_1$ and $t_2$.
There is a geometric criterion for when a generalized rank two parabolic subgroup is actually parabolic: 

\begin{prop} \label{para geom}
Let $p$ and $q$ be two distinct reflections of $W$. Then the generalized rank two parabolic subgroup containing $p$ and $q$ is a parabolic subgroup if and only if the convex set $H_p \cap H_q \cap \Tits(W)$ has dimension $n-2$.
\end{prop}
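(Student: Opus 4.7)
My plan is first to identify the fixed subspace of $W'$ in $V^*$ as $H_p\cap H_q$: every generator of $W'$ is a reflection $t$ with $\beta_t\in U:=\Span(\beta_p,\beta_q)$ and so pointwise fixes $U^\perp=H_p\cap H_q$, while conversely any vector fixed by $W'$ is fixed by both $p$ and $q$. Since $p\neq q$ forces $H_p\neq H_q$ (a reflection is determined by its fixed hyperplane), this fixed subspace has dimension exactly $n-2$, and $\beta_p,\beta_q$ are linearly independent.

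For the forward implication, I would assume $W'=wW_Jw^{-1}$. The fixed subspace of $W_J$ is $\bigcap_{s\in J}H_s$, of codimension $|J|$, so the fixed subspace of $W'$ is $\bigcap_{s\in J}H_{wsw^{-1}}$; matching this against $H_p\cap H_q$ forces $|J|=2$, say $J=\{s_1,s_2\}$, and identifies $H_p\cap H_q=H_{ws_1w^{-1}}\cap H_{ws_2w^{-1}}$. The corresponding codimension-$2$ face of the simplicial cone $wD$ then sits inside $H_p\cap H_q\cap\Tits(W)$ and has dimension $n-2$.

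For the converse, I would start from $\dim(H_p\cap H_q\cap\Tits(W))=n-2$. Since $\Tits(W)=\bigcup_{w\in W}wD$ is a countable union of chambers, some $wD\cap H_p\cap H_q$ must itself attain dimension $n-2$ (a Baire-category argument within the linear subspace $H_p\cap H_q$). Because $H_p$ and $H_q$ lie in the Coxeter arrangement $\A$, they do not cut through the relative interior of any face of $\F(W)$, so $wD\cap H_p\cap H_q$ is a union of faces of the simplicial cone $wD$; its top face is of the form $wF_J$ for some $J\subseteq S$ with $|J|=2$, say $J=\{s_1,s_2\}$. The linear span $H_{ws_1w^{-1}}\cap H_{ws_2w^{-1}}$ of this face is $(n-2)$-dimensional and contained in $H_p\cap H_q$, so the two subspaces agree, giving $U=w\Span(\alpha_{s_1},\alpha_{s_2})$. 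The parabolic subgroup $wW_Jw^{-1}$ has root system $\Phi\cap U$, so its reflections are exactly the generating reflections of $W'$, whence $W'=wW_Jw^{-1}$ is parabolic.

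The step I expect to take the most care is, in the converse direction, justifying that $wD\cap H_p\cap H_q$ decomposes as a union of faces of $wD$---using that the Coxeter-arrangement hyperplanes $H_p$ and $H_q$ never cross the interior of any face of $\F(W)$---together with the dimension-extraction argument isolating an actual codimension-$2$ face from the purely convex-geometric hypothesis on $H_p\cap H_q\cap\Tits(W)$.
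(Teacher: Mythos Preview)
Your argument is correct and follows the same route as the paper: in each direction the crux is locating an $(n-2)$-dimensional face of the Coxeter fan inside $H_p\cap H_q$ and reading off the conjugate rank-two standard parabolic. The only differences are expository---the paper asserts that $H_p\cap H_q\cap\Tits(W)$, being a union of faces of the Coxeter fan, contains an $(n-2)$-dimensional face, where you make this explicit via Baire category, and the paper finishes by citing the stabilizer lemma \cite[Lemma~4.5.1]{Bj-Br} to place $w^{-1}pw,\,w^{-1}qw$ in $W_{\{s_1,s_2\}}$, where you instead identify root systems via $w\Phi_J=\Phi\cap U$. On the step you flagged: because $H_p$ and $H_q$ are hyperplanes of the Coxeter arrangement, each meets the simplicial chamber $wD$ in a single face (write the normal of $H_p$ in the basis of facet normals of $wD$; nonnegativity of the coefficients follows from $H_p$ missing the interior), so $wD\cap H_p\cap H_q$ is already one face of $wD$ rather than a union, and no further decomposition is needed.
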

Since $H_p \cap H_q$ is a linear space of dimension $n-2$, the condition is that a positive volume portion of this linear space meets $\Tits(W)$.

\begin{proof}
Suppose that $H_p \cap H_q$ meets $\Tits(W)$ in the required manner. 
The intersection $H_p \cap H_q \cap \Tits(W)$ is a union of faces of the Coxeter fan, so there must be some face $\gamma$ of the Coxeter fan, of dimension $n-2$, with $\gamma \subseteq H_p \cap H_q$.
Let $w$ be such that $\gamma$ lies in the boundary of $wD$. 
Then $w^{-1}(\gamma)$ is a face of $D$ of dimension $n-2$, and must be of the form $D \cap \alpha_r^{\perp} \cap \alpha_s^{\perp}$ for some simple reflections $r$ and $s$.
The reflections $w^{-1}pw$ and $w^{-1}qw$ fix $D \cap \alpha_r^{\perp} \cap \alpha_s^{\perp}$, so $w^{-1}pw$ and $w^{-1}qw$ lie in $W_{\{ r,s \}}$ by \cite[Lemma 4.5.1]{Bj-Br}.
Thus $p$ and $q$ lie in $w W_{\{ r,s \}} w^{-1}$, so the parabolic subgroup $w W_{\{ r,s \}} w^{-1}$ is the generalized rank two parabolic subgroup containing $p$ and $q$.

The proof of the converse is similar, and easier.
\end{proof}

\begin{example}\label{not para}
Proposition~\ref{para geom} says in particular that to find a generalized parabolic subgroup which is not parabolic, one must find hyperplanes that meet outside the Tits cone. 
A simple example is to take $W$ to be $\tilde{A}_2$, the rank three Coxeter group with $m(r,s)=m(r,t)=m(s,t)=3$.
The Tits cone for $\tilde{A}_2$ is the union of an open half space with the singleton $\set{0}$.
The reflecting planes $H_{rsr}$ and $H_t$ meet in a line $L$ on the boundary of the Tits cone.
There are infinitely many reflections of $W$ whose reflecting planes contain $L$, so the generalized rank two parabolic subgroup containing $rsr$ and $t$ is infinite.
However, all rank two parabolic subgroups of $\tilde{A}_2$ are finite.
\end{example}

\begin{prop} \label{RankTwoIsOK}
Let $W'$ be a generalized rank two parabolic subgroup of $W$. 
Then $W'$ has exactly two canonical generators $\set{r_1,r_2}$.
Furthermore, $W' \cap T=\{ r_1, r_1 r_2 r_1, r_1 r_2 r_1 r_2 r_1, \ldots, r_2 r_1 r_2 r_1 r_2, r_2 r_1r_2, r_2 \}$.
\end{prop}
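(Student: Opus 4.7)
The plan is to reduce the proposition to Theorem~\ref{deodyer} plus the standard enumeration of reflections in a dihedral Coxeter group. By definition $W'$ is generated by reflections, so Theorem~\ref{deodyer} produces a canonical generating set $R \subseteq W' \cap T$ and asserts that $(W', R)$ is a Coxeter system whose reflections are $W' \cap T$. All of the work therefore is in showing $|R|=2$; once that is established, the description of $W' \cap T$ is immediate from the classification of rank two Coxeter systems.

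To show $|R|=2$, first note $|R|\ge 2$: by hypothesis $W'$ contains two distinct reflections, and a group generated by a single reflection contains only one. The heart of the argument is the upper bound $|R|\le 2$. The key observation is that all positive roots lie in a common open half-space of $V$, namely $\{v \in V : \lambda(v)>0\}$ for any linear functional $\lambda$ that is strictly positive on the simple roots $\Pi$. Intersecting this half-space with the two-dimensional subspace $U$ spanned by $\beta_{t_1},\beta_{t_2}$, we see that the positive roots of reflections in $W'$ all lie in an open half-plane of $U$. In such a half-plane, any set of three or more vectors has the property that at least one lies in the positive linear span of two others (a convex-geometric fact for $\mathbb{R}^2$: parametrize the half-plane by angle and observe that an interior vector is a positive combination of vectors on either side of it). Applying this with three distinct elements of $R$ would contradict condition~(ii) of Theorem~\ref{deodyer}, so $|R|\le 2$.

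With $R=\{r_1,r_2\}$ in hand, Theorem~\ref{deodyer} tells us that $(W',\{r_1,r_2\})$ is a rank two Coxeter system and that $W'\cap T$ equals its reflection set. The reflections in a dihedral Coxeter system $\langle r_1,r_2 \mid r_1^2=r_2^2=(r_1r_2)^m=e\rangle$ (with $m\in\{2,3,\ldots,\infty\}$) are precisely the elements of odd length in the generators, and these are enumerated as the alternating products $r_1,\ r_1 r_2 r_1,\ r_1 r_2 r_1 r_2 r_1,\ldots$ starting with $r_1$, together with $r_2,\ r_2 r_1 r_2,\ldots$ starting with $r_2$. (When $m<\infty$ and $m$ is odd, the two longest words agree; when $m$ is even they are distinct; when $m=\infty$ all listed elements are distinct.) This matches the list in the statement.

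The main obstacle is the upper bound $|R|\le 2$, since condition~(ii) of Theorem~\ref{deodyer} must rule out arbitrarily large canonical generating sets even when $W'$ is infinite (as in Example~\ref{not para}). The resolution is the half-plane observation above, which is a purely two-dimensional convex-geometric fact and so is insensitive to whether the positive roots in $U$ accumulate or whether $W'$ is actually parabolic.
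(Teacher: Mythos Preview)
Your proof is correct and follows essentially the same approach as the paper's. Both invoke Theorem~\ref{deodyer}, both dispose of $|R|\ge 2$ quickly, and both prove $|R|\le 2$ via the same convex-geometric observation: three positive roots lying in the two-dimensional subspace $U$ must have one in the positive span of the other two, contradicting condition~(ii). The paper phrases this as ``there is a linear relation $a\beta_p+b\beta_q+c\beta_r=0$ whose coefficients cannot all have the same sign,'' which is exactly your half-plane argument recast in coordinates.
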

%In particular the set $\Phi'$ of roots contained in $U$ is a root system for $W'$ with positive roots $\Phi'\cap\Phi_+$ and simple roots $\set{\beta_{r_1},\beta_{r_2}}$.

\begin{proof}
Let $R$ be the set of canonical generators of $W'$.
Since $\set{\beta_t:t\in W'\cap T}$ contains at least two independent vectors, $R$ has at least two elements.
We claim that $R$ has exactly two elements. 
If $p$, $q$ and $r$ are distinct elements of $R$ then $\beta_p$, $\beta_q$ and $\beta_r$ all lie in the two dimensional space spanned by $\beta_{t_1}$ and $\beta_{t_2}$.
So there is a linear relation $a \beta_p+b \beta_q + c \beta_r=0$. 
Since $\beta_p$, $\beta_q$ and $\beta_r$ are all positive roots, $a$, $b$ and $c$ can't all have the same sign; assume without loss of generality that $a<0$ and \mbox{$b,c \geq 0$}. 
Then $\beta_p$ is in the positive span of $\beta_q$ and $\beta_r$, contradicting property (ii) of the set $R$. 

Now, Theorem~\ref{deodyer} asserts that $W'$ is a Coxeter group with simple generators $R$ and that $W' \cap T$ is the set of reflections of $W'$ in this presentation. 
This fact implies the last assertion.
\end{proof}

\begin{remark} One corollary of Proposition~\ref{RankTwoIsOK} is that a subgroup $W'$ of $W$ is a generalized parabolic of rank two if and only if $W'$ is generated by two reflections and is not contained in any larger subgroup which is generated by a pair of reflections. This characterization shows that the property of being a generalized parabolic subgroup of rank two depends only on $W$, not on the choice of root system for $W$.
\end{remark}

\begin{remark} \label{gen high rank}
We have deliberately avoided defining generalized parabolic subgroups of rank larger than two.
One could take any linear space $L \subset V$ and take the subgroup $W'$ of $W$ generated by the reflections for the roots lying in $L$.
Then Theorem~\ref{deodyer} states that $(W',R)$ is a Coxeter system, where $R$ is the set of canonical generators of $W'$.
Surprisingly, $(W,R)$ may have rank greater than $\dim L$.
(We thank Matthew Dyer for pointing this out to us.)
For example, let $W=\tilde{A}_3$, the rank four Coxeter group with $m(r,s)=m(s,t)=m(t,u)=m(u,r)=3$ and $m(r,t)=m(s,u)=2$.
Take $L = \{ a \alpha_r + b \alpha_s + c \alpha_t + d \alpha_u : a+c=b+d \}$.
One can show that the group $W'$ generated by reflections in the roots contained in $L$ has canonical generators $\set{rsr, sts, tut, uru}$.
This is the Coxeter group of type $\tilde{A}_1 \times \tilde{A}_1$ and rank four.
The lack of control over rank forms an obstacle whenever we want to induct on rank, and we have chosen to avoid the matter altogether in this paper.
\end{remark}

We now mention several easy lemmas about parabolic subgroups.
(Cf. \cite[Lemma~1.2]{sortable}, \cite[Lemma~1.4]{sortable}, \cite[Lemma~1.5]{sortable} and \cite[Lemma~6.6]{congruence}.
An inconsequential misstatement in \cite[Lemma~1.5]{sortable} is corrected in Lemma~\ref{int} below.)
In the proofs that follow, $\Phi'$ will denote the root system associated to $W'$. 

\begin{lemma}\label{s in}
Let~$W'$ be a parabolic subgroup (of any rank) or generalized rank-two parabolic subgroup with canonical generators $R$.
If $s\in S\cap W'$ then $s\in R$. 
\end{lemma}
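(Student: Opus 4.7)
The plan is to use Theorem~\ref{deodyer}(i) to expand $\beta_s=\alpha_s$ as a nonnegative combination of the roots $\beta_r$ for $r\in R$, then exploit the linear independence of the simple roots to force $s$ to appear in this expansion.

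First I would note that $s\in S\cap W'\subseteq W'\cap T$, so by property~(i) of Theorem~\ref{deodyer} there exist constants $c_r\ge 0$, for $r\in R$, with
\[\alpha_s \;=\;\beta_s \;=\;\sum_{r\in R} c_r\, \beta_r.\]
Each $\beta_r$ is a positive root, so it has a unique expansion $\beta_r=\sum_{s'\in S} d_{r,s'}\,\alpha_{s'}$ with all $d_{r,s'}\ge 0$. Substituting and comparing coefficients on the linearly independent basis $\{\alpha_{s'}:s'\in S\}$, I get $\sum_{r\in R} c_r d_{r,s'}=0$ for every $s'\ne s$, and $\sum_{r\in R} c_r d_{r,s}=1$.

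Nonnegativity forces $c_r d_{r,s'}=0$ for all $r$ and all $s'\ne s$. Hence whenever $c_r>0$, the positive root $\beta_r$ is a nonnegative multiple of $\alpha_s$ alone; being a root, it must equal $\alpha_s$, so $r=s$. Since the coefficient sum for $\alpha_s$ is $1\ne 0$, at least one such $r$ exists, and that $r$ is $s$. Therefore $s\in R$.

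There is no real obstacle: the only facts used are Theorem~\ref{deodyer}(i), the linear independence of the simple roots, and the fact that $\alpha_s$ is the unique positive root proportional to itself. The same argument applies without change when $W'$ is a generalized rank-two parabolic subgroup, since Theorem~\ref{deodyer}(i) was invoked for it as well (in the proof of Proposition~\ref{RankTwoIsOK}).
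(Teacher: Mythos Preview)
Your proof is correct and is essentially the same argument as the paper's, just unpacked in coordinates: the paper observes in one line that $\alpha_s$ spans an extreme ray of the cone $\PosSpan\Phi_+$ and hence of the subcone $\PosSpan\Phi'_+$, which forces $\alpha_s=\beta_r$ for some $r\in R$. Your coefficient comparison is exactly what it means for $\alpha_s$ to be extreme, so the two approaches coincide.
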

\begin{proof}
The root $\alpha_s$ spans an extreme ray in the positive span of $\Phi_+$ and therefore must span an extreme ray in the positive span of $\Phi'_+$.
\end{proof}

\begin{lemma}
\label{canon conj}
Let~$W'$ be a parabolic subgroup (of any rank) or generalized rank-two parabolic subgroup with canonical generators $R$ and let $s\in S$.
If $s\not\in R$ then the parabolic subgroup $sW's$ has canonical generators $sRs=\set{ss's:s'\in R}$.
\end{lemma}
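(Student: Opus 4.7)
The plan is to apply Theorem~\ref{deodyer} directly, verifying that $sRs$ satisfies conditions~(i) and~(ii) for the subgroup $sW's$, and then appealing to uniqueness. The key preliminary observation is that $s\notin W'$: indeed, if $s$ were in $W'$, then by Lemma~\ref{s in} we would have $s\in R$, contradicting the hypothesis. Consequently, every reflection $t\in W'\cap T$ satisfies $t\neq s$, so $\beta_t\neq\alpha_s$, and therefore $s\beta_t$ is again a positive root, namely $\beta_{sts}$.

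Next I would note that $sW's$ is of the same ``kind'' as $W'$ (parabolic conjugates to parabolic, and the generalized rank-two case is obvious since its defining subspace simply becomes $sU$), so Theorem~\ref{deodyer} is applicable and furnishes a unique set of canonical generators. The reflections of $sW's$ are exactly $\{sts:t\in W'\cap T\}$, and the corresponding positive roots are $\{s\beta_t:t\in W'\cap T\}$. Since $s$ acts linearly and sends each of these roots to another positive root, any nonnegative linear relation among $\{\beta_t:t\in W'\cap T\}$ pushes forward via $s$ to a nonnegative linear relation among $\{\beta_{sts}:t\in W'\cap T\}$, and the inverse pulls back via $s$ (using $s^2=e$).

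Using this correspondence, condition~(i) for $R$ in $W'$ yields condition~(i) for $sRs$ in $sW's$: given $sts\in sW's\cap T$, write $\beta_t=\sum_{r\in R}c_r\beta_r$ with $c_r\ge 0$ and apply $s$ to obtain $\beta_{sts}=\sum_{r\in R}c_r\beta_{srs}$. For condition~(ii), suppose for contradiction that for some $r\in R$ one had
\[\beta_{srs}=\sum_{u\in(W'\cap T)\setminus\{r\}}c_u\,\beta_{sus}\]
with $c_u\ge 0$; applying $s$ to both sides gives $\beta_r=\sum_u c_u\beta_u$, contradicting condition~(ii) for $R$ in $W'$.

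Having verified both conditions, the uniqueness clause of Theorem~\ref{deodyer} identifies $sRs$ as the canonical generators of $sW's$. I do not anticipate a significant obstacle: the proof is essentially a transport of structure along $s$, and the only subtle point is ruling out $s\in W'$ so that $s$ preserves positivity on all relevant roots. This is exactly what Lemma~\ref{s in} supplies.
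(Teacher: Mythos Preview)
Your proof is correct and follows essentially the same approach as the paper's. The paper's proof is a two-sentence version of your argument: it notes that $s$ permutes $\Phi_+\setminus\{\alpha_s\}$, so (implicitly using Lemma~\ref{s in} to ensure $\alpha_s\notin\Phi'_+$) the positive roots of $sW's$ are exactly $s(\Phi'_+)$, from which the claim about canonical generators is immediate. You have spelled out the transport of conditions~(i) and~(ii) of Theorem~\ref{deodyer} along $s$ explicitly, which is the same content in more detail.
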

\begin{proof}
The action of $s$ permutes $\Phi_+\setminus\set{\alpha_s}$.
Therefore the positive roots associated to $sW's$ are $s\left(\Phi'_+\right)$.
\end{proof}

\begin{lemma}\label{int}
For any standard parabolic subgroup~$W_J$ and any generalized rank two parabolic subgroup~$W'$, the intersection $W'\cap W_J$ is either $\set{e}$, a pair $\set{e,p}$ where $p$ is a canonical generator of $W'$, or all of~$W'$.
\end{lemma}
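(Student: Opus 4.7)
The plan is to analyze $H = W' \cap W_J$ by first determining its reflections and then using the length function on $W'$ to control its arbitrary elements. Let $R_H = H \cap T = (W' \cap T) \cap T_J$ denote the set of reflections in $H$.

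First I would establish the following dichotomy: either both $r_1$ and $r_2$ lie in $W_J$, or $R_H \subseteq \{r_1, r_2\}$. A reflection $t$ lies in $T_J$ iff $\beta_t \in V_J$, so I must control which positive roots of $W'$ lie in $V_J$. For $t \in W' \cap T$, Proposition~\ref{RankTwoIsOK} yields $\beta_t = a\beta_{r_1} + b\beta_{r_2}$ with $a, b \geq 0$. Expanding $\beta_{r_1}$ and $\beta_{r_2}$ as non-negative linear combinations of the simple roots $\alpha_s$ of $W$ and using linear independence of the $\alpha_s$, I would argue that if $a, b > 0$ and $\beta_t \in V_J$ then both $\beta_{r_1}$ and $\beta_{r_2}$ lie in $V_J$, whence $r_1, r_2 \in W_J$. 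If instead $a = 0$ or $b = 0$, then $\beta_t$ is a positive scalar multiple of $\beta_{r_i}$ for some $i$, and since the map $t \mapsto \beta_t$ is a bijection between reflections and positive roots, this forces $t = r_i$.

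If both $r_1, r_2 \in W_J$ then $W' = \langle r_1, r_2 \rangle \subseteq W_J$ and $H = W'$. Otherwise $R_H$ is contained in the set of those among $\{r_1, r_2\}$ that lie in $W_J$, so $|R_H| \leq 1$. To control arbitrary $w \in H$, I would combine the inclusion $\inv_W(w) \subseteq T_J$ (which holds since $w \in W_J$) with the identity $\inv_{W'}(w) = \inv_W(w) \cap W'$ to deduce $\inv_{W'}(w) \subseteq R_H$. Then $\ell_{W'}(w) = |\inv_{W'}(w)| \leq |R_H| \leq 1$, so $w$ is either $e$ or the unique reflection in $R_H$. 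This yields $H = \{e\}$ when $R_H = \emptyset$, and $H = \{e, p\}$ with $p \in \{r_1, r_2\}$ when $|R_H| = 1$.

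The main technical point requiring care is the identity $\inv_{W'}(w) = \inv_W(w) \cap W'$ for $w \in W'$. I would verify it using the root characterization of inversions: $t \in \inv_W(w)$ iff $w^{-1}\beta_t \in -\Phi_+$, and similarly for $W'$. Since the definition of a generalized rank-two parabolic subgroup makes the positive roots of $W'$ coincide with $\Phi_+ \cap \Span(\beta_{r_1}, \beta_{r_2})$, any root of $W'$ is negative in $W$ iff it is negative in $W'$, which yields the identity.
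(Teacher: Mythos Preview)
Your proof is correct and follows essentially the same approach as the paper's: both analyze which roots of $W'$ lie in $V_J$ via positivity of simple-root coefficients, and both (the paper implicitly, you explicitly) use the identity $\inv_{W'}(w)=\inv_W(w)\cap W'$ together with $\ell_{W'}(w)=|\inv_{W'}(w)|$ to bound arbitrary elements of $W'\cap W_J$. The paper organizes the case analysis slightly differently---splitting on whether $\Phi'\cap\Phi_J$ is empty, all of $\Phi'$, or spans a line, and in the last case proving by contradiction that the single reflection is canonical---but the underlying argument is the same.
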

\begin{proof}
If $\Phi'\cap\Phi_J=\emptyset$ then $W'\cap W_J=\set{e}$.
If $\Phi'\subseteq\Phi_J$ then $W'\cap W_J=W'$.
Otherwise, since both $\Phi'$ and $\Phi_J$ can be described as the intersection of $\Phi$ with a subspace, $\Phi'\cap\Phi_J$ must span a line.
In this case, $W'\cap W_J=\set{e,t}$ for some reflection $t\in W'\cap T$.
It remains to show that $t$ is a canonical generator of $W'$.

Supposing to the contrary that $t$ is not a canonical generator, the positive root $\beta_t$ is in the positive linear span of $\set{\beta_{r_1},\beta_{r_2}}$, where $r_1$ and $r_2$ are the canonical generators of $W'$.
However, $\beta_t$ is also in the positive linear span of $\set{\alpha_s:s\in J}$.
Since $\beta_{r_1}$ and $\beta_{r_2}$ are not in $\Phi_J$, they are not in the linear span of $\set{\alpha_s:s\in J}$.
This contradicts the fact that $\beta_t$ has a unique expression as a positive combination of simple roots.
\end{proof}

Let $W'$ be a generalized rank two parabolic subgroup of~$W$ with canonical generators $r_1$ and $r_2$. 
We index the reflections of $W'$ as 
\[u_1=r_1,\,\, u_2=r_1r_2r_1,\,\, u_3=r_1r_2r_1r_2r_1,\,\,\ldots\,\,, u_{m-1}=r_2r_1r_2,\,\, u_m=r_2,\]
where $m=m(r_1,r_2)$ is the order of $r_1r_2$.
If $m$ is finite, then the index set is the integers from 1 to $m$.
If $m=\infty$ then the index set is the union of the positive integers with the set of formal expressions $\infty-k$, where $k$ is a nonnegative integer.
The index set is totally ordered in the obvious way, with the convention that $i<\infty-k$ for any positive integer $i$ and nonnegative integer $k$.
The following lemma is the second Proposition of~\cite{Pilk}, restated in our language: 
\begin{lemma} \label{PilkLemma}
 Let $I$ be a finite subset of $T$. Then the following are equivalent:
\begin{enumerate}
 \item[(i)] There is an element $w$ of $W$ such that $I=\inv(w)$.
\item[(ii)] For every generalized parabolic subgroup $W'$ of rank two, with canonical generators $r_1$ and $r_2$, the intersection $I \cap W'$ is either an initial or a final segment of $u_1, u_2, \ldots, u_m$.
\end{enumerate}
\end{lemma}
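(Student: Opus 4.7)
The plan is to prove the two implications separately; (ii) $\Rightarrow$ (i) is by induction on $|I|$, with the main technical sub-claim that every nonempty $I$ satisfying (ii) meets $S$.

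\emph{Direction (i) $\Rightarrow$ (ii).} Fix $w \in W$ and let $W'$ be a generalized rank two parabolic with canonical generators $r_1, r_2$ and reflection sequence $u_1, \ldots, u_m$. By Theorem~\ref{deodyer}, the positive roots $\beta_{u_1}, \ldots, \beta_{u_m}$ all lie in the closed two-dimensional convex cone $\Sigma := \reals_{\ge 0}\beta_{r_1} + \reals_{\ge 0}\beta_{r_2}$; a direct dihedral computation shows they appear in angular order in $\Sigma$ as $i$ ranges from $1$ to $m$, with $\beta_{u_1} = \beta_{r_1}$ and $\beta_{u_m} = \beta_{r_2}$ on the extreme rays. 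Now $u_i \in \inv(w)$ iff $w^{-1}\beta_{u_i}$ is a negative root, a linear half-space condition on $V$. Intersecting that half-space with $\Sigma$ yields a sub-cone of $\Sigma$ bounded by a single line through the origin, so the $i$ with $u_i \in \inv(w)$ pick out a consecutive angular interval of $\Sigma$ which, being nonempty, abuts one of the extreme rays. Hence $\inv(w) \cap W'$ is an initial or final segment of $u_1, \ldots, u_m$.

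\emph{Direction (ii) $\Rightarrow$ (i).} Induct on $|I|$; for $I = \emptyset$ take $w = e$. For the inductive step, I would (a) find $s \in S \cap I$, (b) set $I' := \{sts : t \in I \setminus \{s\}\}$, (c) verify $I'$ still satisfies (ii), and (d) apply induction to $I'$ to obtain $w'$ with $\inv(w') = I'$ and then conclude $\inv(sw') = I$. Step (d) is routine: $s \notin I'$, so $\ell(sw') > \ell(w')$, which gives $\inv(sw') = \{s\} \cup s\inv(w')s = \{s\} \cup (I \setminus \{s\}) = I$. Step (c) splits by cases on each generalized rank two parabolic $\tilde W$. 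If $s \in \tilde W$, Lemma~\ref{s in} forces $s$ to be a canonical generator of $\tilde W$, so $s \in \{u_1, u_m\}$; the dihedral identities $u_1 u_i u_1 = u_{m+2-i}$ (for $i \ge 2$, with $u_1$ fixed) and its analogue for $u_m$ then show that removing $s$ from an initial segment of $I \cap \tilde W$ and conjugating by $s$ produces a final segment of $I' \cap \tilde W$, and vice versa. If $s \notin \tilde W$, then Lemma~\ref{canon conj} gives an order-preserving identification of the reflection sequences of $s \tilde W s$ and $\tilde W$ under $s$-conjugation, and initial/final segments of $I \cap s\tilde W s$ transport directly to initial/final segments of $I' \cap \tilde W$.

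The main obstacle is step (a): showing every nonempty $I$ satisfying (ii) contains a simple reflection. The plan is to pick $t_0 \in I$ minimizing the height $\mathrm{ht}(\beta_{t_0}) := \sum_s c_s$ (writing $\beta_{t_0} = \sum c_s \alpha_s$) and argue $t_0 \in S$ by contradiction. Assume $t_0 \notin S$; since $\beta_{t_0}$ is a non-simple positive root, there exists $s \in S$ with $\langle \beta_{t_0}, \alpha_s\ck\rangle > 0$, and then $s\beta_{t_0} = \beta_{st_0s}$ is another positive root of strictly smaller height. Minimality of $t_0$ then forces both $s \notin I$ (else $\mathrm{ht}(\alpha_s) = 1 < \mathrm{ht}(\beta_{t_0})$) and $st_0 s \notin I$. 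The identity $\beta_{t_0} = \langle \beta_{t_0}, \alpha_s\ck\rangle \alpha_s + s\beta_{t_0}$ exhibits $\beta_{t_0}$ as a positive combination of two distinct positive roots, so $\beta_{t_0}$ is interior to the positive-root cone in $U := \Span(\alpha_s, \beta_{t_0})$; hence in the generalized rank two parabolic $W'$ generated by $s$ and $t_0$, the reflection $t_0$ appears as $u_i$ with $1 < i < m$. Applying (ii) to $W'$ with $s = u_1 \notin I$ forces $I \cap W' = \{u_i, u_{i+1}, \ldots, u_m\}$; in particular, by the dihedral identity $st_0s = u_{m+2-i}$, whenever $i \le (m+2)/2$ the element $st_0s$ lies in this final segment, giving $st_0s \in I$ and contradicting minimality. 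The remaining case $i > (m+2)/2$ (and the case $m = \infty$) requires applying the same analysis with the role of $t_0$ now played by the canonical generator $r = u_m \in I$; organizing the resulting descent argument into a finite contradiction is the delicate combinatorial heart of Pilkington's original proof~\cite{Pilk}.
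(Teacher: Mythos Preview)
The paper does not prove this lemma; it simply records it as ``the second Proposition of~\cite{Pilk}, restated in our language.'' So there is no paper-proof to compare against, and your attempt stands on its own.

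Your argument for (i) $\Rightarrow$ (ii) is correct, though the phrase ``a linear half-space condition on $V$'' deserves one clarifying sentence: pick any $\phi$ in the interior of $D\subset V^*$; then a root $\beta$ is positive iff $\br{\phi,\beta}>0$, so $u_i\in\inv(w)$ iff $\br{w\phi,\beta_{u_i}}<0$, which is indeed a linear half-space condition on $\beta_{u_i}\in V$. With that, your angular-order argument in the two-dimensional cone $\Sigma$ goes through cleanly.

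For (ii) $\Rightarrow$ (i), your inductive scaffold (steps (b)--(d)) is sound, and the case analysis in step (c) using Lemmas~\ref{s in} and~\ref{canon conj} is correct. The genuine gap is exactly where you flag it: step (a). Your height-minimization only produces a contradiction when $i\le(m+2)/2$; for the complementary range, and for $m=\infty$, you explicitly defer to ``the delicate combinatorial heart of Pilkington's original proof'' without supplying it. The suggested patch---pass from $t_0$ to the canonical generator $u_m\in I$ and repeat---does not close the loop as stated, because $u_m$ need not minimize height in $I$, so you lose the inequalities $s'\notin I$ and $s'u_ms'\notin I$ that drove the first half of the argument. As written, (ii) $\Rightarrow$ (i) is therefore incomplete; what remains is precisely the substantive content of Pilkington's proof (or an equivalent, such as Dyer's characterization of finite biconvex subsets of $T$ as inversion sets).
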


\subsection{Weak order} \label{weak lattice sec}
The \newword{(right) weak order} on~$W$ is the transitive closure of the relations $w < ws$ for all $w\in W$ and $s\in S$ such that $\ell(w)<\ell(ws)$. 
The weak order is ranked by the length function $\ell$ so that in particular these defining relations are cover relations. 
Order relations can also be described directly:  $v\le w$ if and only if some (or equivalently every) reduced word for $v$ occurs as a prefix of some reduced word for~$w$.
There is also a left weak order, with cover relations relating~$w$ and~$sw$.
However, in this paper, all references to the weak order refer to the right weak order.
Furthermore, the symbol ``$\le$'' as applied to elements of~$W$ always refers to the weak order.
The weak order has $w\le w'$ if and only if $\inv(w)\subseteq\inv(w')$.
Thus in particular $x\covered y$ implies that $x_{\br{s}}$ is covered by or equal to $y_{\br{s}}$. 

Recall that when $\ell(sw)<\ell(w)$, we have $\inv(w)=\set{s}\cup\set{sts:t\in\inv(sw)}$.
As immediate consequence, we obtain a simple but useful fact about the weak order.
For $s\in S$, let $W_{\ge s}$ be the set of elements of~$W$ which are above $s$ in the weak order to and let $W_{\not\ge s}$ be $W\setminus W_{\ge s}$.

\begin{prop} \label{AboveBelow}
Let $w \in W$ and $s \in S$. 
Then $\ell(sw)<\ell(w)$ if and only if $w\ge s$ if and only if $s\in\inv(w)$. 
Left multiplication by~$s$ is an isomorphism of posets from $W_{\not \geq s}$ to $W_{\geq s}$. 
If $w' \gtrdot w$, $w' \geq s$ and $w \not \geq s$ then $w'=sw$.
\end{prop}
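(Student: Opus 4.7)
The plan is to reduce all three assertions to the two facts recalled in the subsection just above the statement: the weak order is determined by inversion sets ($v\le w \iff \inv(v)\subseteq\inv(w)$), and when $\ell(sw)<\ell(w)$ one has $\inv(w)=\{s\}\cup s\,\inv(sw)\,s$.

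For the chain of equivalences, $s\in\inv(w)\iff\ell(sw)<\ell(w)$ is literally the definition of ``inversion'' applied to the reflection $s$. For $w\ge s\iff s\in\inv(w)$, I would use $\inv(s)=\{s\}$: one direction is immediate from the inversion-set characterization of $\le$, and the other direction is $\{s\}=\inv(s)\subseteq\inv(w)$.

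For the second assertion, I would define $\phi\colon W_{\not\ge s}\to W_{\ge s}$ by $\phi(w)=sw$. If $w\not\ge s$ then $s\notin\inv(w)$, which by the chain just proved is equivalent to $\ell(sw)>\ell(w)$, and then $s\in\inv(sw)$ shows $sw\ge s$, so $\phi$ is well-defined; the map $v\mapsto sv$ on $W_{\ge s}$ is its two-sided inverse by the same argument. For the order isomorphism, the hypothesis $\ell(sw)>\ell(w)$ (so that $\ell(s(sw))<\ell(sw)$) lets me apply the recalled inversion-set formula with $sw$ in place of $w$, giving $\inv(sw)=\{s\}\cup s\,\inv(w)\,s$ for every $w\in W_{\not\ge s}$. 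Since conjugation by $s$ is a bijection on $T$, the containment $\inv(w_1)\subseteq\inv(w_2)$ is equivalent to $\inv(sw_1)\subseteq\inv(sw_2)$; the inversion-set criterion then yields $w_1\le w_2\iff \phi(w_1)\le\phi(w_2)$.

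For the third assertion, I would use that a cover $w\lessdot w'$ in the right weak order has the form $w'=wr$ for some $r\in S$ with $\ell(w')=\ell(w)+1$, and that the single new inversion is $wrw^{-1}$ (take any reduced word $a_1\cdots a_k$ for $w$ and apply the reflection-sequence formula to the reduced word $a_1\cdots a_k r$ for $w'$). By the equivalences in the first assertion the hypotheses force $s\in\inv(w')\setminus\inv(w)$, hence $wrw^{-1}=s$, i.e.\ $wr=sw$, so $w'=sw$. I do not anticipate a genuine obstacle: the only mildly subtle point is being careful that the ``new inversion'' in a right-weak-order cover is $wrw^{-1}$ rather than $r$, which is exactly what forces the identification $w'=sw$ in part~(3).
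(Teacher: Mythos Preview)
Your proposal is correct and follows exactly the approach the paper intends: the paper does not spell out a proof but simply declares the proposition an ``immediate consequence'' of the formula $\inv(w)=\{s\}\cup\{sts:t\in\inv(sw)\}$ when $\ell(sw)<\ell(w)$, together with the inversion-set characterization of weak order, and you have carefully filled in precisely those details. The only minor point you might make explicit is that in part~(2), when passing from $\{s\}\cup s\,\inv(w_1)\,s\subseteq\{s\}\cup s\,\inv(w_2)\,s$ back to $s\,\inv(w_1)\,s\subseteq s\,\inv(w_2)\,s$, you are using $s\notin s\,\inv(w_1)\,s$ (which holds since $s\notin\inv(w_1)$); this is implicit in your argument but worth a word.
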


The following anti-isomorphism result for intervals will also be useful.
\begin{prop}\label{anti}
For any $w\in W$, the interval $[e,w]$ in the weak order is anti-isomorphic to the interval $[e,w^{-1}]$, via the map $x\mapsto w^{-1}x$.
\end{prop}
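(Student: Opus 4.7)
The plan is to exploit the standard equivalent formulation of the right weak order: $x \le w$ if and only if there is a factorization $w = x \cdot y$ with $\ell(w) = \ell(x) + \ell(y)$, in which case $y = x^{-1}w$. Everything will follow from bookkeeping with such length-additive factorizations.

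First I would verify that the map $\phi : x \mapsto w^{-1}x$ actually lands in $[e, w^{-1}]$. Given $x \in [e,w]$, fix the factorization $w = x y$ with $\ell(w) = \ell(x) + \ell(y)$, so $\phi(x) = w^{-1}x = y^{-1}$. From $w^{-1} = y^{-1} x^{-1}$, taking lengths gives $\ell(w^{-1}) = \ell(y^{-1}) + \ell(x^{-1})$, which by the same equivalence shows $y^{-1} \le w^{-1}$, as needed.

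Next I would check that $\phi$ is order-reversing. Suppose $x \le x' \le w$; chain the two length-additive factorizations, writing $x' = x u$ and $w = x' v$ with $\ell(x') = \ell(x) + \ell(u)$ and $\ell(w) = \ell(x') + \ell(v)$, so $w = x u v$ and $\ell(w) = \ell(x) + \ell(u) + \ell(v)$. Then directly $\phi(x) = w^{-1}x = v^{-1} u^{-1}$ and $\phi(x') = w^{-1}x' = v^{-1}$, and since $v^{-1} u^{-1}$ is a length-additive product, $\phi(x') = v^{-1} \le v^{-1}u^{-1} = \phi(x)$.

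Finally I would establish bijectivity by constructing the inverse. Applying the exact same construction to $w^{-1}$ (using $(w^{-1})^{-1} = w$) gives an order-reversing map $\psi : [e, w^{-1}] \to [e,w]$ defined by $z \mapsto w z$. The compositions $\phi\circ\psi$ and $\psi\circ\phi$ are both the identity because left multiplication by $w^{-1}$ and $w$ are mutually inverse on $W$, so $\phi$ is the claimed anti-isomorphism. There is no real obstacle in this argument: the only point requiring care is that the length-additive decomposition of $w$ through $x$ also yields a length-additive decomposition of $w^{-1}$ through $y^{-1}$, and similarly when refining through two intermediate elements $x \le x'$; this is immediate from taking inverses of reduced words.
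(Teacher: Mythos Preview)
Your proof is correct and takes essentially the same approach as the paper. The paper phrases everything in terms of explicit reduced-word prefixes (if $a_1\cdots a_k$ is reduced for $w$ with $a_1\cdots a_i$ reduced for $x$ and $a_1\cdots a_j$ reduced for $y$, then $a_k\cdots a_{j+1}$ is a prefix of $a_k\cdots a_{i+1}$, etc.), whereas you phrase the same content abstractly via length-additive factorizations; the two formulations are equivalent, and both conclude by swapping $w$ with $w^{-1}$ to obtain the inverse map.
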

\begin{proof}
Suppose $x\le y\le w$.
Then there exists a reduced word $a_1\cdots a_k$ for~$w$  and $i$ and~$j$ with $i\le j\le k$ such that $a_1\cdots a_i$ is a reduced word for $x$ and $a_1\cdots a_j$ is a reduced word for $y$.
Then $a_ka_{k-1}\cdots a_{j+1}$ is a reduced word for $w^{-1}y$, occurring as a prefix of the reduced word $a_ka_{k-1}\cdots a_{i+1}$ for $w^{-1}x$, which in turn is a prefix of the reduced word $a_ka_{k-1}\cdots a_1$ for $w^{-1}$.
In particular the map $x\mapsto w^{-1}x$ is a well-defined order-reversing map from $[e,w]$ to $[e,w^{-1}]$.
By swapping~$w$ with $w^{-1}$, we see that the inverse map $x\mapsto wx$ is also order-reversing. \end{proof}

The weak order is finitary, meaning that for each $w\in W$, the set of elements below~$w$ is finite.
Weak order on~$W$ is also a complete meet-semilattice with a unique minimal element~$e$, the identity element of~$W$.
(Recall that a meet-semilattice is complete if the meet is defined for any nonempty set of elements, even an infinite set.)
In particular, any set of elements of~$W$ has at most one minimal upper bound (the meet of all upper bounds), and thus any interval in the weak order is a finite lattice.
The symbol ``$\meet$'' will refer to the meet in the weak order and the symbol ``$\join$'' will refer to the join, when the join exists.
An element~$j$ in a meet-semilattice is called \newword{join-irreducible} if any finite set $X$ with $j=\Join X$ has $j\in X$.
This is equivalent (in a finitary meet-semilattice $L$) to the requirement that~$j$ covers exactly one element of $L$.
(Note that the word ``exactly'' cannot be replaced by ``at most one'' here:
The only element of $L$ that covers no other element of $L$ is the minimal element $\0$, which is $\Join\emptyset$.)
Thus in the case of the weak order on~$W,$ join-irreducible elements are those which have a unique cover reflection.

When $W$ is finite, the weak order on $W$ is a lattice.
In particular, $W$ has a unique maximal element $w_0$.
This maximal element is an involution.
Conjugation by $w_0$ permutes $S$.

We now describe the interaction of the weak order with the map $w\mapsto w_J$.
It is immediate that this map is order preserving. 
In fact, much more is true.
For any $A\subseteq W$ and $J\subseteq S$, let $A_J=\set{w_J:w\in A}$. 
The following result is proven by Jedli\v{c}ka in~\cite{Jed}. 
(Jedli\v{c}ka denotes the map $w \mapsto w_J$ by $\alpha_J$. 
The fact that $\alpha_J$ commutes with meet is \cite[Lemma 4.2.iii]{Jed} and the fact that it commutes with join is \cite[Lemma 4.5]{Jed}.)

\begin{prop}\label{para hom}
For any $J\subseteq S$ and any subset $A$ of $W$, if $A$ is nonempty then $\Meet (A_J)=\left( \Meet A \right)_J$ and, if $A$ has an upper bound, then $\Join (A_J)=\left( \Join A \right)_J$.
\end{prop}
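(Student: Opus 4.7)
The plan is to work entirely with inversion sets, exploiting the defining characterization $\inv(w_J)=\inv(w)\cap W_J$.  My first step is an adjunction: for $v\in W_J$ and $w\in W$, one has $v\le w$ if and only if $v\le w_J$, since in both cases the condition is $\inv(v)\subseteq\inv(w)\cap W_J=\inv(w_J)$.  The meet case then drops out: $\Meet A_J\le a_J\le a$ for all $a\in A$ gives $\Meet A_J\le\Meet A$ and hence $\Meet A_J\le(\Meet A)_J$ by the adjunction, while $(\Meet A)_J\le a_J$ for each $a$ gives the reverse inequality.

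The join case is where the real work lies.  Set $v:=\Join A$; the easy inequality $\Join(A_J)\le v_J$ is immediate since $v_J\ge a_J$ for each $a\in A$.  For the reverse, I plan to describe $\inv(v)$ as the iterative Pilk closure of $I^{(0)}:=\bigcup_a\inv(a)$: at each stage, for every generalized rank-two parabolic $W'$ with reflections $u_1,\dots,u_m$ indexed as in Proposition~\ref{RankTwoIsOK}, one adjoins any reflections needed to make the restriction to $W'$ an initial or final segment, as required by Lemma~\ref{PilkLemma}.  Since each $\inv(a)\cap W'$ is itself initial or final, the union $I^{(0)}\cap W'$ is initial, final, all of $W'\cap T$, or the union of an initial segment $\{u_1,\dots,u_K\}$ with a final segment $\{u_L,\dots,u_m\}$ separated by a gap; only the last case forces additions, namely the middle reflections $\{u_{K+1},\dots,u_{L-1}\}$.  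Monotonicity plus boundedness by $\inv(v)$ (using that $\Join A$ exists) ensures the iteration stabilizes at $\inv(v)$.

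The key structural observation is that reflections added in a middle gap always lie in $\{u_2,\dots,u_{m-1}\}$ and are in particular never canonical generators of $W'$.  Combined with Lemma~\ref{int}, which forces $W'\cap W_J\subseteq\{e,p\}$ with $p$ a canonical generator of $W'$ whenever $W'\not\subseteq W_J$, this means that any reflection of $W_J$ added during the $W$-closure must arise from a parabolic $W'\subseteq W_J$.  Noting that the inversion sets of $W$ contained in $W_J\cap T$ are exactly the inversion sets of $W_J$ (a direct check via Pilk's lemma and Lemma~\ref{int}), the $W$-closure intersected with $W_J$ evolves step-by-step identically to the analogous closure inside $W_J$ starting from $\bigcup_a\inv(a_J)$, which converges to $\inv(\Join(A_J))$.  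Intersecting the limits gives $\inv(v)\cap W_J=\inv(\Join(A_J))$, equivalently $(\Join A)_J=\Join(A_J)$.  The main obstacle will be making this iterative closure argument precise, particularly verifying that the structural description of ``initial-plus-final-with-gap'' intersections persists under earlier rounds of additions, so that Lemma~\ref{int} continues to apply at every stage of the iteration.
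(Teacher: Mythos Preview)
The paper does not actually prove this proposition; it simply cites Jedli\v{c}ka~\cite{Jed} (his Lemmas~4.2.iii and~4.5). So you are attempting strictly more than the paper does, and any correct argument here would be a genuine contribution to the exposition.

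Your meet argument is correct and pleasant. The adjunction you identify---for $v\in W_J$ and $w\in W$, $v\le w\iff v\le w_J$---is exactly the right tool, and the two inequalities you derive from it are valid.

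The join argument, however, has a real gap, and it is precisely the one you flag at the end. The ``initial-plus-final-with-gap'' structure of $I^{(0)}\cap W'$ holds because $I^{(0)}$ is a union of inversion sets, but it does \emph{not} persist under your gap-filling step. Concretely: take a rank-two $W''$ with $I^{(0)}\cap W''=\emptyset$. A middle element $t$ added while filling the gap of some other $W'$ can lie in $W''$, and its position in the $u_1,\ldots,u_m$ indexing of $W''$ is entirely unconstrained---two distinct generalized rank-two parabolics meet in at most one reflection, and nothing forces that reflection to be canonical in \emph{both}. So $I^{(1)}\cap W''$ can be a singleton $\{u_k\}$ with $1<k<m$, which is neither initial, nor final, nor initial-plus-final-with-gap. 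At the next round you are forced to decide between extending to an initial or a final segment, and either choice adds a canonical generator of $W''$. Once canonical generators are being added, Lemma~\ref{int} no longer shields $W_J$: if $W''\not\subseteq W_J$ and $W''\cap W_J=\{e,u_1\}$, then adding $u_1$ is exactly an addition to $I\cap T_J$ that did not arise from a rank-two parabolic inside $W_J$, and your parallel-closure comparison breaks down.

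This is not merely a technicality: it is closely related to the known failure, in infinite Coxeter groups, of ``$2$-closure'' (interval closure in every rank-two subsystem) to coincide with the biconvex closure. Your plan implicitly assumes a form of this coincidence for unions of inversion sets, which would itself require proof. Jedli\v{c}ka's argument proceeds differently (working directly with reduced decompositions rather than with a rank-two closure process), and you will likely need either to consult that source or to find a different route around the iterative-closure difficulty.
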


Proposition~\ref{para hom} has the following simple, but useful, corollary.
\begin{cor}\label{join below}
Let $x\not\ge s$ and $y\not\ge s$ and suppose that $x\join y$ exists.
Then $(x\join y)\not\ge s$. 
\end{cor}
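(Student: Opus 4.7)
My plan is to derive this directly from Proposition~\ref{para hom} by choosing the smallest possible standard parabolic subgroup that detects the condition ``$\ge s$''. Specifically, I would take $J=\{s\}$, so that $W_J=\{e,s\}$. For any $w\in W$, the description $\inv(w_J)=\inv(w)\cap W_J$ tells us that $w_J=e$ when $s\notin\inv(w)$ and $w_J=s$ when $s\in\inv(w)$. Combined with Proposition~\ref{AboveBelow} (which identifies the condition $w\ge s$ with $s\in\inv(w)$), this gives the translation $w\not\ge s \iff w_J=e$.

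With this translation in hand, the hypotheses $x\not\ge s$ and $y\not\ge s$ become $x_J=e=y_J$. Applying Proposition~\ref{para hom} to the set $A=\{x,y\}$ (which is nonempty and, by assumption, has the upper bound $x\join y$), we get
\[
(x\join y)_J \;=\; x_J\join y_J \;=\; e\join e \;=\; e.
\]
Translating back via the same equivalence, $(x\join y)_J=e$ is exactly the statement that $s\notin\inv(x\join y)$, i.e.\ $x\join y\not\ge s$, which is what we want.

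There is essentially no obstacle here: all of the work has been done in Proposition~\ref{para hom}, and the only thing to check is the elementary fact that passing to the standard parabolic $W_{\{s\}}$ is the right dictionary between ``lying above $s$ in weak order'' and the value of $w_J$. So the write-up should be just a few lines, and the only mild subtlety is making sure the hypothesis of Proposition~\ref{para hom} about the existence of an upper bound is verified — and this is exactly the assumption that $x\join y$ exists.
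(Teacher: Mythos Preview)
Your proof is correct and is essentially identical to the paper's own argument: the paper also takes $J=\{s\}$, observes that $x\not\ge s$ is equivalent to $x_{\{s\}}=e$, and then applies Proposition~\ref{para hom} to conclude $(x\join y)_{\{s\}}=x_{\{s\}}\join y_{\{s\}}=e$. You have simply spelled out the translation step in a bit more detail than the paper does.
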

\begin{proof}
The assertion that $x\not\ge s$ is equivalent to the assertion that $x_{\set{s}}$ is the identity element~$e$.
(Notice that the subscript is $\set{s}$, not $\br{s}$.)
If $x_{\set{s}}=e$ and $y_{\set{s}}=e$ then $(x\join y)_{\set{s}}=x_{\set{s}}\join y_{\set{s}}=e$ by Proposition~\ref{para hom}.
\end{proof}

We now give two lemmas about cover reflections and the join operation.
These are \cite[Lemma~2.7]{sort_camb} and \cite[Lemma~2.8]{sort_camb} respectively.

\begin{lemma}
\label{s join w br s}
For $w\in W,$ if~$s$ is a cover reflection of~$w$ and every other cover reflection of~$w$ is in~$W_{\br{s}}$ then $w=s\join w_{\br{s}}$.
\end{lemma}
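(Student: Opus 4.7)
The plan is to show equality by first verifying that $w$ is an upper bound for $\{s, w_{\br{s}}\}$, then arguing that no proper element between $s \vee w_{\br{s}}$ and $w$ can exist. Since $s$ is an inversion of $w$ we have $w \geq s$, and since $\inv(w_{\br{s}}) = \inv(w) \cap W_{\br{s}} \subseteq \inv(w)$ we have $w \geq w_{\br{s}}$. Thus $s \vee w_{\br{s}}$ exists and satisfies $s \vee w_{\br{s}} \leq w$.

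For the reverse inequality I will argue by contradiction. Suppose $s \vee w_{\br{s}} < w$. Since every weak-order interval is a finite lattice, there exists an element $u$ with $w \gtrdot u$ and $u \geq s \vee w_{\br{s}}$. Every such cover corresponds to a cover reflection $t$ of $w$, with $u = tw$ and $\inv(u) = \inv(w) \setminus \{t\}$.

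Now I will split into two cases according to whether $t = s$ or $t \neq s$. If $t = s$, then $s \notin \inv(u)$, contradicting $u \geq s$. If $t \neq s$, then by hypothesis $t \in W_{\br{s}}$, so $t \in \inv(w) \cap W_{\br{s}} = \inv(w_{\br{s}})$, which means $t \in \inv(u)$ (since $u \geq w_{\br{s}}$); this contradicts $t \notin \inv(u)$. Either way we reach a contradiction, so $s \vee w_{\br{s}} = w$.

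The argument is short and essentially mechanical once one observes that the two structural facts needed, namely $\inv(w_{\br{s}}) = \inv(w) \cap W_{\br{s}}$ and $\inv(tw) = \inv(w) \setminus \{t\}$ for cover reflections $t$, are already recorded earlier in the paper. The only subtle point — hardly an obstacle — is ensuring that a cover $w \gtrdot u$ with $u \geq s \vee w_{\br{s}}$ is available, which follows from the fact that weak-order intervals are finite lattices as noted in Section~\ref{weak lattice sec}.
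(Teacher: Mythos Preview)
Your proof is correct. The paper does not actually give its own proof of this lemma; it simply cites \cite[Lemma~2.7]{sort_camb}. Your argument uses only facts recorded in Section~\ref{CoxeterConventions} (that $\inv(w_{\br{s}})=\inv(w)\cap W_{\br{s}}$, that $\inv(tw)=\inv(w)\setminus\{t\}$ for a cover reflection $t$, and that weak-order intervals are finite lattices), so it is self-contained within this paper and would serve as a perfectly good replacement for the citation.
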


\begin{lemma}
\label{cov w br s}
If $w\in W_{\br{s}}$ and $s\join w$ exists then $\cov(s\join w)=\cov(w)\cup\set{s}$. 
\end{lemma}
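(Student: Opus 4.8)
The plan is to establish the two inclusions $\cov(s\join w)\subseteq\cov(w)\cup\set{s}$ and $\cov(w)\cup\set{s}\subseteq\cov(s\join w)$ separately, working throughout with inversion sets. Write $v=s\join w$. First I would record the basic facts: since $w\in W_{\br s}$ we have $s\notin\inv(w)$, so $w\not\ge s$; since $v\ge s$, Proposition~\ref{AboveBelow} gives $\ell(sv)<\ell(v)$ and $\inv(v)=\set{s}\cup\set{sts:t\in\inv(sv)}$. The element $sv$ is the image of $v$ under the poset isomorphism $W_{\ge s}\to W_{\not\ge s}$ of Proposition~\ref{AboveBelow}. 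I expect $sv$ to play the role of ``$w$ shifted'': indeed $w\le v$ and $w\not\ge s$ force $w\le sv$ (apply the inverse isomorphism, which is order-preserving), and in fact I would argue $sv$ is close to $w$—more precisely I want to understand $\inv(v)\setminus\inv(w)$.

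For the inclusion $\cov(w)\cup\set{s}\subseteq\cov(s\join w)$: since $s$ is a cover reflection of $v$ (as $sv=v$'s lower cover under left multiplication, using $v\gtrdot sv$ from $\ell(sv)<\ell(v)$ and Proposition~\ref{AboveBelow}), we get $s\in\cov(v)$. Now let $t\in\cov(w)$, so $tw=ws'$ for some $s'\in S$ and $\inv(tw)=\inv(w)\setminus\set t$. I would use Proposition~\ref{AboveBelow}: since $t\in\inv(w)\subseteq\inv(v)$ and $t\ne s$, write $t=sts'_0 s$... rather, I would instead pass to the parabolic picture. The cleanest route: $w$ is covered by $w s'$ in weak order and $ws'\not\ge s$ would need checking; alternatively use Lemma~\ref{span inv s} and the structure of $\inv(v)$. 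The key claim I want is $\inv(v)=\set s\cup\set{sts:t\in\inv(w)}$, i.e. that $sv=w$. This would follow if $\inv(sv)=\inv(w)$; since $w\le sv$ and both lie in $W_{\not\ge s}$, it suffices to show $\ell(sv)=\ell(w)+?$... Actually $sv=w$ need not hold in general—$s\join w$ can be strictly larger. So instead I would prove the weaker statement that suffices: for $t\in\cov(w)$, the reflection $sts$ lies in $\cov(v)$, using that removing $sts$ from $\inv(v)$ corresponds (via the relation $\inv(v)=\set s\cup s\inv(sv)s$) to removing $t$ from $\inv(sv)$, and $t\in\cov(sv)$ because $t\in\cov(w)$ and $w\le sv$ with $\inv(sv)\setminus\inv(w)$ consisting of reflections not below... here I would invoke that if $t\in\cov(w)$ then $t\in\cov(u)$ for any $u\ge w$ with $t\in\inv(u)$ provided no inversion of $u$ ``blocks'' it—this is exactly where I'd use that $v$ covers only finitely much and argue via Lemma~\ref{PilkLemma} or directly. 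Also $s\in\cov(v)$ directly, and $sts$ for $t\in\cov(w)$: note $s\cdot t\cdot s$ composed appropriately gives $v$ covering $tv$... I would check $(sts)v = s t (sv)$ and if $sv=w$ this is $s(tw)=s(ws')=(sv)s'=...$; the identity $tv=(sts)v=s(t(sv))$ shows $v$ covers $(sts)v$ iff $sv$ covers $t(sv)$. So the whole lemma reduces to the claim $sv=w$ together with Lemma~\ref{s join w br s}-type bookkeeping.

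The main obstacle, then, is proving $sv=w$, i.e. that $s(s\join w)=w$ when $w\in W_{\br s}$ and $s\join w$ exists. I would prove this by showing $s\join w$ covers $w$: since $w\in W_{\not\ge s}$ and $s\in W_{\not\ge s}$... no, $s\ge s$. The argument: $w<s\join w$ and I claim $w\lessdot s\join w$. Suppose $w\le u\le s\join w$. Then $u\meet s$ is either $e$ or $s$; if $u\ge s$ then $u$ is an upper bound for $\set{s,w}$ so $u\ge s\join w$, giving $u=s\join w$; if $u\not\ge s$ then $u$ is... we need $u=w$, which requires $w\lessdot s\join w$ after all—circular. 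Instead: $sw$ doesn't exist below since $w\not\ge s$; consider $s\cdot w$ (length-increasing), which is an upper bound for $\set s$ obviously but is $sw\ge w$? Only if... Actually $w\le sw$ iff $s\notin\inv(w^{-1})$-type condition, not automatic. The honest fix: use Proposition~\ref{AboveBelow}'s last sentence—I'd show $s\join w\gtrdot$ something, or simply cite that by definition of join and finiteness of $[e,s\join w]$, together with $s\join w\ge s$, we can apply Proposition~\ref{AboveBelow} to get $s\join w = s\cdot x$ where $x=s(s\join w)\not\ge s$ and $x\ge w$ (order-preserving inverse iso) and $x$ is an upper bound for... $\set w$ only. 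To force $x=w$ I use minimality of the join: $s\cdot x = s\join w$ is the least upper bound of $\set{s,w}$, and if $w\le x'<x$ with $x'\not\ge s$ then $sx'<sx$ and $sx'\ge s$, $sx'\ge w$?—need $sx'\ge w$, which holds since $x'\ge w$ and left mult by $s$ is order-preserving $W_{\not\ge s}\to W_{\ge s}$. So $sx'$ is a smaller upper bound, contradiction; hence $x$ is the least element of $W_{\not\ge s}$ above $w$, which is $w$ itself. This closes the gap, and the rest is the inversion-set bookkeeping sketched above, so I would write it in that order: (1) $s\join w=sw'$ with $w'=w$; (2) deduce $\inv(s\join w)=\set s\cup s\inv(w)s$; (3) read off cover reflections using $(sts)(s\join w)=s\bigl(t(s\cdot w)\bigr)=s(tw)$ and the description of $\cov$ via weak-order covers.
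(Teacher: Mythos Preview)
Your argument ultimately rests on the claim $s(s\vee w)=w$, which is false: in type~$A_2$ with $s=s_1$ and $w=s_2\in W_{\br{s_1}}$, one has $s_1\vee s_2=s_1s_2s_1$ but $s_1(s_1s_2s_1)=s_2s_1\neq s_2$. The specific error in your final paragraph is the step ``$sx'\ge w$, which holds since $x'\ge w$ and left multiplication by $s$ is order-preserving $W_{\not\ge s}\to W_{\ge s}$'': the isomorphism of Proposition~\ref{AboveBelow} yields $sx'\ge sw$, not $sx'\ge w$, and indeed $s_1s_2\not\ge s_2$ in the example. You had in fact already noticed mid-proposal that $sv=w$ need not hold; that instinct was right and should not have been abandoned. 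A related slip: you waver between proving $t\in\cov(v)$ and $sts\in\cov(v)$ for $t\in\cov(w)$; the lemma asserts the former, and the latter also fails in the $A_2$ example (there $sts=s_1s_2s_1\notin\cov(s_1s_2s_1)=\{s_1,s_2\}$).

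The paper does not reprove the lemma here (it is cited from~\cite{sort_camb}), but a correct argument can be assembled from the tools of Section~\ref{CoxeterConventions}. Set $v=s\vee w$; Proposition~\ref{para hom} gives $v_{\br s}=w$. If $t\in\cov(v)$ with $t\neq s$ and $t\notin\inv(w)$, then $\inv(tv)=\inv(v)\setminus\{t\}$ still contains $s$ and $\inv(w)$, so $tv$ is a smaller upper bound for $\{s,w\}$---impossible. Hence $t\in\inv(w)$, and since $\inv(w)\setminus\{t\}=\inv\bigl((tv)_{\br s}\bigr)$ is then an inversion set, $t\in\cov(w)$. Lemma~\ref{cov para} now forces $s\in\cov(v)$, giving $\cov(v)\subseteq\cov(w)\cup\{s\}$. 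For the reverse inclusion, fix $t\in\cov(w)$, note $s\vee tw<v$ (their $\br s$-projections differ), and choose $u$ with $s\vee tw\le u\lessdot v$. The cover reflection $t''$ with $t''v=u$ lies in $\cov(w)\cup\{s\}$ by what was just shown; it cannot be $s$ (else $s\le s\vee tw\le u=sv$, contradicting $sv\not\ge s$), so $t''\in\cov(w)$ and $u_{\br s}=t''w$. Then $tw\le u_{\br s}=t''w$ with both $tw,t''w\lessdot w$ forces $t''=t$, so $t\in\cov(v)$.
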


We now use the weak order to prove several more lemmas about cover reflections, reflection sequences and canonical generators of parabolic subgroups.

\begin{lemma}\label{cov para}
Let $w\in W$ and let $J\subseteq S$.
If $\cov(w)\subseteq W_J$ then $w\in W_J$.
\end{lemma}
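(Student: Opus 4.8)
The plan is to induct on $\ell(w)$, using the fact from Proposition~\ref{AboveBelow} that left multiplication by a simple reflection $s$ moves between $W_{\not\ge s}$ and $W_{\ge s}$, and the description of how inversion sets transform under such multiplication. The base case $w=e$ is trivial since $e\in W_J$ always. For the inductive step, suppose $\cov(w)\subseteq W_J$ and $\ell(w)>0$. If every cover reflection of $w$ lies in $J$, the idea is to peel off one cover reflection and reduce the length. First I would note that $\cov(w)\subseteq W_J\cap T=T_J$, so each cover reflection is a reflection of the standard parabolic $W_J$.

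The key step is to pick a cover reflection $t\in\cov(w)$ and analyze $tw$. By definition of a cover reflection, $tw=ws$ for some $s\in S$, and $w$ covers $tw$ in the weak order with $\inv(tw)=\inv(w)\setminus\{t\}$, so $\ell(tw)=\ell(w)-1$. The subtle point is showing $\cov(tw)\subseteq W_J$: a cover reflection $t'$ of $tw$ satisfies $t'\cdot tw=tw\cdot s'$ for some $s'\in S$. In general the cover reflections of $tw$ need not all be cover reflections of $w$, so I cannot simply say $\cov(tw)\subseteq\cov(w)$. Instead, I expect to argue more carefully — perhaps choosing $t$ to be a particular cover reflection (for instance one coming from a simple reflection on the left, if $w$ has a reduced word beginning appropriately), or using Lemma~\ref{s in} together with the interaction of $\inv$ with parabolic projection. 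Actually, the cleanest route may be via the projection $w\mapsto w_J$: recall $\inv(w_J)=\inv(w)\cap W_J$, and $w_J$ is the minimal upper bound considerations... I would try to show directly that $\inv(w)\subseteq W_J$, which is equivalent to $w\in W_J$, by showing that if some inversion of $w$ lies outside $W_J$ then some cover reflection does too.

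For that last reduction: suppose $\inv(w)\not\subseteq W_J$; I want a cover reflection of $w$ outside $W_J$. Take a reduced word $a_1\cdots a_k$ for $w$ with reflection sequence $t_1,\dots,t_k$; then $t_k=a_1\cdots a_{k-1}a_ka_{k-1}\cdots a_1$ and $t_k w = a_1\cdots a_{k-1}$, so $t_k\in\cov(w)$ (with $t_kw=ws$ for $s=a_k$... wait, $t_kw = a_1\cdots a_{k-1}$ and $ws = a_1\cdots a_k a_k = a_1\cdots a_{k-1}$, yes). So the last reflection in any reflection sequence is a cover reflection. The hope is to choose the reduced word so that if $w\notin W_J$ then $a_k\notin J$ forces $t_k\notin W_J$ — but that need not hold since $t_k$ is a conjugate. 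The honest main obstacle is precisely this: relating "$w\notin W_J$" to the existence of a cover reflection outside $W_J$. I expect to resolve it by choosing a reduced word for $w$ that first spells out a reduced word for $w_J$ (using the factorization $w=w_J\cdot{}^{J}\!w$ from Section~\ref{Parabolic Subsec}), so that $\ell(w_J)<\ell(w)$ when $w\notin W_J$; then the last letter $a_k$ comes from the ${}^{J}\!w$ part, and I would argue that $t_k=a_1\cdots a_k\cdots a_1\notin W_J$ because $w^{-1}\beta_{t_k}$ is negative and $\beta_{t_k}\notin\Phi_J$ — the latter following from the minimal-coset-representative property of ${}^{J}\!w$, analogously to the argument in Proposition~\ref{can gen}. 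This gives a cover reflection of $w$ not in $W_J$, contradicting the hypothesis, so $w\in W_J$.
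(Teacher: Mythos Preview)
Your final approach is correct and is essentially the paper's argument, though the paper executes it more cleanly. Both proofs show the contrapositive: if $w\notin W_J$, exhibit a cover reflection of $w$ outside $W_J$. You do this by choosing a reduced word $a_1\cdots a_k$ for $w$ whose prefix $a_1\cdots a_i$ is a reduced word for $w_J$ (with $i<k$) and observing that $t_k$ is a cover reflection; the paper instead just picks any $w'$ with $w_J\le w'\covered w$ and lets $t$ be the unique element of $\inv(w)\setminus\inv(w')$. In both cases the key is the identity $\inv(w)\cap W_J=\inv(w_J)$, which you already recalled: since $(w')_J=w_J$ (by order-preservation of the projection), the missing inversion $t$ lies in $\inv(w)\setminus W_J$. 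Your detour through ``$\beta_{t_k}\notin\Phi_J$ via the minimal-coset-representative property, analogously to Proposition~\ref{can gen}'' is unnecessary; once you know $t_k\in\inv(w)$ and $t_k\notin\{t_1,\dots,t_i\}=\inv(w_J)=\inv(w)\cap W_J$, you are done.
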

\begin{proof}
Suppose $w\not\in W_J$ so that in particular $w_J<w$.
Choose any $w'$ such that $w_J\le w'\covered w$.
Since the map $x\mapsto x_J$ is order preserving, $(w')_J=w_J$.
In particular $\inv(w')\cap W_J=\inv(w)\cap W_J$, so that the unique reflection in $\inv(w)\setminus\inv(w')$ is not in $W_J$.
Thus $\cov(w)\not\subseteq W_J$.
\end{proof}

The following lemma is very close in spirit to \cite[Lemma~2.11]{Dyer}. 
The notation $u_1, u_2, \ldots, u_m$ is explained in the paragraph before Lemma~\ref{PilkLemma}.

\begin{lemma}  \label{dyer lem}
Let $t_1$, $t_2$, \ldots, $t_k$ be a sequence of distinct reflections in $T$. The following are equivalent:
\begin{enumerate}
 \item[(i)] There is a reduced word $s_1 s_2 \cdots s_k$ in $W$ such that $t_1$, $t_2$, \dots, $t_k$ is the reflection sequence of $s_1 s_2 \cdots s_k$.
\item[(ii)] For every generalized parabolic subgroup $W'$ of rank two, with canonical generators $r_1$ and $r_2$, the subsequence of  $t_1$, $t_2$, \ldots, $t_k$ consisting of elements of $W'$ is either an initial or final subsequence of $u_1, u_2, \ldots, u_m$.
\end{enumerate}
\end{lemma}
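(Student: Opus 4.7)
The plan is to prove both implications by induction on $k$; the base $k \leq 1$ is trivial. Throughout, given a generalized rank-two parabolic $W'$ with canonical generators $r_1, r_2$, I write $u_1, \ldots, u_m$ for its reflections in the order of Proposition~\ref{RankTwoIsOK}; swapping $r_1$ and $r_2$ reverses the indexing and exchanges ``initial'' with ``final'' subsequences, so such relabelings are always permissible.

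For (i) $\Rightarrow$ (ii), let $s_1 \cdots s_k$ be a reduced word with reflection sequence $t_1, \ldots, t_k$, so the truncation $s_2 \cdots s_k$ has reflection sequence $t'_i := s_1 t_{i+1} s_1$. Fix a generalized rank-two parabolic $W'$. If $s_1 \notin W'$, Lemma~\ref{canon conj} gives that $s_1 W' s_1$ is itself such a parabolic with canonical generators $s_1 r_1 s_1, s_1 r_2 s_1$; the inductive hypothesis on $t'_1, \ldots, t'_{k-1}$ transported by conjugation handles the case (and $t_1 = s_1 \notin W'$ drops out). If $s_1 \in W'$, Lemma~\ref{s in} makes $s_1$ a canonical generator; taking $s_1 = r_1 = u_1$, conjugation by $s_1$ fixes $u_1$ and sends $u_i \mapsto u_{m-i+2}$ for $i \geq 2$. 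Distinctness of the $t_i$'s excludes $u_1$ from the $t'$-subsequence (since $t'_i = u_1$ would force $t_{i+1} = t_1$), so by induction this subsequence is a final segment $u_m, u_{m-1}, \ldots, u_j$ with $j \geq 2$ (possibly empty); conjugating back gives $u_2, u_3, \ldots, u_{m-j+2}$, and prepending $t_1 = u_1$ yields the required initial subsequence.

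For (ii) $\Rightarrow$ (i), the crux is to show $t_1 \in S$. First, (ii) implies the hypothesis of Pilkington's Lemma~\ref{PilkLemma}, so $\{t_1, \ldots, t_k\} = \inv(w)$ for a unique $w \in W$. Suppose for contradiction $t_1 \notin S$; then $\beta_{t_1}$ has support $J \subseteq S$ of size $\geq 2$, with positive coefficients $d_{s'}$ for $s' \in J$. Because $K(\beta_{t_1}, \beta_{t_1}) > 0$ and equals $\sum_{s' \in J} d_{s'} K(\beta_{t_1}, \alpha_{s'})$, some $s \in J$ satisfies $K(\beta_{t_1}, \alpha_{s}) > 0$. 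Let $W'$ be the generalized rank-two parabolic with root system $\Phi \cap \Span(\alpha_s, \beta_{t_1})$; by Lemma~\ref{s in}, $s$ is a canonical generator, which we label $r_1$. The computation $s(\beta_{t_1}) = \beta_{t_1} - c \alpha_s$ with $c = K(\beta_{t_1}, \alpha_s)/\delta(s) > 0$ produces a positive root $s(\beta_{t_1}) \in \Phi' \cap \Phi_+$ whose expansion $(-c) \alpha_s + 1 \cdot \beta_{t_1}$ in the basis $\{\alpha_s, \beta_{t_1}\}$ has negative first coordinate, placing it outside the positive cone of $\{\alpha_s, \beta_{t_1}\}$ and therefore ruling out $\beta_{t_1} = \beta_{r_2}$. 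Since also $\beta_{t_1} \neq \alpha_s = \beta_{r_1}$, we conclude $t_1 = u_p$ with $1 < p < m$. Writing $\beta_{t_1} = a \beta_{r_1} + b \beta_{r_2}$ with $a, b > 0$, the inversion condition $w^{-1}\beta_{t_1} \in \Phi_-$ forces at least one of $r_1, r_2$ into $\inv(w) = \{t_1, \ldots, t_k\}$; that element equals $t_j$ for some $j \geq 2$. The $t$-subsequence in $W'$ thus has $t_1 = u_p$ at position $1$ (with $1 < p < m$) and $u_1$ or $u_m$ at a later position $j$, which is neither initial nor final in $u_1, \ldots, u_m$---contradicting (ii).

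Once $t_1 \in S$ is established, set $s := t_1$ and $t'_i := s t_{i+1} s$ for $i = 1, \ldots, k-1$, so that $\{t'_1, \ldots, t'_{k-1}\} = \inv(sw)$. A case analysis parallel to the one in (i) $\Rightarrow$ (ii)---distinguishing whether $s$ belongs to each generalized rank-two parabolic $W''$---shows that $t'_1, \ldots, t'_{k-1}$ satisfies (ii). By induction there is a reduced word $s_2 \cdots s_k$ for $sw$ with reflection sequence $t'_1, \ldots, t'_{k-1}$, and $s \cdot s_2 \cdots s_k$ is the reduced word for $w$ required in (i). The principal obstacle is the geometric argument forcing $t_1 \in S$: the use of $K(\beta_{t_1}, \beta_{t_1}) > 0$ to locate a simple root $\alpha_s$ with positive $K$-pairing against $\beta_{t_1}$ is the essential step that makes $\beta_{t_1}$ strictly interior to the 2-plane and thereby produces the contradicting rank-two parabolic.
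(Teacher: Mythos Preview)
Your proof is correct, but the paper takes a markedly shorter route that avoids the induction entirely. The paper applies Lemma~\ref{PilkLemma} to each prefix set $I_j=\{t_1,\ldots,t_j\}$: for (i)$\Rightarrow$(ii), each $I_j$ is the inversion set of $s_1\cdots s_j$, so $I_j\cap W'$ is an initial or final segment for every $j$, which forces the ordered subsequence in $W'$ to be initial or final; for (ii)$\Rightarrow$(i), condition~(ii) restricted to the first $j$ terms shows that each $I_j\cap W'$ is an initial or final segment, so Lemma~\ref{PilkLemma} gives $I_j=\inv(w_j)$ for some $w_j$, and the resulting saturated chain $e\covered w_1\covered\cdots\covered w_k$ produces the reduced word.

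What your approach buys is an explicit, constructive identification of the first letter: the geometric argument that $t_1\in S$ (via $K(\beta_{t_1},\beta_{t_1})>0$ forcing some $\alpha_s$ with $K(\beta_{t_1},\alpha_s)>0$, and then showing $\beta_{t_1}$ is not extremal in the resulting rank-two subsystem) is a nice self-contained fact, and the inductive peeling gives an algorithm for reading off $s_1s_2\cdots s_k$ one letter at a time. The paper's approach, by contrast, treats all prefixes simultaneously and lets Lemma~\ref{PilkLemma} do the heavy lifting; it is a few lines long and requires no case analysis on whether $s_1\in W'$. Incidentally, in your (ii)$\Rightarrow$(i) direction the extra step showing that some canonical generator $r_i$ lies in $\inv(w)$ is not needed: once you know the $t$-subsequence in $W'$ begins with $u_p$ for $1<p<m$, this already contradicts~(ii), since any nonempty initial subsequence begins with $u_1$ and any nonempty final subsequence begins with $u_m$.
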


\begin{proof}
Define $I_j=\{ t_1, t_2, \ldots, t_j \}$. First, assume (i) and fix a generalized rank two parabolic subgroup $W'$ whose reflections are ordered as in (ii) above.
For every~$j$ from $1$ to $k$, the inversions of $s_1 s_2 \cdots s_j$ are $I_j$ and hence, by Proposition~\ref{PilkLemma}, $I_j \cap W'$ is either an initial or final segment of the reflections of $W'$. Since this is true for every $I_j$, the subsequence of $I$ consisting of reflections in $W'$ is either an initial or final subsequence of the reflections of $W'$. Now, assume (ii) and fix some~$j$ between~$1$ and $k$. For every rank two generalized parabolic subgroup $W'$, the intersection $W' \cap I_j$ consists of the elements of an initial or final subsequence of the reflections of $W'$. So, by Proposition~\ref{PilkLemma} again, $I_j$ is the inversion set of some element $w_j$ of~$W$. Then $e$, $w_1$, $w_2$, \dots, $w_k$ is an ascending saturated chain in the weak order (since the inversion sets are increasing) and thus there is a reduced word $s_1 s_2 \cdots s_k$ such that $I_j=\{ s_1, s_1 s_2 s_1, \ldots, s_1 s_2 \cdots s_j \cdots s_2 s_1\}$. Then we must have $\{ s_1 s_2 \cdots s_j \cdots s_2 s_1 \} = I_{j} \setminus I_{j-1} = \{ t_j \}$, and $t_1$, $t_2$, \ldots, $t_k$ is the reflection sequence of  $s_1 s_2 \cdots s_k$ as required.
\end{proof}

Lemma~\ref{dyer lem} immediately implies the following:
\begin{lemma}\label{dyer cov}
Let $w\in W$ and let $W'$ be a rank two parabolic subgroup all of whose reflections are in $\inv(w)$.
If $t\in W'$ is a cover reflection of~$w$ then~$t$ is a canonical generator of $W'$.
\end{lemma}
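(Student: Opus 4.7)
The plan is to apply Lemma~\ref{dyer lem} to a carefully chosen reduced word for~$w$, namely one in which the cover reflection $t$ is the \emph{last} reflection in the reflection sequence.

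First I would produce such a reduced word. Since $t$ is a cover reflection, $tw = ws$ for some $s \in S$ with $\ell(tw) = \ell(w)-1$. Taking any reduced word $a_1 \cdots a_{k-1}$ for $tw$ and appending $s$ yields a reduced word $a_1 \cdots a_{k-1} s$ for $w$. A direct calculation (using $t = wsw^{-1}$) shows that the final term $a_1 \cdots a_{k-1} s a_{k-1} \cdots a_1$ of its reflection sequence equals $t$. So $t$ occurs last in the reflection sequence of this reduced word.

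Next I would invoke Lemma~\ref{dyer lem}(ii) for the rank two generalized parabolic $W'$. Writing the canonical generators as $r_1, r_2$ and the reflections of $W'$ as $u_1 = r_1,\, u_2 = r_1 r_2 r_1,\, \ldots,\, u_m = r_2$, the subsequence consisting of elements of $W'$ inside the reflection sequence of $a_1 \cdots a_{k-1} s$ is an initial or final segment of $u_1, u_2, \ldots, u_m$. The hypothesis that \emph{every} reflection of $W'$ lies in $\inv(w)$ forces every $u_i$ to appear in the reflection sequence, so this subsequence is the \emph{entire} list, either in the order $u_1, u_2, \ldots, u_m$ or in the reverse order $u_m, u_{m-1}, \ldots, u_1$. (Note in passing that the hypothesis implicitly requires $m < \infty$, since $\inv(w)$ is finite.)

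Finally, because $t \in W'$ and $t$ is the last element of the whole reflection sequence, $t$ must in particular be the last element of the $W'$-subsequence, which is $u_m = r_2$ in the first case and $u_1 = r_1$ in the second. Either way $t \in \{r_1, r_2\}$, i.e.\ $t$ is a canonical generator of $W'$. The only real step is setting up the reduced word so that $t$ appears last; after that, the conclusion is immediate from Lemma~\ref{dyer lem}, and I do not anticipate any substantive obstacle.
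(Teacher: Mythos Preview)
Your proof is correct and is exactly the argument the paper has in mind: the paper simply asserts that Lemma~\ref{dyer lem} immediately implies Lemma~\ref{dyer cov}, and you have spelled out that immediate implication in full detail. The key step of choosing a reduced word with $t$ last in the reflection sequence (via $tw=ws$) is precisely what makes the deduction work.
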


Lemma~\ref{dyer cov} and a well-known result show the following. 

\begin{lemma}\label{cov canon}
Let $w\in W$ and let $U=\bigcap_{t\in\cov(w)}H_t$.
Then the stabilizer of $U$ in $W$ is a finite parabolic subgroup with canonical generators $\cov(w)$.
\end{lemma}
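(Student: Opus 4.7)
The strategy is to identify $\cov(w)$ with a conjugate of the right descent set $J := \{s \in S : ws < w\}$ of $w$, which realizes $U$ and $\Stab(U)$ inside a single conjugate of the standard parabolic $W_J$. A reflection $t$ is a cover reflection of $w$ precisely when $tw = ws$ for some $s \in S$, equivalently when $t = wsw^{-1}$ with $s \in J$; thus $\cov(w) = wJw^{-1}$ and $U = \bigcap_{s \in J} wH_s = wU_0$, where $U_0 := \bigcap_{s \in J} H_s$ is the fixed subspace of $W_J$.

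The first main step is to prove that $W_J$ is finite. Factor $w = w^J \cdot v$ with $w^J$ the unique minimal-length representative of the coset $wW_J$ and $v \in W_J$; this factorization satisfies $\ell(w^J u) = \ell(w^J) + \ell(u)$ for every $u \in W_J$. For each $s \in J$, the inequality $ws < w$ combined with this length-additivity forces $vs < v$, so every $s \in J$ is a right descent of $v$; since $v \in W_J$ has no right descents outside $J$, the right descent set of $v$ is exactly $J$. Consequently $v^{-1}$ sends each simple root of $\Phi_J$ to a negative root, hence maps all of $\Phi_J^+$ into the negative cone, so $\inv(v^{-1}) = T_J$. Since $\inv(v^{-1})$ has size $\ell(v) < \infty$, $T_J$ is finite, so $W_J$ is finite and $v = w_0(J)$.

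The conclusion assembles quickly. Since $W_J$ fixes $U_0$ pointwise, $U = wU_0 = w^J U_0$. By \cite[Lemma 4.5.1]{Bj-Br}, the stabilizer of any point in the relative interior of the face $D \cap U_0$ is $W_J$, and this equals the pointwise stabilizer of $U_0$; hence $\Stab(U) = w^J W_J (w^J)^{-1}$, a finite parabolic subgroup. By Proposition~\ref{can gen}, its canonical generators are $w^J J (w^J)^{-1}$. The standard fact that $w_0(J)$ permutes $J$ by conjugation yields $vJv^{-1} = J$, whence $\cov(w) = wJw^{-1} = w^J J (w^J)^{-1}$ is the set of canonical generators. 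The main obstacle is the finiteness of $W_J$; Lemma~\ref{dyer cov} supplies complementary structural information by confirming that each pair $t_1, t_2 \in \cov(w)$ lies in a rank two parabolic $\langle t_1, t_2 \rangle = wW_{\{s_1,s_2\}}w^{-1}$ (all of whose reflections are inversions of $w$, since each $w\alpha_{s_i}$ is a negative root for $s_i \in J$) with canonical generators $\{t_1, t_2\}$, though the global finiteness of $W_J$ still rests on the descent-set argument above.
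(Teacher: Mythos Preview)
Your proof is correct and follows essentially the same approach as the paper's: identify $\cov(w)=wJw^{-1}$ with $J$ the right descent set, factor $w$ through the minimal coset representative to see that $W_J$ contains an element with full descent set (hence is finite with $v=w_0(J)$), and then invoke Proposition~\ref{can gen} together with $w_0(J)Jw_0(J)^{-1}=J$ to identify the canonical generators. The only blemish is a harmless slip writing $v^{-1}$ for $v$ when describing which element sends the simple roots of $\Phi_J$ negative (from $vs<v$ one gets $v\alpha_s<0$, whence $T_J\subseteq\inv(v^{-1})$ as you then correctly conclude); the closing remark about Lemma~\ref{dyer cov} is true but unnecessary for the argument.
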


\begin{proof}
Let the cover reflections of $w$ be $t_1$, $t_2$, \ldots, $t_r$. 
Then $t_i w=w s_i$ for some $s_i \in S$; let $J = \{ s_1, \ldots, s_r \} \subseteq S$. 
So $U=w V_J^{\perp} w^{-1}$. 
Here $V_J^{\perp}$ is the subspace of $V^*$ which is orthogonal to the subspace $V_J$ of $V$.  
We claim that the stabilizer of $U$ is $w W_J w^{-1}$. 

Each generator $t_i$ of $w W_J w^{-1}$ stabilizes $U$, so the stabilizer of $U$ is contained in $w W_J w^{-1}$. Conversely, let $u$ stabilize $U$ and write $u=w v w^{-1}$, so $v$ stabilizes $V_J^{\perp}$. In particular, $v$ stabilizes $V_J^{\perp} \cap D$ so  $v$ is in $W_J$ by \cite[Theorem~5.13(a)]{Humphreys}. 
We deduce that $u \in w W_J w^{-1}$. This concludes the proof that the stabilizer of $U$ is $w W_J w^{-1}$. 

Now, let $x$ be the minimal length element of $wW_J$. 
The map $v \mapsto xv$ is a poset isomorphism from $W_J$ to $w W_J$. 
We assumed that $w$ covers $|J|$ other members of $w W_J$ (namely $t_1 w$, \dots, $t_r w$), so we conclude that $W_J$ has an element which covers $|J|$ other elements. 
In particular, $W_J$ is finite and thus, $w W_J w^{-1}$ is finite.
Furthermore $w=x w_0(J)$, where $w_0(J)$ is the maximal element of $W_J$. 
Now, $\cov(w)=w J w^{-1}=x w_0(J) J w_0(J)^{-1} x^{-1}=x J x^{-1}$. 
(We have used the fact that conjugation by $w_0(J)$ permutes $J$.) 
By Proposition~\ref{can gen}, the canonical generators of $w W_J w^{-1}$ are $x J x^{-1}$, since $x$ is a minimal length coset representative of $wW_J$.
Thus the canonical generators of $w W_J w^{-1}$ are the cover reflections of $w$.
\end{proof}

\subsection{Coxeter elements}
A \newword{Coxeter element}~$c$ of~$W$ is an element represented by a (necessarily reduced) word $s_1s_2\cdots s_n$ where $S=\set{s_1,\ldots,s_n}$ and $n=|S|$.
Any reduced word for a given Coxeter element $c$ is a permutation of $S$, and typically $c$ has several reduced words.
As a special case of Tits' solution to the word problem for Coxeter groups  (see e.g. \cite[Section~3.3]{Bj-Br}), any two reduced words for the same Coxeter element are related by a sequence of transpositions of commuting generators.
A simple generator $s\in S$ is called \newword{initial in $c$} if there is some reduced word for $c$ having $s$ as its first letter.
Two different initial letters of the same Coxeter element must commute with each other.
A simple generator $s\in S$ is \newword{final} if it is the last letter of some reduced word for $c$.

The \newword{restriction} of a Coxeter element $c$ to $W_J$ is the Coxeter element of $W_J$ obtained by deleting the letters in $S\setminus J$ from some reduced word for~$c$.
This is independent of the reduced word chosen for $c$.

Several constructions in this paper depend on a choice of $c$, and this dependence is made explicit by superscripts or subscripts in the notation.
We often take advantage of the explicit mention of $c$ in this notation to indicate a standard parabolic subgroup.
For example, if $c'$ is the restriction of $c$ to $W_J$, the notation $\pidown^{c'}$ (see Section~\ref{pidown sec}) stands for a map defined on $W_J$, not on~$W$.

\subsection{Sortable elements} \label{Sortable Subsec}

Fix a Coxeter element~$c$ and a particular word $s_1s_2\cdots s_n$ for~$c$ and write a half-infinite word 
\[(s_1\cdots s_n)^\infty=s_1s_2\cdots s_n|s_1s_2\cdots s_n|s_1s_2\cdots s_n|\ldots\]
The symbols ``$|$'' are inert ``dividers'' which facilitate the definition of sortable elements.
When subwords of $(s_1\cdots s_n)^\infty$ are interpreted as expressions for elements of~$W,$ the dividers are ignored.
The \newword{$(s_1\cdots s_n)$-sorting word} for $w\in W$ is the lexicographically first (as a sequence of positions in $(s_1\cdots s_n)^\infty$) subword of $(s_1\cdots s_n)^\infty$ which is a reduced word for~$w$.
The $(s_1\cdots s_n)$-sorting word can be interpreted as a sequence of subsets of~$S$:
Each subset in the sequence is the set of letters of the $(s_1\cdots s_n)$-sorting word which occur between two adjacent dividers. 

A \newword{$c$-sorting word} for~$w$ is a word that arises as the $(s_1\cdots s_n)$-sorting word for~$w$ for some reduced word $s_1\cdots s_n$ for $c$.
Since any two reduced words for~$c$ are related by commutation of letters, the $c$-sorting words for~$v$ arising from different reduced words for~$c$ are related by commutations of letters, with no commutations across dividers.
%sinceTAMS:  changed subwords to subsets in following
Furthermore, the sequence of subsets defined by a $c$-sorting word is independent of the choice of reduced word $s_1\cdots s_n$ for $c$.
An element $v\in W$ is \newword{\mbox{$c$-sortable}} if a $c$-sorting word for $v$ defines a sequence of subsets which is weakly decreasing under inclusion.

\begin{example} \label{easy sort}
Let $W$ be $A_3$, the Coxeter group with $S=\set{p,q, r}$, $m(p,q) = m(q,r) =3$ and $m(p,r) =2$. 
Let $c=pqr$. Then the $c$-sortable elements are $e$, $p$, $pq$, $pqr$, $pqrp$, $pqrpq$, $pqrpqp$, $pqrq$, $pqp$, $pr$, $q$, $qr$, $qrq$ and $r$.
There are $14$ of these $pqr$-sortable elements; in general, for any Coxeter element $c$ in $A_n$, the number of $c$-sortable elements is $\frac{1}{n+2} \binom{2n+2}{n+1}$, the $(n+1)^{\mathrm{st}}$ Catalan number.

We emphasize that sorting words must, by definition, be reduced.
The word $pq|pq$ may appear to be the $pqr$-sorting word of a $pqr$-sortable element.
But, in fact, it is a non-reduced word for the element $qp$, which is not $pqr$-sortable.
\end{example}

% \begin{example}\label{easy sort}
% Let $W$ be $A_2$, the Coxeter group with $S=\set{p,q}$ and $m(p,q)=3$, and let $c=pq$.
% Then the $c$-sortable elements of $W$ are the identity, $p$, $pq$, $pqp$, and~$q$.
% The only non-$c$-sortable element is $qp$.
% \end{example}

Sortable elements satisfy the following recursion, which is easily verified from the definition. 
\begin{prop} \label{CSortRecursive}
Let~$W$ be a Coxeter group,~$c$ a Coxeter element and let~$s$ be initial in~$c$.
Then an element $w \in W$ is $c$-sortable if and only if
\begin{enumerate}
\item[(i) ] $w \geq s$ and $sw$ is $scs$-sortable or
\item[(ii) ] $w \not \geq s$ and~$w$ is an $sc$-sortable element of $W_{\br{s}}$.
\end{enumerate}
\end{prop}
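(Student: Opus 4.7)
The plan is to fix a reduced word $s s_2 \cdots s_n$ for $c$ with $s$ as its first letter (possible since $s$ is initial in $c$) and to analyze the resulting $c$-sorting word of $w$ directly from the definition. The half-infinite word is $(s s_2 \cdots s_n)^{\infty}$, with a divider after every $n$-th letter. Since $s$ is initial, $scs$ has reduced word $s_2 \cdots s_n s$; ignoring dividers, the letters of $(s s_2 \cdots s_n)^{\infty}$ from position $2$ onward coincide with the letters of $(s_2 \cdots s_n s)^{\infty}$, though the dividers are offset by exactly one position. This offset is the whole engine of the recursion.

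For case~(i), assume $w \geq s$, so $s \in \inv(w)$. Then the lexicographically first reduced subword of $(s s_2 \cdots s_n)^{\infty}$ representing $w$ must include position~$1$; stripping this initial $s$ leaves the lex-first reduced subword representing $sw$, which by the letter-alignment above picks out exactly the same positions as the $(scs)$-sorting word of $sw$. Let $K_1, K_2, \ldots$ be the blocks of the $c$-sorting word of $w$ and $L_1, L_2, \ldots$ those of the $(scs)$-sorting word of $sw$. The divider offset implies $K_i \setminus \{s\} = L_i \setminus \{s\}$ for all $i \geq 1$, together with $s \in K_1$ and $s \in K_{i+1}$ iff $s \in L_i$ for $i \geq 1$. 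A routine check then shows that, subject to these relations, $K_1 \supseteq K_2 \supseteq \cdots$ is equivalent to $L_1 \supseteq L_2 \supseteq \cdots$, so $w$ is $c$-sortable iff $sw$ is $scs$-sortable.

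For case~(ii), assume $w \not\geq s$. Then $s \notin \inv(w)$, so position~$1$ is skipped by the $c$-sorting word of $w$. If any later block $K_i$ contained $s$, the weakly decreasing condition would force $s \in K_1$, which would in turn force position~$1$ to be used, a contradiction. Hence, if $w$ is $c$-sortable, no block contains $s$, the $c$-sorting word is a word over $\{s_2, \ldots, s_n\}$, and $w \in W_{\br{s}}$. Conversely, if $w \in W_{\br{s}}$, then $s$ occurs in no reduced word for $w$, so the lex-first reduced subword of $(s s_2 \cdots s_n)^{\infty}$ automatically avoids every $s$-position. Restricting to the remaining positions identifies the $c$-sorting word of $w$ with the $sc$-sorting word of $w$ inside $W_{\br{s}}$, where $sc = s_2 \cdots s_n$ is viewed as the restriction of $c$ to $W_{\br{s}}$. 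The two block sequences differ only by deletion of the empty $s$-slots, so the weakly decreasing conditions agree; thus $w$ is $c$-sortable iff $w$ is an $sc$-sortable element of $W_{\br{s}}$.

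The only delicate step is the divider-offset bookkeeping in case~(i); once the two infinite words are aligned, every remaining verification is immediate.
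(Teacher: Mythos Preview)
Your argument is correct and is exactly the direct verification from the definition that the paper has in mind: the paper gives no proof, merely asserting that the recursion ``is easily verified from the definition,'' and your divider-offset bookkeeping spells out precisely that verification.
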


Proposition~\ref{CSortRecursive} amounts to a recursive definition of sortable elements.
Recall from Proposition~\ref{AboveBelow} that $w\geq s$ is equivalent to the statement that~$w$ has a reduced word starting with~$s$, which is equivalent to $\ell(sw)<\ell(w)$.
Thus in the case $w\ge s$, the definition works by induction on length.
In the other case, the induction is on the rank of the Coxeter group.
This induction on length and rank is an essential feature of the proofs in this paper.

The following observation is immediate from the definition of sortable elements.
\begin{prop}\label{sort para easy}
Let $J \subset S$ and let $c'$ be the restriction of $c$ to $W_J$. 
Then an element $v \in W_J$ is $c$-sortable if and only if it is $c'$-sortable. 
\end{prop}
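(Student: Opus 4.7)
The plan is to reduce the statement to the observation that, since $v \in W_J$, every reduced word for $v$ uses only simple generators in $J$, so the infrastructure of the $c$-sorting word for $v$ essentially collapses onto the infrastructure of the $c'$-sorting word for $v$.

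First I would fix a reduced word $s_1 s_2 \cdots s_n$ for $c$ and let $i_1 < i_2 < \cdots < i_k$ be the indices with $s_{i_j} \in J$. By the definition of the restriction of a Coxeter element, $s_{i_1} s_{i_2} \cdots s_{i_k}$ is a reduced word for $c'$. Consider the half-infinite word $(s_1 \cdots s_n)^\infty$ used to define the $c$-sorting word. Erasing all positions whose letter lies in $S \setminus J$ produces exactly $(s_{i_1} \cdots s_{i_k})^\infty$, and the dividers in $(s_1 \cdots s_n)^\infty$ (which fall after every $n$ letters) map under this erasure to dividers falling after every $k$ letters, i.e.\ to the dividers used to define the $c'$-sorting word.

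Next I would invoke the standard fact that every reduced word for an element of $W_J$ uses only letters of $J$. Applied to $v \in W_J$, this means that every subword of $(s_1 \cdots s_n)^\infty$ that is a reduced word for $v$ only uses $J$-positions. Consequently, the lexicographically first such subword is obtained by taking the lexicographically first reduced subword of $(s_{i_1} \cdots s_{i_k})^\infty$ for $v$ and re-inserting it into its corresponding $J$-positions in $(s_1 \cdots s_n)^\infty$. In other words, the $(s_1 \cdots s_n)$-sorting word and the $(s_{i_1} \cdots s_{i_k})$-sorting word of $v$ consist of the same letters in the same order.

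Finally, since the erasure above sends the dividers of $(s_1 \cdots s_n)^\infty$ to the dividers of $(s_{i_1} \cdots s_{i_k})^\infty$, the sequence of subsets of $S$ arising from the $c$-sorting word of $v$ equals the sequence of subsets of $J$ arising from the $c'$-sorting word of $v$ (as sequences of subsets of $J \subseteq S$). Thus one sequence is weakly decreasing under inclusion if and only if the other is, which by definition means $v$ is $c$-sortable if and only if $v$ is $c'$-sortable. No single step is genuinely hard; the only subtlety to verify carefully is that the lex-first selection on $(s_1\cdots s_n)^\infty$ really does coincide, position-by-position in $J$-positions, with the lex-first selection on $(s_{i_1}\cdots s_{i_k})^\infty$, which follows from the support statement for $v \in W_J$.
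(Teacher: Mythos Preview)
Your proposal is correct and takes essentially the same approach as the paper, which simply asserts that the result is immediate from the definition of sortable elements. You have faithfully unpacked why it is immediate: since every reduced word for $v\in W_J$ uses only letters of $J$, the $(s_1\cdots s_n)$-sorting word for $v$ coincides with the $(s_{i_1}\cdots s_{i_k})$-sorting word, and the dividers correspond, so the associated sequences of subsets agree.
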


\section{The Euler form $E_c$ and the form $\omega_c$} \label{omega sec}
In this section, we define two bilinear forms $E_c$ and $\omega_c$ and establish their basic properties.
We then use the form $\omega_c$ to characterize the reflection sequences of $c$-sortable elements.
We prove some consequences of this characterization.
Throughout the section, we fix a symmetrizable choice $A$ of Cartan matrix for~$W$, and a particular choice of symmetrizing function $\delta$.

%sinceTAMS:  I changed A(\alpha^{\vee}_{s_i},\alpha_{s_j}) to K(\alpha^{\vee}_{s_i},\alpha_{s_j}) in the following
We continue to let $s_1\cdots s_n$ be a reduced word for a Coxeter element~$c$.
The \newword{Euler form} $E_c$ is defined on the bases of simple roots and co-roots as follows:
\[E_c(\alpha^{\vee}_{s_i},\alpha_{s_j})=\left\lbrace\begin{array}{ll}
K(\alpha^{\vee}_{s_i},\alpha_{s_j})=a_{s_i s_j}&\mbox{if } i>j,\\
1&\mbox{if }i=j,\mbox{ or}\\
0&\mbox{if } i<j.
\end{array}\right.\]
The symmetrization $E_c(\alpha, \beta)+E_{c}(\beta, \alpha)$ equals $K(\alpha, \beta)$.

Since all reduced words for $c$ differ only by exchanging commuting generators, the Euler form is independent of the choice of reduced word for~$c$. 
We now present three simple lemmas on the Euler form.
The first two follow immediately from the definition.

\begin{lemma}\label{Ec initial}
Let~$s$ be initial in~$c$ and let $\beta_t$ be a positive root.
Then $E_c(\alpha^{\vee}_s, \beta_t)$ equals the coefficient of $\alpha_s$ when $\beta_t$ is expanded in the basis of simple roots.
In particular, $E_c(\alpha^{\vee}_s, \beta_t) \ge 0$, with equality if and only if $t \in W_{\br{s}}$.
\end{lemma}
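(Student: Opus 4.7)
The statement is essentially a direct computation, so the plan is to unwind the definitions and extract positivity from the known structure of the root system.

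First I would invoke the hypothesis that $s$ is initial in $c$: this lets me pick a reduced word $s_1 s_2 \cdots s_n$ for $c$ with $s_1 = s$. Applying the defining formula for $E_c$ to the pairs $(\alpha^\vee_s,\alpha_{s_j})$ with this choice of word, the cases $i>j$ don't arise (since $i=1$), and so I get $E_c(\alpha^\vee_s,\alpha_s)=1$ while $E_c(\alpha^\vee_s,\alpha_{s_j})=0$ for $j>1$. Because $E_c$ is bilinear and is independent of the chosen reduced word for $c$, the linear functional $E_c(\alpha^\vee_s,\,\cdot\,)$ on $V$ is precisely the coordinate projection onto the $\alpha_s$-component in the basis $\Pi$. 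Applying this to $\beta_t$ gives the first assertion.

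For the ``in particular'' clause, I would then use two standard facts recalled in Section~\ref{Root subsec} and Section~\ref{Parabolic Subsec}. Since $\beta_t$ is a positive root, its expansion in $\Pi$ has nonnegative coefficients, so $E_c(\alpha^\vee_s,\beta_t)\ge 0$. Equality holds precisely when $\beta_t$ lies in $V_{\br{s}}=\Span\{\alpha_{s'}:s'\ne s\}$. By the identity $\Phi_{\br{s}}=\Phi\cap V_{\br{s}}$ for the standard parabolic $W_{\br{s}}$, and the bijection $t\mapsto\beta_t$ between reflections and positive roots (which restricts to a bijection between reflections of $W_{\br{s}}$ and positive roots of $\Phi_{\br{s}}$), this condition is equivalent to $t\in W_{\br{s}}$.

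There is no real obstacle here: the content of the lemma is essentially a tautology given the definition of $E_c$ and the choice $s_1=s$, together with the standard description of the root system of a standard parabolic subgroup. The only point to emphasize while writing it up is that the choice of reduced word for $c$ is legitimate because $E_c$ is well-defined independently of that choice (since all reduced words for $c$ differ by commuting exchanges of generators, and the defining formula for $E_c$ is symmetric in swapping $i$ and $j$ precisely when $s_i$ and $s_j$ commute, i.e.\ when $a_{s_i s_j}=0$).
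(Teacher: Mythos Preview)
Your proposal is correct and matches the paper's approach: the paper simply states that this lemma follows immediately from the definition of $E_c$, and your argument is exactly the unwinding of that definition together with the standard facts about positive roots and parabolic subroot systems.
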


\begin{lemma}\label{Ec final}
Let~$s$ be final in~$c$ and let $\beta^{\vee}_t$ be a positive co-root.
Then $E_c(\beta_t^{\vee}, \alpha_s)$ equals the coefficient of $\alpha^{\vee}_s$ when $\beta_t^{\vee}$ is expanded in the basis of simple co-roots.
In particular, $E_c(\beta_t^{\vee}, \alpha_s) \ge 0$, with equality if and only if $t \in W_{\br{s}}$.
\end{lemma}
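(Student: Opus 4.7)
The proof will be dual to that of Lemma~\ref{Ec initial}, replacing the initial letter of $c$ by the final one. Since $s$ is final in $c$, I can choose a reduced word $s_1 s_2 \cdots s_n$ for $c$ with $s_n = s$. Directly from the definition of $E_c$, for each simple co-root $\alpha^{\vee}_{s_i}$ I get $E_c(\alpha^{\vee}_{s_i}, \alpha_s) = 1$ when $i = n$ and $E_c(\alpha^{\vee}_{s_i}, \alpha_s) = 0$ when $i < n$ (the case $i > n$ does not arise). Writing $\beta^{\vee}_t = \sum_i d_i \alpha^{\vee}_{s_i}$ and using bilinearity, I obtain $E_c(\beta^{\vee}_t, \alpha_s) = d_n$, which is precisely the coefficient of $\alpha^{\vee}_s$ in the simple co-root expansion of $\beta^{\vee}_t$. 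This proves the first assertion.

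For the nonnegativity, I would check that all the $d_i$ are nonnegative. Write $t = w r w^{-1}$ for some $r \in S$; then from the formulas in Section~\ref{Root subsec}, $\beta^{\vee}_t = \delta(r)^{-1} \beta_t$. Since $\beta_t$ is a positive root, its expansion $\beta_t = \sum_i c_i \alpha_{s_i}$ has all $c_i \ge 0$. Converting via $\alpha_{s_i} = \delta(s_i) \alpha^{\vee}_{s_i}$ gives $d_i = c_i \delta(s_i)/\delta(r) \ge 0$, so in particular $d_n \ge 0$. Moreover $d_n = 0$ iff $c_n = 0$, iff $\beta_t$ has no $\alpha_s$-component in the simple root basis, iff $\beta_t \in \Phi_{\br{s}}$, iff $t \in W_{\br{s}}$.

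The only small point to be careful about is that $\delta(r)$ be well-defined on the conjugacy class of $r$ in $W$, but this is precisely the non-standard hypothesis on $\delta$ imposed in Section~\ref{Root subsec} (and automatic in the crystallographic case by Section~\ref{RootConjugacy}). There is no serious obstacle; the whole statement is a one-line bookkeeping exercise once the word for $c$ is ordered so that $s$ sits at the end.
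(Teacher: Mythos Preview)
Your proof is correct and is exactly the argument the paper has in mind: the paper simply states that this lemma ``follows immediately from the definition,'' and your writeup spells out precisely that immediate computation (choosing a word for $c$ ending in $s$, reading off $E_c(\alpha^{\vee}_{s_i},\alpha_s)$ from the definition, and expanding $\beta^{\vee}_t$ in simple co-roots).
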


\begin{lemma}\label{Ec invariant}
If~$s$ is initial or final in $c$, then $E_c(\beta^{\vee},\beta')=E_{scs}(s \beta^{\vee}, s \beta')$ for all $\beta$ and~$\beta'$ in $V$.
\end{lemma}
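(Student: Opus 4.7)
The plan is to reduce the statement to a short case analysis on pairs of basis elements. First I would dispose of the ``final'' case by bootstrapping from the ``initial'' case: if $s$ is final in $c$, then $s$ is initial in $scs$, so applying the initial version of the lemma to $scs$ yields $E_{scs}(\gamma^\vee,\gamma')=E_{s(scs)s}(s\gamma^\vee,s\gamma')=E_c(s\gamma^\vee,s\gamma')$ for all $\gamma^\vee,\gamma'$; substituting $\gamma^\vee=s\beta^\vee$ and $\gamma'=s\beta'$ and using $s^2=e$ then gives the desired identity.

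So assume $s=s_1$ is initial in $c=s_1s_2\cdots s_n$, and write $c'=scs=s_2\cdots s_n s_1$, in which $s$ now occupies position $n$ (last) while each $s_k$ with $k>1$ occupies position $k-1$. By bilinearity of $E_c$ and $E_{c'}$ it is enough to verify $E_c(\alpha_{s_i}^\vee,\alpha_{s_j})=E_{c'}(s\alpha_{s_i}^\vee,s\alpha_{s_j})$ for all $i,j$. Using the action formulas recalled earlier, $s\alpha_{s_j}=\alpha_{s_j}-a_{ss_j}\alpha_s$ for $j>1$ and $s\alpha_{s}=-\alpha_s$, and similarly $s\alpha_{s_i}^\vee=\alpha_{s_i}^\vee-a_{s_is}\alpha_s^\vee$ for $i>1$ and $s\alpha_{s}^\vee=-\alpha_s^\vee$, I would expand the right-hand side by bilinearity and split into the four subcases $\{i=1,i>1\}\times\{j=1,j>1\}$.

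The computation in each subcase rests on three elementary evaluations of $E_{c'}$: from the definition together with the position of $s$ in $c'$, one has $E_{c'}(\alpha_{s_i}^\vee,\alpha_s)=0$ for $i>1$ (since position of $s_i$ is less than position of $s$), $E_{c'}(\alpha_s^\vee,\alpha_{s_j})=a_{ss_j}$ for $j>1$, and $E_{c'}(\alpha_s^\vee,\alpha_s)=1$. When $i,j>1$ the two $\alpha_s$-corrections to $s\alpha_{s_i}^\vee$ and $s\alpha_{s_j}$ contribute terms $-a_{s_is}\cdot a_{ss_j}$ and $+a_{s_is}a_{ss_j}\cdot 1$ that cancel; the remaining term $E_{c'}(\alpha_{s_i}^\vee,\alpha_{s_j})$ matches $E_c(\alpha_{s_i}^\vee,\alpha_{s_j})$ because the relative order of $s_i$ and $s_j$ is the same in both words. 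The mixed cases ($i=1$, $j>1$ and $i>1$, $j=1$) work out because the unique nonzero evaluation after expansion reproduces $0$ or $a_{s_is}$ respectively, which agrees with $E_c$; the diagonal case $i=j=1$ is immediate from $(-1)(-1)=1$ and $E_{c'}(\alpha_s^\vee,\alpha_s)=1$.

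The main obstacle is nothing conceptual, only bookkeeping: keeping the signs and the two off-diagonal ``correction'' terms straight across the four subcases. A slightly slicker route would be to decompose $E_c$ into its symmetric part $\frac12 K$ (for which $W$-invariance is automatic) and its antisymmetric part $\tfrac12\omega_c$, and then verify the transformation law for $\omega_c$; but since $\omega_c$ is itself defined through $E_c$, this saves no real work, and the direct check above is the most efficient proof.
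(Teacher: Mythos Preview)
Your proof is correct and follows essentially the same approach as the paper: reduce the final case to the initial case by swapping $c$ and $scs$, then verify the identity on pairs of simple (co)roots by expanding $E_{scs}(s\alpha_p^\vee,s\alpha_q)$ bilinearly and checking the four cases according to whether $p$ or $q$ equals $s$. The paper's write-up is terser but the computation and case split are identical to yours.
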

\begin{proof}
The cases where~$s$ is initial and final are equivalent by exchanging the roles of~$c$ and $scs$.  
We take~$s$ to be initial and choose a reduced word for~$c$ with $s_1=s$.
By linearity, it is enough to take $p,q\in S$ and verify the relation for $\beta\ck=\alpha^{\vee}_p$ and $\beta'=\alpha_q$.
Then $E_{scs}(s\alpha^{\vee}_p,s\alpha_q)=E_{scs}(\alpha^{\vee}_p-a_{ps}\alpha^{\vee}_s,\alpha_q-a_{sq}\alpha_s)$, which equals \renewcommand{\theequation}{$\ast$}
\begin{equation}\label{rhs}
E_{scs}(\alpha^{\vee}_p,\alpha_q)  -  E_{scs}(\alpha^{\vee}_p,\alpha_s)a_{sq} -  E_{scs}(\alpha^{\vee}_s,\alpha_q)a_{ps}  +  a_{ps}a_{sq}.
\end{equation}
If $p\neq s$ and $q\neq s$ then (\ref{rhs}) is 
\begin{equation*}
E_{scs}(\alpha_p^{\vee}, \alpha_q)  -  0 \cdot a_{sq} -  a_{sq}a_{ps} +  a_{ps}a_{sq}= E_{scs}(\alpha_p^{\vee}, \alpha_q) =E_c(\alpha^{\vee}_p,\alpha_q).
\end{equation*}
If $p=s$ and $q\neq s$ then (\ref{rhs}) is $a_{sq} -  1 \cdot a_{sq}  - a_{sq} \cdot 2 + 2 \cdot a_{sq} = 0 = E_c(\alpha^{\vee}_p, \alpha_q)$.
If $p\neq s$ and $q=s$ then (\ref{rhs}) is $0  -  0 \cdot 2  -  1 \cdot a_{ps}  +  a_{ps}\cdot 2 = a_{ps} = E_c(\alpha^{\vee}_p,\alpha_q)$.
Finally, if $p=q=s$ then (\ref{rhs}) is $1-1\cdot 2-1\cdot 2+2\cdot 2=1=E_c(\alpha^{\vee}_p,\alpha_q)$.
\end{proof}

\begin{remark}
The Euler form arises in quiver theory. 
Lemmas~\ref{Ec initial},  \ref{Ec final} and~\ref{Ec invariant}, in the case where $A$ is symmetric and crystallographic, have simple interpretations and proofs in the context of quiver representations.
For a brief and accessible introduction to quiver representation theory, see~\cite{Derk-Wey}. For a thorough introduction which is well suited to our approach, see~\cite{ASS}. In particular, see  Section~III.3 for the homological motivation underlying the Euler form. One thing that is not in~\cite{ASS} is an explanation of the connection between roots and indecomposable representations for a general quiver; for that we recommend the highly readable paper of Kac~\cite{KacQuiver}. Our work is motivated by connections to cluster algebras, as is much current research on quivers; the reader who wishes to know the best current results relating quiver representation theory to cluster algebras should consult~\cite{Keller}. 
\end{remark}

Define a skew-symmetric form $\omega_c(\beta,\beta')=E_c(\beta,\beta')-E_c(\beta',\beta)$.
Retaining the fixed reduced word $s_1\cdots s_n$ for $c$, the form $\omega_c$ has the following values on simple roots $\alpha_{s_i}$ and $\alpha_{s_j}$:
\[\omega_c(\alpha_{s_i},\alpha_{s_j})=\left\lbrace\begin{array}{ll}
K(\alpha_{s_i},\alpha_{s_j})&\mbox{if }i>j,\\
0&\mbox{if }i=j,\mbox{ or}\\
-K(\alpha_{s_i},\alpha_{s_j})&\mbox{if }i<j.
\end{array}\right.\]

The form $\omega_c$, like the Euler form $E_c$, is independent of the choice of reduced word for~$c$.
In what follows, the sign of $\omega_c$, but not its exact value, will be of crucial importance.
When $W$ is of rank three, there is a vector $\zeta_c$ such that the sign of $\omega_c(\alpha, \beta)$ is positive or negative 
according to whether the angle from $\alpha$ to $\beta$ circles $\RR \zeta_c$ in a counterclockwise or clockwise direction. 
See Figures~\ref{G2tildezeta}, \ref{B3Zeta} and~\ref{542Zeta} and the surrounding text. Unlike the Euler form $E_c$, which is common in quiver representation theory, and the symmetric form $K$, which occurs in Coxeter theory, Lie theory and elsewhere, the form $\omega_c$ seems to be new. The only prior appearances we are aware of are in~\cite{reduced}, by the second author, and~\cite{Palu}.

\begin{remark}
Since $\beta^{\vee}_t$ is a positive scalar multiple of $\beta_t$, the quantities $\omega_c(\beta_{p}, \beta_{q})$ and $\omega_c(\beta^{\vee}_{p}, \beta_{q})$ always have the same sign. In any statement about the sign of $\omega_c$, it is a matter of choice which to write. In this paper, we will prefer the former, because it makes manipulations involving the antisymmetry of $\omega_c$ more transparent. However, in cases where the magnitude of $\omega_c$ is important, the latter always seems to be the relevant quantity. 
\end{remark}

\begin{example} \label{AOrientExample}
%We now describe $\omega_c$ in type $A$. 
The Coxeter group $A_{n-1}$ is naturally isomorphic to the permutation group $S_n$, with simple generators the transpositions $s_i=(i \ i+1)$. 
The reflections are the transpositions $(i j)$ for $1 \leq i < j \leq n$, with $\beta_{(i j)} = \alpha_i + \alpha_{i+1} + \cdots + \alpha_{j-1}$. 
So, for $1 \leq i < j < k \leq n$, we have $\omega_c(\beta_{ij}, \beta_{jk}) = \omega_c(\alpha_{j-1}, \alpha_j)$. 
So $\omega_c(\beta_{ij}, \beta_{jk}) > 0$ if $s_{j-1}$ precedes $s_j$ in $c$, and $\omega_c(\beta_{ij}, \beta_{jk}) < 0$ if $s_j$ precedes $s_{j-1}$. 
The value of $\omega_c$ on other pairs of roots can be worked out similarly.
%We invite the reader to work out the similar formulas for the $\omega$-pairing between the other possible pairs of roots.
\end{example}

The following are the key properties of $\omega_c$. 
The first is immediate from the definition and the second is immediate from Lemma~\ref{Ec invariant}.
Recall that $V_J$ is the real linear span of the parabolic sub root system $\Phi_J$.

\begin{lemma} \label{OmegaRestriction}
Let $J\subseteq S$ and let $c'$ be the restriction of $c$ to $W_J$.
Then $\omega_c$ restricted to $V_J$ is $\omega_{c'}$. 
\end{lemma}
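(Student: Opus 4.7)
The plan is to unravel the definition of $\omega_c$ on the basis $\set{\alpha_s : s \in J}$ of $V_J$ and observe termwise equality with $\omega_{c'}$. Since both forms are bilinear, it suffices to verify the identity $\omega_c(\alpha_s, \alpha_{s'}) = \omega_{c'}(\alpha_s, \alpha_{s'})$ for all $s, s' \in J$.

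First I would fix a reduced word $s_1 s_2 \cdots s_n$ for $c$ and note that $c'$ is represented by the word obtained by deleting the letters in $S \setminus J$. In particular, the relative order of any two letters of $J$ is the same in both words. Thus for $s_{i}, s_{j} \in J$, the trichotomy $i < j$, $i = j$, $i > j$ used in the definition of $\omega_c(\alpha_{s_i},\alpha_{s_j})$ coincides with the trichotomy used to define $\omega_{c'}(\alpha_{s_i},\alpha_{s_j})$ (after reindexing). By independence of $\omega_c$ and $\omega_{c'}$ from the choice of reduced word, this comparison is well-defined.

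Next I would check that the symmetric form $K$ restricts properly. The Cartan matrix entries $a_{ss'}$ for $s, s' \in J$ are precisely the entries of the Cartan matrix of $W_J$, and the restriction $\delta|_J$ is a symmetrizer for it (and satisfies the conjugacy condition, since the $\Gamma^{\odd}$-components of $W_J$ are refinements of those of $W$). Hence $K(\alpha_s, \alpha_{s'}) = \delta(s) a_{s s'}$ takes the same value whether computed in $V$ (for $W$) or in $V_J$ (for $W_J$).

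Combining these two observations, for any $s_i, s_j \in J$ the defining formula
\[\omega_c(\alpha_{s_i},\alpha_{s_j}) = \operatorname{sgn}(i - j)\, K(\alpha_{s_i},\alpha_{s_j})\]
matches the analogous formula for $\omega_{c'}(\alpha_{s_i},\alpha_{s_j})$ term by term, yielding the lemma. There is essentially no obstacle here; the only thing one might worry about is whether the sign recipe for $c'$ really does arise from restricting the sign recipe for $c$, but this is exactly the content of the definition of restriction together with the fact that reduced words for $c$ and $c'$ are unique up to commutations of disjoint generators.
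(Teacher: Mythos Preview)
Your proof is correct and is essentially the same approach as the paper's, which simply declares the lemma ``immediate from the definition.'' You have spelled out the verification on simple roots that the paper leaves implicit; the key observations---that restriction of $c$ preserves the relative order of the letters in $J$, and that the Cartan data (hence $K$) restricts compatibly---are exactly what makes the claim immediate.
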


\begin{lemma} \label{OmegaInvariance}
If~$s$ is initial or final in $c$, then $\omega_c(\beta,\beta')=\omega_{scs}(s\beta,s\beta')$ for all roots $\beta$ and $\beta'$.
\end{lemma}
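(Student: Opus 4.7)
The plan is to deduce this directly from Lemma~\ref{Ec invariant}, since $\omega_c$ is built from $E_c$ by antisymmetrization. The only subtlety is a minor mismatch between the way the two bilinear forms are written: Lemma~\ref{Ec invariant} records the invariance of $E_c(\beta^{\vee},\beta')$, with a co-root in the first slot, whereas $\omega_c(\beta,\beta') = E_c(\beta,\beta') - E_c(\beta',\beta)$ uses roots in both slots. I would first promote Lemma~\ref{Ec invariant} to a statement that is symmetric in the two slots.

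The promotion is immediate from linearity. Since $\alpha_p = \delta(p)\,\alpha^\vee_p$ and the $W$-action on $V$ is linear, we have $s\alpha_p = \delta(p)\,(s\alpha^\vee_p)$. Combining these with bilinearity of $E_c$ and $E_{scs}$ gives
\[
E_c(\alpha_p,\alpha_q) \;=\; \delta(p)\,E_c(\alpha^\vee_p,\alpha_q) \;=\; \delta(p)\,E_{scs}(s\alpha^\vee_p, s\alpha_q) \;=\; E_{scs}(s\alpha_p, s\alpha_q),
\]
using Lemma~\ref{Ec invariant} in the middle equality. By bilinearity in both slots (one extends to each $\alpha_q = \delta(q)\alpha^\vee_q$ as well, or simply notes the relation $E_c(\alpha_p,\alpha_q)=E_{scs}(s\alpha_p, s\alpha_q)$ already holds on a basis), we obtain $E_c(\beta,\beta') = E_{scs}(s\beta, s\beta')$ for arbitrary $\beta, \beta' \in V$.

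From here the lemma is a one-line computation:
\[
\omega_c(\beta,\beta') \;=\; E_c(\beta,\beta') - E_c(\beta',\beta) \;=\; E_{scs}(s\beta, s\beta') - E_{scs}(s\beta', s\beta) \;=\; \omega_{scs}(s\beta, s\beta').
\]
No induction, case analysis, or further Coxeter-theoretic input is needed beyond what is already packaged in Lemma~\ref{Ec invariant}; the only step that deserves a sentence in the write-up is the passage from the ``co-root, root'' formulation of $E_c$-invariance to the ``root, root'' formulation used to define $\omega_c$. I do not anticipate any real obstacle here, which matches the author's remark that the lemma is \emph{immediate} from Lemma~\ref{Ec invariant}.
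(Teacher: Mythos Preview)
Your proposal is correct and is exactly the approach the paper takes: the authors simply say the lemma is ``immediate from Lemma~\ref{Ec invariant}.'' Your extra sentence handling the co-root/root mismatch is fine but arguably unnecessary, since Lemma~\ref{Ec invariant} is already stated ``for all $\beta$ and $\beta'$ in $V$'' and its proof verifies the identity on a basis of $V$ in each slot; thus $E_c(x,y)=E_{scs}(sx,sy)$ holds for arbitrary $x,y\in V$ without any further promotion.
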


\begin{lemma} \label{OmegaNegativity}
Let~$s$ be initial in~$c$ and let~$t$ be a reflection in~$W$.
Then \mbox{$\omega_c(\alpha_s, \beta_t) \geq 0$}, with equality only if~$s$ and~$t$ commute.
\end{lemma}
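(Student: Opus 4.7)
The plan is to fix a reduced word $s_1 s_2 \cdots s_n$ for $c$ with $s_1 = s$ (possible since $s$ is initial in $c$) and expand $\beta_t = \sum_{i=1}^n c_i \alpha_{s_i}$ with $c_i \geq 0$. Bilinearity of $\omega_c$, combined with the explicit formula $\omega_c(\alpha_{s_i}, \alpha_{s_j}) = -K(\alpha_{s_i}, \alpha_{s_j})$ for $i < j$ and $\omega_c(\alpha_{s_i}, \alpha_{s_i}) = 0$, immediately gives
\[ \omega_c(\alpha_s, \beta_t) = -\sum_{i > 1} c_i \, K(\alpha_s, \alpha_{s_i}). \]
Since $K(\alpha_s, \alpha_{s_i}) = \delta(s) \, a_{s s_i} \leq 0$ for $i > 1$ (because $a_{s s_i} \leq 0$ by condition (ii) of the Cartan matrix and $\delta(s) > 0$), every summand is nonnegative, establishing $\omega_c(\alpha_s, \beta_t) \geq 0$.

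For the equality analysis, the sum vanishes precisely when every $s_i \in \supp(\beta_t) \setminus \{s\}$ satisfies $a_{s s_i} = 0$, equivalently, commutes with $s$. Set $J = \supp(\beta_t)$. I would then split into two cases. If $s \notin J$, then $s$ commutes with every element of $J$, so $s \beta_t = \beta_t$; since the reflection $sts$ has positive root $s\beta_t = \beta_t$, we get $sts = t$, hence $s$ and $t$ commute. If $s \in J$, then $s$ commutes with every element of $J \setminus \{s\}$, so the standard parabolic subgroup $W_J$ factors as the internal direct product $\langle s \rangle \times W_{J \setminus \{s\}}$, and its root system $\Phi_J = \Phi \cap V_J$ decomposes as $\{\pm \alpha_s\} \sqcup \Phi_{J \setminus \{s\}}$. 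The positive root $\beta_t$ lies in $\Phi_J$ and has nonzero coefficient on $\alpha_s$, so it cannot lie in $\Phi_{J \setminus \{s\}}$; hence $\beta_t = \alpha_s$, giving $t = s$, which trivially commutes with $s$.

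The nonnegativity half is a one-line consequence of the definitions, so the substantive content is the equality case when $s$ itself appears in $\supp(\beta_t)$. The pointwise vanishing of $c_i K(\alpha_s, \alpha_{s_i})$ by itself permits a root $\beta_t$ involving $\alpha_s$ together with simple roots commuting with $s$; the main step is to rule this out by invoking the direct-product decomposition of $W_J$ and the identification $\Phi \cap V_J = \Phi_J$, forcing $\beta_t$ to be $\alpha_s$ itself.
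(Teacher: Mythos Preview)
Your proof is correct and follows essentially the same approach as the paper: expand $\beta_t$ in simple roots and use that $\omega_c(\alpha_s,\alpha_{s_i}) = -K(\alpha_s,\alpha_{s_i}) \ge 0$ for $i>1$. For the equality case, the paper avoids your case split by taking $J = \{s' \in S : ss' = s's\}$ (which contains $s$); the vanishing condition gives $\supp(\beta_t) \subseteq J$, so $t \in W_J$, and since $s$ commutes with every generator of $W_J$ it is central there and hence commutes with $t$---no need to analyze whether $s \in \supp(\beta_t)$ or to invoke the root-system decomposition.
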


\begin{proof}
Expanding in the basis of simple roots, write $\beta_t=\sum_{r \in S} a_r \alpha_{r}$.
Since $\beta_t$ is a positive root, all of the $a_r$ are nonnegative. 
So $\omega_c(\alpha_s, \beta_t)= - \sum_{r \in S \setminus \set{s}} a_r K(\alpha_s, \alpha_r)$. 
Each $K(\alpha_s, \alpha_r)$ is nonpositive, yielding the required inequality.

Furthermore $\omega_c(\alpha_s, \beta_t)=0$ only if $a_r=0$ for all $r$ such that $\omega_c(\alpha_s, \alpha_r)\neq 0$. 
Thus $\omega_c(\alpha_s, \beta_t)=0$ only if~$t$ is a reflection in the standard parabolic subgroup $W_{J}$, where $J=\{ s' : s s' = s' s \}$.   
Since $s$ commutes with every generator of $W_J$, we know that $s$ commutes with $t$.
\end{proof}
A similar proof establishes the following:
\begin{lemma} \label{OmegaPositivity}  
Let~$s$ be final in~$c$ and let~$t$ be a reflection in~$W$.
Then $\omega_c(\alpha_s, \beta_t) \leq 0$, with strict equality only if~$s$ and~$t$ commute.
\end{lemma}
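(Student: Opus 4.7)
The plan is to mirror the proof of Lemma~\ref{OmegaNegativity} essentially verbatim, with the sole structural difference being the sign of $\omega_c$ evaluated on $(\alpha_s,\alpha_r)$ for simple $r$.

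First I would unpack what ``$s$ final in $c$'' contributes. Choose a reduced word $s_1\cdots s_n$ for $c$ with $s_n=s$. Then for every other simple reflection $r=s_j$ with $j<n$, the definition of $\omega_c$ on simple roots gives
\[
\omega_c(\alpha_s,\alpha_r) \;=\; K(\alpha_s,\alpha_r),
\]
which is $\le 0$ since $r\neq s$ and $K(\alpha_s,\alpha_r)=\delta(s)a_{sr}\le 0$. It is strictly negative precisely when $a_{sr}\neq 0$, i.e.\ precisely when $m(s,r)\neq 2$, i.e.\ when $s$ and $r$ do not commute.

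Next I would expand $\beta_t=\sum_{r\in S}a_r\alpha_r$ with all $a_r\ge 0$ (since $\beta_t$ is positive), apply bilinearity, and use $\omega_c(\alpha_s,\alpha_s)=0$ to obtain
\[
\omega_c(\alpha_s,\beta_t)\;=\;\sum_{r\in S\setminus\{s\}} a_r\,K(\alpha_s,\alpha_r).
\]
Each summand is a product of a nonnegative and a nonpositive number, so the total is $\le 0$, yielding the first claim. For the equality case, $\omega_c(\alpha_s,\beta_t)=0$ forces $a_r=0$ for every $r$ with $K(\alpha_s,\alpha_r)<0$, i.e.\ for every $r\in S$ that does not commute with $s$. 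Hence $\beta_t$ lies in the span of $\{\alpha_r:r\in J\}$, where $J=\{r\in S:rs=sr\}\cup\{s\}$, so $t\in W_J$. Since $s$ commutes with every generator of $W_J$, Lemma~\ref{commutes} (or just the definition of $J$) implies $s$ commutes with $t$.

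I do not anticipate any real obstacle: this is the formal dual of Lemma~\ref{OmegaNegativity}, differing only in that ``initial'' is replaced by ``final'' and the resulting sign on $\omega_c(\alpha_s,\alpha_r)$ is flipped. (One could also simply observe that $\omega_{c^{-1}}=-\omega_c$ and that $s$ is final in $c$ iff $s$ is initial in $c^{-1}$, deducing the lemma directly from Lemma~\ref{OmegaNegativity}; but the direct argument above is equally short and self-contained.)
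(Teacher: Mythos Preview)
Your proposal is correct and is exactly the approach the paper intends: the paper simply states ``A similar proof establishes the following'' immediately after the proof of Lemma~\ref{OmegaNegativity}, and your argument is precisely that mirror-image proof with the sign of $\omega_c(\alpha_s,\alpha_r)$ flipped because $s$ is now final rather than initial. Your parenthetical observation that $\omega_{c^{-1}}=-\omega_c$ gives an equally valid one-line deduction from Lemma~\ref{OmegaNegativity}.
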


The form $\omega_c$ has a special relation with the reflection sequences of $c$-sortable elements.
Recall that when $a_1\cdots a_k$ is a reduced word for some $w\in W$, the reflection sequence associated to $a_1\cdots a_k$ is $t_1,\ldots,t_k$, where $t_i=a_1a_2\cdots a_i\cdots a_2a_1$.

\begin{prop} \label{InversionOrdering}
Let $a_1\cdots a_k$ be a reduced word for some $w\in W$ with reflection sequence $t_1,\ldots,t_k$.
Then the following are equivalent:
\begin{enumerate}
\item[(i) ]$\omega_{c}(\beta_{t_i}, \beta_{t_j}) \geq 0$ for all $i\le j$ with strict inequality holding unless $t_i$ and $t_j$ commute.
\item[(ii) ]$w$ is $c$-sortable and $a_1\cdots a_k$ can be converted to a $c$-sorting word for~$w$ by a sequence of transpositions of adjacent commuting letters.
\end{enumerate}
\end{prop}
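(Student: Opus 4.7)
The plan is to prove both directions by simultaneous induction on $\ell(w)$ and the rank $n$ of $W$, mirroring the recursive structure of Proposition~\ref{CSortRecursive}. Fix an initial simple generator $s$ of $c$. Case A is $s \in \inv(w)$, in which we reduce to $sw$ and the Coxeter element $scs$, using Lemma~\ref{OmegaInvariance} to transport $\omega_c$ to $\omega_{scs}$. Case B is $s \notin \inv(w)$, in which we reduce to $W_{\br{s}}$ with the restricted Coxeter element $sc$, using Lemma~\ref{OmegaRestriction} to identify $\omega_c|_{V_{\br{s}}}$ with $\omega_{sc}$. Throughout, Lemma~\ref{OmegaNegativity} is the uniform tool for controlling $\omega_c(\alpha_s, \beta_t)$.

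For (ii) $\Rightarrow$ (i), work directly with the $c$-sorting word (the extension to an arbitrary commutation-equivalent reduced word will follow because the permissible swaps can be checked to preserve (i), a byproduct of the computations below). In Case A the sorting word begins with $s$, so $t_1 = s$ and, for $i \geq 2$, $t_i = s r_{i-1} s$ where $r_1, \ldots, r_{k-1}$ is the reflection sequence of the $scs$-sorting word for $sw$; inductively that sequence satisfies (i) for $\omega_{scs}$, Lemma~\ref{OmegaInvariance} converts this to (i) for $\omega_c$ on the pairs with $i, j \geq 2$, and Lemma~\ref{OmegaNegativity} handles the pairs with $i = 1$. In Case B every $\beta_{t_i}$ lies in $V_{\br{s}}$, and Lemma~\ref{OmegaRestriction} reduces to (i) for $\omega_{sc}$ on the $sc$-sorting word of $w \in W_{\br{s}}$, provided by the rank induction.

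For (i) $\Rightarrow$ (ii) in Case A, let $m$ be the index with $t_m = s$. For each $l < m$, property (i) gives $\omega_c(\beta_{t_l}, \alpha_s) \geq 0$ while Lemma~\ref{OmegaNegativity} gives $\omega_c(\alpha_s, \beta_{t_l}) \geq 0$; skew-symmetry collapses both to zero. The computation inside the proof of Lemma~\ref{OmegaNegativity} then forces each such $t_l$ to lie in the parabolic $W_{J_s}$, where $J_s := \{s' \in S : ss' = s's\}$. Hence $\inv(a_1 \cdots a_{m-1}) \subseteq W_{J_s}$, so $a_1 \cdots a_{m-1} \in W_{J_s}$; every letter preceding $a_m$ commutes with $s$ and $a_m = s$, and commuting $a_m$ to the front produces a reduced word $s \cdot b_2 \cdots b_k$ for $w$. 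Lemma~\ref{OmegaInvariance} converts property (i) on the tail into property (i) for the $scs$-reflection sequence of $sw$, and the length induction yields the $c$-sorting word for $w$.

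The main obstacle is (i) $\Rightarrow$ (ii) in Case B, where $s \notin \inv(w)$ and we must prove $w \in W_{\br{s}}$. Suppose some $a_j = s$ and take $j$ minimal; then $v := a_1 \cdots a_{j-1} \in W_{\br{s}}$, $v \neq e$, and $\beta_{t_j} = v\alpha_s \neq \alpha_s$. Lemma~\ref{span inv s} expresses $\beta_{t_j} = \alpha_s + \gamma$ with $\gamma = \sum_{l < j} d_l \beta_{t_l}$, $d_l \geq 0$, and $\gamma \neq 0$. Skew-symmetry yields the identity
\[
\sum_{l < j} d_l\, \omega_c(\beta_{t_l}, \beta_{t_j}) \;=\; \omega_c(\gamma, \beta_{t_j}) \;=\; \omega_c(\gamma, \alpha_s) \;=\; -\omega_c(\alpha_s, \gamma).
\]
Hypothesis (i) makes the left side non-negative, and the computation inside Lemma~\ref{OmegaNegativity} gives $\omega_c(\alpha_s, \gamma) \geq 0$ with equality only when $\gamma$ is supported on $J_s$. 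Both sides must therefore vanish, so $\beta_{t_j}$ lies in the root system of $W_{\{s\} \cup J_s}$; but that parabolic decomposes as $\langle s \rangle \times W_{J_s}$, whose only positive root with $\alpha_s$-coefficient $1$ is $\alpha_s$ itself, contradicting $\beta_{t_j} \neq \alpha_s$. Hence $w \in W_{\br{s}}$ and the rank induction concludes. The delicate point in this step is the summed identity above, which converts the non-negative terms required by (i) into a single quantity whose opposite-sign bound from Lemma~\ref{OmegaNegativity} rules out every possibility except $\beta_{t_j} = \alpha_s$.
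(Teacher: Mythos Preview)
Your proof is correct and follows the same inductive scheme as the paper's (length and rank induction governed by an initial letter $s$ of $c$, invoking Lemmas~\ref{OmegaRestriction}, \ref{OmegaInvariance}, \ref{OmegaNegativity} and~\ref{span inv s}); the direction $(ii)\Rightarrow(i)$ is essentially identical to the paper's.

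For $(i)\Rightarrow(ii)$ you organize the argument differently, and the comparison is worth noting. The paper splits on whether $s$ occurs as a \emph{letter} in $a_1\cdots a_k$; you split on whether $s\in\inv(w)$. Your Case~A is actually cleaner than the paper's corresponding step: where the paper first pushes the earliest occurrence of $s$ as far forward as possible and then uses Lemma~\ref{span inv s} to contradict $j>1$, you observe directly from Lemma~\ref{OmegaNegativity} and skew-symmetry that every $t_l$ with $l<m$ already lies in $W_{J_s}$, so Lemma~\ref{span inv s} is not needed at all in this case. In exchange, Lemma~\ref{span inv s} does its work in your Case~B, where your summed identity $\sum_{l<j} d_l\,\omega_c(\beta_{t_l},\beta_{t_j})=-\omega_c(\alpha_s,\gamma)$ squeezes $\beta_{t_j}$ into $\Phi_{J_s}$; the paper's computation instead pairs only $\beta_{t_{j-1}}$ against $\beta_{t_j}$. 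Both routes reach the same conclusion; yours trades the paper's ``move $s$ forward and contradict'' maneuver for a more direct argument in Case~A and a slightly more global one in Case~B. One notational slip: since $s\in J_s$ under your definition, the decomposition should read $\langle s\rangle\times W_{J_s\setminus\{s\}}$ rather than $\langle s\rangle\times W_{J_s}$.
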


Figures~\ref{542Zeta} and~\ref{word fig}  illustrate the sequence of partial products of a $c$-sorting word, and the corresponding reflection sequence, in such a way that the truth of condition~(i) is visually apparent.

\begin{proof}
We prove each implication separately by induction on the rank of~$W$ and the length of~$w$.
The case where either of these is $0$ is trivial and serves as our base case. 
We take advantage of the fact that both (i) and (ii) are unaffected by transpositions of adjacent commuting letters of $a_1\cdots a_k$.

Suppose (i) holds.
Let~$s$ be initial in~$c$.
If~$s$ does not occur in the sequence $a_1\cdots a_k$, then $w\in W_{\br{s}}$.
By Lemma~\ref{OmegaRestriction}, (i) holds with respect to $sc$ for~$w$ and $a_1\cdots a_k$ in $W_{\br{s}}$, so by induction on rank,~$w$ is $sc$-sortable and $a_1\cdots a_k$ can be converted to an $sc$-sorting word $a'_1\cdots a'_k$ for~$w$ by transpositions of adjacent commuting letters.
By Proposition~\ref{CSortRecursive},~$w$ is $c$-sortable.
Furthermore, $a'_1\cdots a'_k$ is a $c$-sorting word for~$w$.

If~$s$ occurs in $a_1\cdots a_k$ then, if necessary, apply a sequence of transpositions of adjacent commuting letters so as to place the  first appearance of $s$ as early as possible.
Let $a_1\cdots a_k$ now stand for the new word thus obtained and let $a_j$ be the first occurrence of $s$.  

Suppose (for the sake of contradiction) that $j>1$. Lemma~\ref{span inv s} states that $\beta_{t_j}$ is in the positive span of $\alpha_s$ and $\beta_{t_i}$ for $i<j$, say
\[\beta_{t_j}=a \alpha_s + \sum_{i=1}^{j-1} b_i \beta_{t_i}.\]
By hypothesis, $\omega_c(\beta_{t_{j-1}}, \beta_{t_j}) \geq 0$. 
On the other hand, 
\[\omega_c(\beta_{t_{j-1}}, \beta_{t_j}) = a \omega_c(\beta_{t_{j-1}}, \alpha_s) + \sum_{i=1}^{j-2} b_i \omega_{c}(\beta_{t_{j-1}}, \beta_{t_i}).\]
By our hypothesis, by Lemma~\ref{OmegaNegativity} and by the antisymmetry of $\omega_c$, all of the terms on the right side are nonpositive.
Thus $\omega_c(\beta_{t_{j-1}}, \beta_{t_j})=0$, 
and by the hypothesis again, $t_{j-1}$ and $t_j$ commute, so $a_{j-1}$ and $a_j$ commute. 
This contradiction to our choice of $a_1\cdots a_k$ proves that $j=1$ so $a_1=s$.

Now consider the reduced word $a_2\cdots a_k$ for $sw$.
By Lemma~\ref{OmegaInvariance}, condition (i) holds for $sw$ and $a_2\cdots a_k$ with respect to $\omega_{scs}$.
By induction on length, $sw$ is $scs$-sortable and $a_2\cdots a_k$ can be converted to an $scs$-sorting word for $sw$ by transpositions of commuting letters.
More specifically, since all $scs$-sorting words for $sw$ are related by transpositions of commuting letters, $a_2\cdots a_k$ can be converted to the $(s_2\cdots s_ns_1)$-sorting word for $sw$, where $s_1\cdots s_n$ is a reduced word for $c$ with $s_1=s$.
Prepending $a_1$ then gives the $(s_1\cdots s_n)$-sorting word for the $c$-sortable element~$w$.
We have established that (i) implies (ii).

Now suppose (ii) holds.
We may as well assume that $a_1\cdots a_k$ actually \textbf{is} the $(s_1\cdots s_n)$-sorting word for~$w$, where $s_1\cdots s_n$ is a reduced word for $c$.
Let~$s=s_1$.
If $w\not\ge s$ then all the $a_i$ and $t_i$ are in $W_{\br{s}}$ and all the $\beta_{t_i}$ are in $\Phi_{\br{s}}$.
By induction on rank, (i) holds with respect to $\omega_{sc}$.
By Lemma~\ref{OmegaRestriction}, (i) then holds with respect to $\omega_c$.

If $w \geq s$ then $a_1=s$.
Lemma~\ref{OmegaNegativity} says that $\omega_{c}(\alpha_s, \beta_t) \geq 0$ for all reflections~$t$, with strict inequality unless~$s$ and~$t$ commute.
Thus (i) holds in the case $i=1$.
We now consider the case $i>1$. 
The word $a_2 \cdots a_k$ is the $(s_2\cdots s_ns_1)$-sorting word for $sw$. 
For each~$j$ with $2\le j\le k$, let $t'_j=a_2a_3 \cdots a_j \cdots a_3a_2$. 
By induction on length, $\omega_{scs}(\beta_{t'_i}, \beta_{t'_j}) \geq 0$ when $2\le i<j\le k$, with strict inequality unless $t'_i$ and $t'_j$ commute. 
Now, since $a_1 \cdots a_k$ is reduced, $s \beta_{t'_j}=\beta_{s t'_j s}=\beta_{t_j}$ (rather than $- \beta_{t_j}$).
Thus $\omega_{c}(\beta_{t_i}, \beta_{t_j})=\omega_{scs}(\beta_{t'_i}, \beta_{t'_j}) \geq 0$ when $i<j$, with strict inequality unless $t_i$ and $t_j$ commute. 
\end{proof}

Proposition~\ref{InversionOrdering} leads to a uniform proof (valid for arbitrary~$W$) of the following lemma \cite[Lemma~6.7]{sortable} which was proved in~\cite{sortable} for finite~$W$ by a case-by-case check of the classification of finite Coxeter groups. 
\begin{lemma}\label{nc lemma 2}
Let~$s$ be final in~$c$ and let $v\in W$ be $c$-sortable.
Then $v \geq s$ if and only if~$s$ is a cover reflection of~$v$.
\end{lemma}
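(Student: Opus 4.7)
The plan is to split the biconditional into the trivial reverse direction and the substantive forward direction; the forward direction will exploit the $\omega_c$-characterization of reflection sequences of $c$-sortable elements.

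For the reverse direction I would simply observe that any cover reflection is by definition an inversion, so $s \in \cov(v)$ forces $s \in \inv(v)$, which is equivalent to $v \geq s$ by Proposition~\ref{AboveBelow}. This uses neither the hypothesis that $v$ is $c$-sortable nor the finality of $s$.

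For the forward direction, I would fix a $c$-sorting word $a_1 \cdots a_k$ for $v$ and let $t_1, \ldots, t_k$ be its reflection sequence. Since $v \geq s$ gives $s \in \inv(v)$, there is a unique index $i$ with $t_i = s$. Proposition~\ref{InversionOrdering} supplies the sign condition $\omega_c(\beta_{t_i}, \beta_{t_j}) \geq 0$ for every $j > i$, with strict inequality unless $t_i$ and $t_j$ commute. Since $s$ is final in $c$, Lemma~\ref{OmegaPositivity} gives the opposite inequality $\omega_c(\alpha_s, \beta_{t_j}) \leq 0$, with equality only when $s$ and $t_j$ commute. Setting $t_i = s$ pins both bounds to zero for every $j > i$, so $s$ commutes with each of $t_{i+1}, \ldots, t_k$.

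It remains to convert this commutation information into the statement that $s$ is a cover reflection, and I would do so by bubbling $s$ to the final position in the reflection sequence. The key observation is that whenever two consecutive members $t_j, t_{j+1}$ of a reflection sequence commute, so do the underlying letters $a_j, a_{j+1}$: conjugating the identity $t_j t_{j+1} = t_{j+1} t_j$ by the common prefix $a_1 \cdots a_{j-1}$ reduces it to $a_j a_{j+1} = a_{j+1} a_j$. Applying the corresponding commutation move produces a reduced word whose reflection sequence has $t_j$ and $t_{j+1}$ swapped. Iterating, I bubble $s = t_i$ rightward past $t_{i+1}, t_{i+2}, \ldots, t_k$, each swap being legal because $s$ commutes with the reflection it is passing. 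The resulting reduced word $a'_1 \cdots a'_k$ for $v$ ends its reflection sequence with $s$, so a direct computation of the last reflection gives $sv = a'_1 \cdots a'_{k-1}$ and hence $v = (sv)\,a'_k$; this exhibits $s$ as a cover reflection of $v$, with associated right descent $a'_k$.

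The main obstacle I anticipate is the bubbling step: translating \emph{consecutive reflections commute} into \emph{we may perform a commutation move on the underlying reduced word} is the only point that requires a short calculation rather than a direct citation. An alternative would be to invoke Lemma~\ref{dyer lem} and verify directly that the rearranged sequence continues to satisfy the rank-two condition in every generalized rank-two parabolic subgroup, but the braid-relation argument above is more transparent and localizes the work to a single pair of positions at a time.
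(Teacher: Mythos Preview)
Your proof is correct and follows essentially the same route as the paper: both fix a $c$-sorting word, locate $s=t_i$ in the reflection sequence, combine Proposition~\ref{InversionOrdering} with Lemma~\ref{OmegaPositivity} to force $\omega_c(\alpha_s,\beta_{t_j})=0$ and hence $s$ commutes with every $t_j$ for $j>i$, and then translate this into $s\in\cov(v)$. The only difference is in that final translation: you bubble $a_i$ rightward one position at a time, while the paper conjugates once by $a_1\cdots a_i$ to see that $a_i$ commutes with every inversion of $a_{i+1}\cdots a_k$ and then invokes Lemma~\ref{commutes} to conclude that $a_i$ commutes with the product $a_{i+1}\cdots a_k$, giving $va_i<v$ with cover reflection $t_i=s$ in one stroke.
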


\begin{proof}
The ``if'' statement is by the definition of cover reflection, so we suppose $v \geq s$ and prove that~$s$ is a cover reflection.
Let $t_1,\ldots,t_k$ be the reflection sequence of a $c$-sorting word $a_1\cdots a_k$ for $v$ and let $i$ be such that $t_i=s$.
By Proposition~\ref{InversionOrdering}, $\omega_{c}(\alpha_s, \beta_{t_j}) \geq 0$ for all $j \geq i$ with strict inequality holding unless $t_i$ and $t_j$ commute.
Lemma~\ref{OmegaPositivity} states that $\omega_{c}(\alpha_s, \beta_t) \leq 0$ for any~$t$.
Thus $\omega_{c}(\alpha_s, \beta_{t_j})=0$ for all $j \geq i$, so that $t_i$ commutes with $t_j$ for all $j \geq i$.
Conjugating by $a_{i}\cdots a_1$, we see that $a_i$ commutes with $a_{i+1}\cdots a_j\cdots a_{i+1}$ for all $i\le j$.
By Lemma~\ref{commutes}, $a_i$ commutes with $a_{i+1}\cdots a_k$.
In particular, $va_i<v$ so that $a_1\cdots a_{i-1}a_{i+1}\cdots a_ka_ia_k\cdots a_{i+1}a_{i-1}\cdots a_1=t_i=s$ is a cover reflection of~$w$.
\end{proof}

Proposition~\ref{InversionOrdering} also implies the following useful fact.

\begin{prop}\label{sort para}
If $v$ is $c$-sortable and $J\subseteq S$, then $v_J$ is $c'$-sortable, where $c'$ is the restriction of $c$ to $W_J$.
\end{prop}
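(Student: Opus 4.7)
The plan is to apply Proposition~\ref{InversionOrdering} in both directions, using the fact that $\omega_c$ restricts to $\omega_{c'}$ on $V_J$ (Lemma~\ref{OmegaRestriction}). The main issue will be producing a reduced word for $v_J$ whose reflection sequence inherits the sign condition on $\omega_c$ from a $c$-sorting word for $v$.

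Concretely, let $a_1 \cdots a_k$ be a $c$-sorting word for $v$ with reflection sequence $t_1, \ldots, t_k$, which by Proposition~\ref{InversionOrdering} satisfies $\omega_c(\beta_{t_i}, \beta_{t_j}) \geq 0$ for all $i \leq j$, with strict inequality unless $t_i$ and $t_j$ commute. Let $t_{i_1}, \ldots, t_{i_m}$ be the subsequence consisting of those $t_i$ that lie in $W_J$; since $\inv(v) \cap W_J = \inv(v_J)$, these are exactly the elements of $\inv(v_J)$. I would first verify that this subsequence is the reflection sequence of some reduced word for $v_J$ inside $W_J$. By Lemma~\ref{dyer lem}, this is equivalent to checking that for every generalized rank two parabolic subgroup $W''$ of $W_J$, the sub-subsequence lying in $W''$ is an initial or final segment of the reflections of $W''$. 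The observation I would rely on is that a generalized rank two parabolic subgroup of $W_J$ is also one of $W$ (both are defined via roots lying in a common two-dimensional subspace of $V_J \subseteq V$), so this condition is inherited from the corresponding condition on the full sequence $t_1, \ldots, t_k$, which we get by applying Lemma~\ref{dyer lem} to the $c$-sorting word.

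Once a reduced word $b_1 \cdots b_m$ for $v_J$ with reflection sequence $t_{i_1}, \ldots, t_{i_m}$ is in hand, the $\omega_c$-nonnegativity condition on the subsequence is automatic from the condition on the full sequence. By Lemma~\ref{OmegaRestriction}, the restriction of $\omega_c$ to $V_J$ equals $\omega_{c'}$, so the same sequence satisfies $\omega_{c'}(\beta_{t_{i_p}}, \beta_{t_{i_q}}) \geq 0$ for $p \leq q$, with strict inequality unless $t_{i_p}$ and $t_{i_q}$ commute. Applying Proposition~\ref{InversionOrdering} now inside $W_J$ yields that $v_J$ is $c'$-sortable (and in fact $b_1 \cdots b_m$ can be rearranged by commutations into a $c'$-sorting word for $v_J$).

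The potential obstacle is the first step, matching generalized rank two parabolic subgroups of $W_J$ with those of $W$; this is where one must be careful, since we have not defined generalized parabolic subgroups of higher rank and there is no general statement that a generalized parabolic subgroup of a standard parabolic is one of the ambient group. However in rank two the argument is immediate from the definition, as both notions amount to taking all reflections whose roots lie in a given two-dimensional subspace of $V_J$ (which is also a subspace of $V$). With that observation, the proof reduces to stringing together Proposition~\ref{InversionOrdering}, Lemma~\ref{dyer lem}, and Lemma~\ref{OmegaRestriction}.
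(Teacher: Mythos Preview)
Your argument is correct and follows essentially the same route as the paper's proof: extract the subsequence of the reflection sequence lying in $W_J$, use Lemma~\ref{dyer lem} to see it is the reflection sequence of a reduced word for $v_J$, and then apply Lemma~\ref{OmegaRestriction} together with Proposition~\ref{InversionOrdering} to conclude $c'$-sortability. The only cosmetic difference is that where the paper invokes Lemma~\ref{int} to control $W'\cap W_J$ for each generalized rank two parabolic $W'$ of $W$, you instead observe directly that a generalized rank two parabolic of $W_J$ is already one of $W$; these two observations do the same work.
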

\begin{proof}
Suppose $v$ is $c$-sortable and let $t_1\cdots t_k$ be the reflection sequence of a $c$-sorting word for $v$.
Recall that the inversion set of $v_J$ is $\inv(v)\cap W_J$.
Order $\inv(v_J)$ by restricting the ordering $t_1\cdots t_k$.
Lemmas~\ref{int} and~\ref{dyer lem} imply that the restricted order is the reflection sequence for a reduced word $a_1\cdots a_m$ for $v_J$.
Lemma~\ref{OmegaRestriction} and Proposition~\ref{InversionOrdering} imply that $a_1\cdots a_m$ can be converted, by a sequence of transpositions of adjacent commuting letters, to a $c'$-sorting word for $v_J$.
\end{proof}

\section{Orientation and alignment}\label{align sec}  

In this section, we relate Proposition~\ref{InversionOrdering} to the notions of $c$-orientation and $c$-alignment defined in~\cite{sortable}. 
To understand the essential idea behind orientation and alignment, consider the example of the rank two Coxeter group~$W$ of type $B_2$, with $S=\set{s_1,s_2}$ and $m(s_1,s_2)=4$.
By Lemma~\ref{PilkLemma}, the inversion set of any element of~$W$ is either an initial segment or a final segment of the sequence $s_1,s_1s_2s_1,s_2s_1s_2,s_2$.
The inversion sets of the $s_1s_2$-sortable elements are 
\[\emptyset, \set{s_1},\, \set{s_1,s_1s_2s_1},\, \set{s_1,s_1s_2s_1,s_2s_1s_2},\, \set{s_1,s_1s_2s_1,s_2s_1s_2,s_2},\mbox{ and }\set{s_2}.\]
Notice that if the inversion set of an $s_1s_2$-sortable element contains more than one reflection, it must be an initial segment of $s_1,s_1s_2s_1,s_2s_1s_2,s_2$.
By symmetry, the inversion sets of $s_2s_1$-sortable elements containing more than one reflection must be final segments of $s_1,s_1s_2s_1,s_2s_1s_2,s_2$.
Thus the choice of Coxeter element specifies an orientation of the reflections in the rank two Coxeter group.

The content of \cite[Section~3]{sortable} is that for any finite Coxeter group~$W,$ the choice of a Coxeter element specifies an orientation, with certain desirable properties, of each rank two parabolic subgroup of~$W$.
The content of \cite[Theorem~4.1]{sortable} is that an element $w\in W$ is $c$-sortable if and only if it is $c$-aligned, meaning that, for any rank two parabolic subgroup $W'$, if the intersection $\inv(w)\cap W'$ has more than one element, it respects the $c$-orientation of $W'$ in the sense of the $B_2$ example above.

Theorem~\ref{sortable is aligned}, below, generalizes \cite[Theorem~4.1]{sortable} to the case where~$W$ is not necessarily finite.
The proof of Theorem~\ref{sortable is aligned} specializes to an alternate proof of \cite[Theorem~4.1]{sortable} without resorting to type-by-type arguments.
Theorem~\ref{sortable is aligned} is a strengthening of Proposition~\ref{InversionOrdering} which provides a more pleasant characterization of sortability:
It characterizes a sortable element by its inversion set, rather than by the requirement that its inversion sets can be totally ordered in a special way.

The statement of Theorem~\ref{sortable is aligned} differs in notation from the statement of \cite[Theorem~4.1]{sortable}.
In~\cite{sortable}, $c$-orientations are described as a directed cycle structure on the reflections of each rank two parabolic subgroup~$W,$
and \cite[Theorem~4.1]{sortable} is phrased in terms of the oriented cycle structure.
The oriented cycles of~\cite{sortable} are natural in light of the combinatorial models for $c$-orientation introduced in \cite[Section~3]{sortable}.
In the present context, it is more reasonable to totally order the reflections of $W'$ based on the sign of $\omega_c$ applied to the corresponding positive roots, and phrase Theorem~\ref{sortable is aligned} in terms of the total orders.
The interested reader will easily see the relationship between the directed cycles of~\cite{sortable} and the total orders used here.

Let $W'$ be a generalized rank two parabolic subgroup of~$W$ with canonical generators $r_1$ and $r_2$, and continue to index the reflections of $W'$ as $u_1$, $u_2$, \ldots, $u_m$ as explained in the paragraph before Lemma~\ref{PilkLemma}.

\begin{prop} \label{OmegaCyclic} 
Let $W'$ be a generalized rank two parabolic subgroup with reflections indexed as above.
\begin{enumerate}
\item[(i) ] If $\omega_c(\beta_{u_1},\beta_{u_m})=0$ then $\omega_c(\beta_{u_i}, \beta_{u_j})=0$ for all indices $i$ and~$j$. 
\item[(ii) ] If $\omega_c(\beta_{u_1},\beta_{u_m})<0$ then $\omega_c(\beta_{u_i},\beta_{u_j})<0$ for all indices $i < j$.
\item[(iii) ] If $\omega_c(\beta_{u_1},\beta_{u_m})>0$ then $\omega_c(\beta_{u_i},\beta_{u_j})>0$ for all indices $i < j$.
\end{enumerate}
\end{prop}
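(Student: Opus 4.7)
The plan is to reduce the proposition to a single computation on the two-dimensional subspace $U = \Span(\beta_{r_1}, \beta_{r_2})$. Any skew-symmetric bilinear form on a two-dimensional space is a scalar multiple of the determinant in any fixed basis, so with the basis $B = (\beta_{u_1}, \beta_{u_m})$ we have
\[\omega_c(x, y) = \omega_c(\beta_{u_1}, \beta_{u_m}) \cdot \det\nolimits_B(x, y)\]
for all $x,y \in U$. By Theorem~\ref{deodyer}, each $\beta_{u_i}$ lies in the positive span of $\beta_{r_1}$ and $\beta_{r_2}$, so there exist coordinates $\beta_{u_i} = a_i \beta_{u_1} + b_i \beta_{u_m}$ with $a_i, b_i \ge 0$, yielding
\[\omega_c(\beta_{u_i}, \beta_{u_j}) = (a_i b_j - a_j b_i) \cdot \omega_c(\beta_{u_1}, \beta_{u_m}).\]
Part (i) is immediate. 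For parts (ii) and (iii), it suffices to show $a_i b_j - a_j b_i > 0$ whenever $1 \le i < j \le m$---equivalently, that the slope $b_i/a_i$ (interpreted in $[0,+\infty]$) is strictly increasing in $i$, or geometrically that the rays through $\beta_{u_1},\beta_{u_2},\dots,\beta_{u_m}$ appear in that order along the positive cone of $U$ from $\beta_{r_1}$ to $\beta_{r_2}$.

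To prove this monotonicity I would exploit the dihedral structure of $W'$ via the identities $\beta_{u_{2k+1}} = (r_1 r_2)^k \alpha_{r_1}$ and $\beta_{u_{2k+2}} = (r_1 r_2)^k (r_1 \alpha_{r_2})$, which express the sequence of rays as the orbit of two initial rays under the ``rotation'' $r_1 r_2$ restricted to $U$. The eigenstructure of this linear map depends on whether the restricted form $K|_U$ is positive definite, degenerate, or indefinite---the finite, affine, and hyperbolic dihedral cases---and in each case elementary analysis shows that the orbit sweeps the positive cone of $U$ monotonically. Alternatively, one can argue inductively using the dihedral identity $u_{i+1} u_i u_{i+1} = u_{i+2}$, which (after tracking positivity) lets one express $\beta_{u_{i+2}}$ as a positive linear combination of $\beta_{u_{i+1}}$ and $\beta_{u_i}$ and propagate the slope inequality one step at a time.

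The main obstacle I anticipate is making the monotonicity rigorous in the infinite dihedral settings, where the orbit of $r_1 r_2$ accumulates at one or two limit rays sitting inside the positive cone of $U$. One must verify that, despite this accumulation, consecutive rays $[\beta_{u_i}]$ never overshoot or reverse direction; this likely requires either a three-way case analysis (finite/affine/hyperbolic) or a uniform inductive argument tracking the signs of coefficients in the reflection identity above. Once the monotonicity is in hand, assembling (i), (ii), and (iii) is immediate from the two displayed formulas.
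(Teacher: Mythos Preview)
Your reduction is exactly the paper's: write $\beta_{u_i}=a_i\beta_{u_1}+b_i\beta_{u_m}$ with $a_i,b_i\ge 0$, note that $\omega_c(\beta_{u_i},\beta_{u_j})=(a_ib_j-a_jb_i)\,\omega_c(\beta_{u_1},\beta_{u_m})$, so (i) is immediate and (ii), (iii) reduce to showing $a_ib_j-a_jb_i>0$ for $i<j$.

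Where you diverge is in establishing this determinant inequality. You propose either an eigenstructure case analysis (finite/affine/hyperbolic dihedral) or an inductive recursion via $u_{i+1}u_iu_{i+1}=u_{i+2}$, and you correctly flag the infinite case as the sticking point. Both approaches can be made to work, but the paper sidesteps the difficulty with a cleaner trick: the inequality is immediate when $i=1$ or $j=m$, and for the remaining indices one observes that conjugation by $u_1$ (or by $u_m$) sends each $\beta_{u_k}$ to some $\beta_{u_{k'}}$ in a way that \emph{reverses} the total order on the indices $2,\ldots,m-1$, while simultaneously reversing the sign of the $2\times 2$ determinant (since a reflection has determinant $-1$ on $U$). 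Alternating applications of $u_1$ and $u_m$ therefore preserve the assertion ``$a_ib_j-a_jb_i>0$ for the current pair in its current order'' and strictly decrease the distance of the pair from the boundary, reaching $i=1$ or $j=m$ in finitely many steps. This argument is uniform in $m$, including $m=\infty$, and avoids any analysis of the spectral type of $r_1r_2$ or of limit rays.

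So your plan is sound, but the obstacle you anticipate is real for the approaches you outline; the paper's reflection-reduction is what makes it disappear.
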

When (i) applies, we say that \newword{the restriction of $\omega_c$ to $W'$ is zero}.
\begin{proof}
Assertion (i) is immediate from the fact that $\omega_c$ is linear and skew-symmetric and the fact that all positive roots associated to reflections in $W'$ are in the positive span of $\beta_{u_1}$ and $\beta_{u_m}$.

Since $u_1$ and $u_m$ are the canonical generators of $W'$, we can write $\beta_{u_i}=a\beta_{u_1}+b\beta_{u_m}$ and $\beta_{u_j}=c\beta_{u_1}+d\beta_{u_m}$ for nonnegative scalars $a$, $b$, $c$ and $d$ with $a+b>0$ and $c+d>0$.  
Both assertions (ii) and (iii) follow from the assertion that $ad-bc>0$.
The latter is immediate in the case where $i=1$ or $j=m$.
If neither $i=1$ nor $j=m$ then write $u_1(\beta_{u_i})=\beta_{u_{i'}}$ for some $i'$ and $u_1(\beta_{u_j})=\beta_{u_{j'}}$ for some $j'$.
One easily verifies that $i'>j'$ and, similarly, that the action of $u_m$ on $\beta_{u_i}$ and $\beta_{u_j}$ reverses the total order on $u_2,\ldots, u_{m-1}$.
One also easily checks that the action of $u_1$ or $u_m$ reverses the sign of $ad-bc$.
Acting a finite number of times by $u_1$ and $u_m$, we eventually reach the case where $i=1$ or $j=m$, thus completing the proof.
\end{proof}

We are now prepared to state the main result of this section.
Let $W'$ be a noncommutative generalized rank two parabolic subgroup of~$W$ and let~$w$ be an element of~$W$. 
We say that~$w$ is \newword{$c$-aligned with respect to $W'$} if one of the following cases holds:
\begin{enumerate}
\item Indexing the reflections as above, $\omega_c(\beta_{u_i},\beta_{u_j}) > 0$ for all indices $i < j$, and $\inv(w) \cap W'$ is either the empty set, the singleton $\set{u_m}$ or a (necessarily finite) initial segment of $u_1, u_2, \ldots, u_m$. 
\item The restriction of $\omega_c$ to $W'$ is zero and $\inv(w) \cap W'$ is either the empty set or a singleton.
\end{enumerate} 
We say that~$w$ is \newword{$c$-aligned} if~$w$ is $c$-aligned with respect to every noncommutative rank two parabolic subgroup of~$W$. As we will see below, this implies that $w$ is also $c$-aligned with respect to every generalized noncommutative rank two parabolic subgroup.

\begin{example} \label{AAligned}
We describe $c$-aligned elements in type~$A_{n-1}$, relying on the computations from Example~\ref{AOrientExample}. Let $W'$ be a noncommutative rank two parabolic subgroup. $W'$ must be of type~$A_2$, with canonical generators $(i j)$ and $(j k)$ for some $1 \leq i < j < k \leq n$. Suppose that $s_{j-1}$ precedes $s_j$ in $c$. Then, as we saw in Example~\ref{AOrientExample}, we have $\omega_c(\beta_{(ij)}, \beta_{(jk)}) \geq 0$. In the notation above, the reflections in this rank two parabolic are $(u_1, u_2, u_3) = ((i j), (i k), (j k))$. So an element $w \in A_{n-1}$ is $c$-aligned with respect to $W'$ if and only if $\inv(w) \cap W' \neq \{ (i k), (j k) \}$. 

In terms of the one-line representation of the permutation $w$, this says that $w$ avoids $kij$. 
Similarly, if $s_{j}$ precedes $s_{j-1}$ in $c$, then $w$ must avoid $jki$. 
This recovers the pattern avoidance description of alignment from~\cite{sortable}. 
It is similarly easy in this way to recover the pattern avoidance descriptions of alignment in the other classical finite types, and produce new pattern avoidance descriptions in the classical affine types. 
In the exceptional types, there is no simple description of $W$ in terms of permutations.
However, the uniform definition of alignment in terms of $\omega_c$ is a dramatic improvement over the computer-aided definition in~\cite[Sections~3--4]{sortable}.
\end{example}

For a geometric depiction of the inversion set of a $c$-aligned element, see the discussion surrounding Figure~\ref{542Zeta}. 

\begin{theorem} \label{sortable is aligned}
An element~$w$ of~$W$ is $c$-sortable if and only if it is $c$-aligned. In this case, $w$ is also $c$-aligned with respect to every generalized noncommutative rank two parabolic subgroup of~$W$.
\end{theorem}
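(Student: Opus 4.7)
I would prove both implications simultaneously by induction on $\ell(w) + \mathrm{rank}(W)$, using Proposition~\ref{CSortRecursive} to carry the induction step; the base case $\ell(w)=0$ or $\mathrm{rank}(W)=0$ is trivial. The forward implication (sortable $\Rightarrow$ aligned, even for generalized rank two parabolics) is essentially a repackaging of Proposition~\ref{InversionOrdering} via Lemmas~\ref{dyer lem} and~\ref{OmegaCyclic}. Let $t_1,\ldots,t_k$ be the reflection sequence of a $c$-sorting word for $w$. Given any generalized noncommutative rank two parabolic $W'$ with reflections indexed $u_1,\ldots,u_m$, Lemma~\ref{dyer lem} restricts the $W'$-subsequence of $t_1,\ldots,t_k$ to be either the ordered prefix $u_1,\ldots,u_\ell$ or the ordered suffix $u_m,u_{m-1},\ldots,u_{m-\ell+1}$. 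Proposition~\ref{InversionOrdering} forces $\omega_c(\beta_{t_i},\beta_{t_j})\ge 0$ for $i<j$, strict unless $t_i,t_j$ commute; combined with Proposition~\ref{OmegaCyclic} and the fact that $u_1,u_2$ (resp.\ $u_{m-1},u_m$) do not commute in a noncommutative $W'$, this kills the suffix possibility with $\ell\ge 2$ whenever the restriction of $\omega_c$ to $W'$ is nonzero, and kills suffixes and prefixes of length $\ge 2$ when that restriction is zero. This gives the allowed shapes of $\inv(w)\cap W'$ in the definition of $c$-alignment and also handles generalized parabolics, establishing the final sentence of the theorem.

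For the reverse implication, pick $s$ initial in $c$. If $w\not\ge s$ then $w\in W_{\br{s}}$; every noncommutative rank two parabolic of $W_{\br{s}}$ is one of $W$, and Lemma~\ref{OmegaRestriction} identifies $\omega_c$ with $\omega_{sc}$ on $V_{\br{s}}$, so $c$-alignment of $w$ passes to $sc$-alignment in $W_{\br{s}}$; the inductive hypothesis on rank gives $sc$-sortability in $W_{\br{s}}$, hence $c$-sortability by Proposition~\ref{CSortRecursive}. If $w\ge s$, it suffices to show $sw$ is $scs$-aligned, for then induction on length and Proposition~\ref{CSortRecursive} finish. Fix a noncommutative rank two parabolic $W'$ of $W$ and split on whether $s\in W'$. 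When $s\notin W'$, set $W''=sW's$, another noncommutative rank two parabolic; since $s\notin W''$ as well, we have $\inv(sw)\cap W' = s\bigl(\inv(w)\cap W''\bigr)s$, and Lemma~\ref{OmegaInvariance} shows that the $\omega_{scs}$-orientation of $W'$ matches the $\omega_c$-orientation of $W''$ under conjugation by $s$, so the alignment data for $w$ with respect to $W''$ translates directly into alignment data for $sw$ with respect to $W'$.

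The genuine obstacle is the case $s\in W'$. Here Lemma~\ref{s in} makes $s$ a canonical generator of $W'$; Lemma~\ref{OmegaNegativity} and the noncommutativity of $W'$ force the $\omega_c$-positive indexing to satisfy $s=u_1$ with $\omega_c(\beta_{u_1},\beta_{u_m})>0$. Since $s\in\inv(w)$, $c$-alignment forces $\inv(w)\cap W'=\{u_1,u_2,\ldots,u_\ell\}$ for some $\ell\ge 1$. The identity $su_is=u_{m+2-i}$ for $i\ge 2$ (a direct calculation in the dihedral group) yields
\[
\inv(sw)\cap W' \;=\; s\bigl(\{u_2,\ldots,u_\ell\}\bigr)s \;=\; \{u_m,u_{m-1},\ldots,u_{m-\ell+2}\},
\]
a suffix of length $\ell-1$ in the $u$-indexing. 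The key point is that $s$ is now \emph{final} in $scs$, so Lemma~\ref{OmegaPositivity} gives $\omega_{scs}(\alpha_s,\beta_{u_m})<0$, i.e.\ $\omega_{scs}(\beta_{u_1},\beta_{u_m})<0$. By Proposition~\ref{OmegaCyclic} the $\omega_{scs}$-positive ordering of $W'$ is the reverse of the $u$-ordering, and under this relabeling $\inv(sw)\cap W'$ becomes an \emph{initial} segment, exactly as required for $scs$-alignment. This suffix-to-prefix flip, driven by the transition of $s$ from initial in $c$ to final in $scs$, is the delicate bookkeeping step; the rest of the proof is organizational.
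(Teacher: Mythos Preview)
Your forward implication and the case $w\ge s$ of the reverse implication are essentially correct and close in spirit to the paper's approach. (The paper handles the subcase $s\in W'$ slightly differently, invoking Lemma~\ref{PilkLemma} directly rather than computing the dihedral conjugation explicitly, but your suffix-to-prefix bookkeeping is fine.)

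The genuine gap is in the case $w\not\ge s$. You write ``If $w\not\ge s$ then $w\in W_{\br{s}}$'' as though it were immediate, but this implication is false for arbitrary $w$ (take $w=s_2s_1$ in $A_2$ with $s=s_1$), and for $c$-aligned $w$ it is precisely the hard content of the theorem. Nothing in your outline forces a $c$-aligned element not above $s$ to lie in the standard parabolic $W_{\br{s}}$; you have simply asserted the conclusion you need. The paper devotes the bulk of its argument to exactly this step: assuming $w\not\ge s$ but $w\notin W_{\br{s}}$, it first uses induction on rank to see that $w_{\br{s}}$ is $sc$-sortable, extends a $c$-sorting word $a_1\cdots a_j$ for $w_{\br{s}}$ to a reduced word $a_1\cdots a_k$ for $w$, and observes that the first new letter $a_{j+1}$ must be $s$. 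It then runs a downward induction on the reflections $r_i=a_1\cdots a_i\,s\,a_i\cdots a_1$, at each stage applying $c$-alignment to the rank two parabolic $a_1\cdots a_i W_{\{a_{i+1},s\}} a_i\cdots a_1$ (whose canonical generators are identified via Proposition~\ref{can gen}) together with the sign information from Proposition~\ref{InversionOrdering}, Lemma~\ref{span inv s}, and Lemma~\ref{OmegaNegativity}, to conclude that $r_i\in\inv(w)$. At $i=0$ this gives $s\in\inv(w)$, contradicting $w\not\ge s$. Without this argument your induction on rank never gets started in this case.
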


%\begin{remark}
%Theorem~\ref{sortable is aligned} would follow from Proposition~\ref{InversionOrdering} if we knew that there did not exist any sequence $(t_1, t_2, \ldots, t_r)$ of reflections such that $\omega_c(\beta_{t_1},\beta_{t_2})$, $\omega_c(\beta_{t_2},\beta_{t_3})$, \dots, $\omega_c(\beta_{t_{r-1}}, \beta_{t_r})$, $\omega_c(\beta_{t_r},\beta_{t_1})$ were all positive. However such cycles can exist, although not when $W$ is finite or of rank less than or equal to $3$.
%\end{remark}

\begin{proof}
The fact that $c$-sortability implies $c$-alignment follows immediately from Proposition~\ref{InversionOrdering} and Lemma~\ref{PilkLemma}. The same argument also shows that $c$-sortability implies alignment with respect to all generalized noncommutative rank two parabolic subgroups.

Suppose~$w$ is $c$-aligned.
Let $s$ be initial in $c$ and first consider the case where $s\le w$.
Let~$W'$ be a noncommutative parabolic subgroup of rank two with canonical generators $r_1$ and $r_2$.
By hypothesis,~$w$ is $c$-aligned with respect to $W'$.
We claim that $sw$ is $scs$-aligned with respect to $sW's$. 
If $r_1,r_2\neq s$ then the claim is immediate by Lemmas~\ref{canon conj} and~\ref{OmegaInvariance}.
Otherwise, take $r_1=s$ without loss of generality. In this case, $s W' s=W'$.
Since $w \geq s$, we know that $sw \not \geq s$ and $s$ is not an inversion of $sw$. 
So, by Lemma~\ref{PilkLemma}, $\inv(sw) \cap W'$  is an initial segment of $r_2$, $r_2 s r_2$, $r_2 s r_2 s r_2$ \dots. 
Since $W'$ is noncommutative, $\omega_{scs}(\beta_{r_2}, \alpha_s) >  0$ by Lemma~\ref{OmegaNegativity} and Proposition~\ref{OmegaCyclic}, and we see that $sw$ is $scs$-aligned with respect to $W'$.
Letting~$W'$ vary over all noncommutative rank two parabolic subgroups, we see that $sw$ is $scs$-aligned with respect to every noncommutative rank two parabolic subgroup, or in other words that $sw$ is $scs$-aligned.
By induction on $\ell(w)$, the element $sw$ is $scs$-sortable, so~$w$ is $c$-sortable by Proposition~\ref{CSortRecursive}.

Now consider the case where $s\not\le w$.
We first show that $w\in W_{\br{s}}$.
Suppose to the contrary that $w\not\in W_{\br{s}}$, so that in particular, $w_{\br{s}}<w$.
By Lemmas~\ref{int} and~\ref{OmegaRestriction} we see that $w_{\br{s}}$ is $sc$-aligned, so it is $sc$-sortable by induction on the rank of~$W$ and therefore $c$-sortable by Proposition~\ref{sort para easy}.
Let $a_1\cdots a_j$ be a $c$-sorting word for $w_{\br{s}}$.
Since $w_{\br{s}}<w$, we can extend this word to a reduced word $a_1\cdots a_k$ for~$w$ for some $k>j$.
Note that $a_{j+1}=s$, because otherwise $a_1\cdots a_{j+1}$ is a reduced word for an element of $W_{\br{s}}$ which is greater than $w$, contradicting the definition of $w_{\br{s}}$.
For each $i\in[k]$, let $t_i$ be the reflection given by $a_1\cdots a_i\cdots a_1$ and let $r_i=a_1\cdots a_i s a_i\cdots a_1$.
We claim that $r_i$ is an inversion of~$w$ for each $i$ with $0\le i\le j$.

The claim is proven by induction on $j-i$, with the base case $j-i=0$ given by the fact that $r_j=t_{j+1}$.
Suppose $0\le i<j$.  
The reflection $t_i$ is an inversion of~$w$ and by induction $r_{i+1}$ is also an inversion of~$w$.
The rank two parabolic subgroup  $W'=a_1\cdots a_iW_{\set{a_{i+1},s}}a_i\cdots a_1$ contains $r_i$, $r_{i+1}$ and $t_{i+1}$.
The word $a_1\cdots a_{i+1}$ is reduced because it is a prefix of a reduced word. 
The word $a_1\cdots a_i$ is reduced for the same reason and thus $a_1\cdots a_is$ is reduced because the letter $s$ does not occur in $a_1\cdots a_i$.
Thus $a_1\cdots a_i$ is a minimal length coset representative for $a_1\cdots a_iW_{\set{a_{i+1},s}}$, so the canonical generators of $W'$ are $t_{i+1}$ and $r_i$ by Proposition~\ref{can gen}.
If $t_{i+1}$ and $r_i$ commute then $a_{i+1}$ and $s$ commute, so that the claim holds because $r_i=r_{i+1}$.
We may thus assume that $t_{i+1}$ and $r_i$ do not commute.
In particular $r_{i+1}=t_{i+1}r_it_{i+1}$ is distinct from $t_{i+1}$ and $r_i$.

Since $a_1\cdots a_{i+1}$ is a reduced word for an element of $W_{\br{s}}$, by Lemma~\ref{span inv s} we conclude that $\beta_{r_{i+1}}$ is in the positive span of $\alpha_s$ and $\set{\beta_{t_l}:l\le i+1}$, say $\beta_{r_{i+1}}=a \alpha_s + \sum_{l=1}^{i+1} b_l \beta_{t_l}$.  
Thus 
\[\omega_c(\beta_{t_{i+1}}, \beta_{r_{i+1}}) = a \omega_c(\beta_{t_{i+1}}, \alpha_s) + \sum_{l=1}^{i+1} b_l \omega_{c}(\beta_{t_{i+1}}, \beta_{t_l}).\]
By Lemma~\ref{OmegaNegativity} and the antisymmetry of $\omega_c$, the first term on the right side is nonpositive.
Since $a_1\cdots a_j$ is a $c$-sorting word for a $c$-sortable element, Proposition~\ref{InversionOrdering} and the antisymmetry of $\omega_c$ imply that the remaining terms on the right side are also nonpositive.
Furthermore, if every term of the right side is zero then $t_{i+1}$ commutes with each $t_l$ with $l\le i$, and $s$ commutes with $t_{i+1}$.
If $t_{i+1}$ commutes with each $t_l$ with $l\le i$ then one easily concludes that $a_{i+1}$ commutes with each $a_l$ with $l\le i$, and that $t_{i+1}=a_{i+1}$.
If in addition $s$ commutes with $t_{i+1}=a_{i+1}$ then $t_{i+1}$ and $r_i$ commute, contradicting our assumption. 
Thus $\omega_c(\beta_{t_{i+1}}, \beta_{r_{i+1}}) < 0$.
Applying Proposition~\ref{OmegaCyclic}, we conclude that $\omega_c(\beta_{t_{i+1}}, \beta_{r_i}) < 0$.
Since~$w$ is $c$-aligned and its inversion set contains $t_{i+1}$ and $r_{i+1}$, the inversion set of~$w$ must also contain $r_i$.
This proves the claim.

The claim for $i=0$ is the statement that $s\le w$, contradicting the assumption that $s\not\le w$.
This contradiction proves that $w\in W_{\br{s}}$.
It is immediate by Lemmas~\ref{int} and~\ref{OmegaRestriction} that~$w$ is $sc$-aligned as an element of $W_{\br{s}}$.
By induction on rank,~$w$ is $sc$-sortable, and thus is $c$-sortable by Proposition~\ref{CSortRecursive}.
\end{proof}
% 
% If $W$ is finite then the longest element $w_0$ of $W$ is obviously $c$-aligned for every $c$.
% So we have the following corollary to Theorem~\ref{sortable is aligned}.
% (Cf. \cite[Corollary~4.4]{sortable}.)
% \begin{cor}\label{w0 sort}
% If~$W$ is finite then $w_0$ is $c$-sortable for any Coxeter element~$c$.
% \end{cor}

\begin{remark}
A consequence of Theorem~\ref{sortable is aligned} is that, if $w$ is any $c$-aligned element, then we can order $\inv(w)$ as $t_1$, $t_2$, \dots $t_k$ such that $\omega(t_i, t_j) \geq 0$ whenever \mbox{$i < j$}. 
%Conversely, if we knew that $\inv(w)$ could be so ordered, we could establish Theorem~\ref{sortable is aligned} as follows: 
If we knew that $\inv(w)$ for could be so ordered, even when $w$ is not $c$-sortable, then we could establish Theorem~\ref{sortable is aligned} as follows: 
By Lemma~\ref{dyer lem}, $t_1$, $t_2$, \dots $t_k$ is the reflection sequence of some reduced word for $w$.
(This step uses the hypothesis that $w$ is aligned.)
By Proposition~\ref{InversionOrdering}, that reduced word is the sorting word for a sortable element. 
Unfortunately, $\inv(w)$ cannot always be so ordered, although counter-examples are quite large and occur only in infinite groups.
\end{remark}

\begin{remark}\label{rule out 2}
When $W$ is finite, case (2) in the definition of $c$-alignment cannot happen.
(Indeed, if case (2) occurred, then the present definition of alignment would not agree with the definition in \cite[Theorem~4.1]{sortable}.)
One way to see that case (2) cannot occur when $W$ is finite is to appeal to Corollary~\ref{w0 sort}, which says that $w_0$ is $c$-sortable, and thus $c$-aligned by Theorem~\ref{sortable is aligned}.
Since $\inv(w_0)$ is all of $T$, in particular $\inv(w_0)\cap W'$ cannot be the empty set or a singleton.
\end{remark}

\section{Forced and unforced skips}\label{skip sec}

In this section, we define a set $C_c(v)$ of $n$ roots associated to each sortable element $v$, and prove several key properties of $C_c(v)$.
In Section~\ref{pidown sec} we define and study the $c$-Cambrian cone $\Cone_c(v)$ whose inward normals are $C_c(v)$.  
The main results of the present section are characterizations of $C_c(v)$ in terms of ``skips'' in a $c$-sorting word for $v$ and in terms of cover reflections of $v$.

Let $v$ be $c$-sortable and let $r\in S$ be a simple reflection. 
We define a root $C^r_c(v)$ inductively as follows.
For~$s$ initial in $c$,
\[C_c^r(v)=\left\lbrace\begin{array}{ll}
\alpha_r&\mbox{if } v\not\ge s\mbox{ and }r=s\\
C_{sc}^r(v)&\mbox{if } v\not\ge s\mbox{ and }r\neq s\\
s C^r_{scs}(sv)&\mbox{if } v\ge s
\end{array}\right.\]
In the second case listed above, the root $C_{sc}^r(v)$ is defined in $W_{\br{s}}$ by induction on rank and in the third case, $C_{scs}(sv)$ is defined by induction on length.
Implicitly, this recursive description depends on a choice of initial element for $c$, but it is easy to check that $C^r_c(v)$ is, in fact, well defined. 
See Lemma~\ref{PidownWellDef} for a detailed discussion of a similar issue.

Let $C_c(v)=\set{C_c^r(v):r\in S}$.  
In other words, 
\[C_c(v)=\left\lbrace\begin{array}{ll}
C_{sc}(v)\cup\set{\alpha_s}&\mbox{if } v\not\ge s\\
s C_{scs}(sv)&\mbox{if } v\ge s
\end{array}\right.\]
An easy inductive argument shows that $C_c(v)$ is a basis of $V$ and, in the crystallographic case, of the root lattice. 

We define $A_c(v)=C_c(v) \cap \Phi^-$ and $B_c(v)=C_c(v) \cap \Phi^+$.
We now give a combinatorial description of $A_c(v)$ and $B_c(v)$ in terms of a $c$-sorting word for $v$.

Let~$w$ be an element of~$W,$ let $s_1\cdots s_n$ be a reduced word for $c$, let $a_1\cdots a_k$ be the $(s_1\cdots s_n)$-sorting word for~$w$ and let $r\in S$.
Recall the definition of the $(s_1\cdots s_n)$-sorting word as the leftmost subword of $(s_1\cdots s_n)^\infty$ which is a reduced word for~$w$.
Since~$w$ is of finite length, there is a leftmost instance of $r$ in $(s_1\cdots s_n)^{\infty}$ which is not in that subword.
Let $i$ be such that this leftmost instance of $r$ in $(s_1\cdots s_n)^\infty$ occurs (unless $i=0$) after the location of $a_i$ and (unless $i=k$) before the location of $a_{i+1}$.
Say~$a_1\cdots a_k$ \newword{skips} $r$ in the $(i+1)\st$ position.
If $a_1\cdots a_ir$ is reduced then this is an \newword{unforced skip}; 
otherwise it is a \newword{forced skip}.

For any $w\in W$, define 
\begin{eqnarray*}
\fs_c(w)\!\!\!\!&=&\!\!\!\!\set{a_1\cdots a_i r a_i\cdots a_1:a_1\cdots a_k\mbox{ has a forced skip of $r$ in position $i+1$}}\\
\ufs_c(w)\!\!\!\!&=&\!\!\!\!\set{a_1\cdots a_i r a_i\cdots a_1:a_1\cdots a_k\mbox{ has an unforced skip of $r$ in position $i+1$}}.
\end{eqnarray*}
The sets $\fs_c(w)$ and $\ufs_c(w)$ are easily seen to be independent of the choice of reduced word $s_1\cdots s_n$ for~$c$, but do, of course, depend on the choice of $c$. 

The main results of this section are the following four propositions.
For examples illustrating these results, see Section~\ref{aff sec}.
Recall that $\cov(w)$ stands for the set of cover reflections of an element $w\in W$.  

\begin{prop}\label{walls}
Let $v$ be $c$-sortable with $c$-sorting word $a_1\cdots a_k$ and let $r\in S$.
Let $a_1\cdots a_k$ skip $r$ in the $(i+1)\st$ position and let $t=a_1\cdots a_ira_i\cdots a_1$.
Then $C_c^r(v)=\pm\beta_t$.
Furthermore
\begin{eqnarray*}
A_c(v)	&=&\set{C_c^r(v):\mbox{the skip of $r$ in $a_1\cdots a_k$ is forced}}\\
		&=&\set{-\beta_t:t\in\fs_c(v)},\mbox{ and}\\
B_c(v)	&=&\set{C_c^r(v):\mbox{the skip of $r$ in $a_1\cdots a_k$ is unforced}}\\
		&=&\set{\beta_t:t\in\ufs_c(v)}
\end{eqnarray*}
\end{prop}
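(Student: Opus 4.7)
The plan is to prove all assertions of Proposition~\ref{walls} by simultaneous induction on $\ell(v)$ and the rank $n = |S|$, mirroring the recursive definition of $C_c^r(v)$ and the recursive characterization of $c$-sortability (Proposition~\ref{CSortRecursive}). In the base case $v = e$, the sorting word is empty; each $r \in S$ is skipped in position~$1$ with associated reflection $t = r$, and the skip is unforced because $r$ alone is reduced. Unwinding the definition yields $C_c^r(e) = \alpha_r = +\beta_r$, matching the proposition.

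For the inductive step, fix an initial letter $s$ of $c$. If $v \not\geq s$, the sorting word avoids $s$. For $r = s$ the skip occurs in position $1$ with $t = s$ (unforced), and $C_c^s(v) = \alpha_s = +\beta_s$ by definition. For $r \neq s$, $C_c^r(v) = C_{sc}^r(v)$, and because $s$ does not appear in the sorting word, the skip position, the reflection $t$, and the forced/unforced status are identical whether computed in $W$ or in $W_{\br{s}}$; rank induction in $W_{\br{s}}$ then gives the result, since positive (respectively negative) roots of $W_{\br{s}}$ embed as positive (respectively negative) roots of $W$.

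Now suppose $v \geq s$, so $a_1 = s$ and $a_2 \cdots a_k$ is an $scs$-sorting word for $sv$. A skip of $r$ at position $j + 1$ of $a_2 \cdots a_k$, with reflection $t' = a_2 \cdots a_{j+1} r a_{j+1} \cdots a_2$, corresponds to a skip at position $j + 2$ of $a_1 \cdots a_k$, with reflection $t = s t' s$. Length induction gives $C_{scs}^r(sv) = \pm \beta_{t'}$, negative precisely when the $scs$-skip is forced, i.e.\ when $t' \in \inv(a_2 \cdots a_{j+1})$. When $t' \neq s$, the identity $s \beta_{t'} = \beta_{s t' s} = \beta_t$ gives $C_c^r(v) = \pm \beta_t$ with the same sign; using $\inv(s \cdot a_2 \cdots a_{j+1}) = \{s\} \cup s \cdot \inv(a_2 \cdots a_{j+1}) \cdot s$ together with $t \neq s$, forcedness of the $c$-skip is equivalent to forcedness of the $scs$-skip, so both sign and combinatorial interpretation are preserved.

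The subtle subcase is $t' = s$, which forces $r = s$ and $t = s$. Since $v \geq s$ implies $sv \not\geq s$, we have $s \notin \inv(sv)$, so the $scs$-skip of $s$ is necessarily unforced and $C_{scs}^s(sv) = +\alpha_s$. Applying $s$ then gives $C_c^s(v) = -\alpha_s = -\beta_t$. On the $c$-side, $s \in \inv(s \cdot a_2 \cdots a_{j+1})$ trivially, so the $c$-skip of $s$ is forced, matching the negative sign. This compensation---where the one place that multiplication by $s$ flips a root sign is exactly the place where the forced/unforced status also flips---is the main delicate point of the argument; everything else is straightforward bookkeeping given the recursive definitions and Proposition~\ref{CSortRecursive}.
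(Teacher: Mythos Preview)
Your inductive scheme is sound and the argument is essentially correct, but there is one factual slip in the final subcase.  You assert that $t' = s$ forces $r = s$.  This is false: take $W$ of type $A_2$ with simple generators $p,q$, Coxeter element $c = pq$, and $v = pqp$.  Then $s = p$, $sv = qp$, and the $qp$-sorting word $qp$ skips $r = q$ in position~$3$ with $t' = qp\cdot q\cdot pq = p = s$, yet $r = q \neq s$.  Fortunately your argument never actually uses $r = s$: everything you wrote goes through verbatim once you replace ``the $scs$-skip of $s$'' by ``the $scs$-skip of $r$'' and $C_{scs}^s(sv)$, $C_c^s(v)$ by $C_{scs}^r(sv)$, $C_c^r(v)$.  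The key points---that $t' = s \notin \inv(sv)$ (since $v \ge s$) forces the $scs$-skip to be unforced, while $t = s \in \inv(s\cdot a_2\cdots a_{j+1})$ forces the $c$-skip to be forced---depend only on $t' = s$, not on $r = s$.

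For comparison, the paper organizes the proof differently.  It first proves only the unsigned statement $C_c^r(v) = \pm\beta_t$ by the same induction; because the target is $\{+\beta_t,-\beta_t\}$, the step $s(\pm\beta_{sts}) = \pm\beta_t$ holds uniformly with no need to isolate the case $t = s$.  The sign is then read off separately by combining Lemma~\ref{AB inv} (negative roots in $C_c(v)$ correspond exactly to inversions of $v$) with Lemma~\ref{fs ufs} (forced skips correspond exactly to inversions of $v$).  Your approach fuses these three arguments into a single induction, which is more self-contained but requires the explicit sign-flip bookkeeping you carried out; the paper's decomposition trades that bookkeeping for the two short lemmas.
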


\begin{prop}\label{lower walls}
Let $v$ be $c$-sortable.
Then $A_c(v)=\set{-\beta_t:t\in\cov(v)}$.
\end{prop}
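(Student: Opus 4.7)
The plan is to combine Proposition~\ref{walls} with a direct induction on $\ell(v)+\rank(W)$ that tracks how both $A_c(v)$ and $\cov(v)$ transform under the two reductions (``$s$ initial in $c$ with $v\not\ge s$'' or ``$v\ge s$'') encoded in the recursive definitions of sortable elements and of $C_c(v)$. By Proposition~\ref{walls}, $A_c(v)=\{-\beta_t:t\in\fs_c(v)\}$, so the proposition is equivalent to the combinatorial identity $\fs_c(v)=\cov(v)$. The base case $v=e$ is trivial.

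For the inductive step, first choose $s\in S$ initial in~$c$. If $v\not\ge s$, then $v$ is $sc$-sortable as an element of $W_{\br{s}}$; the $c$-sorting word of $v$ begins with an (automatically unforced) skip of $s$ and otherwise agrees letter for letter with the $sc$-sorting word of $v$ in $W_{\br{s}}$, and since $\inv(v)\subseteq W_{\br{s}}$ every cover reflection of $v$ in the ambient group already lies in $W_{\br{s}}$. Thus $A_c(v)=A_{sc}(v)=\{-\beta_t:t\in\cov_{W_{\br{s}}}(v)\}=\{-\beta_t:t\in\cov_W(v)\}$ by the rank-induction hypothesis.

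If instead $v\ge s$, set $v'=sv$, which is $scs$-sortable with $\ell(v')<\ell(v)$, and use $C_c(v)=s\cdot C_{scs}(v')$. The length-induction hypothesis applied to $v'$ gives $A_{scs}(v')=\{-\beta_u:u\in\cov(v')\}$; in particular $-\alpha_s\notin C_{scs}(v')$, since $s\notin\cov(v')$ as $\ell(s\cdot v')=\ell(v)>\ell(v')$. Using that the action of $s$ sends $\alpha_s\mapsto-\alpha_s$ and preserves the sign of every other root, one computes
\[
A_c(v)=s\cdot A_{scs}(v')\ \cup\ \begin{cases}\{-\alpha_s\}&\text{if }\alpha_s\in C_{scs}(v'),\\ \emptyset&\text{otherwise.}\end{cases}
\]
For $u\in\cov(v')$ one has $u\ne s$, so $s\beta_u=\beta_{sus}$. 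Combined with the elementary fact that conjugation by $s$ is a bijection from $\cov(v')$ onto $\cov(v)\setminus\{s\}$ (proved by comparing inversion sets via $\inv(v)=\{s\}\cup s\inv(v')s$ and checking that the cover condition $tv=vs''$ transports to $u\cdot v'=v'\cdot s''$), this reduces the inductive step to the single equivalence
\[
\alpha_s\in C_{scs}(v')\iff s\in\cov(v).
\]

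This last equivalence is the main obstacle. Using Proposition~\ref{walls}, its left-hand side says that the $scs$-sorting word $a_2\cdots a_k$ of $v'$ has an unforced skip whose associated reflection equals $s$---equivalently, $(a_2\cdots a_i)^{-1}s(a_2\cdots a_i)\in S$ for some intermediate $i$---while the right-hand side says $(sv)^{-1}s(sv)=v^{-1}sv\in S$. The key technical input is the nested-supports property of $c$-sortable sorting words: once $r$ is dropped from a block it cannot reappear in any later block, so the letters $a_{i+1},\ldots,a_k$ following a skip of $r$ all lie in $W_{\br{r}}$. I would exploit this rigidity to propagate ``the partial conjugate at position $i$ is a simple reflection'' to ``the full conjugate at position $k$ is a simple reflection,'' arguing by a secondary induction on $\ell(v')$ that follows the initial-letter decomposition of $scs$ and uses the recursive formula for $C_{scs}(v')$ to translate between the two sides. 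Once this auxiliary equivalence is established, the inductive step closes and Proposition~\ref{lower walls} follows.
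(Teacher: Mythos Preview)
Your inductive framework and the reduction to the single equivalence
\[
\alpha_s\in C_{scs}(sv)\iff s\in\cov(v)
\]
are correct and match the paper exactly. The gap is in how you propose to prove this equivalence.

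For the direction $(\Rightarrow)$, writing the $scs$-sorting word of $v'=sv$ as $b_1\cdots b_{k-1}$ with an unforced skip of $r$ at position $i+1$ and $(b_1\cdots b_i)^{-1}s(b_1\cdots b_i)=r$, the condition $s\in\cov(v)$ becomes $\tau^{-1}r\tau\in S$ where $\tau=b_{i+1}\cdots b_{k-1}$. Your nested-supports observation (Lemma~\ref{unforced basics}(2)) gives only that $\tau\in W_{\br{r}}$, and this is \emph{not} enough: in general $\tau^{-1}r\tau$ is not simple when $\tau\in W_{\br{r}}$ (take $W=A_2$, $r=s_1$, $\tau=s_2$). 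What is actually needed is that $r$ \emph{commutes} with $\tau$, and that is exactly what the paper extracts using the form $\omega_c$: Lemma~\ref{unforced basics}(3) gives $\omega_{scs}(\alpha_s,\beta_{t'_j})\ge 0$ for each later inversion $t'_j$, while Lemma~\ref{OmegaPositivity} (using that $s$ is final in $scs$) forces $\omega_{scs}(\alpha_s,\beta_{t'_j})\le 0$; hence equality, hence $s$ commutes with every $t'_j$, hence $r$ commutes with $\tau$ (via Lemma~\ref{commutes}). Your proposed ``secondary induction on $\ell(v')$ following the initial-letter decomposition of $scs$'' does not close this gap: once you strip an initial letter $s'$ from $scs$, the generator $s$ becomes \emph{final} rather than initial in the new Coxeter element, so the inductive hypothesis (phrased for $s$ initial) no longer applies, and the root you must track is $s'\alpha_s=\beta_{s'ss'}$, which is no longer simple.

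For the direction $(\Leftarrow)$ you give no argument. The paper's method here is again not a naive induction: it compares the $(s_2\cdots s_ns_1)$-sorting word of $v$ with the $scs$-sorting word of $sv$, using that $s\in\cov(v)$ forces these words to agree up to the position where $s$ enters the reflection sequence of $v$, which then exhibits the required unforced skip. Some argument of this sort is needed; the recursive formula for $C_{scs}$ alone does not produce it.
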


\begin{prop}\label{Cc final}
Let~$s$ be final in~$c$ and let $v$ be $c$-sortable with $v \geq s$.
Then
\begin{enumerate}
\item[(i) ]$v=s\join v_{\br{s}}$,
\item[(ii) ]$\cov(v)=\set{s}\cup\cov(v_{\br{s}})$, and
\item[(iii) ]$\ufs_c(v)=\ufs_{cs}(v_{\br{s}})$.
\end{enumerate}
\end{prop}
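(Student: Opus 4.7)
The plan is to prove the three statements together by induction on $\ell(v)$, using the recursive structure of $c$-sortable elements. The base case $v=s$ is direct: $v_{\br{s}}=e$, and all three statements follow from inspecting the $c$-sorting word of $s$, where $s$ occupies the last position of the first block and every other simple generator is skipped.

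For the inductive step, I would first establish (i) and (ii) together using Lemmas~\ref{s join w br s} and~\ref{cov w br s}, which yield both statements once we know that (a)~$s$ is a cover reflection of $v$, and (b)~every other cover reflection of $v$ lies in $W_{\br{s}}$. Part~(a) is exactly Lemma~\ref{nc lemma 2}. For part~(b), I would fix a $c$-sorting word $a_1\cdots a_k$ for $v$ with reflection sequence $t_1,\ldots,t_k$ and let $j^*$ be the index with $t_{j^*}=s$. Proposition~\ref{InversionOrdering} combined with Lemma~\ref{OmegaPositivity} forces $\omega_c(\alpha_s,\beta_{t_j})=0$, and hence $s$ to commute with $t_j$, for every $j>j^*$; the proof of Lemma~\ref{nc lemma 2} also shows $a_{j^*}$ commutes with $a_{j^*+1},\ldots,a_k$. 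Given any cover reflection $t=t_j\neq s$, I would consider the generalized rank-two parabolic $W'$ containing $\{s,t\}$: the $c$-alignment of $v$ (Theorem~\ref{sortable is aligned}), together with the $\omega_c$-orientation of $W'$ from Proposition~\ref{OmegaCyclic} and the rank-two fact Lemma~\ref{dyer cov}, should pin down $t$ as a canonical generator of $W'$ whose positive root does not involve $\alpha_s$, placing $t$ in $W_{\br{s}}$.

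For (iii), with (i) in hand, I would use Proposition~\ref{walls} to translate the identity into the statement $C_c(v)\cap\Phi^+ = C_{cs}(v_{\br{s}})\cap\Phi^+$, and then unwind the recursive definition of $C_c(v)$ along an initial letter $s'$ of $c$. The two branches of the recursion ($v\geq s'$ versus $v\not\geq s'$) match respectively induction on length (via conjugation by $s'$) and induction on rank (via Proposition~\ref{sort para easy} and Lemma~\ref{OmegaRestriction}, working inside $W_{\br{s'}}$). In both cases, the decomposition $v=s\vee v_{\br{s}}$ from~(i), combined with Lemma~\ref{canon conj} to track the action of $s'$ on roots, should match $C_c(v)$ with $C_{cs}(v_{\br{s}})$ on their positive-root parts and thereby deliver~(iii).

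The main obstacle I foresee is the case in which no initial letter of $c$ commutes with $s$ (as happens, for example, in type $A_3$ with $c=s_2 s_1 s_3$ and $s=s_1$). In that situation $s$ ceases to be final in $s'cs'$ for any initial $s'$, so the naive length-induction through Proposition~\ref{CSortRecursive} loses the very hypothesis ``$s$ final'' on which part~(b) rests. I expect to handle this by carrying out the cover-reflection analysis of (b) directly in $W$, leaning on the \emph{generalized} rank-two alignment statement in Theorem~\ref{sortable is aligned} together with the structural description of generalized rank-two parabolic subgroups in Proposition~\ref{RankTwoIsOK}, so that the argument never needs to descend to $s'v$. The analogous bookkeeping for~(iii), matching unforced skips between the $c$-template and the shorter $cs$-template (whose block length is one smaller), will likewise require careful handling of how template positions shift when $s$ is deleted.
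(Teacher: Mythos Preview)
Your approach to (i) and (ii) has a real gap in step~(b). The alignment argument, together with Lemma~\ref{dyer cov}, does show that a cover reflection $t\neq s$ of $v$ is the canonical generator $p\neq s$ of the generalized rank-two parabolic $W'$ containing $s$ and~$t$. But it does \emph{not} show that $p\in W_{\br{s}}$: in general the second canonical generator of such a $W'$ need not lie in $W_{\br{s}}$. For a concrete obstruction, take $W=A_3$, $s=s_2$, and $t=s_1s_2s_3s_2s_1$ with $\beta_t=\alpha_1+\alpha_2+\alpha_3$. The only positive roots in $\Span(\alpha_2,\beta_t)$ are $\alpha_2$ and $\beta_t$, so $W'$ has canonical generators exactly $\{s_2,t\}$, yet $t\notin W_{\br{s_2}}$. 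Nothing in your argument rules out this configuration arising from an actual $c$-sortable $v$; the skew form $\omega_c$ alone cannot, because $\omega_c(\alpha_s,\beta_t)=0$ here. The paper supplies the missing input through the Euler form $E_c$: Lemma~\ref{Cc order} gives an $E_c$-triangularity among the roots in $C_c(v)$, and Proposition~\ref{Cc together} (via Lemmas~\ref{Ec initial} and~\ref{Ec final}) deduces that either $t$ or $sts$ lies in $W_{\br{s}}$. Only with that extra fact does the rank-two analysis force $t=p\in W_{\br{s}}$.

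Your plan for (iii) inherits the same gap and runs into the additional difficulty you already flagged: once you pass to $s'cs'$ for an initial $s'$ not commuting with $s$, the hypothesis ``$s$ final'' is lost, so an induction through the recursive definition of $C_c$ does not close. The paper sidesteps this entirely. After (ii), a count shows $|\ufs_c(v)|=|\ufs_{cs}(v_{\br{s}})|$, so it suffices to prove $\ufs_c(v)\subseteq\ufs_{cs}(v_{\br{s}})$. For $t\in\ufs_c(v)$ one knows $t\in W_{\br{s}}$ by Proposition~\ref{Cc together}; then one restricts the reflection sequence of $v$ to $W_{\br{s}}$ and applies Lemma~\ref{unforced basics converse} (a converse to Lemma~\ref{unforced basics}) to recognise $t$ as an unforced skip of $v_{\br{s}}$. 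No recursion along an initial letter is needed, so the ``$s$ final'' hypothesis is used once and never has to survive a change of Coxeter element.
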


\begin{prop}\label{Cc initial}
Let~$s$ be initial in~$c$ and let $v$ be a $c$-sortable element of~$W$ such that~$s$ is a cover reflection of $v$.
Then 
\begin{enumerate}
\item[(i) ]$v=s\join v_{\br{s}}$,
\item[(ii) ]$\cov(v)=\set{s}\cup\cov(v_{\br{s}})$, and
\item[(iii) ]$\ufs_c(v)=\set{sts:t\in\ufs_{sc}(v_{\br{s}})}$.
\end{enumerate}
\end{prop}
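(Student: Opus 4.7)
The plan is to proceed by induction on $\ell(v)$, with the base case $\ell(v)=1$ (which forces $v=s$) being immediate.

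A central preliminary observation, valid whenever $s\in\cov(v)$, is that $sv=vr$ for some $r\in S$ implies $\inv(sv)=\inv(v)\setminus\{s\}$, and this set is preserved by conjugation by $s$. Consequently $\inv(v)\cap W_{\br{s}}=\inv(sv)\cap W_{\br{s}}$, so $(sv)_{\br{s}}=v_{\br{s}}$. This identifies $v_{\br{s}}$ as the parabolic projection of the shorter element $sv$ and provides the bridge between the $scs$-sortable element $sv$ and the $sc$-sortable element $v_{\br{s}}$ (note that $sc$ is the restriction of $scs$ to $W_{\br{s}}$).

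For (ii), I prove both containments in $\cov(v)=\{s\}\cup\cov(v_{\br{s}})$. For the forward direction, given $t\in\cov(v)$ with $t\ne s$, Lemma~\ref{cov canon} implies that $s$ and $t$ are canonical generators of a finite parabolic $W_{\cov}$; hence they are also canonical generators of the rank-two parabolic $W''$ they generate, and $W''$ is a true parabolic of $W$. Applying Lemma~\ref{int} to $W''$ and $W_{\br{s}}$, the intersection $W''\cap W_{\br{s}}$ is either $\{e\}$ or $\{e,t\}$ (the option $W''$ is excluded since $s\notin W_{\br{s}}$). To rule out $\{e\}$, I invoke the $c$-alignment of $v$ (Theorem~\ref{sortable is aligned}), the inequality $\omega_c(\alpha_s,\beta_t)\ge 0$ for $s$ initial (Lemma~\ref{OmegaNegativity}), and, in the non-commuting case, the orientation of $W''$ given by Proposition~\ref{OmegaCyclic}; a geometric argument on the $c$-oriented reflections of $W''$ then forces $t\in W_{\br{s}}$. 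Once $t\in W_{\br{s}}$, I descend the relation $tv=vr$ through the coset factorization $v=v_{\br{s}}\cdot\leftq{\br{s}}{v}$ to show $t\in\cov(v_{\br{s}})$. The reverse containment is handled by a symmetric argument lifting cover relations from $v_{\br{s}}$ to $v$, appealing to the inductive hypothesis as needed. Part (i) then follows at once from (ii) by Lemma~\ref{s join w br s} together with the hypothesis $s\in\cov(v)$.

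For (iii), I use the recursive definition $C_c(v)=s\cdot C_{scs}(sv)$. Proposition~\ref{lower walls}, applied to the hypothesis $s\in\cov(v)$, gives $-\alpha_s\in A_c(v)\subseteq C_c(v)$; since $C_c(v)$ is a basis this means $\alpha_s\notin C_c(v)$, and translating by $s$ gives $\alpha_s\in C_{scs}(sv)$ and in fact $\alpha_s\in B_{scs}(sv)$. The sign calculation for $s$ applied to roots (separating $\pm\alpha_s$ from the rest) then yields $B_c(v)=s\cdot(B_{scs}(sv)\setminus\{\alpha_s\})$. Using $(sv)_{\br{s}}=v_{\br{s}}$ together with an inductive analysis of the sorting-word skips (via Proposition~\ref{walls}) applied to $sv$, I establish $B_{scs}(sv)\setminus\{\alpha_s\}=B_{sc}(v_{\br{s}})$. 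Combining these gives $B_c(v)=s\cdot B_{sc}(v_{\br{s}})$, and translating back through Proposition~\ref{walls} yields $\ufs_c(v)=\{sts:t\in\ufs_{sc}(v_{\br{s}})\}$.

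The chief technical difficulty is the geometric step in (ii) that rules out $W''\cap W_{\br{s}}=\{e\}$ and so concludes $t\in W_{\br{s}}$: this is precisely where the specific combinatorics of $c$-sortable elements (via $c$-alignment and the initial-letter property of $\omega_c$) enters, and it is what distinguishes this proposition from purely structural statements about canonical generators of parabolic subgroups.
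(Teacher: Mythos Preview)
Your overall architecture is close to the paper's, but the crucial step in (ii)---showing that every $t\in\cov(v)\setminus\{s\}$ lies in $W_{\br{s}}$---has a genuine gap. The tools you list ($c$-alignment, Lemma~\ref{OmegaNegativity}, Proposition~\ref{OmegaCyclic}) do establish that $W''\cap T\subseteq\inv(v)$, but they do \emph{not} force $t\in W_{\br{s}}$. The skew-symmetric form $\omega_c$ cannot distinguish between the conclusions ``$t\in W_{\br{s}}$'' and ``$sts\in W_{\br{s}}$'': both are compatible with $\omega_c(\alpha_s,\beta_t)>0$, and $c$-alignment only controls $\inv(v)\cap W''$, which says nothing about simple-root supports. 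Your ``geometric argument'' is precisely the missing idea, and nothing you have written supplies it.

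The paper handles this via the (non-symmetric) Euler form $E_c$. Lemma~\ref{Cc order} shows that the roots in $C_c(v)$, in skip order, satisfy $E_c(\beta_i,\beta_j)=0$ for $i<j$; hence either $E_c(-\alpha_s,\pm\beta_t)=0$ or $E_c(\pm\beta_t,-\alpha_s)=0$. Combined with Lemmas~\ref{Ec initial}--\ref{Ec invariant}, this yields that either $t\in W_{\br{s}}$ or $sts\in W_{\br{s}}$ (Proposition~\ref{Cc together}). A separate combinatorial argument, using Lemma~\ref{PilkLemma} and Lemma~\ref{dyer cov}, then pins down that in the cover-reflection case it must be $t$ itself. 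The asymmetry of $E_c$ is exactly what $\omega_c$ lacks.

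The same gap recurs in your treatment of (iii). The identity $B_{scs}(sv)\setminus\{\alpha_s\}=B_{sc}(v_{\br{s}})$ would require that every $t'\in\ufs_{scs}(sv)\setminus\{s\}$ lie in $W_{\br{s}}$ (since $B_{sc}(v_{\br{s}})\subset V_{\br{s}}$), and your ``inductive analysis of the sorting-word skips'' does not explain how to obtain this. In the paper this is again Proposition~\ref{Cc together}: for $t\in\ufs_c(v)$ one has $sts\in W_{\br{s}}$, and then the argument of the last paragraph of the proof of Proposition~\ref{Cc final} (Lemma~\ref{unforced basics converse}) transfers $sts$ into $\ufs_{sc}((sv)_{\br{s}})=\ufs_{sc}(v_{\br{s}})$. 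Finally, note that once you know $\cov(v)\setminus\{s\}\subseteq W_{\br{s}}$, you get (i) from Lemma~\ref{s join w br s} and then the full (ii) from Lemma~\ref{cov w br s}; there is no need for a separate ``reverse containment'' argument.
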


\begin{example} \label{skip example}
Let $W$ be $\tilde{A}_2$, the rank three Coxeter group with $S=\set{p,q,r}$ and $m(p,q)=m(p,r)=m(q,r)=3$. 
Take $c=pqr$ and consider $v=pqrpr$. 
This word is reduced, so $v$ is $c$-sortable. 
The letter $q$ is skipped in the $5$th position, the letters $p$ and $r$ in the $6$th. 
So Proposition~\ref{walls} states that $C_c^q(v) = pqrp \alpha_q = \beta_{pqrpqprqp}$, $C_c^p(v) = pqrpr \alpha_p = - \beta_{pqrqp}$ and $C_c^r(v) = pqrpr \alpha_r = - \beta_q$. 
The reader who checks this from the recursive definition of $C_v$ will quickly see why Proposition~\ref{walls} is true. 
The reflection $pqrpqprqp$ is an unforced skip, while $pqrqp$ and $q$ are forced. 
Proposition~\ref{lower walls} states that $pqrqp$ and $q$ are the cover reflections of $v$.
\end{example}

The proofs of the latter two results require the Euler form, and thus a choice of symmetrizable Cartan matrix.
However, the results themselves are strictly combinatorial, not geometric. 
These combinatorial results restrict the geometry of those cones of the Cambrian fan which have a wall contained in $H_s$,
as explained in Proposition~\ref{recursive fan final} and Section~\ref{rank3sec}. 
See Figures~\ref{bij1} and~\ref{bij2} and the surrounding discussion.
We first prepare to prove Propositions~\ref{walls} and~\ref{lower walls} by proving some preliminary lemmas.
\begin{lemma}\label{AB inv}
If $-\beta_t \in A_c(v)$ then~$t$ is an inversion of $v$.
If $\beta_t \in B_c(v)$ then~$t$ is not an inversion of $v$.
\end{lemma}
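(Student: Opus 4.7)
The plan is to induct on $\ell(v)$ and, simultaneously, on the rank of $W$. The base case is $v = e$: iterating the first branch of the recursion yields $C_c(e) = \Pi$, so $A_c(e) = \emptyset$ (making the first implication vacuous), while each $\alpha_s \in B_c(e)$ corresponds to the simple reflection $s$, which is not an inversion of $e$.

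For the inductive step, fix an $s$ initial in $c$ and distinguish the two branches of the recursive definition of $C_c(v)$. If $v \not\ge s$, then $C_c(v) = C_{sc}(v) \cup \set{\alpha_s}$. The added root $\alpha_s$ is positive and corresponds to the reflection $s$, which is not an inversion of $v$ since $v \not\ge s$; this is consistent with $\alpha_s \in B_c(v)$. The remaining roots lie in $V_{\br{s}}$, so their signs are unchanged when viewed as roots of $W$. Applying the rank-$(n{-}1)$ inductive hypothesis to the $sc$-sortable element $v \in W_{\br{s}}$, and noting that the inversions of $v$ in $W_{\br{s}}$ coincide with its inversions in $W$, completes this case.

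If $v \ge s$, then $C_c(v) = s\cdot C_{scs}(sv)$ where $sv$ is $scs$-sortable with $\ell(sv) < \ell(v)$. Two ingredients are crucial: the identity $\inv(v) = \set{s} \cup \set{sts : t \in \inv(sv)}$ (recalled in Section~\ref{CoxeterConventions}), together with the fact that $s \notin \inv(sv)$ (since $\ell(s\cdot sv) = \ell(v) > \ell(sv)$). Because $s$ permutes $\Phi^+ \setminus \set{\alpha_s}$ and swaps $\alpha_s$ with $-\alpha_s$, every root $\pm\beta_u \in C_{scs}(sv)$ with $u \ne s$ is sent to $\pm\beta_{sus}$ of the same sign, and the inductive statement for $sv$ then transfers along the inversion identity. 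For $\beta = \alpha_s \in B_{scs}(sv)$, the image $s\beta = -\alpha_s \in A_c(v)$ corresponds to the reflection $s \in \inv(v)$, as desired.

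The one subtlety, and the only genuine obstacle, is ruling out the remaining subcase $\beta = -\alpha_s \in A_{scs}(sv)$: the inductive hypothesis applied to $sv$ with respect to $scs$ would force $s \in \inv(sv)$, contradicting $s \notin \inv(sv)$. Thus this bad subcase does not arise, and the remaining subcases are verified by direct bookkeeping with the sign changes under the $s$-action; no deeper tool is required.
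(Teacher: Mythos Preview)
Your proof is correct and follows essentially the same approach as the paper's: induction on length and rank, splitting into the cases $v\not\ge s$ and $v\ge s$, and in the latter case handling the generic subcase $u\neq s$ via the fact that $s$ permutes $\Phi^+\setminus\{\alpha_s\}$, while ruling out $-\alpha_s\in A_{scs}(sv)$ by appealing to the inductive hypothesis. The paper recasts the two implications as the single set identity $C_c(v)\cap\{\pm\beta_t:t\in\inv(v)\}=A_c(v)$, but the substance is identical.
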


\begin{proof}
We prove the equivalent statement
\renewcommand{\theequation}{$\ast \ast$}
\begin{equation}\label{C cap inv}
C_c(v)\cap\set{\pm\beta_t:t\in\inv(v)}=A_c(v). 
\end{equation}

Let~$s$ be initial in~$c$. 
If $v\not\ge s$ then $C_c(v)=C_{sc}(v)\cup\set{\alpha_s}$.
By induction on rank, $C_{sc}(v)\cap\set{\pm\beta_t:t\in\inv(v)}=A_{sc}(v)$.
This completes the argument in this case since $s\not\in\inv(v)$ and $A_{sc}(v)=A_c(v)$.

If $v\ge s$ then $C_c(v)=sC_{scs}(sv)$ and $\inv(sv)=(s\inv(v)s)\setminus\set{s}$.
By induction on length, $C_{scs}(sv)\cap\set{\pm\beta_t:t\in\inv(sv)}=A_{scs}(sv)$. 
Thus\footnote{Note that the use of the notation $E \setminus F$ does not imply $F \subset E$. Here and throughout the paper, the definition of $E \setminus F$ is $\{ x \in E : x \not \in F \}$.} 
\[C_c(v)\cap\set{\pm\beta_t:t\in\inv(v), t\neq s}=sA_{scs}(sv)=A_c(v) \setminus \{ - \alpha_s \}. \]

So we just need to consider whether $\pm \alpha_s$ could be contained in either side of equation~(\ref{C cap inv}).  If $- \alpha_s\in C_c(v)$ then $A_c(v)=sA_{scs}(sv)\cup\set{- \alpha_s}$; otherwise $A_c(v)=sA_{scs}(sv)$.
So $- \alpha_s$ is contained in the right hand side of equation~(\ref{C cap inv}) if and only if it is contained in the left hand side. 
We now check that $\alpha_s$ cannot lie in the left hand side; by definition, $\alpha_s$ cannot belong to $A_c(v)$.  
If $\alpha_s$ was in $C_{c}(v)$, then $-\alpha_s$ would be in $C_{scs}(sv)$ and hence $-\alpha_s \in A_{scs}(sv)$. 
So, by induction on length, $s$ would be an inversion of $sv$, which contradicts our assumption that $v \geq s$.
\end{proof}

\begin{lemma}\label{fs ufs}
Let $w\in W$ have $c$-sorting word $a_1\cdots a_k$.
If~$a_1\cdots a_k$ skips $r$ in the $(i+1)\st$ position then the skip is forced if and only if $a_1\cdots a_ira_i\cdots a_1\in\inv(w)$.
\end{lemma}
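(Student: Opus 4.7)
The plan is to handle the two directions separately, using the generic observation that for a reduced expression $u = a_1 \cdots a_i$ and simple reflection $r$, the word $a_1 \cdots a_i r$ fails to be reduced if and only if $t := a_1 \cdots a_i r a_i \cdots a_1 = uru^{-1}$ is an inversion of $u$. This is immediate from $\ell(tu) = \ell(uru^{-1}u) = \ell(ur)$ together with the fact that length changes by $\pm 1$ under right multiplication by a simple reflection.

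For the ``forced implies $t \in \inv(w)$'' direction, I would observe that $a_1\cdots a_i$ is a prefix of the reduced word $a_1\cdots a_k$ for $w$, so $a_1\cdots a_i \le w$ in weak order and hence $\inv(a_1\cdots a_i) \subseteq \inv(w)$. Since the skip is forced, $a_1\cdots a_i r$ is not reduced, and by the above observation $t \in \inv(a_1\cdots a_i) \subseteq \inv(w)$.

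For the converse, I would assume the skip is unforced and argue by contradiction that $t \in \inv(w)$ is impossible. If $t \in \inv(w)$, then $\inv(a_1\cdots a_i r) = \inv(a_1\cdots a_i) \cup \{t\} \subseteq \inv(w)$, so $a_1 \cdots a_i r \le w$ in weak order, and this reduced expression extends to a reduced word $a_1 \cdots a_i r\, b_1 b_2 \cdots b_{k-i-1}$ for $w$. I would then realize this word as a subword of $(s_1\cdots s_n)^\infty$ by matching the positions of the $c$-sorting word for $a_1, \ldots, a_i$, then taking the leftmost occurrence of $r$ after those positions, and finally extending greedily (possible since every simple reflection appears infinitely often). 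By the definition of a skip at position $i+1$, the position chosen for $r$ lies strictly before the position of $a_{i+1}$ in the sorting word (the edge case $i=k$ is ruled out directly, because if $a_1\cdots a_k r$ were reduced then $t = wrw^{-1}$ would satisfy $\ell(tw) = \ell(wr) > \ell(w)$, contradicting $t \in \inv(w)$). Hence the new sequence of positions is lexicographically earlier than that of the $c$-sorting word, contradicting its defining lex-minimality.

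The main obstacle is simply the bookkeeping of positions in $(s_1\cdots s_n)^\infty$ and verifying that the constructed position sequence does beat the sorting word's position sequence lexicographically. No structural results about sortability, the Euler form, or alignment are needed: the lemma rests only on formal properties of reduced words and the lex-first definition of the $c$-sorting word.
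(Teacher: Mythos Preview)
Your proposal is correct and follows essentially the same approach as the paper: both arguments reduce to the lex-minimality of the $c$-sorting word, with the forward direction being immediate from $\inv(a_1\cdots a_i)\subseteq\inv(w)$. The paper's version of the reverse direction is slightly more compact: rather than extending $a_1\cdots a_i r$ to a full reduced word for $w$ and embedding it in $(s_1\cdots s_n)^\infty$, it observes that $t\in\inv(w)$ means $t=t_j$ for some $j$, rules out $j\le i$ since the skip is unforced, and notes that $t=t_j$ for $j>i$ would (via the exchange condition) produce a lex-earlier reduced subword, contradicting the definition of the sorting word.
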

\begin{proof}
Suppose~$a_1\cdots a_k$ skips $r$ in the $(i+1)\st$ position.
Set $t=a_1\cdots a_ira_i\cdots a_1$ and $t_j=a_1\cdots a_j\cdots a_1$ for each~$j$ with $1 \leq j \leq k$.
Since $a_1\cdots a_i$ is reduced, the word $a_1\cdots a_ir$ is reduced if and only if $t\neq t_j$ for every~$j$ with $1\le j\le i$.
However, if $t=t_j$ for some $i<j\le k$ then skipping $r$ in the $(i+1)\st$ position violates the definition of a $c$-sorting word.
Thus $a_1\cdots a_ir$ is reduced (and thus the skip is unforced) if and only if $a_1\cdots a_ira_i\cdots a_1\not\in\inv(w)$.
\end{proof}

\begin{lemma}\label{unforced basics}
Let $v$ be a $c$-sortable element of~$W$.
Suppose a $c$-sorting word $a_1\cdots a_k$ for $v$ has an unforced skip of the generator $r$ at the $(i+1)\st$ position.
Let $t=a_1\cdots a_ira_i\cdots a_1$ and for each $j\in[k]$, define $t_j=a_1\cdots a_j\cdots a_1$. Then
\begin{enumerate}
\item$a_1\cdots a_ir$ is a $c$-sorting word for a $c$-sortable element.
\item$r\neq a_j$ for every~$j$ with $i<j\le k$.
\item$\omega_c(\beta_t,\beta_{t_j}) \geq 0$ for every~$j$ with $i<j\le k$, with strict equality holding unless~$t$ and $t_j$ commute.
\end{enumerate}
\end{lemma}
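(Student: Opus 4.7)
The plan is a simultaneous induction on $\ell(v)$ and the rank of $W$, organized by the three cases flowing from Proposition~\ref{CSortRecursive}. Fix $s$ initial in $c$ and split according to whether $v\geq s$, and, when $v\not\geq s$, whether $r=s$ or $r\neq s$. The base case $v=e$ is trivial.

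\emph{Case 1: $v\geq s$.} Then $a_1=s$ sits at position $1$ of $(s_1\cdots s_n)^\infty$, so the skip forces $i\geq 1$, and $a_2\cdots a_k$ is the $scs$-sorting word of $sv$ with an unforced skip of $r$ at position $i$. Setting $t'=a_2\cdots a_i r a_i\cdots a_2$ and $t'_j=a_2\cdots a_j\cdots a_2$ for $2\leq j\leq k$, we have $t=st's$ and $t_j=st'_js$. The length induction applied to $sv$ yields the three conclusions for $t'$ and the $t'_j$ relative to $\omega_{scs}$. Prepending $s$ to the $scs$-sorting word $a_2\cdots a_i r$ produces the reduced word $a_1\cdots a_i r$, which by Proposition~\ref{CSortRecursive} is the $c$-sorting word of a $c$-sortable element, giving (1); claim (2) is the primed claim verbatim. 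Since $\ell(s\cdot sv)=\ell(v)>\ell(sv)$, the reflection $s$ is not an inversion of $sv$, so none of $t'$ and the $t'_j$ equals $s$; hence $\beta_t=s\beta_{t'}$ and $\beta_{t_j}=s\beta_{t'_j}$ with positive signs, and Lemma~\ref{OmegaInvariance} transports the $\omega_{scs}$ inequalities to the $\omega_c$ inequalities, yielding (3).

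\emph{Case 2a: $v\not\geq s$ and $r=s$.} Since $v\in W_{\br{s}}$, no $a_j$ equals $s$, so the leftmost skipped copy of $s$ in $(s_1\cdots s_n)^\infty$ is at position $1$. Thus $i=0$, $t=s$, and $a_1\cdots a_0\, r=s$ is manifestly the $c$-sorting word of the $c$-sortable element $s$. This gives (1); (2) asserts $s\notin\supp(v)$, which holds; and (3) is precisely the content of Lemma~\ref{OmegaNegativity}.

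\emph{Case 2b: $v\not\geq s$ and $r\neq s$.} Here $v$ lies in $W_{\br{s}}$ and, by Proposition~\ref{sort para easy} and the recursion, is $sc$-sortable there; moreover $a_1\cdots a_k$ serves simultaneously as its $c$-sorting word in $W$ and as its $sc$-sorting word in $W_{\br{s}}$, because excising the $s$-positions from $(s_1\cdots s_n)^\infty$ yields $(s_2\cdots s_n)^\infty$ without disturbing the lex-first choice of any non-$s$ letter. Since $r\neq s$, the skip at position $i+1$ is recorded identically in both settings. The rank induction in $W_{\br{s}}$ delivers the lemma there with $\omega_{sc}$, and Proposition~\ref{sort para easy} transfers $sc$-sortability back to $c$-sortability for (1)--(2); since $\beta_t$ and each $\beta_{t_j}$ lie in $V_{\br{s}}$, Lemma~\ref{OmegaRestriction} identifies $\omega_c$ with $\omega_{sc}$ on these roots, yielding (3). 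The main delicacy of the argument lies in Case 1, in verifying that $\beta_t=+s\beta_{t'}$ so that the translation between $\omega_c$ and $\omega_{scs}$ is sign-clean; this is precisely where the observation $t'\neq s$ (equivalently, $s\notin\inv(sv)$) is needed.
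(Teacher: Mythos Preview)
Your proof is correct and follows the same inductive scheme as the paper's: induction on length via Lemma~\ref{OmegaInvariance} when $v\ge s$, Lemma~\ref{OmegaNegativity} when $v\not\ge s$ and $r=s$, and induction on rank via Lemma~\ref{OmegaRestriction} when $v\not\ge s$ and $r\neq s$. The paper dismisses (1) and (2) as ``immediate from the relevant definitions'' and proves only (3); you are more careful, and in particular you are right to flag the sign issue in Case~1, which the paper glosses over.

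One small imprecision in that very spot: your justification ``$s$ is not an inversion of $sv$'' shows $t'_j\neq s$ for $2\le j\le k$ (since the $t'_j$ are inversions of $sv$), but it does not apply to $t'$, which, being an unforced skip of $sv$, is \emph{not} an inversion of $sv$. The conclusion $t'\neq s$ is nonetheless correct: since $a_1\cdots a_i r$ is reduced with first letter $a_1=s$, its reflection sequence $s=t_1,t_2,\ldots,t_i,t$ consists of distinct reflections, so $t\neq s$ and hence $t'=sts\neq s$. Equivalently, $s\in\inv(v)$ while $t\notin\inv(v)$ by Lemma~\ref{fs ufs}.
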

\begin{proof}
Let $c=s_1 s_2 \cdots s_n$ and let $a_1 a_2 \cdots a_k$ be the $(s_1 \cdots s_n)$-sorting word for $v$. 

The first two assertions are immediate from the relevant definitions.
For the last assertion, we argue by induction on length and rank.
Let~$s=s_1$.
First suppose $v\not\ge s$.  
If $r=s$ then Lemma~\ref{OmegaNegativity} proves the assertion.
If not then $v\in W_{\br{s}}$ is $sc$-sortable, $a_1\cdots a_k$ is an $sc$-sorting word and the assertion follows by induction on rank and by Lemma~\ref{OmegaRestriction}.

On the other hand, if $v\ge s$ then in particular $a_1=s$ so an $scs$-sorting word $a_2\cdots a_k$ for $sv$ has an unforced skip of $r$ at position $i$.
By induction of length for $t'=a_2\cdots a_ira_i\cdots a_2$ and $t'_j=a_2\cdots a_j\cdots a_2$ we have $\omega_{scs}(\beta_{t'},\beta_{t'_j}) \geq 0$ for all~$j$ with $i<j\le k$, with strict equality holding unless~$t$ and $t_j$ commute.
Now apply Lemma~\ref{OmegaInvariance}.  
\end{proof}

We now prove Propositions~\ref{walls} and~\ref{lower walls}.
\begin{proof}[Proof of Proposition~\ref{walls}]
We will verify the statement $C_c^r(v)=\pm\beta_t$ by induction on length and rank.
The statements about $A_c$ and $B_c$ follow by Lemmas~\ref{AB inv} and~\ref{fs ufs}.
Let $a_1\cdots a_k$ be the $(s_1\cdots s_n)$-sorting word for $v$ and let $s=s_1$.

If $v\not\ge s$ then consider the cases $r=s$ and $r\neq s$.
If $r=s$ then $t=s$ and $C_c^s(v)=\alpha_s$.
If $r\neq s$ then $C_c^r(v)=C_{sc}^r(v)$ which equals $\pm\beta_t$ by induction on rank.

If $v\ge s$ then $C_c^r(v)=sC_{scs}^r(sv)$.
An $scs$-sorting word for $sv$ can be obtained from $a_1\cdots a_k$ by deleting the initial $s=a_1$.
Thus $sv$ skips $r$ in the $i\th$ position and $a_2\cdots a_ira_i\cdots a_2=sts$.
By induction on the length of $v$, $C_{scs}^r(sv)=\pm\beta_{sts}$, so $C_c(v)=s\left(\pm\beta_{sts}\right)=\pm\beta_t$.
\end{proof}

\begin{proof}[Proof of Proposition~\ref{lower walls}]
Let $a_1\cdots a_k$ be the $(s_1\cdots s_n)$-sorting word for $v$ and let $s=s_1$.
We split into several cases.

\textbf{Case 1:} $v\not\ge s$. Then $v\in W_{\br{s}}$ and $C_c(v)=C_{sc}(v)\cup\set{\alpha_s}$, so $A_c(v)=A_{sc}(v)$. By induction on rank, $A_{sc}(v)=\set{- \beta_t:t\in\cov(v)}$.

\textbf{Subcase 2a:} $v\ge s$ and~$s$ is a cover reflection of $v$. In this case $\cov(sv)$ is equal to $\set{sts:t\in(\cov(v)\setminus\set{s})}$. We have $A_{scs}(sv)=\set{-\beta_{sts}:t\in(\cov(v)\setminus\set{s})}$ by induction on length.  
Because $C_c(v)=sC_{scs}(sv)$ and since only $\pm\alpha_s$ have their positive/negative status changed by the action of the reflection~$s$, either $A_c(v)=sA_{scs}(sv)$ or $A_c(v)=\set{-\alpha_s}\cup sA_{scs}(sv)$.
To finish the proof in this case, it remains to show that the latter case holds, or in other words that $\alpha_s\in C_{scs}(sv)$.
By Proposition~\ref{walls}, this is equivalent to showing that there is an unforced skip of some~$r$ in the $i\th$ position of $a_2\cdots a_k$ with $s=a_2\cdots a_ira_i\cdots a_2$.

Consider the $(s_2\cdots s_ns_1)$-sorting word $a'_1\cdots a'_k$ for $v$. 
Take $i$ to be the index such that $s=a'_1\cdots a'_i\cdots a'_1$.
Since~$s$ is a cover reflection of $v$, $\inv(v)=\set{s}\cup\inv(sv)$.
Thus $a'_1,\ldots,a'_{i-1}$ are the first $i-1$ letters of the $(s_2\cdots s_ns_1)$-sorting word for $sv$.
By the uniqueness of sorting words, the words $a'_1,\ldots,a'_{i-1}$ and $a_2,\ldots,a_i$ are equal.
Since~$s$ is not an inversion of $sv$, $a'_i$ is skipped at position $i$ in $a_2\cdots a_k$.
This is an unforced skip, since $a'_1\cdots a'_i=a_2\cdots a_ia'_i$ is reduced.
This completes the proof in the case where $v\ge s$ and~$s$ is a cover reflection of $v$.

\textbf{Subcase 2b:} $v\ge s$ and~$s$ is not a cover reflection of $v$. Then $\cov(sv)=\set{sts:t\in\cov(v)}$.
By induction on length, $A_{scs}(sv)=\set{-\beta_{sts}:t\in\cov(v)}$.
In this case we want to show that $A_c=sA_{scs}(sv)$, or equivalently that $\alpha_s\not\in C_{scs}(sv)$.
Appealing again to Proposition~\ref{walls}, we complete the proof by showing that there is no unforced skip in $a_2\cdots a_k$ whose corresponding reflection is~$s$.
Suppose to the contrary that that there is an unforced skip of some $r$ in the $i\th$ position of $a_2\cdots a_k$ with $s=a_2\cdots a_ira_i\cdots a_2$.
Define $t'_j=a_2\cdots a_j\cdots a_2$ for every~$j$ with $2\le j\le k$.
By Lemma~\ref{unforced basics}, $\omega_{scs}(\alpha_s,\beta_{t'_j}) \geq 0$ for every~$j$ between $i+1$ and $k$ (inclusive), with strict inequality unless~$s$ and $t'_j$ commute.
But, by Lemma~\ref{OmegaPositivity}, $\omega_{scs}(\alpha_s,\beta_{t}) \leq 0$, for every reflection~$t$, because~$s$ is final in $scs$.  Thus~$s$ and $t'_j$ commute for every~$j$ with $i<j\le k$.
For each such~$j$, let $t''_j=a_{i+1}\cdots a_j\cdots a_{i+1}$.
Since $s=a_2\cdots a_i r a_i \cdots a_2$ commutes with $t'_j=a_2\cdots a_it''_ja_i\cdots a_2$, the reflection $r$ commutes with $t''_j$.
Since the $t''_j$'s are the inversions of $a_{i+1}\cdots a_k$, by Proposition~\ref{commutes}, $r$ commutes with $a_{i+1} \cdots a_k$.
So we have $v=a_1a_2\cdots a_i a_{i+1}\cdots a_k=a_2\cdots a_i r a_{i+1} \cdots a_k=a_2 \cdots  a_i  a_{i+1} \cdots a_k r$. Since $\ell(v)=k$, the word $a_2 \cdots  a_i a_{i+1} \cdots a_k r$ is reduced. So $a_2 \cdots a_k r =v$ covers $a_2 \cdots a_k=sv$, contradicting the assumption that $s$ is not a cover reflection of $v$.
\end{proof}

We now proceed to prove Propositions~\ref{Cc final} and~\ref{Cc initial}, again starting with preliminary lemmas.
The second of the two lemmas is essentially a converse to Lemma~\ref{unforced basics}.

\begin{lemma}\label{Cc order} 
Let $v$ be $c$-sortable, with $c=s_1 s_2 \cdots s_n$.  
Let $\Omega_c(v)$ be the subword of $(s_1 s_2 \cdots s_n)^{\infty}$ which is the complement of the $(s_1 s_2 \cdots s_n)$-sorting word for $v$.
Let $(r_1, r_2, \ldots, r_n)$ be the simple reflections, ordered according to when they first appear in $\Omega_c(v)$. Set $\beta_i=C_c^{r_i}(v)$.

With the above definitions, $E_c(\beta_i, \beta_j)=0$ for $i < j$. 
\end{lemma}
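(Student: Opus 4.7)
The plan is to induct on the rank of $W$ and on $\ell(v)$, with the base case $v = e$ handled directly: then the sorting word is empty, $\Omega_c(v) = (s_1 s_2 \cdots s_n)^\infty$, so $r_i = s_i$ and $\beta_i = C_c^{s_i}(e) = \alpha_{s_i}$, and $E_c(\alpha_{s_i}, \alpha_{s_j}) = 0$ for $i < j$ by the definition of $E_c$ (extended bilinearly from simple co-roots to simple roots).

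For the inductive step I would take $s = s_1$, which is initial in $c$, and split on the two branches of the recursive definition of $C_c^r$. First suppose $v \not\ge s$, so $v$ is $sc$-sortable inside $W_{\br{s}}$ and its $c$-sorting word contains no $s$. Then position $1$ of $(s_1 s_2 \cdots s_n)^\infty$ lies in $\Omega_c(v)$, giving $r_1 = s$ and $\beta_1 = \alpha_s$. Deleting every occurrence of $s$ from $(s_1 \cdots s_n)^\infty$ produces $(sc)^\infty$ as an infinite Coxeter word for $W_{\br{s}}$, and under this identification the $c$-sorting word for $v$ becomes the $sc$-sorting word for $v$; consequently $(r_2, \ldots, r_n)$ coincides with the ordering of $S \setminus \set{s}$ determined by $\Omega_{sc}(v)$, and $\beta_i = C_{sc}^{r_i}(v)$ lies in $V_{\br{s}}$ for each $i \ge 2$. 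For pairs $(1,j)$ with $j \ge 2$, Lemma~\ref{Ec initial} yields $E_c(\alpha_s^\vee, \beta_j) = 0$ since the $\alpha_s$-coefficient of $\beta_j$ is zero, so $E_c(\beta_1, \beta_j) = \delta(s) E_c(\alpha_s^\vee, \beta_j) = 0$. For pairs $(i,j)$ with $1 < i < j$, the restriction of $E_c$ to $V_{\br{s}}$ equals $E_{sc}$ (immediate from the definition, since $s_2, \ldots, s_n$ appear in the same relative order in $c$ and in $sc$), and the inductive hypothesis applied inside $W_{\br{s}}$ gives $E_c(\beta_i, \beta_j) = E_{sc}(\beta_i, \beta_j) = 0$.

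Next suppose $v \ge s$, so $a_1 = s$ and $a_2 \cdots a_k$ is the $scs$-sorting word for $sv$. Stripping the initial letter from $(s_1 \cdots s_n)^\infty$ produces $(scs)^\infty$ letter by letter, and this shift identifies the positions of $a_2, \ldots, a_k$ in the two infinite words. Hence $\Omega_c(v)$ and $\Omega_{scs}(sv)$ determine the same ordering $(r_1, \ldots, r_n)$ of $S$. Writing $\beta'_i = C_{scs}^{r_i}(sv)$, the recursion gives $\beta_i = s\beta'_i$, and the inductive hypothesis on length yields $E_{scs}(\beta'_i, \beta'_j) = 0$ for $i < j$. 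Lemma~\ref{Ec invariant} then completes this case: $E_c(\beta_i, \beta_j) = E_c(s\beta'_i, s\beta'_j) = E_{scs}(\beta'_i, \beta'_j) = 0$.

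The main obstacle is the combinatorial bookkeeping inside each case: verifying that the ordering of simple reflections produced by $\Omega_c(v)$ really does agree with the corresponding ordering attached to the smaller object ($\Omega_{sc}(v)$ in the first case, $\Omega_{scs}(sv)$ in the second). Once those identifications are in place, the algebraic content reduces to two standard properties of the Euler form, namely Lemma~\ref{Ec initial} and Lemma~\ref{Ec invariant}.
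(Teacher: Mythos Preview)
Your proof is correct and follows essentially the same approach as the paper: the same induction on rank and length, the same split on whether $v\ge s$ for $s$ initial in $c$, the same identification of the orderings coming from $\Omega_c(v)$ with those from $\Omega_{sc}(v)$ or $\Omega_{scs}(sv)$, and the same appeals to Lemma~\ref{Ec invariant} and (implicitly in the paper) Lemma~\ref{Ec initial}. Your write-up is somewhat more explicit about the bookkeeping, but there is no substantive difference.
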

\begin{proof}
We use the usual argument by induction on length and rank. The base case, where $v=e$, is immediate from the definition of $E_c$.

First, suppose that $v \geq s$. 
In particular, the orderings on $S$ obtained from $\Omega_c(v)$ and from $\Omega_{scs}(sv)$ are the same.
Now $E_c(\beta_i, \beta_j)=E_{c}(sC^{r_i}_{scs}(sv), sC^{r_j}_{scs}(sv) )$.
By Lemma~\ref{Ec invariant}, the latter equals $E_{scs}(C^{r_i}_{scs}(sv), C^{r_j}_{scs}(sv) )$ which by induction equals zero for $i<j$.

Now, suppose that $v \not \geq s$, so $v \in W_{\br{s}}$. 
Then $s=r_1$ and $\beta_1=\alpha_s$.
For $i >1$, the root $\beta_i$ lies in $\Phi_{\br{s}}$ and, thus, $E_c(\beta_1, \beta_i)=0$. 
The ordering on $S\setminus\set{s}$ obtained from $\Omega_{sc}(v)$ is the restriction of the ordering obtained from $\Omega_c(v)$.
Thus if $1 < i < j$, then $E_{c}(\beta_i, \beta_j)=0$ by induction on rank.
\end{proof}

\begin{lemma} \label{unforced basics converse}
Let $t_1$, $t_2$, \dots, $t_j$, $t$, $t_{j+1}$, \dots $t_N$ be a sequence of $N+1$ distinct reflections of $W$, and let $v$ be a $c$-sortable element of $W$. Suppose that the following conditions hold.
\begin{enumerate}
\item[(i)] $t_1$, $t_2$, \dots, $t_j$, $t_{j+1}$, \dots $t_N$ is the reflection sequence of a $c$-sorting word for~$v$.
\item[(ii)] $t_1$, $t_2$, \dots, $t_j$, $t$ is the reflection sequence of a reduced word.
\item[(iii) ]$\omega_c(\beta_{t_i},\beta_{t}) \geq 0$ for all $i\le j$, with strict inequality unless $t_i$ and $t$ commute, and
\item[(iv) ]$\omega_c(\beta_{t},\beta_{t_k})\geq 0$ for all $k>j$, with strict inequality unless $t$ and $t_k$ commute.
\end{enumerate}
Then $t$ is an unforced skip of $v$.
\end{lemma}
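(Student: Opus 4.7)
My plan is to prove this lemma by double induction on the rank $|S|$ and the length $\ell(v)$, mirroring the inductive structures used in Proposition~\ref{InversionOrdering} and Lemma~\ref{unforced basics}. Fix $s$ initial in $c$; the three cases correspond to the recursive definition of sortability given in Proposition~\ref{CSortRecursive}.

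First I would handle the case $v \geq s$. Here $t_1 = s$ (after commutation) and $sv$ is $scs$-sortable with reflection sequence $(st_2 s, \ldots, st_N s)$. Since the $N+1$ reflections in the given sequence are distinct, $t \neq s$, so $\beta_{sts} = s\beta_t$ is again a positive root. Lemma~\ref{OmegaInvariance} translates conditions (iii) and (iv) into the analogous $scs$-conditions for $sv$ with $j' = j - 1$. The edge case $j = 0$ requires a brief aside: condition (iv) at $k = 1$ combined with Lemma~\ref{OmegaNegativity} forces $s$ and $t$ to commute, so $sts = t$ remains a simple reflection and I instead use $j' = 0$. By induction on $\ell(v)$, $sts \in \ufs_{scs}(sv)$; the natural correspondence between the $c$-sorting word of $v$ and the $scs$-sorting word of $sv$ (obtained by stripping the initial $s$) then transports this unforced skip to $t \in \ufs_c(v)$.

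Second I would handle $v \not\geq s$ with $t \in W_{\br{s}}$. Then $v$ is $sc$-sortable in $W_{\br{s}}$, every $t_i$ and $t$ lie in $W_{\br{s}}$, and the reduced word witnessing (ii) uses letters only from $S \setminus \{s\}$. Lemma~\ref{OmegaRestriction} shows the hypotheses descend to $W_{\br{s}}$ with Coxeter element $sc$, and the inductive hypothesis on rank yields $t \in \ufs_{sc}(v)$; since the $c$-sorting word of $v$ equals its $sc$-sorting word in $W_{\br{s}}$, we conclude $t \in \ufs_c(v)$.

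Third I would handle $v \not\geq s$ with $t \notin W_{\br{s}}$. The reduced word $b_1 \cdots b_{j+1}$ witnessing (ii) must then have $b_1 \cdots b_j = a_1 \cdots a_j \in W_{\br{s}}$ and $b_{j+1} = s$. I will prove $t = s$; once established, the fact that the first instance of $s$ in $(s_1 \cdots s_n)^\infty$ is skipped by $v$'s $c$-sorting word (since $v \not\geq s$) and this skip is unforced ($s \notin \inv(v)$) gives $t \in \ufs_c(v)$. When $j < N$, I would apply the lemma inductively to the $c$-sortable element $u = a_1 \cdots a_j$, which lies in $W_{\br{s}}$ and has length $j < \ell(v)$: conditions (i)--(iii) are inherited and (iv) is vacuous, so the inductive conclusion $t \in \ufs_c(u)$, combined with the observation that $\ufs_c(u) \cap (W \setminus W_{\br{s}}) \subseteq \{s\}$ for any $u \in W_{\br{s}}$, forces $t = s$. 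When $j = N$, condition (iv) is already vacuous and I would instead expand $\beta_t = v\alpha_s = \alpha_s + \gamma$ with $\gamma \in V_{\br{s}}$, then combine condition (iii) with Lemma~\ref{OmegaNegativity}, Lemma~\ref{OmegaRestriction}, and Proposition~\ref{InversionOrdering} applied to $v$'s $c$-sorting word to force each $t_i$ to commute with $s$, so that by Lemma~\ref{commutes} the element $v$ commutes with $s$ and hence $t = s$. The hard part will be this final $j = N$ subsubcase, which requires delicate sign analysis of $\omega_c(\beta_{t_i}, \beta_t)$ via the simple-root decomposition of $\beta_t$.
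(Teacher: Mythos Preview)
Your inductive framework and the first two cases match the paper. The gap is in the third case ($v \not\geq s$ and $t \notin W_{\br{s}}$), specifically your $j = N$ subcase, where you propose to ``force each $t_i$ to commute with $s$'' by sign analysis. Expanding $\beta_t = \alpha_s + \sum_k b_k \beta_{t_k}$ via Lemma~\ref{span inv s}, condition~(iii) at index $i$ reads
\[
\omega_c(\beta_{t_i}, \alpha_s) + \sum_k b_k\, \omega_c(\beta_{t_i}, \beta_{t_k}) \geq 0.
\]
The first summand is $\leq 0$ by Lemma~\ref{OmegaNegativity}, and Proposition~\ref{InversionOrdering} makes the $k < i$ terms $\leq 0$ but the $k > i$ terms $\geq 0$. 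So only at $i = N$ are all terms nonpositive, yielding $\omega_c(\beta_{t_N}, \alpha_s) = 0$; for $i < N$ the mixed signs block the conclusion. (One can salvage this by noting that the $i = N$ equality also forces $t_N$ to commute with $t$ via the strict clause of~(iii), hence $a_N = v^{-1}t_Nv$ commutes with $s = v^{-1}tv$, allowing a descent to $a_1\cdots a_{N-1}$ and an iteration --- but that extra idea is absent from your sketch.)

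The paper sidesteps this entirely and renders your $j < N$ / $j = N$ split unnecessary. Hypotheses~(i) (via Proposition~\ref{InversionOrdering}) and~(iii) say precisely that the sequence $t_1, \ldots, t_j, t$ satisfies condition~(i) of Proposition~\ref{InversionOrdering}, so the element $v'$ with this reflection sequence is $c$-sortable. Since $t \notin W_{\br{s}}$ is an inversion of $v'$, we have $v' \notin W_{\br{s}}$, whence Proposition~\ref{CSortRecursive} gives $v' \geq s$, i.e.\ $s \in \inv(v') = \{t_1, \ldots, t_j, t\}$. As every $t_i$ lies in $W_{\br{s}}$, this forces $t = s$ for every $j$ at once, and $s \in \ufs_c(v)$ then follows immediately from $v \not\geq s$.
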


\begin{proof}
Let $s$ be initial in $c$.
If $v \not \geq s$ then $v\in W_{\br{s}}$ and each $t_i\in W_{\br{s}}$.
Let $v'$ be the element with a reduced word having reflection sequence $t_1,\ldots,t_{j},t$.
By Proposition~\ref{InversionOrdering}, applied twice, we see that $v'$ is $c$-sortable.
We claim that either $t=s$ or $t\in W_{\br{s}}$.
If $t\not\in W_{\br{s}}$ then we have $v'\not\in W_{\br{s}}$, so, since $v'$ is $c$-sortable, $s\le v'$.
But $t$ is the only inversion of $v'$ that is not an inversion of $v$, and $s\not\le v$.
Thus $t=s$, and this proves the claim. 
Since $v \not \geq s$ and $s$ is initial in $c$, we have $s\in\ufs_c(v)$.
Thus, by the claim, it remains only to consider the case $t\in W_{\br{s}}$.
By Lemma~\ref{OmegaRestriction} we see that (iii) and (iv) also hold with respect to $\omega_{sc}$.
Thus by induction on rank, $t\in\ufs_{sc}(v)\subset\ufs_c(v)$.

If $v \geq s$ then $s\in\set{t_1,\ldots,t_N}$, so $t\neq s$.
Since $s$ is initial in $c$, we can transpose initial commuting entries in the sequence $t_1,\ldots,t_N$ to make $s$ appear first in the sequence, without upsetting properties (i)--(iv).
By Lemma~\ref{OmegaInvariance} and Proposition~\ref{CSortRecursive}, the sequence $s t_2 s$, $s t_3 s$, \ldots, $s t_j s$, $s t s$, $s t_{j+1} s$, \ldots, $s t_N s$ satisfies (i)--(iv) with respect to $scs$. 
By induction on $N$ we have $sts\in\ufs_{scs}(sv)$, so $t\in\ufs_c(v)$.
\end{proof}

We will soon turn to the proofs of Propositions~\ref{Cc final} and~\ref{Cc initial}. We will deduce both these propositions from the following result. 

\begin{prop} \label{Cc together}
Let $s$ be either initial or final in $c$. Let $v$ be $c$-sortable with $s\in\cov(v)$ and let $t$ be a reflection with $\pm \beta_t$ in $C_c(v) \setminus \{ - \alpha_s \}$.
\begin{enumerate}
\item[(i) ] If $s$ is final in $c$ or $t\in\cov(v)$ then $t\in W_{\br{s}}$.
\item[(ii) ] If $s$ is initial in $c$ and $t\in\ufs_c(v)$ then $sts\in W_{\br{s}}$.
\end{enumerate}
\end{prop}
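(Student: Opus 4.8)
The natural approach is the by-now-standard induction on the length of $v$ and the rank of $W$, using the recursive structure of $C_c(v)$ and of $\cov(v)$ together with the behavior of the forms $\omega_c$ under the operations $c \mapsto scs$ and restriction to $W_{\br{s}}$. We will treat the cases ``$s$ final'' and ``$s$ initial'' somewhat separately, since the combinatorial descriptions of the relevant roots differ (cover reflections and forced skips on the one hand, unforced skips on the other), but the two cases are linked by the anti-symmetry of $\omega_c$ and by Lemmas~\ref{Ec initial} and~\ref{Ec final}.

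First I would handle part (i) in the case that $s$ is final in $c$. Here I want to show that every $t$ with $\pm\beta_t \in C_c(v)\setminus\{-\alpha_s\}$ lies in $W_{\br{s}}$, equivalently that $\beta_t$ has zero coefficient on $\alpha_s$. The key tool is Lemma~\ref{Cc order}: order the simple reflections as $r_1, r_2, \ldots, r_n$ by first appearance in the complement word $\Omega_c(v)$, and set $\beta_i = C_c^{r_i}(v)$, so that $E_c(\beta_i,\beta_j)=0$ for $i<j$. Since $s$ is final in $c$ and $s \in \cov(v)$, Proposition~\ref{lower walls} tells us $-\beta_s \in A_c(v)$, i.e.\ $\alpha_s = -(-\alpha_s) = \beta_s$ appears (negated) among the $\beta_i$; I expect that $s$ must in fact be the \emph{last} of the $r_i$ — this is exactly where $s$ being final (hence a forced/cover skip occurring late) is used, via Lemmas~\ref{fs ufs} and~\ref{unforced basics} combined with Lemma~\ref{OmegaPositivity}. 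Granting that $s = r_n$, Lemma~\ref{Cc order} gives $E_c(\beta_i, -\alpha_s) = E_c(\beta_i, \beta_s) = 0$ for all $i<n$; but by Lemma~\ref{Ec final}, $E_c(\beta_t^\vee, \alpha_s)$ equals the $\alpha_s^\vee$-coefficient of $\beta_t^\vee$, which is a positive multiple of the $\alpha_s$-coefficient of $\beta_t$. Hence $E_c(\beta_i,\beta_s)=0$ forces $\beta_i \in V_{\br{s}}$, i.e.\ $t \in W_{\br{s}}$, for every $\beta_i \neq -\alpha_s$. This disposes of part (i) when $s$ is final. The remaining clause of (i) — $s$ initial but $t \in \cov(v)$ — follows from Proposition~\ref{lower walls} again: $\cov(v)$ corresponds to $A_c(v) = \{-\beta_t : t \in \cov(v)\}$, and one argues directly (using that $s$ is the first of the $r_i$ in this situation and Lemma~\ref{Ec initial} in place of Lemma~\ref{Ec final}) that a cover reflection $t \neq s$ of $v$ has $\beta_t$ with zero $\alpha_s$-coefficient; alternatively, since $s$ is initial, pass to $sv$ which is $scs$-sortable with $s$ final in $scs^{-1}\!$... more cleanly: use that $\inv(sv) = s\,\inv(v)\,s \setminus \{s\}$ and that conjugation by $s$ permutes $\Phi_+ \setminus \{\alpha_s\}$, reducing the claim to the recursive case.

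For part (ii), with $s$ initial in $c$ and $t \in \ufs_c(v)$, I would run the induction using Proposition~\ref{Cc initial}(iii): since $s \in \cov(v)$ and $s$ is initial, $\ufs_c(v) = \{sts : t \in \ufs_{sc}(v_{\br{s}})\}$. An element of $\ufs_{sc}(v_{\br{s}})$ is an inversion-candidate of an element of $W_{\br{s}}$, hence automatically lies in $W_{\br{s}}$; conjugating back, $sts$ with $t \in W_{\br{s}}$ — but we want $sts \in W_{\br{s}}$, i.e.\ that conjugating $t \in W_{\br{s}}$ by the initial generator $s$ keeps us in $W_{\br{s}}$, which is true precisely when $\beta_t$ has zero $\alpha_s$-coefficient. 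So the content is: the root $\beta_t$ for $t \in \ufs_{sc}(v_{\br{s}})$ lies in $\Phi_{\br{s}}$ — immediate — and then $s\beta_t = \beta_{sts}$ also lies in $\Phi_{\br{s}}$... no: $s$ acts on $\alpha_{s'}$ ($s' \neq s$) by adding a multiple of $\alpha_s$, so $s\beta_t$ generally acquires an $\alpha_s$-component. Thus the real statement being claimed is that the \emph{original} reflection $t$ (the one with $\pm\beta_t \in C_c(v)$) satisfies $sts \in W_{\br{s}}$, and by Proposition~\ref{walls} this $t$ has $C_c^r(v) = \pm\beta_t$ for the skipped generator $r$; writing $t = sts'$ for the corresponding skip $t' = a_2\cdots a_i r a_i \cdots a_2 \in \ufs_{scs}(sv)$ ... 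I will instead argue via Lemma~\ref{Cc order} and Lemma~\ref{Ec initial} in exact parallel with part (i): with $s = r_1$ the first of the $r_i$ (because $s \in \cov(v)$ and $s$ is initial, $s$ is skipped first, by Proposition~\ref{walls} and a direct check), Lemma~\ref{Cc order} gives $E_c(\alpha_s, \beta_i) = E_c(\beta_1, \beta_i) = 0$ for $i > 1$, and then Lemma~\ref{Ec initial} identifies $E_c(\alpha_s^\vee, \beta_t)$ with the $\alpha_s$-coefficient of $\beta_t$; we get that coefficient is zero, so $t \in W_{\br{s}}$, and since $s$ commutes with every generator of $W_{\br{s}}$ not adjacent to it... here one must be slightly careful, but $t \in W_{\br{s}}$ together with the $\alpha_s$-coefficient of $\beta_t$ being zero gives $s\beta_t = \beta_t$ exactly when $t$ commutes with $s$, which need not hold; what \emph{does} hold is $sts = t \cdot (\text{correction})$ still supported on $\br{s}$ iff $\beta_{sts}$ has zero $\alpha_s$-coefficient, and $\beta_{sts} = s\beta_t$; so I actually need the $\alpha_s$-coefficient of $s\beta_t$, not of $\beta_t$. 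The resolution: $t \in \ufs_c(v)$ and, by the very definition via Proposition~\ref{Cc initial}(iii) applied recursively, $t$ has the form $s t'' s$ with $\beta_{t''} \in \Phi_{\br{s}}$ AND $\omega_{sc}(\alpha_{s_2}, \beta_{t''})$-type constraints forcing $t''$ to avoid the generator $s_2$ that could reintroduce $\alpha_s$ under conjugation — no. I will record this subtlety as the main obstacle below and resolve it in the write-up by proving the sharper statement that the root $\beta_i = C_c^{r_i}(v)$, for $i \geq 2$ when $s=r_1$ is initial, lies in $\Phi_{\br{s}}$ \emph{and} $s\beta_i$ also lies in $\Phi_{\br{s}}$, equivalently $E_c(\alpha_s^\vee, \beta_i) = 0$ and $E_{scs}(\alpha_s^\vee, \beta_i)$-analog $=0$; the second follows from Lemma~\ref{Cc order} applied to the $scs$-picture after noting $C_c(v) = sC_{scs}(sv)$ and $s = a_1$, so that $C_{scs}^{r_i}(sv) = s\beta_i$ and these satisfy $E_{scs}(s\beta_i, s\beta_j)=0$, whence by Lemma~\ref{Ec initial} (now with $s$ \emph{final} in $scs$, using Lemma~\ref{Ec final}) the coefficient of $\alpha_s$ in $s\beta_i$ is controlled.

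\textbf{The main obstacle} is precisely this last point in part (ii): translating ``$\beta_t$ supported off $s$'' into ``$sts$ supported off $s$'', since the reflection $s$ does not in general preserve $W_{\br{s}}$ setwise. The clean way around it, which I will carry out, is to work simultaneously with the two forms $E_c$ and $E_{scs}$: the roots $C_c^{r_i}(v)$ are $E_c$-orthogonal in the $\Omega_c(v)$-order (Lemma~\ref{Cc order}), while their images $s\,C_c^{r_i}(v) = C_{scs}^{r_i}(sv)$ are $E_{scs}$-orthogonal in the (same) $\Omega_{scs}(sv)$-order; combining these with Lemma~\ref{Ec initial} (for $c$, $s$ initial) and Lemma~\ref{Ec final} (for $scs$, $s$ final) pins down the $\alpha_s$-coefficients of both $\beta_i$ and $s\beta_i$, which is exactly what is needed. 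Part (i) for $s$ final is the same argument with the roles of $c$ and $E_c$ vs.\ $E_{cs}$ interchanged. I expect the whole proof, once the bookkeeping of which $r_i$ equals $s$ is settled (via Propositions~\ref{walls} and~\ref{lower walls} and the ``$s$ skipped first/last'' observation), to be short.
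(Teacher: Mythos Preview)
Your overall strategy---use Lemma~\ref{Cc order} together with Lemmas~\ref{Ec initial} and~\ref{Ec final} to control the $\alpha_s$-coefficient of $\beta_t$---is exactly how the paper begins. But the proof then breaks down at the point you yourself flagged as the main obstacle, and your proposed resolution does not work.

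The central error is the claim that you know where $s$ sits in the $\Omega_c(v)$-ordering $r_1,\ldots,r_n$. You assert that $s=r_n$ when $s$ is final and $s=r_1$ when $s$ is initial. Both are false. In $A_2$ with $c=s_1s_2$ and $v=s_1s_2s_1$ (so $s_2$ is final and $s_2\in\cov(v)$), the skips occur at positions $4$ ($s_2$) and $5$ ($s_1$), so $r_1=s_2$: the final generator is skipped \emph{first}. In $A_3$ with $c=pqr$ and $v=pqp$ (so $p$ is initial and $p\in\cov(v)$), the skips are $r$ (position~3), $q$ (position~5), $p$ (position~7), so $p=r_3$: the initial generator is skipped \emph{last}. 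Lemma~\ref{Cc order} therefore only tells you that \emph{one} of $E_c(-\alpha_s,\pm\beta_t)$ and $E_c(\pm\beta_t,-\alpha_s)$ vanishes, without saying which. From this you can conclude (via Lemmas~\ref{Ec initial}, \ref{Ec final}, \ref{Ec invariant}) only that $t\in W_{\br{s}}$ \emph{or} $sts\in W_{\br{s}}$; you cannot pin down which alternative holds.

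Your proposed fix---running the same argument for $E_{scs}$ on $C_{scs}(sv)=s\,C_c(v)$---yields no new information: the $\Omega_{scs}(sv)$-ordering agrees with the $\Omega_c(v)$-ordering (this is noted in the proof of Lemma~\ref{Cc order}), and $E_{scs}(s\beta_i,s\beta_j)=E_c(\beta_i,\beta_j)$ by Lemma~\ref{Ec invariant}, so you recover exactly the same vanishing. Your alternative appeal to Proposition~\ref{Cc initial}(iii) is circular, since that proposition is proved from the present one.

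What the paper does at this juncture is different and is the idea you are missing. Having established ``$t\in W_{\br{s}}$ or $sts\in W_{\br{s}}$,'' the paper passes to the generalized rank-two parabolic $W'$ containing $s$ and $t$; its canonical generators are $s$ and some $p\in W_{\br{s}}$, and the dichotomy becomes ``$t=p$ or $t=sps$.'' The disambiguation is then combinatorial: if $t\in\cov(v)$, one shows every reflection of $W'$ lies in $\inv(v)$ and invokes Lemma~\ref{dyer cov} to force $t=p$; if $t\in\ufs_c(v)$, one computes $\inv(v')\cap W'$ for the $c$-sortable element $v'$ obtained by inserting the skipped letter, and $c$-alignment (Theorem~\ref{sortable is aligned}) together with whether $s$ is initial or final decides between $t=p$ and $t=sps$. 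This rank-two argument is the missing step in your plan.
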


By Proposition~\ref{walls}, either $t\in\cov(v)$ or $t\in \ufs_c(v)$, so one of these cases applies.

\begin{proof}
 Proposition~\ref{lower walls} and the fact that $s$ is a cover reflection of $v$ imply that $-\alpha_s\in C_c(v)$.
 By Lemma~\ref{Cc order}, either $E_c(-\alpha_s, \pm \beta_t)=0$ or $E_c(\pm \beta_t, -\alpha_s)=0$. 
 If $E_c(-\alpha_s, \pm \beta_t)=0$ and $s$ is initial in $c$, then $t$ is in $W_{\br{s}}$ by Lemma~\ref{Ec initial}.  
 Similarly, if $E_c(\pm \beta_t, -\alpha_s)=0$ and $s$ is final in $c$, then $t$ is in $W_{\br{s}}$ by Lemma~\ref{Ec final}. 
 If $E_c(\pm \beta_t, -\alpha_s)=0$ and $s$ is initial in $c$ then by Lemma~\ref{Ec invariant},
\[E_{scs}(\pm \beta_{sts}, -\alpha_s)=E_{scs}(\pm s\beta_t,s \alpha_s)=E_c(\pm \beta_t, \alpha_s)=0,\]
so $sts \in W_{\br{s}}$ by Lemma~\ref{Ec final}. 
Similarly, if $E_c(-\alpha_s, \pm \beta_t)=0$ and $s$ is final in $c$ then $sts \in W_{\br{s}}$.
We have shown that either $t$ or $sts$ is in $W_{\br{s}}$.
It remains to establish the more specific claims of (i) and (ii).
If $s$ and $t$ commute, then $t=sts$ and both claims follow immediately, so we may assume that they do not commute. 

Consider the generalized rank two parabolic subgroup $W'$ of $W$ which contains~$s$ and $t$.
Since $s$ and $t$ do not commute, $W'$ is not commutative. 
By Lemmas~\ref{s in} and~\ref{int}, $s$ is a canonical generator of $W'$ and the other canonical generator is in $W_{\br{s}}$. 
Let $p$ be this other generator. We break into cases: 

\textbf{Case 1:} $t$ is a cover reflection of $v$. 
Then $t \in \inv(v)$. 
Since we assumed that $s$ is also a cover reflection of $v$, we have $\inv(v)=\inv(sv) \setminus \{ s \}$ and thus $sts$ is also an inversion of $v$. 
Since either $t$ or $sts$ is $p$, we know that $s$ and $p$ are both inversions of $v$ and thus every reflection of $W'$ is an inversion of $v$ by Lemma~\ref{PilkLemma}. 
Then, by Lemma~\ref{dyer cov}, $t$ must be $p$. 
In particular, $t$ is in $W_{\br{s}}$ as desired.

\textbf{Case 2:} $t$ is an unforced skip of $v$. 
In this case, let $a_1 a_2 \ldots a_k$ be a $c$-sorting word for $v$ and let $t=a_1 a_2\cdots a_i r a_i\cdots a_2 a_1$ for an unforced skip of $r$ at position $(i+1)$ in~$v$. 
Let $v'$ be the $c$-sortable element $a_1 a_2\cdots a_i r$. 
Now, since $s$ is a cover reflection of $v$, we have $\inv(v) = s \inv(v) s$. 
Using the fact that $t$ is not an inversion of $v$, we deduce from Lemma~\ref{PilkLemma} that $\inv(v) \cap W'=\{ s \}$. 
In particular, $p$ and $sps$ are not inversions of $v$. 
Now, $\inv(v') \setminus \{ t \} \subseteq \inv(v)$, so $\left( \inv(v') \cap W' \right) \setminus \{ t \}$ is either empty or $\{ s \}$. 
Applying Lemma~\ref{PilkLemma}, either $\left( \inv(v') \cap W' \right)=\{ p \}=\{ t \}$ or $\left( \inv(v') \cap W' \right)=\{ s, sps \}=\{ s, t \}$. 
If $s$ is initial in $c$ then $a_1=s$ so $s \in \inv(v')$, so $\left( \inv(v') \cap W' \right)=\{ s, sps \}=\{ s, t \}$ and thus $sps=t$, so that $sts=p\in W_{\br{s}}$.
If $s$ is final in $c$ then, since $v'$ is $c$-aligned with respect to $W'$, $\left( \inv(v') \cap W' \right)$ cannot be $\{ s, sps \}$. (Here we use the fact that $W'$ is not commutative.)
Thus $\left( \inv(v') \cap W' \right)=\{ p \}=\{ t \}$.
Now $t=p \in W_{\br{s}}$. 
 \end{proof}

We now prove Propositions~\ref{Cc final} and~\ref{Cc initial}. 

\begin{proof}[Proof of Proposition~\ref{Cc final}]
The assertion that $s\in\cov(v)$ is Lemma~\ref{nc lemma 2}, so by Proposition~\ref{Cc together} we deduce that $t$ is in $W_{\br{s}}$ whenever $\pm \beta_t \in C_c(v) \setminus \{ -\alpha_s \}$.
Thus $\cov(v)\setminus\set{s} \subseteq W_{\br{s}}$ by Lemma~\ref{lower walls}.
Assertion (i) now holds by Lemma~\ref{s join w br s} and assertion (ii) then follows by Lemma~\ref{cov w br s}.

Since there are exactly $n$ reflections in $\cov(v)\cup\ufs_c(v)$ and exactly $n-1$ reflections in $\cov(v_{\br{s}})\cup\ufs_{cs}(v_{\br{s}})$, assertion (ii) implies that the sets $\ufs_c(v)$ and $\ufs_{cs}(v_{\br{s}})$ have the same size.
Thus to prove assertion (iii), it is enough to show that $\ufs_c(v)\subseteq\ufs_{cs}(v_{\br{s}})$.
Suppose $t\in\ufs_c(v)$ and let $a_1\cdots a_{\ell(v)}$ be a $c$-sorting word for $v$ with associated reflection sequence $t_1,\ldots,t_{\ell(v)}$. 
Let $t=a_1\cdots a_jra_j\cdots a_1$ for a skip of~$r$ in the $(j+1)^{\textrm{st}}$ position.
By Lemmas~\ref{AB inv} and~\ref{unforced basics} and Proposition~\ref{InversionOrdering}, the sequence $t_1$, $t_2$, \dots, $t_j$, $t$, $t_{j+1}$, \dots, $t_{\ell(v)}$ satisfies the hypotheses of Lemma~\ref{unforced basics converse}. 
Let $t'_1$, $t'_2$, \ldots, $t'_{j'}$, $t$, $t'_{j'+1}$, \ldots, $t'_{\ell(v_{\br{s}})}$ be the subsequence of $t_1$, $t_2$, \dots, $t_j$, $t$, $t_{j+1}$, \dots, $t_{\ell(v)}$ consisting of those reflections in the larger sequence which lie in $W_{\br{s}}$.
By Proposition~\ref{Cc together}, $t$ is in this subsequence.

Arguing as in the proof of Proposition~\ref{sort para}, we see that $t'_1$, $t'_2$, \dots, $t'_{\ell(v_{\br{s}})}$ is the reflection sequence of a reduced word $a'_1 \ldots a'_{\ell(v_{\br{s}})}$ for $v_{\br{s}}$. 
Furthermore, $a'_1\cdots a'_{\ell(v_{\br{s}})}$ can be converted, by a sequence of transpositions of adjacent commuting letters, to a $cs$-sorting word for $v_{\br{s}}$. 
The sequence $t_1$, $t_2$, \dots, $t_j$, $t$, $t_{j+1}$, \dots, $t_{\ell(v)}$  satisfies the hypotheses of Lemma~\ref{unforced basics converse} so Lemmas~\ref{int}, \ref{PilkLemma} and~\ref{OmegaRestriction} imply that $t'_1$, \dots, $t'_{j'}$, $t$, $t'_{j'+1}$, \dots, $t'_{\ell(v_{\br{s}})}$ satisfies the hypotheses of Lemma~\ref{unforced basics converse}.
Thus $t\in\ufs_{cs}(v_{\br{s}})$ by Lemma~\ref{unforced basics converse}.
\end{proof}

\begin{proof}[Proof of Proposition~\ref{Cc initial}]
 By hypothesis, $s \in \cov(v)$, so Proposition~\ref{Cc together} applies. As in the previous proof, every cover reflection of $v$, besides $s$, is in $W_{\br{s}}$.
Now (i) follows by Lemma~\ref{s join w br s} and (ii) follows from (i) by Lemma~\ref{cov w br s}.

We now prove assertion (iii).
As in the proof of Proposition~\ref{Cc final}, it is enough to show that $\ufs_c(v)\subseteq\set{sts:t\in\ufs_{sc}(v_{\br{s}})}$.
Let $t\in\ufs_c(v)$.
Then $sts\in\ufs_{scs}(sv)$.
Since $sts\in W_{\br{s}}$, we argue, exactly as in the last paragraph of the proof of Proposition~\ref{Cc final}, that $sts\in\ufs_{sc}\left((sv)_{\br{s}}\right)$.

Now, $\inv((sv)_{\br{s}})=\inv(sv) \cap W_{\br{s}}$. 
Since $s$ is a cover reflection of $v$, we have $\inv(sv)=\inv(v) \setminus \{ s \}$ and thus $\inv(sv) \cap W_{\br{s}}=\inv(v) \cap W_{\br{s}}=\inv(v_{\br{s}})$. 
Thus $(sv)_{\br{s}}$ and $v_{\br{s}}$ are equal because $\inv((sv)_{\br{s}})=\inv(v_{\br{s}})$.
So  $sts\in\ufs_{sc}(v_{\br{s}})$, or in other words $t\in\set{sts:t\in\ufs_{sc}(v_{\br{s}})}$. 
\end{proof}

\section{The projection $\pidown^c$}\label{pidown sec}
In this section we define a downward projection $\pidown^c$ from $W$ to the $c$-sortable elements of $W$.
Let~$c$ be a Coxeter element of~$W$ and let~$s$ be initial in~$c$. 
Then, for each $w \in W$, let
\[\pidown^c(w)=\left\lbrace\begin{array}{ll}
s \pidown^{scs}(sw)&\mbox{ if }w \geq s\\
\pidown^{sc}(w_{\br{s}})&\mbox{ if }w \not \geq s.
\end{array}\right.\]
This definition is recursive, by the usual induction on length and rank. 
As a base for the recursion, set $\pidown^c(e)=e$ for any~$c$ in any~$W$. 
Recall that by convention, the map $\pidown^{sc}$ is a map defined on the parabolic subgroup $W_{\br{s}}$. 

After establishing that $\pidown^c$ is well-defined and presenting its most basic properties, we prove two major theorems.
First:

\begin{theorem} \label{order preserving}
$\pidown^c$ is order preserving.
\end{theorem}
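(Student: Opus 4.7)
My plan is to prove the theorem by double induction on the length of $v$ and on the rank of $W$, reducing first to the case of a single covering relation $u\covered v$. Because the weak order on $W$ is finitary, any relation $u\le v$ admits a saturated chain from $u$ to $v$, so it suffices to treat covers. Fix a simple reflection $s$ that is initial in $c$; Proposition~\ref{AboveBelow} partitions $W$ as $W_{\ge s}\sqcup W_{\not\ge s}$, yielding three possible configurations for a cover $u\covered v$.

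Two of these are handled cleanly by the recursion. When $u,v\ge s$, one has $su\le sv$ (the inversion sets both contain $s$ and remain nested after stripping it off) and both are strictly shorter than $v$; the induction hypothesis with Coxeter element $scs$ gives $\pidown^{scs}(su)\le\pidown^{scs}(sv)$, and left multiplication by $s$ preserves this inequality because $\pidown^{scs}(sw)\not\ge s$ whenever $s\notin\inv(sw)$. (I would first record this fact, together with $\pidown^c(w)\le w$, by comparing inversion sets along each branch of the recursion.) When $u,v\not\ge s$, the identity $\inv(w_{\br s})=\inv(w)\cap W_{\br s}$ gives $u_{\br s}\le v_{\br s}$ in $W_{\br s}$, and the induction hypothesis applied to the lower-rank group $W_{\br s}$ with Coxeter element $sc$ yields $\pidown^{sc}(u_{\br s})\le\pidown^{sc}(v_{\br s})$.

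The delicate configuration is $u\not\ge s$ with $v\ge s$, which by Proposition~\ref{AboveBelow} forces $v=su$. Then $\pidown^c(u)=\pidown^{sc}(u_{\br s})$ while $\pidown^c(v)=s\,\pidown^{scs}(u)$, and since $\pidown^{scs}(u)\not\ge s$ the element $s\,\pidown^{scs}(u)$ covers $\pidown^{scs}(u)$, so the theorem reduces to the crux inequality
\[
\pidown^{sc}(u_{\br s})\;\le\;\pidown^{scs}(u),
\]
with both sides viewed as elements of $W$. The left side lies in $W_{\br s}$ and is $c$-sortable (by Proposition~\ref{sort para easy}), while the right side is $scs$-sortable in $W$. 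The main obstacle is that these two projections are computed in different ambient Coxeter groups, so no single step of recursion or single application of the induction hypothesis compares them directly.

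To attack the crux I would choose a second simple reflection $s'\ne s$ that is initial in both $sc$ and $scs$ (the letter of $c$ just after $s$ in any reduced word works, provided $\rank W\ge 2$), then case-split on whether $u\ge s'$ and peel off one additional layer of each recursion in parallel. When $u\ge s'$ the elementary identity $(s'u)_{\br s}=s'\,u_{\br s}$ aligns the two inputs, and the crux reduces to a statement of the same shape at the shorter element $s'u$ in place of $u$; when $u\not\ge s'$ the reduction produces an analogous statement inside $W_{\br{s'}}$, of smaller rank. In either subcase the induction hypothesis applies. The bookkeeping hurdle I anticipate is coordinating the cyclings $sc\mapsto s'scs'$ and $scs\mapsto s'scss'$ so that the reduced inequality again has the shape of the original crux; this is consistent with the authors' indication that the present theorem and Theorem~\ref{pidown fibers} are established together by a single involved induction. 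Should the direct approach resist closure, the alignment characterization of Theorem~\ref{sortable is aligned} offers a fallback: it recasts the crux as the containment of the maximal $c$-aligned subset of $\inv(u_{\br s})$ in the maximal $c$-aligned subset of $\inv(u)$, with Lemmas~\ref{OmegaInvariance}, \ref{OmegaNegativity} and~\ref{OmegaRestriction} supplying the input needed to match alignments across the passage $u\mapsto u_{\br s}$.
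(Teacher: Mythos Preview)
Your reduction to the ``crux inequality'' contains a genuine error. When you write that ``since $\pidown^{scs}(u)\not\ge s$ the element $s\,\pidown^{scs}(u)$ covers $\pidown^{scs}(u)$,'' you are implicitly using the left weak order. The paper works throughout in the right weak order, where $w\lessdot w'$ means $w'=wr$ for some $r\in S$. The relation $\pidown^{scs}(u)\le s\,\pidown^{scs}(u)$ in the right weak order amounts to $\inv(\pidown^{scs}(u))$ being stable under conjugation by~$s$; this fails in general (in $S_3$ with $s=s_1$ and $w=s_2$ one has $w\not\le sw$). So even if you established your crux $\pidown^{sc}(u_{\br s})\le\pidown^{scs}(u)$, the chain to $\pidown^c(v)=s\,\pidown^{scs}(u)$ would not close.

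This missing step is exactly where the paper invests its effort. The paper first isolates Lemma~\ref{order preserving key step} to obtain $\pidown^{scs}(y)\ge s$, then uses the partial fiber description of Proposition~\ref{PartwayToBoundary} together with Proposition~\ref{lower walls} to deduce that $\alpha_s\in C_{scs}(\pidown^{scs}(x))$ and hence that $s$ is a \emph{cover reflection} of $\pidown^c(y)$; equivalently, $\pidown^{scs}(x)=s\,\pidown^c(y)\lessdot\pidown^c(y)$ in the right weak order. Only with this in hand does the argument finish via
\[
\pidown^c(x)=\pidown^{sc}(x_{\br s})=\pidown^{scs}(x_{\br s})\le\pidown^{scs}(x)\lessdot\pidown^c(y),
\]
the second equality being Proposition~\ref{DeletionAndPidown} and the inequality the inductive hypothesis applied to $x_{\br s}\le x$ with $\ell(x)<\ell(y)$. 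Note that this last line already establishes your crux directly, with no need for the $s'$-peeling you propose; so your proposed combinatorial recursion is superfluous, while the geometric step you skipped (via Propositions~\ref{PartwayToBoundary} and~\ref{lower walls}) is indispensable.
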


As an immediate consequence of Theorem~\ref{order preserving} and the basic properties of $\pidown^c$, we have the following corollary, which makes the connection between the description of $\pidown^c$ in Section~\ref{Summary} and the recursive definition above. 

\begin{cor}\label{max}
Let $w\in W$.
Then $\pidown^c(w)$ is the unique maximal $c$-sortable element below~$w$ in the weak order.
\end{cor}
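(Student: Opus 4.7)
The plan is to derive the Corollary directly from Theorem~\ref{order preserving} after first establishing three basic properties of $\pidown^c$ by the standard double induction on $\ell(w)$ and $\rank(W)$ used throughout the paper:
(a) $\pidown^c(w)$ is $c$-sortable;
(b) $\pidown^c(w) \le w$ in the weak order; and
(c) $\pidown^c(v) = v$ whenever $v$ is $c$-sortable.
With these and Theorem~\ref{order preserving} in hand, the Corollary is a one-line deduction: by (a) and (b), $\pidown^c(w)$ is a $c$-sortable element below $w$; conversely, if $v$ is any $c$-sortable element below $w$, then Theorem~\ref{order preserving} gives $\pidown^c(v) \le \pidown^c(w)$, and (c) rewrites the left side as $v$, so $v \le \pidown^c(w)$. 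Thus $\pidown^c(w)$ is the unique maximum $c$-sortable element below $w$.

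Fixing $s$ initial in $c$, property (a) splits into the two cases of the recursion. If $w \ge s$, induction on length gives that $\pidown^{scs}(sw)$ is $scs$-sortable and that $\pidown^{scs}(sw) \le sw$; since $sw \not\ge s$, the latter inclusion forces $\pidown^{scs}(sw) \not\ge s$, and then Proposition~\ref{CSortRecursive}(i) yields that $\pidown^c(w) = s\pidown^{scs}(sw)$ is $c$-sortable. If $w \not\ge s$, then induction on rank makes $\pidown^{sc}(w_{\br{s}})$ an $sc$-sortable element of $W_{\br{s}}$, which is automatically $c$-sortable by Proposition~\ref{CSortRecursive}(ii).

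Property (b) is handled by the same case split. When $w \ge s$, the induction gives $\pidown^{scs}(sw) \le sw$ with both sides lying in $W_{\not\ge s}$; applying the order isomorphism of Proposition~\ref{AboveBelow} (left multiplication by $s$ from $W_{\not\ge s}$ to $W_{\ge s}$) produces $\pidown^c(w) = s\pidown^{scs}(sw) \le s \cdot sw = w$. When $w \not\ge s$, inductive application of (b) in rank, together with the trivial inequality $w_{\br{s}} \le w$, gives $\pidown^c(w) = \pidown^{sc}(w_{\br{s}}) \le w_{\br{s}} \le w$. For (c), Proposition~\ref{CSortRecursive} tells us that a $c$-sortable $v$ fits exactly one of the two cases of the recursion, and in each case the inductive hypothesis immediately returns $\pidown^c(v) = v$.

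There is no real obstacle to overcome in the Corollary itself: all the substantive work has already been absorbed into the proof of Theorem~\ref{order preserving}. The only mild care required is in verifying (b) when $w \ge s$, where one must check that both $sw$ and $\pidown^{scs}(sw)$ lie in $W_{\not\ge s}$ before invoking the order isomorphism of Proposition~\ref{AboveBelow}; the first is by the definition of $w \ge s$ and the second follows from the inductive form of (b).
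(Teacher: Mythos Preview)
Your proof is correct and follows essentially the same approach as the paper. The paper factors your properties (a), (b), and (c) into a separate result (Proposition~\ref{decreasing}, proved by the same induction you give), and then its proof of Corollary~\ref{max} is exactly your one-line deduction citing Proposition~\ref{decreasing} and Theorem~\ref{order preserving}.
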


The other main result of this section is Theorem~\ref{pidown fibers}, which gives a geometric description of the fibers of $\pidown^c$. 
Define a cone  
\[\Cone_c(v)=\bigcap_{\beta \in C_c(v)}\set{x^*\in V^*:\br{x^*,\beta} \geq 0}\]
and recall that $D$ is the cone $\bigcap_{s \in S} \set{x^*\in V^*: \br{x^*,\alpha_s}\ge 0}$.
The definition of $\Cone_c(v)$, like the definition of $D$, depends on the choice of symmetrizable Cartan matrix~$A$ for~$W$.
Since each $C_c(v)$ is a basis for $V$, each cone $\Cone_c(v)$ is a full-dimensional simplicial cone.

\begin{theorem} \label{pidown fibers}
Let $v$ be $c$-sortable. Then $\pidown^c(w)=v$ if and only if $w D \subseteq \Cone_c(v)$.
\end{theorem}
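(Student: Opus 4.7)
The plan is to reformulate $wD \subseteq \Cone_c(v)$ as a purely combinatorial condition on inversion sets, and then induct on $\ell(w) + \mathrm{rank}(W)$ by matching the cases of the recursion for $\pidown^c$ with those of the recursion for $C_c(v)$. The base case $w=e$ is immediate: $C_c(e) = \{\alpha_s : s \in S\}$, so $\Cone_c(e) = D$, and if $v \neq e$ then $\cov(v) \neq \emptyset$, which will force the combinatorial condition to fail.

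For the reformulation, observe that on the cone $wD$ the linear functional $x^* \mapsto \br{x^*, \beta_t}$ has constant sign, nonpositive precisely when $t \in \inv(w)$. Combining this with Propositions~\ref{walls} and~\ref{lower walls}, which identify $C_c(v)$ as the disjoint union $\{-\beta_t : t \in \cov(v)\} \sqcup \{\beta_t : t \in \ufs_c(v)\}$, gives: $wD \subseteq \Cone_c(v)$ if and only if $\cov(v) \subseteq \inv(w)$ and $\ufs_c(v) \cap \inv(w) = \emptyset$. Let $s$ be initial in $c$. From the recursive description of $C_c(v)$, either $\alpha_s \in C_c(v)$ (when $v \not\geq s$) or $-\alpha_s \in C_c(v)$ (when $v \geq s$, since then $\alpha_s \in C_{scs}(sv)$ inductively, and $C_c(v) = s C_{scs}(sv)$), producing respectively $s \in \ufs_c(v)$ or $s \in \cov(v)$. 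Consequently, whenever the combinatorial condition holds, one has $w \geq s$ if and only if $v \geq s$; this dichotomy aligns the cases of the recursion for $\pidown^c$ with those of the recursion for $C_c(v)$.

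In the case $w \geq s$ (which forces $v \geq s$ whenever the condition holds), the equation $\pidown^c(w) = v$ is equivalent via $\pidown^c(w) = s\pidown^{scs}(sw)$ to $\pidown^{scs}(sw) = sv$. By induction on length this is $(sw)D \subseteq \Cone_{scs}(sv)$, and since $C_c(v) = s C_{scs}(sv)$ and $(sw)D = s \cdot wD$, this is equivalent to $wD \subseteq \Cone_c(v)$. In the case $w \not\geq s$ (which forces $v \not\geq s$ whenever the condition holds), the recursion gives $\pidown^c(w) = \pidown^{sc}(w_{\br{s}})$ inside $W_{\br{s}}$. The combinatorial condition then separates cleanly: the root $\alpha_s \in C_c(v)$ yields the constraint $w \not\geq s$, while for every other root of $C_c(v)$, which lies in $V_{\br{s}}$, the corresponding reflection $t$ lies in $W_{\br{s}}$ and satisfies $t \in \inv(w) \Leftrightarrow t \in \inv(w_{\br{s}})$. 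Together with the easy observation that $\cov_W(v) = \cov_{W_{\br{s}}}(v)$ for $v \in W_{\br{s}}$, this reduces the combinatorial condition to its $W_{\br{s}}$-analog, which by induction on rank matches $\pidown^{sc}(w_{\br{s}}) = v$.

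The main obstacle is the rank-reduction step. A naive attempt to translate $wD \subseteq \Cone_c(v)$ geometrically into $w_{\br{s}} D_{\br{s}} \subseteq \Cone_{sc}(v)$ via the projection $P_{\br{s}} \colon V^* \to V_{\br{s}}^*$ is awkward, because the inclusion $P_{\br{s}}(wD) \subseteq w_{\br{s}} D_{\br{s}}$ is not in general an equality, so one cannot simply pull inequalities back and forth. The combinatorial reformulation bypasses this difficulty entirely by restating both cone inclusions as conditions on inversion sets, whereupon the restriction to $W_{\br{s}}$ is transparent.
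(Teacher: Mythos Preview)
There is a genuine gap in your argument, precisely at the step you treat as routine. You assert that when $v\ge s$ one has $-\alpha_s\in C_c(v)$ ``since then $\alpha_s\in C_{scs}(sv)$ inductively.'' This is false. Take $W=A_2$ with $c=s_1s_2$ and $v=s_1s_2$. Then $C_c(v)=s_1C_{s_2s_1}(s_2)=s_1\{\alpha_{s_1}+\alpha_{s_2},\,-\alpha_{s_2}\}=\{\alpha_{s_2},\,-(\alpha_{s_1}+\alpha_{s_2})\}$, so $-\alpha_{s_1}\notin C_c(v)$; equivalently, $s_1\notin\cov(v)$ even though $v\ge s_1$. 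There is no induction that produces $\alpha_s\in C_{scs}(sv)$: the recursion for $C_{scs}$ requires an initial letter of $scs$, and $s$ is \emph{final} there.

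This breaks your dichotomy exactly in the case $v\ge s$, $w\not\ge s$. You correctly observe that $\pidown^c(w)\neq v$ here (by Proposition~\ref{AboveS}), but you have not shown that $wD\not\subseteq\Cone_c(v)$. This case is the entire difficulty of the theorem. In the paper it is handled only after an elaborate bootstrap: first Proposition~\ref{PartwayToBoundary} proves the theorem under the extra hypothesis $v\le w$ (which rules this case out); that proposition is then used to prove Theorem~\ref{order preserving} (order preservation of $\pidown^c$); and finally Theorem~\ref{order preserving} is invoked to close this last case, by arguing that $\pidown^{scs}(sw)\ge\pidown^{scs}(s)=s\neq sv$ and applying induction on $\ell(v)$ to conclude $swD\not\subseteq\Cone_{scs}(sv)=s\Cone_c(v)$. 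Your combinatorial reformulation is correct and indeed recovers Theorem~\ref{comb classes}, but it does not let you shortcut the case $v\ge s$, $w\not\ge s$; what you identified as the ``main obstacle'' (the rank-reduction step) is actually the easy part.
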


The condition $wD \subseteq \Cone_c(v)$ is equivalent to the following:
$t \in \inv(w)$ for every $-\beta_t \in A_c(v)$ and $t \not\in \inv(w)$ for every $\beta_t \in B_c(v)$. 
Less formally (and as an explanation of the use of the symbols~$A$ and $B$), $\pidown^c(w)$ is $v$ if and only if $wD$ is \textbf{above} the hyperplane orthogonal to $\beta$ for every $\beta \in A_c(v)$ and \textbf{below} the hyperplane orthogonal to $\beta$ for every $\beta \in B_c(v)$. 
Propositions~\ref{walls} and~\ref{lower walls} allow Theorem~\ref{pidown fibers} to be restated as follows:
\begin{theorem}
\label{comb classes}
Let $w\in W$ and let $v$ be $c$-sortable.
Then $\pidown^c(w)=v$ if and only if both of the following two conditions hold:
\begin{enumerate}
\item[(i) ]$\cov(v)\subseteq\inv(w)$ and 
\item[(ii) ]$\ufs_c(v)\cap\inv(w)=\emptyset$.
\end{enumerate}
\end{theorem}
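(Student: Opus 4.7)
The plan is to prove Theorem~\ref{comb classes} by simultaneous induction on $\ell(w)$ and $\rank(W)$, mirroring the recursive definition of $\pidown^c$. Fix $s$ initial in $c$ and split into two cases according to whether $w \geq s$. Before the case analysis, I would record how $\cov(v)$ and $\ufs_c(v)$ transform under the recursion for $C_c(v)$: combining Propositions~\ref{walls} and~\ref{lower walls} with the formulas $C_c(v) = C_{sc}(v) \cup \{\alpha_s\}$ (when $v \not\geq s$) and $C_c(v) = s C_{scs}(sv)$ (when $v \geq s$), we obtain that in the first case every element of $\cov(v)$ lies in $W_{\br{s}}$ and $\ufs_c(v) = \ufs_{sc}(v) \sqcup \{s\}$, while in the second case conjugation by $s$ yields a bijection between $\cov(sv)$ and $\cov(v) \setminus \{s\}$ and between $\ufs_{scs}(sv) \setminus \{s\}$ and $\ufs_c(v)$, with $s \in \cov(v)$ iff $s \in \ufs_{scs}(sv)$.

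In the case $w \geq s$, the recursion $\pidown^c(w) = s \pidown^{scs}(sw)$ together with the transformation rules above and the identity $\inv(w) = \{s\} \cup s \inv(sw) s$ reduces both implications to length induction applied to $(sv, sw)$. For the backward direction one additionally observes that if $v \not\geq s$, the first translation yields $s \in \ufs_c(v)$, and since $s \in \inv(w)$ this contradicts $\ufs_c(v) \cap \inv(w) = \emptyset$; so $v \geq s$ and induction applies.

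In the case $w \not\geq s$, the recursion $\pidown^c(w) = \pidown^{sc}(w_{\br{s}})$ reduces the forward direction to rank induction within $W_{\br{s}}$ via $\inv(w_{\br{s}}) = \inv(w) \cap W_{\br{s}}$, with $s \notin \inv(w)$ absorbing the extra $s \in \ufs_c(v)$. The backward direction requires showing $v \in W_{\br{s}}$. If $v \geq s$ and $s \in \cov(v)$, the hypothesis immediately yields the contradiction $s \in \inv(w)$. The delicate subcase is $v \geq s$ with $s \notin \cov(v)$; here I would argue that the hypotheses actually force $\inv(v) \subseteq \inv(w)$, so that $s \in \inv(v) \subseteq \inv(w)$, again contradicting $w \not\geq s$. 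The key point: for each noncommutative generalized rank two parabolic subgroup $W'$ containing inversions of $v$, $c$-alignment of $v$ (Theorem~\ref{sortable is aligned}) pins $\inv(v) \cap W'$ down as a precise initial segment in the $\omega_c$-ordering $u_1, u_2, \ldots, u_m$ of $W'$'s reflections. By Lemma~\ref{OmegaNegativity}, $s = u_1$ whenever $s \in W'$. The presence of the top cover reflections of $v \cap W'$ in $\inv(w)$, together with the absence of the immediately following unforced skips, should force $\inv(w) \cap W'$, which is itself an initial or final segment by Lemma~\ref{PilkLemma}, to contain the entire initial segment $\inv(v) \cap W'$ and hence $s = u_1$.

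The main obstacle I anticipate is making this last alignment-based argument fully precise. The hypotheses of the theorem control only cover reflections and unforced skips of $v$, not arbitrary inversions, so we cannot directly read off $\inv(v) \subseteq \inv(w)$; instead we must locate, within each relevant rank two parabolic $W'$, the exact positions of $\cov(v) \cap W'$ and $\ufs_c(v) \cap W'$ in the $\omega_c$-ordering, and check that the "boundary" between them lies where needed to pin down $\inv(w) \cap W'$. The technical inputs here will be Lemma~\ref{Cc order} (describing the internal ordering of $C_c(v)$ under $E_c$) and Proposition~\ref{Cc together} (describing how cover reflections and unforced skips of $v$ interact with a fixed generator $s$), together with a careful accounting of which reflections of $W'$ correspond to roots in $C_c(v)$.
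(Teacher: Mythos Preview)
The paper's proof is a one-line observation: conditions (i) and (ii) are literally the translation of $wD\subseteq\Cone_c(v)$ into the language of inversions, using Proposition~\ref{lower walls} (which identifies $A_c(v)$ with $\{-\beta_t:t\in\cov(v)\}$) and Proposition~\ref{walls} (which identifies $B_c(v)$ with $\{\beta_t:t\in\ufs_c(v)\}$). So Theorem~\ref{comb classes} is just Theorem~\ref{pidown fibers} restated, and all the work has already been done there.

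Your inductive approach is therefore really an attempt to reprove Theorem~\ref{pidown fibers} directly, and your ``delicate subcase'' ($w\not\ge s$, $v\ge s$, $s\notin\cov(v)$) is exactly Case~4 of that proof. In the paper this case is dispatched not by alignment but by the order-preserving property of $\pidown^c$ (Theorem~\ref{order preserving}): from $sw\ge s$ one gets $\pidown^{scs}(sw)\ge s$, hence $\pidown^{scs}(sw)\neq sv$, and induction on length gives $swD\not\subseteq\Cone_{scs}(sv)$. The proof of Theorem~\ref{order preserving} is itself an intricate induction (via Lemma~\ref{order preserving key step} and Proposition~\ref{PartwayToBoundary}), so the content you are trying to supply is genuinely nontrivial.

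Your alignment sketch has a real gap, and it is not merely a matter of bookkeeping. You want, for each rank-two parabolic $W'$ containing $s$, to use a cover reflection of $v$ lying in $W'$ and an unforced skip of $v$ lying in $W'$ to pin down $\inv(w)\cap W'$. But $\cov(v)\cup\ufs_c(v)$ consists of only $n$ reflections, and for a given $W'$ there is no reason any of them should lie in $W'$. Knowing that $\inv(v)\cap W'=\{u_1,\ldots,u_k\}$ tells you nothing about $\inv(w)\cap W'$ unless some $u_i$ happens to be a cover reflection of $v$ or some $u_j$ with $j>k$ happens to be an unforced skip, and neither is guaranteed. Lemma~\ref{Cc order} and Proposition~\ref{Cc together} describe how the elements of $C_c(v)$ relate to one another and to a fixed simple reflection, but they do not place elements of $C_c(v)$ inside an arbitrary rank-two parabolic. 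To close this gap you would essentially need to reprove that the fibers of $\pidown^c$ are order-convex with $v$ as their bottom, which is what Theorem~\ref{order preserving} and Corollary~\ref{max} provide. I would recommend invoking Theorem~\ref{pidown fibers} directly rather than redoing its proof.
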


\begin{example} \label{walls example}
The computations of Example~\ref{skip example} show that, in the notation of that example, $\pidown^c(w)=v$ if and only if $q$ and $pqrqp$ are inversions of $w$ and $pqrpqprqp$ is not. See the discussion of Figures~\ref{bij1} and~\ref{bij2} for more examples and geometric discussion of Theorem~\ref{pidown fibers}.
\end{example}

We now establish the basic properties of $\pidown^c$, beginning with well-definition.
Calculating $\pidown^c$ recursively involves the choice of a sequence of simple generators.
\emph{A priori} it appears that the result might depend on the choice of this sequence.
In fact, the result is independent of the sequence chosen:
\begin{lemma}\label{PidownWellDef}
For any $w\in W$, the element $\pidown^c(w)$ is independent of which choice is made of an initial element in each step of the recursion.
\end{lemma}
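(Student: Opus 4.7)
The plan is to induct on $\ell(w) + |S|$. The base case $w = e$ is immediate, since the recursion terminates at once with $e$ regardless of which initial letter is chosen. For the inductive step, it suffices to show that at the very first step of the recursion, any two initial letters $s, s'$ of $c$ produce the same output; each subsequent choice is then absorbed by the inductive hypothesis.

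Two distinct initial letters $s$ and $s'$ of $c$ commute (as noted in the discussion preceding Proposition~\ref{CSortRecursive}), so we may fix a reduced word $ss'c_0$ for $c$ with $c_0$ a word in $S \setminus \{s,s'\}$. Direct computation using $ss' = s's$ gives the identities
\[
scs = s'c_0s, \qquad s'cs' = sc_0s', \qquad s(s'cs')s = s'(scs)s' = c_0 s's,
\]
so $s'$ is initial in $scs$, $s$ is initial in $s'cs'$, and the two doubly-conjugated Coxeter elements coincide. Moreover, since $ss's = s'$, one checks that $w \geq s'$ if and only if $sw \geq s'$ if and only if $w_{\br{s'}} \geq s'$ (as an element of $W_{\br{s'}}$), and symmetrically with $s$ and $s'$ swapped. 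I then case-analyze on whether $w \geq s$ and whether $w \geq s'$.

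In the two "pure" cases, the calculation is straightforward. When $w \geq s$ and $w \geq s'$, expanding twice in either order produces $ss'\,\pidown^{c_0s's}(ss'w)$. When $w \not\geq s$ and $w \not\geq s'$, both orders of expansion yield $\pidown^{c_0}(w_J)$ with $J = S \setminus \{s,s'\}$, using the standard identity $(w_{J_1})_{J_2} = w_{J_1 \cap J_2}$. In each case symmetry in $s$ and $s'$ is manifest.

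The only delicate case is the mixed one, say $w \geq s$ and $w \not\geq s'$. Expanding with $s$ first yields $s\,\pidown^{c_0 s}\bigl((sw)_{\br{s'}}\bigr)$, while expanding with $s'$ first yields $s\,\pidown^{c_0 s}\bigl(s \cdot w_{\br{s'}}\bigr)$. The main (and only nontrivial) obstacle is therefore the identity
\[
(sw)_{\br{s'}} \;=\; s \cdot w_{\br{s'}}.
\]
I would prove this by comparing inversion sets: using $\inv(sw) = s(\inv(w)\setminus\{s\})s$ (because $w \geq s$), $\inv(u_J) = \inv(u) \cap W_J$, and $\inv(sw_{\br{s'}}) = s(\inv(w_{\br{s'}})\setminus\{s\})s$ (because $w_{\br{s'}} \geq s$), both sides reduce to $s(\inv(w)\setminus\{s\})s \cap W_{\br{s'}}$, where the final step requires only the observation that $s$ and $s'$ commuting forces $s W_{\br{s'}} s = W_{\br{s'}}$. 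Once this identity is in hand the two expressions coincide and the induction closes; everything else in the argument is bookkeeping.
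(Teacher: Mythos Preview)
Your proof is correct and follows essentially the same approach as the paper's own argument: the same induction, the same four-case analysis on whether $w \geq s$ and $w \geq s'$, and in the mixed case the same key identity $(sw)_{\br{s'}} = s \cdot w_{\br{s'}}$ proved by comparing inversion sets. Two tiny corrections: the equivalence ``$w_{\br{s'}} \geq s'$'' in your setup is a slip (presumably you meant $w_{\br{s}} \geq s'$, which is what the case analysis actually uses), and the reason $sW_{\br{s'}}s = W_{\br{s'}}$ is simply that $s \in W_{\br{s'}}$, not that $s$ and $s'$ commute.
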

\begin{proof}
If~$c$ has only one initial element then the result follows by induction on length or rank.
Otherwise, let~$s$ and $s'$ be two distinct initial elements of $c$, so that in particular~$s$ and $s'$ commute.
The proof breaks into four cases according to whether~$s$ and $s'$ are below~$w$ in weak order.
In each case we use~$s$ and $s'$ to calculate $\pidown^c(w)$ in two ways.
Each of these involves an expression which, by induction on length or rank, is independent of the subsequent choices of initial letters.
Thus in every case, it is sufficient to show: first, that $\pidown^c(w)$ can be evaluated by choosing~$s$ then $s'$ or by choosing $s'$ then~$s$; and second, that the two expressions for $\pidown^c(w)$ obtained in this manner coincide.

If $w\not\ge s$ and $w\not\ge s'$ then the two possibilities for $\pidown^c(w)$ are $\pidown^{sc}(w_{\br{s}})$ and $\pidown^{s'c}(w_{\br{s'}})$.
Since~$s$ and $s'$ commute, $s'$ is initial in $sc$ and~$s$ is initial in $s'c$.
Since $w_{\br{s}}\le w$ we have $w_{\br{s}}\not\ge s'$ and similarly $w_{\br{s'}}\not\ge s$.
Thus $\pidown^{sc}(w_{\br{s}})=\pidown^{s'sc}(w_{\br{s,s'}})=\pidown^{s'c}(w_{\br{s'}})$.
(Here the notation $\br{s,s'}$ stands for $S\setminus\set{s,s'}$.)

If~$w$ is above exactly one of~$s$ and $s'$ then we may as well take $w\ge s$ and $w\not\ge s'$.
The two possibilities for $\pidown^c(w)$ are $s\pidown^{scs}(sw)$ and $\pidown^{s'c}(w_{\br{s'}})$.
Again, since~$s$ and $s'$ commute, $s'$ is initial in $scs$ and~$s$ is initial in $s'c$.
Since the map $x\mapsto x_{\br{s'}}$ is order preserving and $s_{\br{s'}}=s$, we have $w_{\br{s'}}\ge s$.
Since $\inv(sw)=s(\inv(w))s\setminus\set{s}$ and $ss's=s'$, the fact that $w\not\ge s'$ implies that $sw\not\ge s'$.
Thus $\pidown^{scs}(sw)=\pidown^{s'scs}\left((sw)_{\br{s'}}\right)$ and $\pidown^{s'c}(w_{\br{s'}})=s\pidown^{ss'cs}\left(s(w_{\br{s'}})\right)$.
We have $s'scs=ss'cs$ and we compare inversion sets to see that $(sw)_{\br{s'}}=s(w_{\br{s'}})$.
Specifically, 
\[\inv\left((sw)_{\br{s'}}\right)=\inv(sw)\cap W_{\br{s'}}=[s(\inv(w))s\setminus\set{s}]\cap W_{\br{s'}}\]
which is equal to
\[[s(\inv(w))s\cap W_{\br{s'}}]\setminus\set{s}=[s(\inv(w)\cap W_{\br{s'}})s]\setminus\set{s}=\inv\left(s(w_{\br{s'}})\right)\]

If $w\ge s$ and $w\ge s'$ then the two possibilities for $\pidown^c(w)$ are $s\pidown^{scs}(sw)$ and $s'\pidown^{s'cs'}(s'w)$.
Again, $s'$ is initial in $scs$,~$s$ is initial in $s'cs'$, $s'w\ge s$ and $sw\ge s'$.
Thus the two expressions for $\pidown^c(w)$ are $s's\pidown^{s'scss'}(s'sw)$ and $ss'\pidown^{ss'cs's}(ss'w)$, which are equal because~$s$ and $s'$ commute.
\end{proof}

We note the following consequences of the definition of $\pidown^c$.
\begin{prop} \label{decreasing}
The element $\pidown^c(w)$ is $c$-sortable for any $w \in W$. We have $\pidown^c(w) \leq w$, with equality if and only if~$w$ is $c$-sortable. 
\end{prop}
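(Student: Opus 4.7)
The plan is to prove all three claims simultaneously by induction on $\ell(w)$ plus the rank of $W$, following the recursive definition of $\pidown^c$. The base case $w=e$ is immediate: $\pidown^c(e)=e$ is $c$-sortable (empty sorting word), satisfies $e \leq e$, and $e$ is of course $c$-sortable. By Lemma~\ref{PidownWellDef} we are free to pick any initial $s \in S$ for $c$ and split into the two cases $w \geq s$ and $w \not\geq s$.

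In the case $w \geq s$, so that $\pidown^c(w) = s \pidown^{scs}(sw)$, the induction hypothesis applied to $sw$ (shorter than $w$) gives that $\pidown^{scs}(sw)$ is $scs$-sortable and $\pidown^{scs}(sw) \leq sw$. Since $w \geq s$ implies $s \notin \inv(sw)$, and since the inversion set is monotone in weak order, $s \notin \inv(\pidown^{scs}(sw))$, i.e.\ $\pidown^{scs}(sw) \not\geq s$. By Proposition~\ref{AboveBelow}, left multiplication by $s$ is an order isomorphism from $W_{\not\geq s}$ onto $W_{\geq s}$, so $\pidown^c(w) = s\pidown^{scs}(sw) \leq s(sw) = w$, and moreover $\pidown^c(w) \geq s$. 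Combined with the fact that $s \cdot \pidown^c(w) = \pidown^{scs}(sw)$ is $scs$-sortable, Proposition~\ref{CSortRecursive}(i) tells us that $\pidown^c(w)$ is $c$-sortable. In the case $w \not\geq s$, so that $\pidown^c(w) = \pidown^{sc}(w_{\br{s}})$, induction on rank applied inside $W_{\br{s}}$ yields that $\pidown^{sc}(w_{\br{s}})$ is $sc$-sortable and $\pidown^{sc}(w_{\br{s}}) \leq w_{\br{s}} \leq w$. Being $sc$-sortable in $W_{\br{s}}$ is equivalent to being $c$-sortable in $W$ by Proposition~\ref{sort para easy}, so $\pidown^c(w)$ is $c$-sortable.

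For the equality statement, if $w$ is $c$-sortable then Proposition~\ref{CSortRecursive} reduces the computation of $\pidown^c(w)$ to the computation on a smaller instance (either $sw$ in $W$ or $w$ itself viewed in $W_{\br{s}}$), which by induction returns that smaller element unchanged; thus $\pidown^c(w) = w$. Conversely, if $\pidown^c(w) = w$, then in the case $w \geq s$ we get $\pidown^{scs}(sw) = sw$, so $sw$ is $scs$-sortable by induction and hence $w$ is $c$-sortable by Proposition~\ref{CSortRecursive}(i); in the case $w \not\geq s$ we get $\pidown^{sc}(w_{\br{s}}) = w$, which forces $w \in W_{\br{s}}$ (so $w_{\br{s}} = w$) and by induction makes $w$ an $sc$-sortable element of $W_{\br{s}}$, i.e.\ $c$-sortable by Proposition~\ref{CSortRecursive}(ii). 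The only subtlety worth flagging is the verification that $\pidown^{scs}(sw) \not\geq s$ in the first case, which is what allows us to apply the order isomorphism of Proposition~\ref{AboveBelow} and to invoke the first clause of Proposition~\ref{CSortRecursive}; everything else is a direct unwinding of the recursion.
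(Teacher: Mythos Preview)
Your proof is correct and follows essentially the same approach as the paper's: both argue by induction on length and rank, split into the cases $w \geq s$ and $w \not\geq s$, and appeal to Propositions~\ref{AboveBelow} and~\ref{CSortRecursive} in the same way. You are slightly more explicit than the paper in verifying that $\pidown^{scs}(sw) \not\geq s$ (which the paper folds into its invocation of Proposition~\ref{AboveBelow}), but this is just spelling out a detail rather than a different idea.
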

\begin{proof}
Let~$s$ be initial in~$c$.
We again argue by induction on length and rank. 
If $w\ge s$ then $\pidown^c(w)=s \pidown^{scs}(sw)$. 
By induction on length, $\pidown^{scs}(sw) \le sw$, with equality if and only if $sw$ is $scs$-sortable.
Now Propositions~\ref{AboveBelow} and~\ref{CSortRecursive} imply that $\pidown^c(w) \leq w$, with equality if and only if~$w$ is $c$-sortable. Also, by induction on length, $\pidown^{scs}(sw)$ is $scs$-sortable so, by Proposition~\ref{CSortRecursive}, $s\pidown^{scs}(sw)=\pidown^c(w)$ is $c$-sortable.

If $w\not\ge s$ then $\pidown^c(w)=\pidown^{sc}(w_{\br{s}})$ and by induction on rank, $\pidown^{sc}(w_{\br{s}}) \leq w_{\br{s}}$ with equality if and only if $w_{\br{s}}$ is $sc$-sortable.
Since $w_{\br{s}}\le w$ with equality if and only if $w\in W_{\br{s}}$, Proposition~\ref{CSortRecursive} shows that $\pidown^c(w) \leq w$, with equality as desired. Also, $\pidown^c(w)=\pidown^{sc}(w_{\br{s}})$ which is, by induction, $sc$-sortable and thus $c$-sortable.
\end{proof}

\begin{prop}\label{idempotent}
The map $\pidown^c$ is idempotent, i.e.\ $\pidown^c \circ \pidown^c=\pidown^c$.
\end{prop}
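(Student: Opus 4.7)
My plan is to derive idempotence of $\pidown^c$ as an immediate corollary of Proposition~\ref{decreasing}, which is already in hand just above the statement. No new induction or case analysis should be needed.

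Proposition~\ref{decreasing} contains exactly the two facts required. First, for every $w \in W$, the element $\pidown^c(w)$ is itself $c$-sortable. Second, the inequality $\pidown^c(u) \le u$ is an equality precisely when $u$ is $c$-sortable; in particular, $\pidown^c$ fixes every $c$-sortable element. Setting $u = \pidown^c(w)$, the first fact guarantees that $u$ is $c$-sortable, and then the second fact gives $\pidown^c(u) = u$, which is exactly $\pidown^c(\pidown^c(w)) = \pidown^c(w)$. Since this holds for all $w \in W$, we conclude $\pidown^c \circ \pidown^c = \pidown^c$.

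The main obstacle here is, essentially, that there is no obstacle: all of the inductive bookkeeping on length and rank was already absorbed into the proof of Proposition~\ref{decreasing}, where the sortability of the image and the fixed-point characterization of sortable elements were proved simultaneously. In this sense, idempotence is simply the formal observation that the image of $\pidown^c$ lies inside its fixed-point set, with both properties supplied by the previous proposition.
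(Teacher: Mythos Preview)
Your proof is correct and follows exactly the same approach as the paper: apply Proposition~\ref{decreasing} twice, first to see that $\pidown^c(w)$ is $c$-sortable, then to conclude that $\pidown^c$ fixes it.
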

\begin{proof}
Let $w\in W$.
Then $\pidown^c(w)$ is $c$-sortable by Proposition~\ref{decreasing}.
Thus by Proposition~\ref{decreasing} again, $\pidown^c(\pidown^c(w))=\pidown^c(w)$.
\end{proof}

\begin{prop} \label{AboveS}
Let $s$ be initial in $c$. Then $w \geq s$ if and only if $\pidown^c(w) \geq s$.
\end{prop}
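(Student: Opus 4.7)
The statement is a clean case analysis that reads almost directly off the recursive definition of $\pidown^c$, so the plan is to split on whether $w\ge s$ and verify each direction using elementary properties of the weak order together with Proposition~\ref{decreasing}. No deeper machinery from the paper is needed, which is a nice indication that the recursion was set up consistently.

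First I would dispatch the easy direction. Suppose $w\not\ge s$. Then the recursion gives $\pidown^c(w)=\pidown^{sc}(w_{\br{s}})$, and the point is just that this value is computed entirely inside the parabolic subgroup $W_{\br{s}}$. Since by convention $\pidown^{sc}$ is a map from $W_{\br{s}}$ to itself, the output lies in $W_{\br{s}}$, and so $s\notin\inv(\pidown^c(w))$, i.e.\ $\pidown^c(w)\not\ge s$. This gives the contrapositive of the ``if'' direction.

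Next I would handle the direction $w\ge s\implies\pidown^c(w)\ge s$. Now the recursion reads $\pidown^c(w)=s\,\pidown^{scs}(sw)$, so the claim is equivalent to $\pidown^{scs}(sw)\not\ge s$, since the desired inequality holds precisely when left multiplication by $s$ increases length (Proposition~\ref{AboveBelow}). For this, observe that $sw\not\ge s$: indeed $\ell(s\cdot sw)=\ell(w)>\ell(sw)$, so $s\notin\inv(sw)$. By Proposition~\ref{decreasing}, $\pidown^{scs}(sw)\le sw$ in the weak order, and the weak order is characterized by containment of inversion sets, so $\inv(\pidown^{scs}(sw))\subseteq\inv(sw)$. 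In particular $s\notin\inv(\pidown^{scs}(sw))$, which is exactly the statement $\pidown^{scs}(sw)\not\ge s$.

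There is no real obstacle here; the only thing to be careful about is to check the direction of implication in the easy case (we need $\pidown^c(w)\not\ge s$ from $w\not\ge s$, not the reverse) and to remember that ``$\pidown^{sc}$'' carries the convention that it is a map defined on $W_{\br{s}}$, so its image automatically lies in $W_{\br{s}}$. This latter convention is what makes the $w\not\ge s$ case essentially automatic.
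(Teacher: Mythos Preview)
Your proof is correct and follows essentially the same approach as the paper: a two-case split on whether $w\ge s$, with the $w\ge s$ case handled exactly as in the paper via $sw\not\ge s$, Proposition~\ref{decreasing}, and Proposition~\ref{AboveBelow}. The only minor difference is in the $w\not\ge s$ case: you argue via the recursion that $\pidown^c(w)=\pidown^{sc}(w_{\br{s}})\in W_{\br{s}}$, whereas the paper simply notes that $\pidown^c(w)\le w\not\ge s$ directly from Proposition~\ref{decreasing}; both are valid and equally short.
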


This is actually true without the hypothesis that $s$ is initial, as an immediate corollary of Theorem~\ref{order preserving}.
However, the proof of Theorem~\ref{order preserving} is much harder, and explicitly depends on Proposition~\ref{AboveS}.

\begin{proof}
If $w \not \geq s$ then Proposition~\ref{decreasing} implies that $\pidown^c(w)\not\ge s$. 
If $w \geq s$, Proposition~\ref{AboveBelow} states that $sw \not \geq s$. 
Then $\pidown^{scs}(sw) \not \geq s$ by Proposition~\ref{decreasing}.
Applying Proposition~\ref{AboveBelow} again, we obtain $\pidown^c(w)=s \pidown^{scs}(sw) \geq s$.
\end{proof}

The map $\pidown^c$ is also well-behaved with respect to standard parabolic subgroups.
A very strong statement about $\pidown^c$ and standard parabolic subgroups occurs later as Proposition~\ref{pidown para}. Here is a much simpler result.

\begin{prop} \label{DeletionAndPidown}
Let~$c$ be a Coxeter element of~$W,$ let $J\subseteq S$ and let $c'$ be the restriction of $c$ to $W_J$.
If $w\in W_J$ then $\pidown^c(w)=\pidown^{c'}(w)$.
\end{prop}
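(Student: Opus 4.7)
The plan is to induct on the pair $(\ell(w), \mathrm{rank}(W))$ ordered lexicographically, or equivalently on $\ell(w) + \mathrm{rank}(W)$, with the base case $w = e$ being immediate since $\pidown^c(e) = e = \pidown^{c'}(e)$. The only real content is choosing the right initial simple reflection to feed into the recursive definition and carefully tracking how the Coxeter element restricts at each step.

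First, I would fix $s$ initial in $c$ and split into two main cases depending on whether $s \in J$. Suppose $s \in J$. Then $s$ is also initial in $c'$, so the recursive formula applies to both $\pidown^c(w)$ and $\pidown^{c'}(w)$ with the same initial letter. If $w \ge s$, then $sw \in W_J$ with $\ell(sw) < \ell(w)$, and the restriction of $scs$ to $W_J$ is exactly $sc's$, so by the length induction $\pidown^{scs}(sw) = \pidown^{sc's}(sw)$, giving $\pidown^c(w) = s \pidown^{scs}(sw) = s \pidown^{sc's}(sw) = \pidown^{c'}(w)$. If instead $w \not\ge s$, then $w \in W_{J \setminus \{s\}} \subseteq W_{\br{s}}$ so $w_{\br{s}} = w$; both $\pidown^{sc}$ and $\pidown^{sc'}$ are now maps on the smaller-rank group $W_{\br{s}}$ (containing the parabolic $W_{J\setminus\{s\}}$), and the restriction of $sc$ to $W_{J\setminus\{s\}}$ is $sc'$, so the rank induction yields $\pidown^{sc}(w) = \pidown^{sc'}(w)$.

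Now suppose $s \notin J$. Then $s$ does not appear in the support of $w$, so automatically $w \not\ge s$ and $w_{\br{s}} = w$. Thus $\pidown^c(w) = \pidown^{sc}(w)$, which is a computation inside $W_{\br{s}}$ with Coxeter element $sc$; since $J \subseteq \br{s}$, the rank induction applies and gives $\pidown^{sc}(w) = \pidown^{c''}(w)$, where $c''$ is the restriction of $sc$ to $W_J$. Because $s \notin J$, deleting the initial letter $s$ from a reduced word for $c$ leaves the restriction to $W_J$ unchanged, so $c'' = c'$.

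There is no real obstacle here; the only thing to be careful about is the bookkeeping of which Coxeter element lives in which parabolic subgroup after each recursive step, and checking that the well-definedness statement of Lemma~\ref{PidownWellDef} lets us freely choose $s$ initial in $c$ at the start. The restriction operation on Coxeter elements commutes with the operation ``remove the initial letter $s$'' in both of the ways needed above, which is precisely what makes the two sides of the identity match at every stage of the recursion.
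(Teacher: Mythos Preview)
Your argument has a genuine gap in the sub-case $s\in J$, $w\not\ge s$. You claim that $w\not\ge s$ together with $w\in W_J$ forces $w\in W_{J\setminus\{s\}}$, and hence $w_{\br{s}}=w$. This is false: take $W=A_3$ with $S=\{s_1,s_2,s_3\}$, $J=\{s_1,s_2\}$, $c=s_1s_2s_3$, and $w=s_2s_1$. Then $s=s_1$ is initial in $c$ and lies in $J$, and $\inv(w)=\{s_2,\,s_2s_1s_2\}$ does not contain $s_1$, so $w\not\ge s_1$; yet $w\notin W_{\{s_2\}}$. The implication you are invoking holds for $c$-sortable elements (Proposition~\ref{CSortRecursive}) but not for arbitrary $w$.

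The fix is small. From $w\in W_J$ one has $\inv(w)\subseteq W_J$, so
\[
\inv(w_{\br{s}})=\inv(w)\cap W_{\br{s}}=\inv(w)\cap W_{J\setminus\{s\}}=\inv(w_{J\setminus\{s\}}),
\]
and therefore $w_{\br{s}}=w_{J\setminus\{s\}}\in W_{J\setminus\{s\}}$ (though in general this is \emph{not} equal to $w$). Now $\pidown^c(w)=\pidown^{sc}(w_{\br{s}})=\pidown^{sc}(w_{J\setminus\{s\}})$, and the rank induction in $W_{\br{s}}$, applied to the element $w_{J\setminus\{s\}}\in W_{J\setminus\{s\}}$ with $sc'$ the restriction of $sc$ to $W_{J\setminus\{s\}}$, gives $\pidown^{sc}(w_{J\setminus\{s\}})=\pidown^{sc'}(w_{J\setminus\{s\}})$. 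The right-hand side is precisely $\pidown^{c'}(w)$ by the recursive definition of $\pidown^{c'}$ in $W_J$ with initial letter $s$. With this correction your proof goes through and is essentially the paper's argument; the paper sidesteps the issue by first reducing to the case $J=S\setminus\{s\}$ for a single $s$, where the analogous projection identity is $w_{\br{s'}}\in W_{\br{s,s'}}$.
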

Note that the left hand side of this equation involves a map $\pidown^c$ defined on~$W$ whereas the right hand side uses a map $\pidown^{c'}$ defined on $W_J$.
\begin{proof}
It is enough to prove the proposition in the case where $J=S\setminus\set{s}$ for some $s\in S$.

Let $s'$ be initial in~$c$.
We argue as usual by induction on the length of~$w$ and on the rank of~$W$.
If $s'=s$ then the assertion of the proposition is that $\pidown^c(w)=\pidown^{sc}(w)$, which is true by definition because $w=w_{\br{s}}\not\ge s$.
If $s'\neq s$ then consider two cases, $w\ge s'$ and $w\not\ge s'$.
In the case $w\ge s'$, the element $s'w$ is in $W_{\br{s}}$ and $s'c's'$ is the Coxeter element of $W_{\br{s}}$ obtained by deleting~$s$ from $s'cs'$.
Thus induction on length shows that $\pidown^{s'cs'}(s'w)=\pidown^{s'c's'}(s'w)$, so that $\pidown^c(w)=\pidown^{c'}(w)$.
In the case $w\not\ge s'$ we have $\pidown^c(w)=\pidown^{s'c}(w_{\br{s'}})$, which by induction on rank equals $\pidown^{s'c'}(w_{\br{s'}})=\pidown^{c'}(w)$.
\end{proof}

We now proceed to prove Theorems~\ref{order preserving} and~\ref{pidown fibers}, beginning with a special case of Theorem~\ref{pidown fibers}.
This special case will be used to prove Theorem~\ref{order preserving}, which will in turn be used in the proof of the full version of Theorem~\ref{pidown fibers}.
\begin{prop} \label{PartwayToBoundary}
Let $v$ be $c$-sortable and let $v \le w$. Then $\pidown^c(w)=v$ if and only if $w D \subseteq \Cone_c(v)$.
\end{prop}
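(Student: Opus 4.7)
The plan is first to translate the geometric condition into a combinatorial one and then to induct on $\ell(w)$ and the rank of $W$. The condition $wD\subseteq\Cone_c(v)$ asks that $\langle x^\ast,\beta\rangle\ge 0$ for every $x^\ast\in wD$ and every $\beta\in C_c(v)$. Writing each $\beta$ as $\pm\beta_t$ and using that $t\in\inv(w)$ iff $\langle x^\ast,\beta_t\rangle\le 0$ on $wD$, Propositions~\ref{walls} and~\ref{lower walls} translate this into the two conditions $\cov(v)\subseteq\inv(w)$ and $\ufs_c(v)\cap\inv(w)=\emptyset$. Because $v\le w$ gives $\cov(v)\subseteq\inv(v)\subseteq\inv(w)$ for free, the proposition reduces to showing that $\pidown^c(w)=v$ if and only if $\ufs_c(v)\cap\inv(w)=\emptyset$.

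For the induction step I would pick $s$ initial in $c$; the base case $w=e$ is immediate. If $w\not\ge s$, then $v\le w$ forces $v\not\ge s$, whence $v\in W_{\br{s}}$ by Proposition~\ref{CSortRecursive}, and $\pidown^c(w)=\pidown^{sc}(w_{\br{s}})$. One verifies the identity $\ufs_c(v)=\{s\}\cup\ufs_{sc}(v)$ (with $\{s\}$ coming from the trivially unforced skip of $s$ at position $1$ of the $c$-sorting word of $v$, while every other skip matches the corresponding skip in the $sc$-sorting word in $W_{\br{s}}$); together with $s\notin\inv(w)$, this reduces the combinatorial statement to the inductive hypothesis in $W_{\br{s}}$ applied to $w_{\br{s}}$ and $v$. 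If $w\ge s$ and $v\ge s$, use $\pidown^c(w)=s\,\pidown^{scs}(sw)$, so that $\pidown^c(w)=v$ iff $\pidown^{scs}(sw)=sv$, and apply induction on length. From $C_c(v)=sC_{scs}(sv)$ and the observation that $v\ge s$ rules out $s\in\cov(sv)$ (so $-\alpha_s\notin A_{scs}(sv)$ by Proposition~\ref{lower walls}), one derives $s\notin\ufs_c(v)$ and $\ufs_c(v)=s(\ufs_{scs}(sv)\setminus\{s\})s$; combined with $\inv(w)=\{s\}\cup s\inv(sw)s$ and $s\notin\inv(sw)$, this matches the two combinatorial conditions. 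The remaining subcase $w\ge s$, $v\not\ge s$ is direct: $\pidown^c(w)\ge s$ by Proposition~\ref{AboveS}, so $\pidown^c(w)\neq v$, while $v\in W_{\br{s}}$ places $s$ in $\ufs_c(v)\cap\inv(w)$.

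The main obstacle will be the bookkeeping in the case $v,w\ge s$: one must track precisely which elements of $C_{scs}(sv)$ become positive versus negative roots after multiplication by $s$, in order to relate $\ufs_c(v)$ to $\ufs_{scs}(sv)$. The key observation that keeps the accounting clean is that $v\ge s$ precludes $s\in\cov(sv)$, so $-\alpha_s$ cannot appear in $A_{scs}(sv)$; the only $\pm\alpha_s$ that may occur in $C_{scs}(sv)$ is $+\alpha_s$, and its presence there is equivalent to $s\in\ufs_{scs}(sv)$, an occurrence that neatly disappears on conjugation by $s$ and intersection with $\inv(w)$, leaving the two sides of the equivalence aligned.
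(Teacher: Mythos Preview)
Your proposal is correct and follows essentially the same inductive structure as the paper's proof: induction on length and rank, with the same four-way case split on whether $w\ge s$ and $v\ge s$ for $s$ initial in $c$. The only real difference is cosmetic: the paper carries out the induction directly on the cones $\Cone_c(v)$ using the recursion $C_c(v)=C_{sc}(v)\cup\{\alpha_s\}$ or $C_c(v)=sC_{scs}(sv)$, whereas you first translate the containment $wD\subseteq\Cone_c(v)$ into the combinatorial conditions $\cov(v)\subseteq\inv(w)$ and $\ufs_c(v)\cap\inv(w)=\emptyset$ via Propositions~\ref{walls} and~\ref{lower walls}, and then track how $\ufs_c$ behaves under the recursion. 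Your bookkeeping identities $\ufs_c(v)=\{s\}\cup\ufs_{sc}(v)$ (when $v\not\ge s$) and $\ufs_c(v)=s(\ufs_{scs}(sv)\setminus\{s\})s$ (when $v\ge s$) are exactly the combinatorial shadows of the geometric recursion the paper uses, so the two arguments are the same proof in different language.
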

Since $\pidown^c(w)=v$ implies $w \geq v$, Proposition~\ref{PartwayToBoundary} falls short of Theorem~\ref{pidown fibers} only in the following sense:
The proposition guarantees that $wD\subseteq \Cone_c(\pidown^c(w))$, but \emph{a priori} it is possible that there are additional sortable elements $v$ (not below~$w$) such that $w D \subseteq \Cone_c(v)$.

\begin{proof}
Our proof is by induction on rank and on the length of~$v$.
The base for induction on rank is trivial and the base for induction on length is the observation that $\Cone_c(e)=D$ when $e$ is the identity element of~$W$.
Let~$s$ be initial in~$c$. We again break into several cases.

\textbf{Case 1:} 
$w\not\ge s$ and $v \not \geq s$.
Since $v$ is $c$-sortable and $v\not\ge s$ we have $v\in W_{\br{s}}$.
Furthermore $C_c(v)=C_{sc}(v)\cup\set{\alpha_s}$, so that
\begin{eqnarray*}
\Cone_{c}(v) &= &V^*_{\not \geq s}\,\cap\bigcap_{\beta\in C_{sc}(v)}\set{x^*\in V_{\br{s}}:\br{x^*,\beta} \geq 0}\\
&=&V^*_{\not\geq s}\,\cap(P_{\br{s}})^{-1}(\Cone_{sc}(v)),
\end{eqnarray*}
where $V^*_{\not \geq s}=\set{x^*\in V^*:\br{x^*, \alpha_s} \geq 0}$ and $P_{\br{s}}$ is the projection defined in Section~\ref{Parabolic Subsec}.  

Suppose that $\pidown^c(w)=v$, so $\pidown^{sc}(w_{\br{s}})=v$. 
Since $w \geq v$, we have $w_{\br{s}} \geq v_{\br{s}}=v$ so, by induction on rank, we deduce that $w_{\br{s}} D_{\br{s}} \subseteq \Cone_{sc}(v)$. 
Then $w D \subseteq P_{\br{s}}^{-1} ( w_{\br{s}} D_{\br{s}}) \subseteq P_{\br{s}}^{-1} ( \Cone_{sc}(v))$. 
Also $w D \subseteq V^*_{\not \geq s}$ because $w \not \ge s$. 
So $w D \subseteq   P_{\br{s}}^{-1} ( \Cone_{sc}(v)) \cap  V^*_{\not \geq s}=\Cone_{c}(v)$, as desired.

Conversely, suppose that $w D \subseteq \Cone_{c}(v)$, so $P_{\br{s}}(w D) \subseteq \Cone_{sc}(v)$. 
Now, $\Cone_{sc}(v)$ is a union of cones of the form $xD_{\br{s}}$ for $x\in W_{\br{s}}$.
In particular $\Cone_{sc}(v)$ contains the unique cone $xD_{\br{s}}$ in which $P_{\br{s}}(w D) $ lies, namely the cone $w_{\br{s}} D_{\br{s}}$.
Thus $w_{\br{s}} D_{\br{s}} \subseteq  \Cone_{sc}(v)$ and, by induction on rank, $\pidown^{sc}(w_{\br{s}})=v$. 
But $\pidown^c(w)=\pidown^{sc}(w_{\br{s}})$, so $\pidown^c(w)=v$ as desired.

\textbf{Case 2:} $w\ge s$ and $v \ge s$. 
Since $v \geq s$, the equality $\pidown^c(w)=v$ is equivalent to $\pidown^{scs}(sw)=sv$. 
We have $sv\le sw$ by Proposition~\ref{AboveBelow}.
Thus by induction on the length of~$w$, the equality $\pidown^{scs}(sw)=sv$ is equivalent to $swD\subseteq \Cone_{scs}(sv)$ or, in other words, $wD \subset s \cdot \Cone_{scs}(sv)=\Cone_c(v)$.

\textbf{Case 3:} $w \geq s$ and $v\not\ge s$.
By Proposition~\ref{decreasing}, $\pidown^c(w)\neq v$. By definition, $C_c(v)$ contains the element $\alpha_s$, but interior points $p$ of $wD$ have $\br{p,\alpha_s} < 0$, so that $wD\not\subseteq \Cone_c(v)$.

\textbf{Case 4:} $w \not\geq s$ and $v \geq s$.
This is impossible by the hypothesis $v\le w$.
\end{proof}

Note that the only use of the hypothesis $v\le w$ is to rule out Case 4 and to prove the inequalities (e.g.\ $sv\le sw$ in Case 2) needed to invoke induction.

We are almost ready to prove Theorem~\ref{order preserving}.
The inductive structure of the proof is complex, so we separate out the trickiest part as a lemma. For~$W$ a Coxeter group, $c$ a Coxeter element of~$W$ and $x$ and $y \in W$, let $\Sigma(W,c,x,y)$ denote the statement:

\centerline{``If $y \geq x$ then $\pidown^c(y) \geq \pidown^c(x)$.''}

\noindent Theorem~\ref{order preserving} states that $\Sigma(W,c,x,y)$ is true for all $(W,c,x,y)$. 

\begin{lemma} \label{order preserving key step}
Let~$W$ be a Coxeter group, $c$ a Coxeter element of~$W,$ $s\in S$ and $y \in W$. 
Suppose that both of the following conditions hold:
\begin{enumerate}
\item[(i) ]If the rank of $\hat{W}$ is less then the rank of~$W$ then $\Sigma(\hat{W}, \hat{c}, \hat{x}, \hat{y})$ is true for any Coxeter element $\hat{c}$ and any $\hat{x},\hat{y}\in\hat{W}$.
\item[(ii) ]$\Sigma(W, \hat{c}, \hat{x}, \hat{y})$ is true for any Coxeter element $\hat{c}$ and any $\hat{x},\hat{y}$ with $\ell(\hat{y})<\ell(y)$.
\end{enumerate}
Then $\Sigma(W,c,s,y)$ is true.
\end{lemma}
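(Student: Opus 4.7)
The plan is to reduce $\Sigma(W,c,s,y)$ to hypotheses~(i) and~(ii) by picking an initial letter $s'$ of $c$ and splitting into cases on how $s$, $s'$, and $y$ relate. First observe that any simple reflection is $c$-sortable, so $\pidown^c(s)=s$, and the claim becomes: if $y\ge s$, then $\pidown^c(y)\ge s$.

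The three easier cases proceed as follows. When $s'=s$, the statement is exactly Proposition~\ref{AboveS}. When $s'\ne s$ and $y\not\ge s'$, the recursion gives $\pidown^c(y)=\pidown^{s'c}(y_{\br{s'}})$, and since $s\in W_{\br{s'}}$ with $s\in\inv(y_{\br{s'}})=\inv(y)\cap W_{\br{s'}}$, hypothesis~(i) applied in the rank-$(n-1)$ group $W_{\br{s'}}$ gives $\pidown^{s'c}(y_{\br{s'}})\ge\pidown^{s'c}(s)=s$. When $s'\ne s$, $y\ge s'$, and $s,s'$ commute, the identity $s'ss'=s$ puts $s$ in $\inv(s'y)$, so hypothesis~(ii) yields $z:=\pidown^{s'cs'}(s'y)\ge s$; since $z\not\ge s'$ (as $z\le s'y\not\ge s'$), we have $\inv(s'z)=\set{s'}\cup s'\inv(z)s'\ni s'ss'=s$, and thus $\pidown^c(y)=s'z\ge s$.

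The main obstacle is the final case: $s'\ne s$, $y\ge s'$, and $s,s'$ do not commute. Set $m=m(s,s')$ and $y'=s'y$. The assumptions $y\ge s$ and $y\ge s'$, combined with Lemma~\ref{PilkLemma}, force every reflection of $W_{\{s,s'\}}$ to lie in $\inv(y)$, which in particular forces $m<\infty$ since $\inv(y)$ is finite. Applying the formula $\inv(y')=\set{s'ts':t\in\inv(y)\setminus\set{s'}}$ then gives $\inv(y')\cap W_{\{s,s'\}}=\set{u_1,\ldots,u_{m-1}}$, taking canonical generators $r_1=s$, $r_2=s'$. To obtain $\pidown^c(y)=s'\pidown^{s'cs'}(y')\ge s$, I need $u_{m-1}=s'ss'$ to be an inversion of $z:=\pidown^{s'cs'}(y')$, because then $s=s'u_{m-1}s'$ lies in $\inv(s'z)=\set{s'}\cup s'\inv(z)s'$. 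The obvious attempt to invoke~(ii) with $\hat x=s'ss'$ fails, because $s'ss'$ is typically not $s'cs'$-sortable once $m\ge 4$.

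The remedy is to induct against a larger sortable element. Let $\hat x\in W_{\{s,s'\}}$ be the element with inversion set $\set{u_1,\ldots,u_{m-1}}$; concretely, $\hat x$ has the alternating reduced word $ss'ss'\cdots$ of length $m-1$ starting with $s$. The restriction of $s'cs'$ to $W_{\{s,s'\}}$ is $ss'$, and reading this reduced word as the subword of $(ss')^\infty$ occupying positions $1,\ldots,m-1$ yields the piecing $ss'|ss'|\cdots|(ss'\text{ or }s)$ with weakly decreasing supports; so $\hat x$ is $ss'$-sortable and hence $s'cs'$-sortable by Proposition~\ref{sort para easy}, giving $\pidown^{s'cs'}(\hat x)=\hat x$. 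Since $\inv(\hat x)\subseteq\inv(y')$ and $\ell(y')<\ell(y)$, hypothesis~(ii) applied with this $\hat x$ yields $z\ge\hat x\ge u_{m-1}$, completing the argument.
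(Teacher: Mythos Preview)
Your argument is correct and follows essentially the same route as the paper: your element $\hat{x}$ is precisely $s'(s'\vee s)$, and the paper applies hypothesis~(ii) to this same element (uniformly, without splitting on whether $s$ and $s'$ commute), then concludes via the poset isomorphism of Proposition~\ref{AboveBelow} rather than your direct inversion-set computation. One small slip to fix: in your final line, ``$\hat{x}\ge u_{m-1}$'' is false as a weak-order inequality (since $s'=u_m\in\inv(u_{m-1})$ but $s'\notin\inv(\hat{x})$); what you actually need, and have, is $u_{m-1}\in\inv(\hat{x})\subseteq\inv(z)$.
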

\begin{proof}
Let $s'$ be initial in $c$.
If  $y \not \geq s$ then the claim is trivial, so assume $y \geq s$. The case $s=s'$ is Proposition~\ref{AboveS}, so we may also assume that $s \neq s'$. 

If $y\not\ge s'$ then $\pidown^{c}(y)=\pidown^{s'c}(y_{\br{s'}})$.
Since $y_{\br{s'}}\ge s_{\br{s'}}=s$, condition (i) guarantees that $\pidown^{s'c}(y_{\br{s'}})\ge s$ and the lemma is proven in this case.

If $y\ge s'$ then $\pidown^{c}(y)=s' \pidown^{s'cs'}(s'y)$.
Furthermore, since $y$ is an upper bound for~$s$ and $s'$, the join $s'\join s$ exists, and $(s'\join s)\le y$.
Thus $s'(s'\join s)\le s'y$ by Proposition~\ref{AboveBelow}.
Since $\ell(s'y)<\ell(y)$, condition (ii) guarantees that $\Sigma(W, s'cs', s' (s' \join s), s' y)$ is true, or in other words, that $\pidown^{s' c s'}(s' y) \geq \pidown^{s' c s'}(s' (s'\join s))$. 
Since $s'$ is final in $s' c s'$, the restriction of $s' c s'$ to the parabolic subgroup generated by $s$ and $s'$ is $ss'$.
Thus Proposition~\ref{DeletionAndPidown} implies that $\pidown^{s'cs'}\left(s'(s'\join s)\right)=\pidown^{ss'}\left(s'(s'\join s)\right)$. 
The unique reduced word for $s'(s'\join s)$ is an initial segment of $ss'ss'ss'\cdots$ and therefore $s'(s'\join s)$ is $ss'$-sortable.
In particular, $\pidown^{ss'}\left(s'(s'\join s)\right)=s'(s'\join s)$.

Combining the inequalities of the previous paragraph, we have $\pidown^{s' c s'}(s' y)  \geq s'(s'\join s)$.  
Proposition~\ref{AboveS} implies that $\pidown^{s' c s'}(s' y)\not\ge s'$, so $s' \pidown^{s' c s'}(s' y) \geq s' \join s$ by Proposition~\ref{AboveBelow}.
 Therefore, $\pidown^{c}(y) =s' \pidown^{s' c s'}(s' y) \geq s' \join s \geq s$.
 \end{proof}

We now prove Theorem~\ref{order preserving}.

\begin{proof}[Proof of Theorem~\ref{order preserving}]
Let $x,y\in W$ with $x \leq y$.
We wish to show that $\pidown^c(x)\le\pidown^c(y)$.
The proof is by induction on rank and on the length of $y$, the rank staying fixed whenever we appeal to induction on length. Note 
that we may immediately reduce to the case that $x \covered y$. 
Let~$s$ be initial in~$c$.
Since $x \leq y$, it is impossible to have $x \geq s$ and $y \not \geq s$.
Thus we must consider three cases.

\textbf{Case 1:} $y\not\ge s$ and $x\not\ge s$. Then $\pidown^c(x)=\pidown^{sc}(x_{\br{s}})$ and $\pidown^c(y)=\pidown^{sc}(y_{\br{s}})$.
Because $x_{\br{s}}$ is covered by or equal to $y_{\br{s}}$, induction on rank shows that $\pidown^c(x)\le\pidown^c(y)$.

\textbf{Case 2:} $x\ge s$ and $y\ge s$. Then $\pidown^c(x)=s\cdot\pidown^{scs}(sx)$ and $\pidown^c(y)=s\cdot\pidown^{scs}(sy)$.
Since $sx\covered sy$, induction on length shows that $\pidown^{scs}(sx)\le\pidown^{scs}(sy)$, and thus $\pidown^c(x)\le\pidown^c(y)$.

\textbf{Case 3:} $x\not\ge s$ and $y\ge s$. 
This case is illustrated in Figure~\ref{OrderLemmaFigure}.
The dashed line separates $W_{\geq s}$ from $W_{\not \geq s}$;
the shaded region is $W_{\br{s}}$;
thick lines indicate cover relations and ordinary lines indicate inequalities. Many of the relations shown are not obvious and will be proved in the course of the proof. 

\begin{figure}[ht] 
\psfrag{a}[ct][cc]{\LARGE$x=sy$}
\psfrag{b}[cc][cc]{\LARGE$y$}
\psfrag{c}[cc][cc]{\LARGE$\pidown^{scs}(x)$}
\psfrag{d}[cc][cc]{\LARGE$\pidown^{scs}(y)=s \pidown^{scs}(x)=\pidown^c(y)$}
\psfrag{e}[cc][cc]{\LARGE$x_{\br{s}}$}
\psfrag{f}[cc][cc]{\LARGE$\begin{matrix} \pidown^c(x)=\pidown^{sc}(x_{\br{s}})\\ =\pidown^{scs}(x_{\br s}) \end{matrix}$}
\psfrag{g}[cc][cc]{\LARGE$W_{\br{s}}$}
\psfrag{h}[cc][cc]{\LARGE$W_{\not \geq s}$}
\psfrag{i}[cc][cc]{\LARGE$W_{\geq s}$}
\scalebox{0.7}{\includegraphics{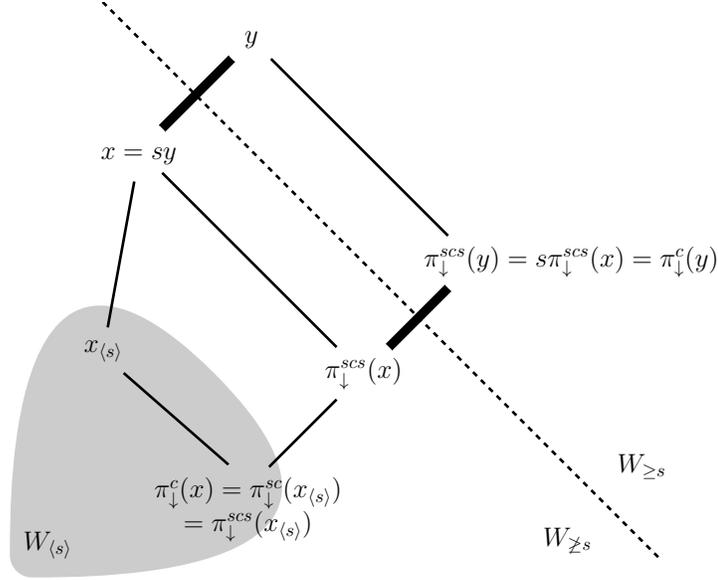}}
\caption{Case 3 of the proof of Theorem~\ref{order preserving}.} \label{OrderLemmaFigure}
\end{figure}

Since $x\covered y$, Proposition~\ref{AboveBelow} states that $x=sy$ and in particular~$s$ is a cover reflection of $y$. 
Condition (i) of Lemma~\ref{order preserving key step} holds by induction on rank, and condition (ii) holds by induction on length, with rank staying fixed.
We conclude that $\Sigma(W, scs, s, y)$ is true, or in other words that $\pidown^{scs}(y)\ge s$.

The fact that $x\not\ge s$ implies, by Proposition~\ref{decreasing}, that $\pidown^{scs}(x)\not\ge s$.
Together with the relation  $\pidown^{scs}(y)\ge s$ from the previous paragraph, this implies that $\pidown^{scs}(x)\neq \pidown^{scs}(y)$.
Since $\pidown^{scs}(x)\le x<y$, we can apply Proposition~\ref{PartwayToBoundary} to conclude that $xD\subseteq \Cone_{scs}(\pidown^{scs}(x))$ but that $yD\not\subseteq \Cone_{scs}(\pidown^{scs}(x))$.
Thus there is some $\beta\in C_{scs}\left(\pidown^{scs}(x)\right)$ such that interior points $p^*$ of $xD$ have $\br{p^*,\beta} > 0$ while interior points $p^*$ of $yD$ have $\br{p^*,\beta} < 0$.
The only hyperplane separating $xD$ from $yD$ is the hyperplane orthogonal to $\alpha_s$.
We therefore conclude that $\alpha_s\in C_{scs}\left(\pidown^{scs}(x)\right)$.
Now $- \alpha_s = s \alpha_s \in C_c\left(s\cdot\pidown^{scs}(x)\right)$ by the definition of $C_c$.
Proposition~\ref{lower walls} now shows that~$s$ is a cover reflection of $s\cdot\pidown^{scs}(x)$, which equals $s\cdot\pidown^{scs}(sy)=\pidown^c(y)$.
We have therefore shown that $\pidown^c(y)\covers s \pidown^c(y) =\pidown^{scs}(sy) = \pidown^{scs}(x)$.
By induction on length, $\pidown^{scs}(x)\ge \pidown^{scs}\left(x_{\br{s}}\right)$ which, according to Proposition~\ref{DeletionAndPidown}, equals $\pidown^{sc}\left(x_{\br{s}}\right)$.
The latter is, by definition, $\pidown^c(x)$.
\end{proof}

Corollary~\ref{max} now follows immediately. 
\begin{proof}[Proof of Corollary~\ref{max}]
By Propositions~\ref{decreasing} and~\ref{idempotent}, the element $\pidown^c(w)$ is $c$-sortable and is below~$w$ in the weak order.
If $v$ is any other $c$-sortable element below~$w$ in the weak order, Theorem~\ref{order preserving} implies that $\pidown^c(v)\le\pidown^c(w)$, and since $v$ is $c$-sortable, $\pidown^c(v)=v$ by the second half of Proposition~\ref{decreasing}.
\end{proof}
Next, we complete the proof of Theorem~\ref{pidown fibers}. 

\begin{proof}[Proof of Theorem~\ref{pidown fibers}]
In the proof of Proposition~\ref{PartwayToBoundary}, the additional hypothesis $v\le w$ was used only to rule out Case 4 and to provide the inequality (analogous to $v\le w$) needed to appeal to induction.
Thus, to prove Theorem~\ref{pidown fibers}, we need only establish Case 4 (assuming inductively that Theorem~\ref{pidown fibers} holds for all examples of lesser length and rank) and copy the proofs of Cases 1, 2 and 3 from the proof of Proposition~\ref{PartwayToBoundary}.

Suppose $v\ge s$ and $w\not\ge s$.
Then $\pidown^c(w)\ne v$ by Proposition~\ref{decreasing}.
Since $sw \geq s$, Theorem~\ref{order preserving} implies that $\pidown^{scs}(sw) \geq \pidown^{scs}(s)=s$. 
In particular, $\pidown^{scs}(sw) \neq sv$. 
Induction on the length of~$v$ establishes that $swD\not\subseteq \Cone_{scs}(sv)=s \Cone_c(v)$. 
Thus $wD \not \subseteq \Cone_c(v)$ as desired.
\end{proof}

We conclude this section with another corollary to Theorem~\ref{order preserving} which considerably strengthens Proposition~\ref{DeletionAndPidown}.

\begin{prop}\label{pidown para}
Let $J\subseteq S$ and let $c'$ be the restriction of $c$ to $W_J$.
Then $\pidown^{c'}(w_J)=\pidown^c(w)_J$ for any $w\in W$.
\end{prop}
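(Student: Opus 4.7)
The plan is to avoid a direct induction and instead exploit the maximality characterization of $\pidown^c$ given by Corollary~\ref{max}, together with the compatibility of sortability and of the weak order with the projection $w \mapsto w_J$. Concretely, I would prove the two inequalities $\pidown^c(w)_J \le \pidown^{c'}(w_J)$ and $\pidown^{c'}(w_J) \le \pidown^c(w)_J$ separately.

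For the first inequality: by Proposition~\ref{decreasing}, $\pidown^c(w) \le w$, and since the map $x \mapsto x_J$ is order-preserving (it is a restriction of inversion sets), we obtain $\pidown^c(w)_J \le w_J$. Moreover, $\pidown^c(w)$ is $c$-sortable by Proposition~\ref{decreasing}, so Proposition~\ref{sort para} tells us that $\pidown^c(w)_J$ is $c'$-sortable. Applying Corollary~\ref{max} inside $W_J$ then gives $\pidown^c(w)_J \le \pidown^{c'}(w_J)$.

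For the reverse inequality: the element $\pidown^{c'}(w_J)$ lies in $W_J$ and is $c'$-sortable, so by Proposition~\ref{sort para easy} it is $c$-sortable as an element of $W$. Since $\pidown^{c'}(w_J) \le w_J \le w$, Corollary~\ref{max} (applied in $W$) yields $\pidown^{c'}(w_J) \le \pidown^c(w)$. Because $\pidown^{c'}(w_J) \in W_J$, it equals its own projection to $W_J$, and order-preservation of the projection then gives $\pidown^{c'}(w_J) = \bigl(\pidown^{c'}(w_J)\bigr)_J \le \pidown^c(w)_J$.

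There is no serious obstacle here: the only substantive ingredients are (i) that $\pidown^c(w)$ is the \emph{unique} maximal $c$-sortable element below $w$ (Corollary~\ref{max}, which is the deep input), and (ii) that both sortability and the weak order descend to standard parabolic subgroups via $w \mapsto w_J$ (Propositions~\ref{sort para easy} and~\ref{sort para}, together with the elementary fact that $w_J \le w$ and that $x \mapsto x_J$ is order-preserving). The proof is essentially a double application of the universal property of $\pidown$ in $W$ and in $W_J$.
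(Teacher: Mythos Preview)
Your proof is correct and follows essentially the same approach as the paper: both arguments prove the two inequalities separately by combining order-preservation of $\pidown$ and of $w\mapsto w_J$ with the fact that sortability passes to and from standard parabolics. The only cosmetic difference is that you package the order-preserving property as the maximality characterization (Corollary~\ref{max}), whereas the paper invokes Theorem~\ref{order preserving} and Proposition~\ref{DeletionAndPidown} directly; these are equivalent formulations of the same idea.
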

\begin{proof}

Since $w\ge w_J$, Theorem~\ref{order preserving} implies that $\pidown^c(w)\ge\pidown^c(w_J)$.
By Proposition~\ref{DeletionAndPidown}, the latter equals $\pidown^{c'}(w_J)$. 
Since the projection $x\mapsto x_J$ is order preserving, $\pidown^c(w)_J \geq \pidown^{c'}(w_J)_J=\pidown^{c'}(w_J)$.

On the other hand, since $w\ge\pidown^c(w)$, we have $w_J\ge (\pidown^{c}(w))_J$. 
Applying the order preserving map $\pidown^{c'}$ to both sides, $\pidown^{c'}(w_J)\ge\pidown^{c'}\left((\pidown^{c}(w))_J \right)$.
But $(\pidown^c(w))_J$ is $c'$-sortable by Propositions~\ref{decreasing} and~\ref{sort para}, so $\pidown^{c'}\left((\pidown^{c}(w))_J\right)=(\pidown^c(w))_J$.
Thus $\pidown^{c'}(w_J)\ge \pidown^c(w)_J$, so that $\pidown^{c'}(w_J) = \pidown^c(w)_J$.
\end{proof}

\begin{samepage}
\addtocontents{toc}{\medskip \textbf{Part II: The Cambrian semilattice and fan} \medskip}
\bigskip
\bigskip
\centerline{\Large{\sc{Part II: The Cambrian semilattice and fan}}}

\section{The Cambrian semilattice} \label{lattice sec}

In this section, we begin to establish the lattice-theoretic properties of $c$-sortable elements. 
We begin by listing and explaining the main results of the section. 
\end{samepage}

\begin{theorem} \label{meets and joins}
Let~$A$ be a collection of $c$-sortable elements of~$W$. 
If~$A$ is nonempty then $\Meet A$ is $c$-sortable. 
If~$A$ has an upper bound then $\Join A$ is $c$-sortable.
\end{theorem}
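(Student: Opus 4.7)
The plan is to handle joins and meets separately, since they call for quite different tools.

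The join statement follows quickly from the basic properties of $\pidown^c$. Let $A$ be a set of $c$-sortable elements with an upper bound, so $\Join A$ exists. For each $a\in A$ we have $a=\pidown^c(a)\le\pidown^c(\Join A)$ by Proposition~\ref{decreasing} and Theorem~\ref{order preserving}, so $\pidown^c(\Join A)$ is an upper bound for $A$. Hence $\Join A\le\pidown^c(\Join A)$, and combining this with the reverse inequality from Proposition~\ref{decreasing} gives $\pidown^c(\Join A)=\Join A$, so $\Join A$ is $c$-sortable.

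The analogous attempt for meets collapses, giving only the automatic inequality $\pidown^c(\Meet A)\le\Meet A$. Instead I would prove closure under meets by double induction on the pair $(\rank(W),\ell(\Meet A))$, ordered lexicographically, with the inductive hypothesis stated uniformly over all Coxeter elements, using the recursive characterization of $c$-sortability from Proposition~\ref{CSortRecursive}. Pick $s$ initial in $c$. If every $a\in A$ satisfies $a\ge s$, then $s$ is a lower bound for $A$, so $\Meet A\ge s$. Since inversion sets shrink along the weak order, the subposets $W_{\ge s}$ and $W_{\not\ge s}$ are each closed under meets in $W$, and Proposition~\ref{AboveBelow} provides a poset isomorphism $x\mapsto sx$ between them, so $s\cdot\Meet A=\Meet\{sa:a\in A\}$. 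Each $sa$ is $scs$-sortable by Proposition~\ref{CSortRecursive}, so the length-induction applied to the Coxeter element $scs$ shows that $s\cdot\Meet A$ is $scs$-sortable, and Proposition~\ref{CSortRecursive} then yields that $\Meet A$ is $c$-sortable.

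In the complementary case some $a_0\in A$ fails $a_0\ge s$, so $a_0\in W_{\br{s}}$ by Proposition~\ref{CSortRecursive}. The inclusion $\inv(\Meet A)\subseteq\inv(a_0)\subseteq T\cap W_{\br{s}}$ forces $\Meet A\in W_{\br{s}}$, and Proposition~\ref{para hom} rewrites it as
\[
\Meet A=(\Meet A)_{\br{s}}=\Meet\{a_{\br{s}}:a\in A\},
\]
a meet that may be computed inside $W_{\br{s}}$. Each $a_{\br{s}}$ is $sc$-sortable in $W_{\br{s}}$ by Proposition~\ref{sort para}, so the rank-induction applied to $W_{\br{s}}$ with Coxeter element $sc$ yields that $\Meet A$ is $sc$-sortable, and hence $c$-sortable. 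The delicate step is the passage from ``some $a_0\in A$ fails $a_0\ge s$'' to ``$\Meet A\in W_{\br{s}}$''; this is where clause (ii) of Proposition~\ref{CSortRecursive}, which ties the condition $a\not\ge s$ to membership in the parabolic $W_{\br{s}}$, is essential to the argument.
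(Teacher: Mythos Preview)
Your proof is correct. The join argument is identical to the paper's. For meets you take a genuinely different route: the paper invokes the alignment characterization (Theorem~\ref{sortable is aligned}), observing that $I=\bigcap_{a\in A}\inv(a)$ meets every rank-two generalized parabolic in an admissible pattern, so $I$ is the inversion set of a $c$-aligned element, which is then $c$-sortable. You instead run the standard length/rank induction via Proposition~\ref{CSortRecursive}, using Proposition~\ref{sort para} to push down to $W_{\br{s}}$ in the parabolic case and the isomorphism of Proposition~\ref{AboveBelow} in the other case. Both are valid; the paper's approach is shorter once alignment is in hand and yields the bonus fact $\inv(\Meet A)=\bigcap_{a\in A}\inv(a)$ (noted in their subsequent remark), while yours stays closer to the recursive definition and does not require Theorem~\ref{sortable is aligned} directly---though it still leans on the $\omega_c$ machinery through Proposition~\ref{sort para}. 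Two small points worth making explicit in your write-up: that $\inv(w)\subseteq T\cap W_{\br{s}}$ implies $w\in W_{\br{s}}$ (e.g.\ via $w_{\br{s}}=w$), and that the meet of a subset of $W_{\br{s}}$ computed in $W$ agrees with the meet computed in $W_{\br{s}}$ (any lower bound has inversions in $W_{\br{s}}$, hence lies in $W_{\br{s}}$).
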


When~$W$ is finite, $\Join S=w_0$, the longest element of $W$.
Since each $s\in S$ is $c$-sortable, we have the following corollary to Theorem~\ref{meets and joins}.
(Cf. \cite[Corollary~4.4]{sortable}.)
\begin{cor}\label{w0 sort}
If~$W$ is finite then $w_0$ is $c$-sortable for any Coxeter element~$c$.
\end{cor}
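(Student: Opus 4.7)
The plan is to deduce Corollary~\ref{w0 sort} as a direct consequence of Theorem~\ref{meets and joins} applied to the set $A = S$. First I would observe that every simple reflection $s \in S$ is $c$-sortable for any Coxeter element $c$: the length-one word $s$ is itself a $c$-sorting word (it is the leftmost occurrence of $s$ in $(s_1 \cdots s_n)^{\infty}$), and its sequence of support sets is just $(\{s\})$, which is trivially weakly decreasing.

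Next, since $W$ is assumed finite, the weak order has a unique maximum $w_0$, and in particular $s \leq w_0$ for every $s \in S$, so $w_0$ is an upper bound for $S$. Thus the hypothesis of the second assertion of Theorem~\ref{meets and joins} is satisfied for $A = S$, and we conclude that $\Join S$ exists and is $c$-sortable.

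Finally, I would identify $\Join S$ with $w_0$. Since $w_0$ is an upper bound for $S$, we have $\Join S \leq w_0$. For the reverse inequality, note that $S \subseteq \inv(\Join S)$, since $s \leq \Join S$ for every $s \in S$. By the equivalence in Proposition~\ref{AboveBelow}, this means that $\ell(s \cdot \Join S) < \ell(\Join S)$ for every $s \in S$, so $\Join S$ has no left ascent. In a finite Coxeter group, the only element with no left ascent is $w_0$, so $\Join S = w_0$, completing the proof.

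The main potential obstacle is that Theorem~\ref{meets and joins} is only asserted to hold when the set has an upper bound, so one must verify that such an upper bound exists; this is where the finiteness hypothesis on $W$ is used, and without it $\Join S$ need not exist at all. Beyond this, the argument is entirely routine.
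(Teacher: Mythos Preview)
Your proposal is correct and follows essentially the same approach as the paper: the paper notes that each $s\in S$ is $c$-sortable and that $\Join S=w_0$ when $W$ is finite, so Theorem~\ref{meets and joins} applies directly. Your write-up supplies a bit more detail (verifying $\Join S=w_0$ via the no-left-ascent argument), but the underlying idea is identical.
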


The statement about meets in Theorem~\ref{meets and joins} means that the restriction of the weak order to $c$-sortable elements is a sub-(meet-)semilattice of the weak order on all of~$W$. 
This sub-semilattice will be called the \newword{$c$-Cambrian semilattice}.
By Theorem~\ref{order preserving}, for any $w\in W$, the restriction of the weak order to $c$-sortable elements weakly below~$w$ is the interval $[e,\pidown^c(w)]$ in the $c$-Cambrian semilattice.
Theorem~\ref{meets and joins} implies that the interval $[e,\pidown^c(w)]$ in the $c$-Cambrian semilattice is a sublattice of the interval $[e,w]$ in the weak order.
In particular, when~$W$ is finite, the $c$-Cambrian semilattice is called the $c$-Cambrian lattice and is a sublattice of the weak order on~$W$.

Remarkably, the $c$-Cambrian semilattice is not only a subobject of the weak order, but also a quotient object.

\begin{theorem} \label{semilattice hom}
For~$A$ any subset of~$W$, 
if~$A$ is nonempty then $\Meet \pidown^c(A)=\pidown^c(\Meet A)$ and if~$A$ has an upper bound then $\Join \pidown^c(A)=\pidown^c( \Join A)$.
\end{theorem}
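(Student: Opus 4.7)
The meet statement is immediate from Corollary~\ref{max}. Since $\Meet A\le a$ for each $a\in A$, order-preservation (Theorem~\ref{order preserving}) gives $\pidown^c(\Meet A)\le\pidown^c(a)$ and hence $\pidown^c(\Meet A)\le\Meet\pidown^c(A)$. Conversely, $\Meet\pidown^c(A)$ is $c$-sortable by Theorem~\ref{meets and joins} and satisfies $\Meet\pidown^c(A)\le\pidown^c(a)\le a$ for each $a$, so $\Meet\pidown^c(A)\le\Meet A$; Corollary~\ref{max} then promotes this to $\Meet\pidown^c(A)\le\pidown^c(\Meet A)$.

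For the join, the inequality $v:=\Join\pidown^c(A)\le\pidown^c(\Join A)$ is analogous: $\pidown^c(\Join A)$ is a $c$-sortable upper bound for $\pidown^c(A)$ by Theorem~\ref{order preserving}, and by Theorem~\ref{meets and joins} the join $v$ exists and satisfies $v\le\pidown^c(\Join A)$. The reverse $\pidown^c(\Join A)\le v$ is harder. Since the weak order is finitary, $[e,\Join A]$ is a finite lattice, so after replacing $A$ by a finite subset with the same join and inducting on $|A|$, I reduce to $A=\{x,y\}$ with $j:=x\vee y$.

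For this binary case I would run a double induction on the rank of $W$ and on $\ell(j)$, choosing $s$ initial in $c$. If $j\not\ge s$, then also $x,y\not\ge s$; Proposition~\ref{para hom} gives $j_{\br{s}}=x_{\br{s}}\vee y_{\br{s}}$, and unwinding $\pidown^c(w)=\pidown^{sc}(w_{\br{s}})$ reduces to the rank-induction hypothesis in $W_{\br{s}}$ with Coxeter element $sc$. If $j\ge s$ and both $x,y\ge s$, the order-isomorphism $W_{\ge s}\cong W_{\not\ge s}$ of Proposition~\ref{AboveBelow}, together with a short rank-two-parabolic argument via Lemmas~\ref{s in} and~\ref{PilkLemma} (using that a simple $s$ is always a canonical generator of any rank-two parabolic containing it), shows $\Join_W\{sx,sy\}=sj$ and likewise $\pidown^c(x)\vee\pidown^c(y)=s(\pidown^{scs}(sx)\vee\pidown^{scs}(sy))$; the length-induction hypothesis applied at $(scs,sj)$ with $\ell(sj)<\ell(j)$ then closes this case.

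The main obstacle is the remaining mixed subcase $j\ge s$, $x\ge s$, $y\not\ge s$, where the passage through $s$ is asymmetric between $x$ and $y$. Here I would reframe $\pidown^c(j)=v$ as the geometric condition $jD\subseteq\Cone_c(v)$ via Proposition~\ref{PartwayToBoundary}, and then verify the two wall conditions of $\Cone_c(v)$: the lower-wall condition $\cov(v)\subseteq\inv(j)$ from Proposition~\ref{lower walls} follows from $v\le j$, while the upper-wall condition $\ufs_c(v)\cap\inv(j)=\emptyset$ from Proposition~\ref{walls} is the real content. For the latter, I would first establish that $s$ is a cover reflection of $v$ (using Lemmas~\ref{s join w br s} and~\ref{cov w br s} together with the decomposition $\pidown^c(x)\ge s$ and $\pidown^c(y)\in W_{\br{s}}$), then apply Proposition~\ref{Cc initial} to identify $\ufs_c(v)$ with $\{sts:t\in\ufs_{sc}(v_{\br{s}})\}$, and finally use $j_{\br{s}}=x_{\br{s}}\vee y_{\br{s}}$ to reduce to the rank-induction hypothesis in $W_{\br{s}}$.
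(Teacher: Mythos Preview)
Your treatment of the meet statement, the easy direction of the join, the reduction to the binary case, and the two ``pure'' subcases ($j\not\ge s$; both $x,y\ge s$) is correct and essentially matches the paper. The mixed subcase, however, has two genuine gaps.

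First, you claim that $s$ is a cover reflection of $v=\pidown^c(x)\vee\pidown^c(y)$, citing Lemmas~\ref{s join w br s} and~\ref{cov w br s}. Those lemmas apply only once $v$ is already known to equal $s\vee w$ for some $w\in W_{\br{s}}$; they do not produce that decomposition. Knowing merely that $\pidown^c(x)\ge s$ and $\pidown^c(y)\in W_{\br{s}}$ is not enough: for a $c$-sortable $u\ge s$ one need not have $u=s\vee u_{\br{s}}$ (take $u=st$ in $B_2$ with $c=st$, where $u_{\br{s}}=e$). So the cited lemmas do not establish $s\in\cov(v)$.

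Second, even granting $s\in\cov(v)$ and Proposition~\ref{Cc initial}, your rank-reduction does not close. Rank induction in $W_{\br{s}}$ gives $\pidown^{sc}(j_{\br{s}})=v_{\br{s}}$, hence $t\notin\inv(j_{\br{s}})$ for each $t\in\ufs_{sc}(v_{\br{s}})$, i.e.\ $t\notin\inv(j)$. But the condition you need is $sts\notin\inv(j)$, since $\ufs_c(v)=\{sts:t\in\ufs_{sc}(v_{\br{s}})\}$. These are different: already in $A_2$ with $j=st$ one has $t\notin\inv(j)$ while $sts\in\inv(j)$. So the reduction does not go through.

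The paper avoids both issues by isolating a Special Case $x=s$, $y\not\ge s$. There one argues geometrically about $\pidown^c(s\vee y)$ (not about $v$): since $s\vee y$ and $s(s\vee y)$ lie on opposite sides of $H_s$, Theorem~\ref{pidown fibers} forces $-\alpha_s\in C_c(\pidown^c(s\vee y))$, so $s\in\cov(\pidown^c(s\vee y))$; then Proposition~\ref{Cc initial} and Proposition~\ref{pidown para} yield $\pidown^c(s\vee y)=s\vee\pidown^c(y)$. The general mixed case then reduces to Case~1 plus the Special Case via the algebraic trick $x\vee y=x\vee(s\vee y)$, bypassing any need to analyze $\cov(v)$ or $\ufs_c(v)$ directly.
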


Theorem~\ref{semilattice hom} means that $\pidown^c$ is a semilattice homomorphism from the weak order on~$W$ to the $c$-Cambrian semilattice, and that the restriction of $\pidown^c$ to any interval $[1,w]$ is a lattice homomorphism from $[1,w]$ to the interval $[1,\pidown^c(w)]$ in the $c$-Cambrian semilattice.
In particular, when~$W$ is finite, $\pidown^c$ is a lattice homomorphism from the weak order on~$W$ to the $c$-Cambrian lattice. 

As a consequence, the $c$-Cambrian semilattice can also be realized as the meet-semilattice quotient of the weak order on~$W$ modulo the lattice congruence defined by the fibers of $\pidown^c$.
As such, the $c$-Cambrian semilattice is the partial order on the fibers of $\pidown^c$ defined as follows:
A fiber $F_1$ is below a fiber $F_2$ if and only if there exists elements $x_1\in F_1$ and $x_2\in F_2$ with $x_1\le x_2$ in the weak order.
In fact, since each fiber has a unique minimal element, the partial order on fibers can also be defined by a formally weaker requirement:
A fiber $F_1$ is below a fiber $F_2$ if and only if, for every $x_2\in F_2$, there exists $x_1\in F_1$ with $x_1\le x_2$ in the weak order.
Not surprisingly, the dual version of this latter characterization does not hold:
It is possible to find fibers $F_1\le F_2$ and an element $x_1\in F_1$ which is below no element of $F_2$.
For example, take~$W$ with $S=\set{s_1,s_2,s_3}$ and $m(s_1,s_2)=m(s_1,s_3)=m(s_2,s_3)=\infty$ and set $c=s_1s_2s_3$.
Then the fiber $F_1=(\pidown^c)^{-1}(s_2)$ contains the element $s_2s_1$.
The fiber $F_2=(\pidown^c)^{-1}(s_2s_3)$ has $F_2\ge F_1$ but no element of $F_2$ is above $s_2s_1$.

There is also a geometric characterization of the $c$-Cambrian semilattice, inherited from the geometric characterization of the weak order which we now review.
Let $b^*$ be a vector in the interior of $D$. 
All cover relations of the weak order on~$W$ are obtained as follows:
Suppose $w_1D$ and $w_2D$ are adjacent and let $n\in V$ be a normal vector to their common boundary hyperplane, with $n$ pointing in the direction of $w_2D$.
The characterization of the weak order in terms of inversion sets implies that $w_1D\covered w_2D$ if and only if $\br{b^*,n}<0$. In light of Lemma~\ref{AB inv} and Theorem~\ref{pidown fibers}, the $c$-Cambrian semilattice is the partial order on the cones $\Cone_c(v)$ with covers defined analogously in terms of the same vector $b^*$.

Another important lattice-theoretic property of $c$-sortable elements is the following theorem, which was pointed out for finite~$W$ in~\cite[Remark 3.8]{sort_camb}.
Although the proof of Theorem~\ref{c to scs} depends (via Theorem~\ref{meets and joins}) on most of the machinery developed thus far, if Theorem~\ref{c to scs} were known from the beginning, almost all of the preceding results would follow easily.

\begin{theorem}\label{c to scs}
If~$s$ is initial in~$c$ then the map
\[v\mapsto\left\lbrace\begin{array}{cl}
sv&\mbox{if }v\ge s\\
s\join v&\mbox{if }v\not\ge s
\end{array}\right.\]
is a bijection from the set $\set{\mbox{$c$-sortable elements $v$ such that $s\join v$ exists}}$ to the set of all $scs$-sortable elements.
The inverse map~is
\[x\mapsto\left\lbrace\begin{array}{cl}
sx&\mbox{if }x\not\ge s\\
x_{\br{s}}&\mbox{if }x\ge s
\end{array}\right..\]
\end{theorem}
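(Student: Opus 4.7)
The plan is to verify bijectivity by decomposing both sides of the map according to whether elements are $\geq s$. Let $\phi$ denote the forward map of the theorem and $\psi$ denote its claimed inverse. By inspection, $\phi$ sends $\{v : v \geq s\}$ into $\{x : x \not\geq s\}$ (by $v \mapsto sv$) and sends $\{v : v \not\geq s,\, s \vee v \text{ exists}\}$ into $\{x : x \geq s\}$ (by $v \mapsto s \vee v$), while $\psi$ swaps these two roles. Thus I will verify well-definedness of $\phi$ and $\psi$ on each of their two pieces, then verify the composite identities piece by piece.

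For well-definedness of $\phi$: when $v \geq s$, Proposition~\ref{CSortRecursive} with $s$ initial in $c$ immediately gives that $sv$ is $scs$-sortable. When $v \not\geq s$, the same proposition forces $v \in W_{\br{s}}$, and the restrictions of $c$ and of $scs$ to $W_{\br{s}}$ coincide (both are obtained by deleting the letter $s$). Hence Proposition~\ref{sort para easy} shows that $v$ is simultaneously $c$-sortable and $scs$-sortable. Since $s$ is always $scs$-sortable and $s \vee v$ exists by hypothesis, Theorem~\ref{meets and joins} applied to the Coxeter element $scs$ yields that $s \vee v$ is $scs$-sortable.

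For well-definedness of $\psi$: when $x \not\geq s$, we have $sx \geq s$, so the join $s \vee sx = sx$ trivially exists, and Proposition~\ref{CSortRecursive} yields that $sx$ is $c$-sortable. When $x \geq s$, Proposition~\ref{Cc final} applied with $s$ final in $scs$ gives $x = s \vee x_{\br{s}}$, so the required join exists; and $x_{\br{s}}$ is $c$-sortable by Proposition~\ref{sort para} combined with the identification of restrictions noted above.

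The composite identities are then direct. The relations $\psi(\phi(v)) = v$ when $v \geq s$ and $\phi(\psi(x)) = x$ when $x \not\geq s$ are tautological, reducing to $s^2 = e$. The relation $\phi(\psi(x)) = x$ when $x \geq s$ is exactly $x = s \vee x_{\br{s}}$, furnished by Proposition~\ref{Cc final} as above. The relation $\psi(\phi(v)) = v$ when $v \not\geq s$ amounts to $(s \vee v)_{\br{s}} = v$, which follows from Proposition~\ref{para hom} via $(s \vee v)_{\br{s}} = s_{\br{s}} \vee v_{\br{s}} = e \vee v = v$. The main conceptual obstacle in this plan is the second case of well-definedness of $\phi$: Theorem~\ref{meets and joins} only a priori gives that $s \vee v$ is $c$-sortable, while the theorem demands $scs$-sortability. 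The key observation unlocking the proof is that because $v \in W_{\br{s}}$, the same element $v$ is simultaneously sortable for both $c$ and $scs$, enabling a second application of Theorem~\ref{meets and joins} now with respect to $scs$.
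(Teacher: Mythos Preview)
Your proof is correct and follows essentially the same route as the paper's own argument: the same decomposition according to whether elements are $\geq s$, the same invocations of Proposition~\ref{CSortRecursive}, Proposition~\ref{sort para easy}, Theorem~\ref{meets and joins}, Proposition~\ref{Cc final}, Proposition~\ref{sort para}, and Proposition~\ref{para hom}, and the same key observation that $v\in W_{\br{s}}$ is simultaneously $c$- and $scs$-sortable since both Coxeter elements restrict to $sc$ on $W_{\br{s}}$. Your organization into ``four well-definedness checks plus four composite identities'' is slightly more explicit than the paper's, but the mathematical content is identical.
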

Theorem~\ref{c to scs} is illustrated pictorially in Section~\ref{aff sec}, especially Figures~\ref{bij1} and~\ref{bij2}.
We now proceed to prove Theorems~\ref{meets and joins},~\ref{semilattice hom} and~\ref{c to scs}.

\begin{proof}[Proof of Theorem~\ref{meets and joins}]
First consider the assertion about $\Meet A$. Define $I$ to be $\bigcap_{a \in A} \inv(a)$. Let $W'$ be a noncommutative generalized rank two parabolic subgroup of $W$, with canonical generators $r_1$ and $r_2$ and with $\omega_c(\beta_{r_1}, \beta_{r_2}) > 0$. Then $I \cap W' = \bigcap_{a \in A} \left( \inv(a) \cap W' \right)$ and every set in the latter intersection is either the empty set, the singleton $\{ r_2 \}$ or a set of the form $\{ r_1, r_1 r_2 r_1, r_1 r_2 r_1 r_2 r_1, \ldots, r_1 r_2 \cdots r_2 r_1 \}$. 
Thus, the intersection $I \cap W'$ is also of one of these forms. 
If $\omega_c(\beta_{r_1}, \beta_{r_2}) = 0$ then a slight modification of this argument shows that $I\cap W'$ is either $\emptyset$, $\{ r_1 \}$ or $\{ r_2 \}$. 
Thus, from Lemma~\ref{PilkLemma}, we see that $I$ is the set of inversions of some element $w$ of $W$. 
Necessarily, $w=\Meet A$.  
Moreover, we see that $w$ is $c$-aligned and hence, by Theorem~\ref{sortable is aligned}, $w$ is $c$-sortable. 

We now prove the assertion about $\Join A$. Let $w = \Join A$, which exists since we assumed that~$A$ has an upper bound. For every $a \in A$, we have $\pidown^c(w) \geq \pidown^c(a)=a$, where the inequality is by Theorem~\ref{order preserving} and the equality is by Proposition~\ref{decreasing}. So $\pidown^c(w) \geq \Join A=w$. Then, by Proposition~\ref{decreasing}, $\pidown^c(w)=w$ so $w=\Join A$ is $c$-sortable. 
\end{proof}

\begin{remark}
The proof of Theorem~\ref{meets and joins} shows that, for any nonempty set $A$ of $c$-sortable elements, $\inv \left( \Meet_{a \in A} a \right) = \bigcap_{a \in A} \inv(a)$. This is not true for arbitrary nonempty subsets $A$ of $W$. 
\end{remark}

\begin{proof}[Proof of Theorem~\ref{semilattice hom}]
Suppose~$A$ is nonempty so that $\Meet A$ exists.
For each $a \in A$, we have $a \geq \Meet A$ so that $\pidown^c(a) \geq \pidown^c(\Meet A)$ by Theorem~\ref{order preserving}.
Thus $\Meet \pidown^c(A) \geq \pidown^c( \Meet A)$. 

However, $\Meet \pidown^c(A)\le\Meet A$ because $\pidown^c(a)\le a$ for each $a$ in~$A$.
Thus by Theorem~\ref{order preserving}, $\pidown^c\left(\Meet \pidown^c(A)\right)\le\pidown^c\left(\Meet A\right)$.
But Theorem~\ref{meets and joins} states that $\Meet \pidown^c(A)$ is $c$-sortable, so that $\pidown^c\left(\Meet \pidown^c(A)\right)=\Meet \pidown^c(A)$ and $\Meet \pidown^c(A) \leq \pidown^c \left( \Meet A \right)$.
Thus $\Meet \pidown^c(A)=\pidown^c( \Meet A)$.

Now, consider the assertion about joins.
If~$A$ has an upper bound $b$ then since the set of elements below $b$ is finite, $A$ is finite. 
Thus it is enough to prove, for any $x$ and $y$ in~$W$ such that $x\join y$ exists, that $\pidown^c(x)\join\pidown^c(y)=\pidown^c(x\join y)$. Let $s$ be initial in $c$. We first prove a special case of the result:

\textbf{Special Case:} $x=s$ and $y \not \geq s$.
In this case,~$s$ is a cover reflection of $s\join y$ by Lemma~\ref{cov w br s}.  
By Theorem~\ref{order preserving}, $\pidown^c(s\join y)$ is above~$s$ and by Proposition~\ref{decreasing}, $\pidown^c(s(s\join y))$ is not above~$s$.
Thus by Theorem~\ref{pidown fibers}, the hyperplane $H_s$ defines a wall of $\Cone_c(\pidown^c(s\join y))$ so $-\alpha_s\in A_c(\pidown^c(s\join y))$.
Theorem~\ref{lower walls} implies that $s$ is a cover reflection of $\pidown^c(s\join y)$.
Now Proposition~\ref{Cc initial} implies that every other cover reflection of $\pidown^c(s\join y)$ is in $W_{\br{s}}$, so that $\pidown^c(s\join y)=s\join\left(\left[\pidown^c(s\join y)\right]_{\br{s}}\right)$ by Lemma~\ref{s join w br s}.
By Propositions~\ref{pidown para} and~\ref{para hom} we have
\[\left[\pidown^c(s\join y)\right]_{\br{s}}=\pidown^{sc}\left((s\join y)_{\br{s}}\right)=\pidown^{sc}\left(s_{\br{s}}\join y_{\br{s}}\right)=\pidown^{sc}\left(1\join y_{\br{s}}\right)=\pidown^{sc}\left(y_{\br{s}}\right),\]
which is $\pidown^c(y)$ by Proposition~\ref{CSortRecursive}, so $\pidown^c(s\join y)=s\join\left(\left[\pidown^c(s\join y)\right]_{\br{s}}\right)=\pidown^c(s) \join \pidown^c(y)$. 

We now prove the general result by induction on rank and on the length of $x\join y$.
We break into several cases.

\textbf{Case 1:} $x\ge s$ and $y\ge s$. In this case, $x\join y\ge s$.
Thus $\pidown^c(x)=s\pidown^{scs}(sx)$, $\pidown^c(y)=s\pidown^{scs}(sy)$ and $\pidown^c(x\join y)=s\pidown^{scs}(s(x\join y))$.
Now $sx\join sy = s(x\join y)$ has length $\ell(x\join y)-1$, so by induction on $\ell(x\join y)$ we obtain $\pidown^{scs}(sx)\join\pidown^{scs}(sy)=\pidown^{scs}(sx\join sy)$.
Thus $\pidown^c(x)\join\pidown^c(y)$ is
\[(s\pidown^{scs}(sx))\join(s\pidown^{scs}(sy))=s\left(\pidown^{scs}(sx)\join\pidown^{scs}(sy)\right)=s\pidown^{scs}(sx\join sy)\]
which equals $s\pidown^{scs}(s(x\join y))=\pidown^c(x\join y)$.

\textbf{Case 2:} $x\not\ge s$ and $y\not\ge s$. Then $(x\join y)\not\ge s$ by Corollary~\ref{join below}.
We have $\pidown^c(x)=\pidown^{sc}(x_{\br{s}})$, $\pidown^c(y)=\pidown^{sc}(y_{\br{s}})$ and $\pidown^c(x\join y)=\pidown^{sc}((x\join y)_{\br{s}})$.
The latter equals $\pidown^{sc}(x_{\br{s}}\join y_{\br{s}})$ by Proposition~\ref{para hom}.
By induction on rank, $\pidown^{sc}(x_{\br{s}})\join\pidown^{sc}(y_{\br{s}})=\pidown^{sc}(x_{\br{s}}\join y_{\br{s}})$, so $\pidown^c(x)\join\pidown^c(y)=\pidown^c(x\join y)$.

\textbf{Case 3:} Exactly one of $x$ and $y$ is above $s$.
Without loss of generality, we take $x\ge s$ and $y\not\ge s$. Let $z=s\join y$, so that $x\join y=x\join z$ and $\pidown^c(x\join y)=\pidown^c(x\join z)$.
Since $sx\join sz = s(x\join z)$ has length $\ell(x\join z)-1=\ell(x\join y)-1$ we argue by induction on $\ell(x\join y)$ as in Case 1 to conclude that $\pidown^c(x\join z)=\pidown^c(x)\join\pidown^c(z)$.
Now $\pidown^c(z)=\pidown^c(s\join y)=\pidown^c(s)\join\pidown^c(y)$ by the Special Case.
Thus $\pidown^c(x\join y)=\pidown^c(x)\join\pidown^c(s)\join\pidown^c(y)$.
Since $\pidown^c(s)\le\pidown^c(x)$ by Theorem~\ref{order preserving}, we obtain $\pidown^c(x\join y)=\pidown^c(x)\join\pidown^c(y)$.
\end{proof}

\begin{proof}[Proof of Theorem~\ref{c to scs}]
Proposition~\ref{CSortRecursive} implies that the involution $v\mapsto sv$ is a bijection between \mbox{$c$-sortable} elements above~$s$ and $scs$-sortable elements not above~$s$.
Thus it remains to show that $v\mapsto s\join v$ is a bijection from \mbox{$c$-sortable} elements~$v$ with $v\not\ge s$ such that $s\join v$ exists to $scs$-sortable elements~$x$ above~$s$ and that the inverse map is $x\mapsto x_{\br{s}}$.

If $v$ is $c$-sortable with $v\not\ge s$ then by Proposition~\ref{CSortRecursive}, $v$ is an $sc$-sortable element of $W_{\br{s}}$.
In particular, $v$ is an $scs$-sortable element of~$W$ by Proposition~\ref{sort para easy}.
Thus Theorem~\ref{meets and joins} now implies that $s\join v$ is an $scs$-sortable element (necessarily above~$s$).  
By Proposition~\ref{para hom}, $(s\join v)_{\br{s}}=s_{\br{s}}\join v_{\br{s}}=1\join v = v$.

If $x$ is $scs$-sortable with $x\ge s$ then since~$s$ is final in $scs$, $x_{\br{s}}$ is $sc$-sortable by Proposition~\ref{sort para}.
In particular, $x_{\br{s}}$ is a \mbox{$c$-sortable} element not above~$s$. 
Furthermore, Proposition~\ref{Cc final} states that $x=s\join x_{\br{s}}$.
\end{proof}

\section{Canonical join representations}\label{join rep sec}

In this section we consider a lattice theoretic construction which has important consequences for sortable elements, particularly as they relate to noncrossing partitions.
Specifically, we consider a canonical way of representing an element~$w$ of the weak order as the join of an antichain~$A$ of join-irreducible elements.
If $A$ is the unique minimal (in a sense made precise below) antichain whose join is $w$, then $A$ is called the canonical join representation of $w$.
The notion of canonical join representations is a key component in the theory of free lattices (see~\cite{FreeLattices}).
We will prove the following theorem.

\begin{theorem}\label{W can join rep}
Every $w\in W$ has a canonical join representation~$A$.
Furthermore $\cov(w)$ is the disjoint union, over all $j\in A$, of $\cov(j)$.
\end{theorem}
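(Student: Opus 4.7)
The plan is to construct the canonical join representation explicitly as $A=\{j_t:t\in\cov(w)\}$, where for each cover reflection $t\in\cov(w)$, the element $j_t$ is defined to be the minimum of $U_t:=\{u\le w:t\in\inv(u)\}$ in the weak order. Equivalently, writing $w_t:=tw$ for the element covered by $w$ via $t$, we have $U_t=\{u\le w:u\not\le w_t\}$, since $u\le w_t$ amounts to $\inv(u)\subseteq\inv(w)\setminus\{t\}$.

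The principal technical step, and the main obstacle, is to show that $U_t$ has a minimum. Since the weak order is a complete meet-semilattice, the meet $m_t:=\bigwedge U_t$ exists; one must verify $t\in\inv(m_t)$, so that $m_t\in U_t$ itself. I would approach this by induction on $\ell(w)$: pick a simple right descent $s$ of $w$, so $ws\lessdot w$, and analyze how the cover reflection $t$ and the set $U_t$ interact with the reduction to $ws$; the inductive step should use Theorem~\ref{meets and joins}-style arguments to confirm that the inversion $t$ is preserved under the meet. Alternatively, Lemma~\ref{cov canon} asserts that $\cov(w)$ generates a finite parabolic subgroup of $W$, allowing one to localize the existence argument inside this finite parabolic and exploit the known meet-semidistributivity of the weak order on finite Coxeter groups.

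Once $j_t$ is shown to exist, the remaining assertions follow quickly. From the minimality of $j_t$ in $U_t$, any cover reflection $r$ of $j_t$ gives $rj_t<j_t$ and hence $rj_t\not\in U_t$, so $t\notin\inv(rj_t)=\inv(j_t)\setminus\{r\}$, forcing $r=t$. Hence $\cov(j_t)=\{t\}$ and $j_t$ is join-irreducible. For the identity $w=\bigvee_{t\in\cov(w)}j_t$: the join is $\le w$ since each $j_t\le w$, and if it were strictly below $w$ it would lie under some cover $rw\lessdot w$ with $r\in\cov(w)$, contradicting the fact that $j_r\le rw$ would force $r\in\inv(j_r)\subseteq\inv(rw)=\inv(w)\setminus\{r\}$.

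To identify $A$ as the canonical join representation, I argue as follows. Given any antichain $A'$ of join-irreducibles with $w=\bigvee A'$, for each $t\in\cov(w)$ some element $a(t)\in A'$ must satisfy $t\in\inv(a(t))$ (else $\bigvee A'\le tw<w$), and the minimality of $j_t$ in $U_t$ then yields $j_t\le a(t)$. Thus $A$ refines every competing decomposition of $w$ into join-irreducibles; a standard argument in meet-semidistributive finite lattices further shows that the elements of $A$ themselves form an antichain, so $A$ is \emph{the} canonical join representation. The final identity $\cov(w)=\bigsqcup_{j\in A}\cov(j)$ is then immediate from $\cov(j_t)=\{t\}$ and the distinctness of the elements of $\cov(w)$.
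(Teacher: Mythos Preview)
Your overall architecture matches the paper's proof: define $j_t$ as the minimum of $U_t=\{u\le w:u\not\le tw\}$, show each $j_t$ is join-irreducible with unique cover reflection $t$, show $\bigvee j_t=w$, and show $A=\{j_t\}$ refines every other joining antichain. The parts you do spell out (join-irreducibility of $j_t$, $\bigvee A=w$, and $A\lleq A'$) are essentially correct.

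The real gap is exactly the step you flag as the obstacle: proving that $U_t$ has a minimum. Neither of your proposed routes works as stated. Induction on $\ell(w)$ via a right descent is not developed enough to evaluate, and the localization idea via Lemma~\ref{cov canon} fails because $j_t$ has no reason to lie in the finite parabolic subgroup generated by $\cov(w)$; that subgroup controls the top of the interval, not where the minimum of $U_t$ sits. The paper's argument (Lemma~\ref{BelowWAboveT}) is different and clean: apply the anti-isomorphism $x\mapsto w^{-1}x$ from $[e,w]$ to $[e,w^{-1}]$ (Proposition~\ref{anti}), under which $U_t$ corresponds to $\{v\le w^{-1}:v\not\ge s\}$ with $s=w^{-1}tw\in S$. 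This set is closed under joins by Corollary~\ref{join below}, hence has a unique maximum, and pulling back gives a unique minimum of $U_t$.

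There is also a circularity in your antichain step. You invoke ``meet-semidistributivity of the weak order,'' but the paper points out that semidistributivity of intervals is a \emph{consequence} of this very theorem. In fact the antichain property follows directly from what you have: for $t\neq t'$, note $t'w\in U_t$, so $j_t\le t'w$, hence $t'\notin\inv(j_t)$; since $t'\in\inv(j_{t'})$ this rules out $j_{t'}\le j_t$, and symmetry gives the antichain. You should make that explicit rather than appeal to semidistributivity.
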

Note that since each $j\in A$ is join-irreducible, each set $\cov(j)$ is a singleton.
This theorem implies, by a well-known fact \cite[Theorem~2.24]{FreeLattices} from the theory of free lattices and by Lemma~\ref{anti}, that every interval in the weak order is a semi-distributive lattice in the sense of \cite[Section~I.3]{FreeLattices}.
A stronger result was proven, for~$W$ finite, in~\cite{boundedref}.  Cf.~\cite{Caspard,hyperplane}.

More importantly for our purposes, canonical join representations of sortable elements are well-behaved in the following sense.
\begin{prop}\label{csort canon}
If $v\in W$ is $c$-sortable then every element of its canonical join representation is $c$-sortable.
\end{prop}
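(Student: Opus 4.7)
The plan is to combine Theorem~\ref{semilattice hom} (which says $\pidown^c$ preserves joins) with the minimality property that defines canonical join representations. Let $v$ be a $c$-sortable element of~$W$ with canonical join representation $A$, and consider the set $B=\set{\pidown^c(j):j\in A}$ obtained by applying $\pidown^c$ componentwise. By Theorem~\ref{semilattice hom} together with Proposition~\ref{decreasing}, we compute $\Join B = \pidown^c(\Join A) = \pidown^c(v) = v$.

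The set $B$ need not be an antichain, so next I would let $B'$ denote the set of maximal elements of $B$, thinned if necessary to an irredundant antichain whose join is still $v$. Then $B'$ is a join representation of $v$. The defining property of the canonical join representation---that $A$ refines any other join representation of $v$---then produces, for each $j\in A$, some $b'\in B'$ with $j\le b'$. Every element of $B'$ has the form $\pidown^c(j'')$ for some $j''\in A$, and Proposition~\ref{decreasing} gives $\pidown^c(j'')\le j''$, so $j\le \pidown^c(j'')\le j''$. Since $A$ is an antichain and $j,j''\in A$, this forces $j=j''$, whence $j\le\pidown^c(j)\le j$. Therefore $\pidown^c(j)=j$, and $j$ is $c$-sortable by Proposition~\ref{decreasing}.

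The only point requiring care is the direction of the refinement order implicit in ``unique minimal antichain'': the canonical join representation must be the \emph{refinement-minimum} among join representations of $v$, so that every element of $A$ is bounded above by some element of any competing antichain. Beyond fixing that convention, the argument is a direct application of results already in hand, and no new induction or case analysis on length or rank is needed.
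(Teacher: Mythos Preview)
Your proof is correct and follows essentially the same approach as the paper: form $\pidown^c(A)$, use Theorem~\ref{semilattice hom} to see it still joins to $v$, pass to the antichain of its maximal elements, and then play the minimality of the canonical join representation against $\pidown^c(j)\le j$ to force $\pidown^c(j)=j$ for each $j\in A$. The paper phrases the final step as $A\lleq B$ and $B\lleq A$ hence $A=B=\pidown^c(A)$, whereas you unwind this directly via the antichain condition, but the argument is the same.
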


Call a reflection \newword{$c$-accessible} if it occurs in the inversion set of some $c$-sortable element of~$W$.
Proposition~\ref{csort canon} leads to the following theorem, which establishes a bijection between $c$-accessible reflections and $c$-sortable join-irreducible elements.
\begin{theorem}\label{exactly one ji}
Let $t$ be any $c$-accessible reflection in~$W$.
Then there exists a unique $c$-sortable join-irreducible~$j$ whose unique cover reflection is $t$.
Furthermore,~$j$ is the unique minimal element among $c$-sortable elements having $t$ as an inversion.
\end{theorem}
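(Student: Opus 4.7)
The plan is to construct $j$ explicitly as the meet (in the weak order) of the nonempty set
\[M_t := \{v \in W : v \text{ is } c\text{-sortable and } t \in \inv(v)\}.\]
By Theorem~\ref{meets and joins}, this meet is $c$-sortable, and by the remark immediately following the proof of that theorem, $\inv(\Meet M_t) = \bigcap_{v \in M_t} \inv(v)$, which still contains $t$. Hence $j := \Meet M_t$ lies in $M_t$ and is automatically the unique minimum of $M_t$, settling the final sentence of the theorem.

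Next I would show that $t$ is a cover reflection of $j$. Pick a $c$-sorting word $a_1 \cdots a_n$ for $j$ with reflection sequence $t_1,\dots,t_n$, and let $i$ be the unique index with $t_i = t$. The key observation is that any prefix of a $c$-sorting word is again a $c$-sorting word for a $c$-sortable element: the support-decreasing condition is inherited by prefixes, and lex-minimality transfers because any lex-earlier subword of $(s_1\cdots s_n)^\infty$ representing the prefix element could be extended by $a_{i+1}\cdots a_n$ to a lex-earlier subword for $j$. Thus if $i<n$, then $a_1\cdots a_i$ is the $c$-sorting word of a $c$-sortable element $j' < j$ with $t = t_i \in \inv(j')$, violating minimality of $j$ in $M_t$. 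So $i = n$, and a short cancellation computes $t_n j = a_1 \cdots a_{n-1}$, which has length $n-1$; hence $t = t_n \in \cov(j)$.

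At this point I would invoke the canonical-join-representation machinery. By Proposition~\ref{csort canon}, the canonical join representation $A$ of $j$ consists of $c$-sortable join-irreducibles, and by Theorem~\ref{W can join rep}, $\cov(j) = \bigsqcup_{a \in A} \cov(a)$. Since $t \in \cov(j)$, there is exactly one $a_0 \in A$ with $\cov(a_0) = \{t\}$; this $a_0$ is then a $c$-sortable join-irreducible with $t \in \inv(a_0)$, so $a_0 \in M_t$ and $a_0 \leq j$. Minimality of $j$ forces $a_0 = j$, and since $A$ is an antichain containing $j$, we conclude $A = \{j\}$. Thus $j$ itself is join-irreducible with $\cov(j) = \{t\}$.

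For uniqueness, suppose $j'$ is another $c$-sortable join-irreducible with unique cover reflection $t$. Then $j' \in M_t$, so $j \leq j'$; if the inequality were strict, $j$ would lie below some element covered by $j'$, which by join-irreducibility is $t j'$, yielding the contradiction $t \in \inv(j) \subseteq \inv(t j') = \inv(j') \setminus \{t\}$. The main obstacle I anticipate is the step establishing $t \in \cov(j)$: while the prefix argument is elementary, it relies on the fact that a prefix of a $c$-sorting word is again a $c$-sorting word, which is not formalized as a standalone lemma in the paper and must be carefully extracted from the definition in Section~\ref{Sortable Subsec}.
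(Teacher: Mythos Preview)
Your proof is correct and its core (the prefix argument showing $t\in\cov(j)$, followed by the canonical join representation to force join-irreducibility) coincides with the paper's Proposition~\ref{at least one ji}. There are two genuine differences worth noting.

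First, you begin by taking $j=\Meet M_t$ and invoking the remark after Theorem~\ref{meets and joins} to get $t\in\inv(j)$, thereby establishing directly that $M_t$ has a \emph{minimum}. The paper instead just picks an arbitrary \emph{minimal} element of $M_t$ and only later deduces uniqueness of the minimum as a byproduct of the bijection. Your route is tidier and front-loads the ``unique minimal'' clause of the theorem.

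Second, for uniqueness of the join-irreducible you give a short direct argument: any other $j'$ lies in $M_t$, hence $j\le j'$; if strict then $j\le tj'$, contradicting $t\in\inv(j)$. The paper instead proves uniqueness separately as Proposition~\ref{at most one ji} by induction on rank and length, using Lemma~\ref{cov para}. The paper's inductive proof is logically prior---it uses none of the heavy machinery (Theorems~\ref{meets and joins}, \ref{W can join rep}, Proposition~\ref{csort canon})---whereas your argument leans on having already constructed the minimum $j$. Both are valid; yours is shorter in context, while the paper's isolates a fact that holds for \emph{any} reflection $t$, not just $c$-accessible ones.

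Your worry about the prefix step is not a real obstacle: the paper uses exactly this fact without further comment in the proof of Proposition~\ref{at least one ji}, and your justification from the definition is adequate.
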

For examples illustrating Theorems~\ref{W can join rep} and~\ref{exactly one ji} and Proposition~\ref{csort canon}, see Section~\ref{aff sec}.
Theorems~\ref{W can join rep} and~\ref{exactly one ji} and Proposition~\ref{csort canon} also give greater insight into a map $\nc_c$ defined on $c$-sortable elements.
When~$W$ is finite, $\nc_c$ is a bijection from $c$-sortable elements to $c$-noncrossing partitions \cite[Theorem~6.1]{sortable}.
At the end of this section, we recall the definition of noncrossing partitions and of $\nc_c$ and explain how Theorem~\ref{W can join rep} and Proposition~\ref{csort canon} lead to the insight that, for finite~$W,$ noncrossing partitions encode the canonical join representations of sortable elements.
For infinite $W$, there is not yet a consensus on the right definition of noncrossing partitions, so we cannot generalize the result that $\nc_c$ is a bijection.
However, we show in Theorem~\ref{nc inj} that $\nc_c$, and several variants of $\nc_c$, are injective.

We now proceed to give details and proofs.
Recall from Section~\ref{CoxeterConventions} that a partially ordered set $P$ is \newword{finitary} if, for every $x\in P$, the set of elements less than or equal to $x$ is finite.
If $L$ is a finitary meet-semilattice then $L$ has a unique minimal element, denoted $\0$, and furthermore, by a standard argument, for every $x$, the interval $[\0,x]$ is a lattice.
Define a pre-order\footnote{A preorder is a relation that is reflexive and transitive but may not be antisymmetric.} ``$\lleq$'' on subsets of $L$ by setting $A\lleq B$ if and only if for every $a\in A$ there exists $b\in B$ with $a\le b$.
Notice that if $A\lleq B$ and $\Join B$ exists then $\Join A$ exists and $\Join A\le \Join B$ in $L$.

For $A\subseteq L$, we write $I(A)$ for the (lower) order ideal generated by~$A$. 
Another description of the relation $\lleq$ is that $A \lleq B$ if and only if $I(A) \subseteq I(B)$. 
Any finite order ideal is of the form $I(A)$ for precisely one anti-chain~$A$.
Thus $\lleq$ induces a partial order (rather than just a preorder) on the finite anti-chains of $L$. 

An antichain $A\subseteq L$ is called the \newword{canonical join representation} of $x$ if:
\begin{enumerate}
\item[(i) ] $x=\Join A$; and
\item[(ii)] If $B$ is any other antichain in $L$ with $\Join B=x$ then $A \lleq B$.
\end{enumerate}
The canonical join representation of $x$, if it exists, is the unique minimal antichain, with respect to $\lleq$, among antichains joining to $x$.
Since $L$ is finitary, the canonical join representation of $x$ is a finite set.

Recall also that an element $j\in L$ is called join-irreducible if any finite set $X$ with $j=\Join X$ has $j\in X$ or equivalently if $j$ covers exactly one element of $L$.
\begin{lemma}\label{can rep ji}
If~$A$ is the canonical join representation of some $x\in L$ then every element of~$A$ is join-irreducible.
\end{lemma}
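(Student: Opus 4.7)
The plan is to prove this by contradiction: suppose some $a \in A$ is not join-irreducible, and manipulate $A$ into a strictly smaller antichain (with respect to $\lleq$) whose join is still $x$, contradicting the minimality built into the definition of a canonical join representation.

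Concretely, I would proceed as follows. Suppose $a \in A$ is not join-irreducible. Then by definition there is a finite set $Y$ with $a = \Join Y$ and $a \notin Y$. By replacing $Y$ with its antichain of maximal elements (which has the same join and still does not contain $a$), I may assume $Y$ is an antichain. Every element of $Y$ then satisfies $y \le a$, and in fact $y < a$: equality would force $y = a \in Y$, and strict inequality upward would contradict $y \le \Join Y = a$. Note also $Y$ is nonempty: if $Y = \emptyset$ then $a = \hat 0$, but $\hat 0$ cannot belong to any antichain of size $\ge 1$ in an interval above it, and the canonical join representation of $\hat 0$ itself is $\emptyset$, not $\{\hat 0\}$, so this case does not arise.

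Now form $B' = (A \setminus \{a\}) \cup Y$ and let $B$ be the antichain of maximal elements of $B'$. Then $\Join B = \Join B' = \bigl(\Join(A \setminus \{a\})\bigr) \join \bigl(\Join Y\bigr) = \bigl(\Join(A \setminus \{a\})\bigr) \join a = \Join A = x$. Since $A$ is the canonical join representation of $x$, we have $A \lleq B$, so in particular there exists $b \in B$ with $a \le b$. Since $B \subseteq B' = (A \setminus \{a\}) \cup Y$, either $b \in A \setminus \{a\}$, which contradicts the fact that $A$ is an antichain, or $b \in Y$, which forces $a \le b < a$, also a contradiction. Therefore every $a \in A$ must be join-irreducible.

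The main (minor) obstacle is just being careful about edge cases: ruling out $Y = \emptyset$, and verifying that passing to maximal elements of $B'$ preserves the join — both of which are easy once noted. The substantive content of the argument is the replacement $a \rightsquigarrow Y$, which produces a witness to $A$ failing condition (ii) of the canonical join representation whenever $a$ fails to be join-irreducible.
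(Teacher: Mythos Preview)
Your proof is correct and follows essentially the same approach as the paper: replace a non-join-irreducible $a\in A$ by a set $Y$ joining to $a$ with $a\notin Y$, take maximal elements of $(A\setminus\{a\})\cup Y$ to obtain an antichain $B$ with $\Join B=x$, and observe that no element of $B$ lies above $a$, so $A\not\lleq B$. The paper's version is slightly terser (it does not single out the case $Y=\emptyset$, which in fact is handled uniformly by the same argument), but the substance is identical.
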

\begin{proof}
Suppose~$A$ is an antichain with $\Join A=x$.
If some $j\in A$ is not join-irreducible, then $j=\Join X$ for some $X\subseteq L$ with $j\not\in X$.
Since $L$ is finitary, $X$ is finite.
The set $A'$ of maximal elements of $X\cup (A\setminus \set{j})$ is a finite antichain joining to $x$.
But no element of $A'$ is above~$j$, so $A\notlleq A'$, and thus~$A$ is not the canonical join representation of $x$.
\end{proof}

\begin{remark}\label{free lattices remark}
Our development of canonical join representations is loosely modeled on the treatment in \cite[Section~II.1]{FreeLattices}.
One difference is that we use the symbol ``$\lleq$'' in place of ``$\ll$'' to emphasize the fact that $A\lleq A$ for any $A\subseteq L$. \end{remark}

The weak order is finitary, so the general notions outlined above apply.  
The proof of Theorem~\ref{W can join rep} begins with the following observation.

\begin{lemma} \label{BelowWAboveT}
Let $w \in W$ and let $t$ be a cover reflection of~$w$. 
Then there is some $j(w,t)$ in $W$ such that the set $\set{v\in W: v\le w \mbox{ and } v\not\le tw}$ is equal to the interval $[j(w,t), w]$. \end{lemma}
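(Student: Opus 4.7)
The plan is to define $j(w,t)$ as the unique minimum of $S := \{v \in W : v \le w,\ v \not\le tw\}$ and show $S = [j(w,t), w]$. First note that $S$ is up-closed in $[e,w]$: if $v \le v' \le w$ and $v' \le tw$, then $v \le tw$, contradicting $v \in S$. So once a minimum of $S$ is produced, the interval description is automatic.

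To locate this minimum, I will transport $S$ across the order anti-isomorphism $\phi : [e,w] \xrightarrow{\sim} [e,w^{-1}]$, $x \mapsto w^{-1}x$, of Proposition~\ref{anti}. The up-set $S$ maps to the down-set
\[
\phi(S) \;=\; \{\, x \le w^{-1} : x \not\ge w^{-1}(tw)\,\}.
\]
Since $t$ is a cover reflection of $w$, there is some $s \in S$ with $tw = ws$, whence $w^{-1}(tw) = s$ is a simple reflection. Thus $\phi(S) = \{x \le w^{-1} : s \not\le x\}$, the set of elements of $[e,w^{-1}]$ that do not lie above $s$.

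The key step is to produce a \emph{maximum} of $\phi(S)$. I will take $y := \bigvee \phi(S)$, the join computed in the finite lattice $[e,w^{-1}]$; this agrees with the join in $W$ because $w^{-1}$ is an upper bound for $\phi(S)$. Corollary~\ref{join below} asserts that if $x_1, x_2 \not\ge s$ and $x_1 \vee x_2$ exists, then $x_1 \vee x_2 \not\ge s$; iterating this over the finite set $\phi(S)$ yields $y \not\ge s$, so $y \in \phi(S)$ and is the (necessarily unique) maximum. Setting $j(w,t) := \phi^{-1}(y) = wy$ then produces the unique minimum of $S$, and combined with up-closedness this gives $S = [j(w,t), w]$.

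The only real cleverness here is recognizing that one should pass through $\phi$: a direct attempt to take $\bigwedge S$ in $W$ runs into the obstacle that the meet of elements not below $tw$ need not itself avoid $tw$. Proposition~\ref{anti} converts this meet-of-complement question into a join-of-complement question where $tw$ becomes the simple reflection $s$, and Corollary~\ref{join below} is precisely the statement that resolves that converted question.
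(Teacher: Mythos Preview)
Your proof is correct and follows essentially the same approach as the paper's: both transport the problem through the anti-isomorphism of Proposition~\ref{anti} to convert it into showing that $\{x\le w^{-1}: x\not\ge s\}$ has a unique maximum, then invoke Corollary~\ref{join below} (and finiteness) to see this set is closed under joins. One cosmetic point: you use the symbol $S$ both for your set $\{v\le w: v\not\le tw\}$ and for the set of simple generators in ``some $s\in S$''; renaming the former would avoid the clash.
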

\begin{proof}
Applying Lemma~\ref{anti}, the lemma is equivalent to the statement that the (lower) order ideal $\set{v\in W:v\le w^{-1}\mbox{ and }v\not\ge w^{-1}tw}$ has a unique maximal element.
But $tw\covered w$ so $tw=ws$ and $w^{-1}tw=s$ for some simple generator $s$. 
Now Corollary~\ref{join below} implies that $\set{v\in W:v\le w^{-1}\mbox{ and }v\not\ge w^{-1}tw}$ is closed under joins.
Since this set is finite, it has a unique maximal element.
\end{proof}

To prove Theorem~\ref{W can join rep}, we show that the set $\set{j(w,t):t\in\cov(w)}$ is the canonical join representation of~$w$.

\begin{proof}[Proof of Theorem~\ref{W can join rep}]
Let~$A$ stand for the set $\set{j(w,t):t\in\cov(w)}$.
First, we check that~$A$ is an antichain. 
Suppose $t,t'\in\cov(w)$ with $t\neq t'$.
Then $\inv(tw)=\inv(w)\setminus\set{t}$ and $\inv(t'w)=\inv(w)\setminus \{ t' \}$.
In particular, $t'w$ is an element of the set $\set{v\in W: v\le w \mbox{ and } v\not\le tw}$, so $t'w\ge j(w,t)$.
This implies that $j(w,t)$ is not in $\set{v\in W: v\le w \mbox{ and } v\not\le t'w}=[j(w,t'),w]$, and since $j(w,t)\le w$, we have $j(w,t)\not\ge j(w,t')$.

Next, we check that $w = \Join A$. 
Note first that~$w$ is an upper bound for~$A$, so $\Join A$ exists and is weakly below~$w$. 
Let $v$ be any element covered by~$w$, so that in particular $v=tw$ for some $t\in\cov(w)$.
Since $t$ is an inversion of $j(w,t)$ but $\inv(v)=\inv(w)\setminus\set{t}$, the element $v$ is not above $j(w,t)$.
Thus~$w$ is the minimal upper bound for~$A$.

Now suppose that $w = \Join B$ for some antichain $B$ in $L$.
We need to show that $A\lleq B$.
That is, we must show that for every $t\in\cov(w)$, there exists $b\in B$ with $j(w,t)\le b$. 
Fix one such $t$.
Since $tw < w$ and $\Join B=w$, we know that $tw$ is not an upper bound for $B$. 
Let $b \in B$ be such that $b \not \leq tw$. Then, since we know $b \leq w$, we have $j(w,t) \leq b$.
This proves that $A\lleq B$, thus completing the proof that~$A$ is the canonical join representation of~$w$.

In particular, Lemma~\ref{can rep ji} implies that each $j(w,t)$ is join-irreducible.
Let $t'$ be the unique cover reflection of $j(w,t)$.
Since $j(w,t)$ is the unique minimal element of $\set{v\in W: v\le w \mbox{ and } v\not\le tw}$, we must have $t'j(w,t)\le tw$.
But $\inv(tw)=\inv(w)\setminus\set{t}$ and $\inv(t'j(w,t))=\inv(j(w,t))\setminus\set{t'}$, so $t=t'$.
This establishes the second assertion of the theorem.
\end{proof}

Proposition~\ref{csort canon} now follows easily from Theorem~\ref{semilattice hom}.

\begin{proof}[Proof of Proposition~\ref{csort canon}]
Let $v$ be $c$-sortable and let~$A$ be its canonical join representation.
Consider the set $\pidown^c(A)=\set{\pidown^c(j):j\in A}$.
Theorem~\ref{semilattice hom} implies that 
\[\Join\left(\pidown^c(A)\right)=\pidown^c(\Join A)=\pidown^c(v)=v.\]
Let $B$ be the antichain of maximal elements of $\pidown^c(A)$.
In particular, $\Join B=v$.
Since~$A$ is the canonical join representation of $v$, we have $A\lleq B$.
But since each element of $B$ is $\pidown^c(j)$ for some $j\in A$, we have $B\lleq A$.
Thus $A=B$ and therefore $A=\pidown^c(A)$.
Thus $j=\pidown^c(j)$ for each $j\in A$.
\end{proof}

If $v$ is a $c$-sortable join-irreducible element, then its unique cover reflection is, by definition, a $c$-accessible reflection.
Theorem~\ref{exactly one ji} states in particular that this map from $c$-sortable join-irreducible elements to $c$-accessible reflections is a bijection.
We prove the theorem in two halves.
The first half is proved by a simple inductive argument using no deep results.

\begin{prop}\label{at most one ji}
For any reflection~$t$, there is at most one join-irreducible $c$-sortable element whose unique cover reflection is~$t$.
\end{prop}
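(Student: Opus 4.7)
The plan is to prove uniqueness by a short, non-inductive contradiction argument, using Theorem~\ref{meets and joins} as the main hammer. Suppose $j_1$ and $j_2$ are two $c$-sortable join-irreducibles both having unique cover reflection $t$, and I want to conclude $j_1=j_2$.

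The first step is a purely lattice-theoretic observation about any join-irreducible $j$ of the weak order. Since $j$ covers exactly one element of $W$, namely $tj$, and since $[e,j]$ is a finite lattice, $tj$ is the unique coatom of $[e,j]$. Thus every $v<j$ satisfies $v\le tj$, and combining this with $\inv(tj)=\inv(j)\setminus\set{t}$, we see that $t\notin\inv(v)$ whenever $v<j$.

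The second step applies this to $j_1$ and $j_2$. By Theorem~\ref{meets and joins}, the meet $j_1\meet j_2$ is $c$-sortable, and the remark immediately following the proof of that theorem records the identity $\inv(j_1\meet j_2)=\inv(j_1)\cap\inv(j_2)$, which therefore contains $t$. Assume for contradiction that $j_1\neq j_2$. Then at least one of $j_1,j_2$, say $j_1$ (by symmetry), is strictly greater than $j_1\meet j_2$. Applying the observation from the previous paragraph to $j_1$ gives $t\notin\inv(j_1\meet j_2)$, contradicting the identity above. Hence $j_1=j_2$.

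There is really no serious obstacle: the whole argument is a two-line deduction once one notices that Theorem~\ref{meets and joins} supplies exactly the inversion-set identity needed, and no induction on length or rank, and no case analysis on initial/final generators of $c$, is required. The one point worth emphasizing is that the identity $\inv(j_1\meet j_2)=\inv(j_1)\cap\inv(j_2)$ genuinely relies on $c$-sortability and fails for arbitrary pairs in $W$; it is precisely this failure that makes Proposition~\ref{at most one ji} meaningful for $c$-sortable join-irreducibles rather than being a trivial statement about all join-irreducibles of the weak order (as Example~\ref{easy sort}-type computations already show).
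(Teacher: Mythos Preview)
Your argument is correct. The key observation---that for a join-irreducible $j$ with unique cover reflection $t$, every $v<j$ lies below $tj$ and hence has $t\notin\inv(v)$---combines cleanly with the inversion-set identity $\inv(j_1\meet j_2)=\inv(j_1)\cap\inv(j_2)$ from the Remark following Theorem~\ref{meets and joins}, and the contradiction follows immediately.

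However, your route is genuinely different from the paper's. The paper proves Proposition~\ref{at most one ji} by the standard induction on rank and length, using only elementary facts (Lemma~\ref{cov para} and Proposition~\ref{CSortRecursive}); the paper even remarks that this half of Theorem~\ref{exactly one ji} ``uses no deep results.'' Your argument, by contrast, invokes Theorem~\ref{meets and joins}, whose proof rests on Theorem~\ref{sortable is aligned} and hence on the whole $\omega_c$ machinery of Sections~\ref{omega sec}--\ref{align sec}. What you gain is brevity and conceptual clarity: the uniqueness is seen to be a direct consequence of the fact that meets of $c$-sortable elements have inversion sets given by intersection, which is precisely what fails for general elements of $W$ and explains why the proposition is nontrivial. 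What the paper's approach buys is independence: the inductive proof could in principle be placed much earlier in the paper, and it keeps the logical dependencies minimal. Since by Section~\ref{join rep sec} the heavy machinery is already available anyway, your shorter argument is a perfectly good alternative.
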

\begin{proof}
Suppose $j_1$ and $j_2$ are join-irreducible $c$-sortable elements, each having~$t$ as a cover reflection.
We argue by induction on $\ell(j_1)$ and on the rank of~$W$ that $j_1=j_2$.
Let~$s$ be initial in~$c$.

If either join-irreducible (say $j_1$) is not above~$s$ then it is in $W_{\br{s}}$, so that in particular $t\in W_{\br{s}}$.
Therefore $j_2$ has the property that all of its cover reflections are in $W_{\br{s}}$ and thus $j_2$ is also in $W_{\br{s}}$ by Lemma~\ref{cov para}.
By induction on rank, $j_1=j_2$.

Otherwise both $j_1$ and $j_2$ are above~$s$.
If either (say $j_1$) equals~$s$, then $t=s$, and Lemma~\ref{cov para} implies that $j_2\in W_{\set{s}}$, so that $j_2=s$ as well.
If neither $j_1$ nor $j_2$ is~$s$ then $sj_1$ and $sj_2$ are join-irreducible $scs$-sortable elements having the same cover reflection.
By induction on length, $sj_1=sj_2$, so $j_1=j_2$.
\end{proof}

The second half of the proof of Theorem~\ref{exactly one ji} is implied by the following proposition, which uses, via Proposition~\ref{csort canon}, the machinery developed over the course of the paper.

\begin{prop}\label{at least one ji}
Let~$t$ be a $c$-accessible reflection and let~$v$ be minimal in the weak order among $c$-sortable elements having $t\in\inv(v)$.
Then~$v$ is join-irreducible and has~$t$ as its unique cover reflection.
\end{prop}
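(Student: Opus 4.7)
The strategy is to use a $c$-sorting word for $v$ to pin down $t$ as the very last reflection in the reflection sequence, and then to exploit the block structure of the sorting word to rule out any other cover reflection of $v$.

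First, I fix a $c$-sorting word $a_1\cdots a_k$ for $v$ with blocks $Z_1\supseteq Z_2\supseteq\cdots\supseteq Z_m$, and let $t_1,\ldots,t_k$ be the associated reflection sequence, with $t=t_i$. Every prefix $a_1\cdots a_i$ represents a $c$-sortable element: inheriting the block decomposition by truncating the last block involved to its initial segment preserves the chain of support inclusions. Hence $a_1\cdots a_i$ is $c$-sortable, lies weakly below $v$ in the weak order, and has $\{t_1,\ldots,t_i\}\ni t$ as its inversion set. The minimality of $v$ then forces $i=k$, so $t=t_k=va_kv^{-1}$, giving $tv=va_k$ with $a_k\in S$, hence $t\in\cov(v)$.

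Now suppose, for contradiction, that some other $t_j\in\cov(v)$ has $j<k$. The standard characterization of cover reflections in terms of a reduced word yields that $a_j$ commutes with each of $a_{j+1},\ldots,a_k$; and since $a_1\cdots a_k$ is reduced, $a_j$ cannot reappear among $a_{j+1},\ldots,a_k$ (otherwise it could be commuted up and cancelled, shortening the word). Writing $Z_p$ for the block containing $a_j$, this forces $a_j\notin Z_q$ for every $q>p$. The element $v'=t_jv$ then has reduced word $a_1\cdots\widehat{a_j}\cdots a_k$, which I equip with the inherited block structure $Z_1,\ldots,Z_{p-1},\,Z_p\setminus\{a_j\},\,Z_{p+1},\ldots,Z_m$. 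The support chain remains weakly decreasing: the one nontrivial inclusion $Z_p\setminus\{a_j\}\supseteq Z_{p+1}$ holds precisely because $a_j\notin Z_{p+1}$. Hence $v'$ is $c$-sortable, strictly below $v$, and $\inv(v')=\inv(v)\setminus\{t_j\}$ still contains $t=t_k$, contradicting the minimality of $v$. Therefore $\cov(v)=\{t\}$, so $v$ is join-irreducible with unique cover reflection $t$.

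The only real obstacle is verifying that the inherited block decomposition of $v'$ is a valid $c$-sortable decomposition; this becomes immediate once one observes that a letter producing a cover reflection commutes past, and hence cannot reappear in, any later block of the sorting word.
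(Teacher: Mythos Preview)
Your first step---showing that $t=t_k$ by observing that every prefix of a $c$-sorting word is again a $c$-sorting word---is correct and is exactly what the paper does.

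The second step, however, contains a genuine gap. The assertion that ``$a_j$ commutes with each of $a_{j+1},\ldots,a_k$'' whenever $t_j$ is a cover reflection of $v$ is \emph{not} a standard fact, and it is false in general, even for $c$-sorting words of $c$-sortable elements. The condition $t_j\in\cov(v)$ only says that $a_j(a_{j+1}\cdots a_k)=(a_{j+1}\cdots a_k)s$ for some $s\in S$, i.e.\ that $a_j$ is conjugated to a simple reflection by the tail; it does not say that $a_j$ commutes with each individual letter of the tail. A small counterexample: take $W$ of type $A_2$ with $c=s_1s_2$ and $v=s_1s_2s_1=w_0$, whose $c$-sorting word is $s_1s_2|s_1$. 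The reflection sequence is $t_1=s_1$, $t_2=s_1s_2s_1$, $t_3=s_2$, and both $t_1=s_1$ and $t_3=s_2$ are cover reflections of $v$. But $a_1=s_1$ does not commute with $a_2=s_2$, and indeed $s_1$ reappears in the second block. Deleting $a_1$ gives $s_2|s_1$ with supports $\{s_2\}\not\supseteq\{s_1\}$, so $t_1v=s_2s_1$ is not $c$-sortable. Of course $w_0$ is not minimal for $t=s_2$, so this does not contradict the proposition---but your argument at this point does not invoke minimality, so the logical step fails as written.

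The paper's proof closes this gap by appealing to the canonical join representation. If $v$ had a second cover reflection, then by Theorem~\ref{W can join rep} and Proposition~\ref{csort canon} the canonical join representation of $v$ would consist of at least two $c$-sortable join-irreducibles strictly below $v$, one of which has $t$ as its unique cover reflection (and hence has $t$ as an inversion), contradicting minimality. This is what replaces your direct block-deletion argument; it uses the heavier machinery of Section~\ref{join rep sec} precisely because there is no elementary reason the deleted word should remain $c$-sortable.
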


\begin{proof}
If $t_1,t_2,\ldots,t_k$ is the sequence of reflections arising from a $c$-sorting word $s_1s_2\cdots s_k$ for $v$ then in particular $t=t_k$.
(If not, then $t=t_i$ for some $i<k$, so that $s_1s_2\cdots s_i$ is a $c$-sortable element having~$t$ as an inversion, contradicting the minimality of $v$.)
Thus in particular,~$t$ is a cover reflection of $v$.
If $v$ has an additional cover reflection then Proposition~\ref{csort canon} and Theorem~\ref{W can join rep} imply that $v$ is the join of a set of two or more join-irreducible $c$-sortable elements strictly less than $v$, one of which has~$t$ as a cover reflection.
This contradicts minimality.
Thus $v$ is a $c$-sortable join-irreducible element whose unique cover reflection is $t_k=t$.
\end{proof}

We now describe noncrossing partitions and explain how they relate to the results of this section.
Let~$W$ be a finite Coxeter group.
An element $w\in W$ can be written (typically in many ways) as a word in the alphabet $T$.
We refer to words in the alphabet $T$ as \newword{$T$-words}, to avoid confusion with words in the alphabet~$S$, which are called simply ``words'' throughout the paper.
A \emph{reduced} $T$-word for~$w$ is a $T$-word for~$w$ which has minimal length among all $T$-words for~$w$.
Say $x\le_T y$ if~$x$ possesses a reduced $T$-word which is a prefix of some reduced $T$-word for~$y$.
The {\em noncrossing partition lattice} in~$W$ (with respect to the Coxeter element~$c$) is the interval $[1,c]_T$ in the partial order ``$\le_T$.''
The elements of this interval are called \newword{$c$-noncrossing partitions}.

Results of \cite{BWorth} show that a noncrossing partition is uniquely determined by its fixed space and that the fixed space $\Fix(x)\subseteq V^*$ of a noncrossing partition $x$ is the intersection $\bigcap_{i=1}^kH_{t_i}$, where $t_1\cdots t_k$ is any reduced $T$-word for $x$ and $H_t$ denotes the hyperplane fixed by $t$.
The subspace $\bigcap_{i=1}^kH_{t_i}$ is uniquely defined by the associated \newword{noncrossing parabolic subgroup}---the subgroup $\Stab\left(\bigcap_{i=1}^kH_{t_i}\right)$ fixing $\bigcap_{i=1}^kH_{t_i}$ pointwise.

In~\cite{sortable}, a map $\nc_c$ was defined on $c$-sortable elements and was shown \cite[Theorem~6.1]{sortable} to be a bijection from $c$-sortable elements to $c$-noncrossing partitions.
The definition of $\nc_c(v)$ is as follows: Let $t_1,t_2,\ldots,t_k$ be the reflection sequence for a $c$-sorting word for $v$. 
Since every cover reflection of $v$ is an inversion of $v$, the cover reflections of $v$ occur in the sequence $t_1,t_2,\ldots,t_k$.
 Let $u_1,u_2,\ldots,u_r$ be the cover reflections of $v$, ordered as a subsequence of $t_1,t_2,\ldots,t_k$. 
Then $\nc_c(v)=u_1 u_2 \cdots u_r$. 
The product $u_1 u_2 \cdots u_r$ is a reduced $T$-word for $\nc_c(v)$. 
By Lemma~\ref{cov canon}, the  reflections $u_1,u_2,\ldots,u_r$ are the canonical generators of the parabolic subgroup associated to $\nc_c(v)$.

Theorem~\ref{exactly one ji} allows us to identify (when~$W$ is finite) the set $T$ of reflections of~$W$ with the set of join-irreducible $c$-sortable elements of~$W$.
Under this identification, Theorem~\ref{W can join rep} and Proposition~\ref{csort canon} imply that, if one specifies noncrossing partitions by the canonical generators of the associated parabolic subgroup, the noncrossing partitions are exactly the canonical join representations of sortable elements.
Put another way, a set of reflections canonically generates a noncrossing parabolic subgroup if and only if the corresponding set of join-irreducible $c$-sortable elements canonically represents its join.

For general $W$, the map $\nc_c$ still makes sense, and our results easily imply that the map $\nc_c$ is injective.
In fact, all of the natural variants of $\nc_c$ are injective, as made precise in the following theorem.
\begin{theorem}\label{nc inj}
The following maps are injective:
\begin{enumerate}
\item[(i) ]The map $\nc_c$ from $c$-sortable elements of $W$ to $W$;
\item[(ii) ]The map $v\mapsto \Fix(\nc_c(v))$ from $c$-sortable elements of $W$ to subspaces of $V^*$;
\item[(iii) ]The map $v\mapsto \Stab(\Fix(\nc_c(v)))$ from $c$-sortable elements of $W$ to parabolic 
subgroups of $W$;
\item[(iv) ] The map $\cov$ from $c$-sortable elements of $W$ to sets of reflections in $W$.
\end{enumerate}
\end{theorem}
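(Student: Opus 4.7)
The plan is to prove (iv) directly using the machinery of canonical join representations, and then deduce (i), (ii), and (iii) as formal consequences via Lemma~\ref{cov canon}. The key observation is that all four maps factor through or are visibly refinements of the cover-reflection data, so once (iv) is established the others are essentially free.

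First I would prove (iv). Suppose $v_1$ and $v_2$ are $c$-sortable elements with $\cov(v_1)=\cov(v_2)$. Let $A_i$ be the canonical join representation of $v_i$; this exists by Theorem~\ref{W can join rep}, and the same theorem gives a bijection between $A_i$ and $\cov(v_i)$ sending each join-irreducible element $j\in A_i$ to its unique cover reflection. By Proposition~\ref{csort canon} each element of $A_i$ is itself $c$-sortable. Now Proposition~\ref{at most one ji} (equivalently the uniqueness half of Theorem~\ref{exactly one ji}) asserts that for every reflection $t$ there is at most one $c$-sortable join-irreducible whose unique cover reflection is $t$. Hence each element of $A_i$ is determined by its cover reflection, and since $\cov(v_1)=\cov(v_2)$ we obtain $A_1=A_2$, so $v_1=\Join A_1=\Join A_2=v_2$.

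To obtain (iii) from (iv), I would invoke Lemma~\ref{cov canon}: it identifies $\Stab(\Fix(\nc_c(v)))$ as the finite parabolic subgroup whose canonical generators are exactly $\cov(v)$. By Theorem~\ref{deodyer} a subgroup generated by reflections determines its canonical generators uniquely. Therefore $\Stab(\Fix(\nc_c(v_1)))=\Stab(\Fix(\nc_c(v_2)))$ forces $\cov(v_1)=\cov(v_2)$, and (iv) then gives $v_1=v_2$. The implications (iii)$\Rightarrow$(ii)$\Rightarrow$(i) are formal: if $\Fix(\nc_c(v_1))=\Fix(\nc_c(v_2))$ then applying $\Stab$ yields the hypothesis of (iii); and if $\nc_c(v_1)=\nc_c(v_2)$ as elements of $W$, then their fixed subspaces in $V^*$ agree, giving the hypothesis of (ii).

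There is no real obstacle here; the proof is essentially a bookkeeping exercise combining Theorem~\ref{W can join rep}, Proposition~\ref{csort canon}, Proposition~\ref{at most one ji}, and Lemma~\ref{cov canon}. The only small subtlety worth stating cleanly in the write-up is that ``same canonical generators'' is the same information as ``same parabolic subgroup'' (via Theorem~\ref{deodyer}), which is what lets (iii) and (iv) be compared, and that the three maps in (i)--(iii) form a natural chain of compositions so that injectivity of the last implies injectivity of the earlier ones.
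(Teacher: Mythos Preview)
Your proof of (iv) is correct and matches the paper's argument exactly.

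The gap is in your bridge from (iv) to (iii). Lemma~\ref{cov canon} concerns the stabilizer of $U=\bigcap_{t\in\cov(v)}H_t$, not the stabilizer of $\Fix(\nc_c(v))$. You have silently identified these two subspaces, but that identification is precisely the nontrivial content. Recall that $\nc_c(v)=u_1u_2\cdots u_r$ is a \emph{product} of the cover reflections. The fixed space of a product of reflections always contains the intersection of their fixed hyperplanes, but in general it can be strictly larger; you need to show equality here. The paper's proof supplies exactly this missing step: by Lemma~\ref{cov canon} the cover reflections are the canonical generators of a finite parabolic $wW_Jw^{-1}$, so after conjugating, $u_1\cdots u_r$ becomes a Coxeter element of the finite group $W_J$, and one invokes the classical fact that a Coxeter element of a finite Coxeter group fixes no nonzero vector in its natural reflection representation. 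This forces $\Fix(\nc_c(v))=U$, after which your application of Lemma~\ref{cov canon} and Theorem~\ref{deodyer} goes through. Once you insert this argument, your chain of implications (i)$\Rightarrow$(ii)$\Rightarrow$(iii)$\Rightarrow$(iv) is exactly the paper's.
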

\begin{proof}
Suppose $v$ and $v'$ are $c$-sortable elements with $\cov(v)=\cov(v')$.
By Theorem~\ref{W can join rep} and Proposition~\ref{csort canon}, the canonical join representation of $v$ consists of $c$-sortable join-irreducible elements $j$, such that the set of cover reflections occurring as cover reflections of the $j$'s is $\cov(v)$.
The same is true of the canonical join representation of $v'$, so by Theorem~\ref{exactly one ji}, these two join representations coincide, and thus $v=v'$.
This is (iv).

Obviously, $\nc_c(v)$ determines $\Fix(\nc_c(v))$ which determines  $\Stab(\Fix(\nc_c(v)))$. 
We conclude the proof by showing that $\Stab(\Fix(\nc_c(v)))$ determines $\cov(v)$.
Thus, if any of the maps in (i), (ii) or (iii) takes $v$ and $v'$ to the same object, we have $\cov(v)=\cov(v')$ and thus $v=v'$.

Specifically, we will show that for any $w\in W$ and any ordering $t_1,\ldots,t_k$ of $\cov(w)$, the stabilizer of the fixed set of $t_1\cdots t_k$ has canonical generators $\cov(w)$.
Thus, in particular, since every parabolic subgroup has a uniquely defined set of canonical generators, $\Stab(\Fix(\nc_c(v)))$ determines $\cov(v)$.

Let $U=\bigcap_{t\in\cov(w)}H_t$.
Lemma~\ref{cov canon} states that $W' :=\Stab(U)$ is a finite parabolic subgroup with canonical generators $\cov(w)$.
For an ordering $t_1,\ldots,t_k$ of $\cov(w)$, for each $i\in[k]$ we have $t_i=w s_i w^{-1}$ for some $s_i \in S$. 
Let $J=\{ s_1, s_2, \ldots s_k \}$. 
Then $W'=w W_J w^{-1}$, so $W_J$ is finite.
It is known (see \cite[Theorem~V.6.1]{Bourbaki} or \cite[Lemma~3.16]{Humphreys}) that for a finite Coxeter group $W'$ with a reflection representation $V'$ of dimension equal to the rank of $W'$, a Coxeter element fixes no point of $(V')^*$ except the origin.
Since $s_1\cdots s_k$ is a Coxeter element of $W_J$, we have $\Fix(s_1\cdots s_k)=(V_J)^\perp$, so $\Fix(t_1\cdots t_k)=w\Fix(s_1\cdots s_k)=w[(V_J)^\perp]=U$. 
\end{proof}

\begin{remark}
We cannot at present generalize \cite[Theorem~6.1]{sortable} to show that $\nc_c$ is a bijection from $c$-sortable elements of $W$ to $c$-noncrossing partitions, because it is not yet clear what is the right definition of noncrossing partitions in an infinite Coxeter group.
(The ``right'' definition would be the definition that is most useful in studying Artin groups of infinite type.)
The most obvious generalization is to lift the definition verbatim from the finite case, so that the $c$-noncrossing partitions are the elements of $[1,c]_T$.
When $W$ is infinite, the map $\nc_c$ fails to be surjective onto $[1,c]_T$.
\end{remark}

\section{The Cambrian fan}\label{fan sec}
In this section, we study the collection $\F_c$ consisting of all $c$-Cambrian cones together with all of their faces.
We conjecture that this collection of cones is a fan in the usual sense (see Section~\ref{Root subsec}) and prove the following weaker statement, whose terminology will be explained below. 
\begin{theorem}\label{fan in Tits}
The collection $\F_c$ of cones is a fan in the interior of $\Tits(W)$.
The decomposition, induced by $\F_c$, of $\Tits(W)$ into convex regions is a coarsening of the fan $\F(W)=\set{wD:w\in W}$.
\end{theorem}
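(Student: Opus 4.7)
The core of the proof is the identity
\[
\Cone_c(v) \cap \Tits(W) \;=\; \bigcup_{w \,:\, \pidown^c(w) = v} wD,
\]
which I would establish first for each $c$-sortable $v$. The direction $\supseteq$ is exactly Theorem~\ref{pidown fibers}. For the reverse direction, Propositions~\ref{walls} and~\ref{lower walls} say that every facet hyperplane of $\Cone_c(v)$ is a reflection hyperplane $H_t$ of $W$; since no reflection hyperplane meets the interior of any Coxeter chamber $wD$, a chamber with interior points in $\Cone_c(v)$ must be wholly contained in $\Cone_c(v)$, and so satisfies $\pidown^c(w)=v$ by Theorem~\ref{pidown fibers}. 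An arbitrary point of $\Cone_c(v) \cap \Int(\Tits(W))$ is the limit of regular points in the interior of $\Cone_c(v)$, each lying in such a chamber; closedness of chambers yields the reverse inclusion in $\Int(\Tits(W))$, and taking closures extends the identity to all of $\Tits(W)$. The coarsening statement is then immediate: each $\Cone_c(v) \cap \Tits(W)$ is convex and is a union of closed Coxeter chambers, and the interiors of distinct such sets are disjoint inside $\Tits(W)$, since a common interior regular point would lie in a chamber $wD$ contained in both cones, forcing $v=\pidown^c(w)=v'$.

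For the fan axioms inside $\Int(\Tits(W))$, closure under taking faces is built into the definition of $\F_c$; the substantive task is to show that the intersection of any two Cambrian cones, restricted to $\Int(\Tits(W))$, is a common face of both. I would argue by induction on the rank of $W$, exploiting the recursion $C_c(v)=C_{sc}(v)\cup\{\alpha_s\}$ when $v\not\ge s$ and $C_c(v)=s\,C_{scs}(sv)$ when $v\ge s$, for $s$ initial in $c$. Cones with $v\not\ge s$ lie in the half-space $\{x^*:\br{x^*,\alpha_s}\ge 0\}$ and, via the projection $P_{\br{s}}$, are identified with cones of the $(W_{\br{s}},sc)$-Cambrian family extended perpendicularly to $\alpha_s$; the inductive hypothesis on rank gives the fan property for this subfamily inside $\Int(\Tits(W))$ restricted to this half-space. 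Cones with $v\ge s$ lie in the opposite half-space and equal $s\cdot\Cone_{scs}(sv)$ for $scs$-sortable $sv$ with $sv\not\ge s$; the previous case, applied to the Coxeter element $scs$, handles them.

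The main obstacle is the compatibility of the two subfamilies across $H_s\cap\Int(\Tits(W))$. One needs to show that when a cone $\Cone_c(v)$ with $v\not\ge s$ has a facet along $H_s$ meeting $\Int(\Tits(W))$, the join $s\vee v$ exists and is $c$-sortable with $s\vee v\ge s$, and moreover $\Cone_c(s\vee v)\cap H_s=\Cone_c(v)\cap H_s$. The bijection of Theorem~\ref{c to scs} supplies the pairing $v\leftrightarrow s\vee v$, while Proposition~\ref{Cc initial} yields the precise identifications $\cov(s\vee v)=\{s\}\cup\cov(v)$ and $\ufs_c(s\vee v)=\{sts:t\in\ufs_{sc}(v)\}$; since $s$ fixes $H_s$ pointwise, the walls of $\Cone_c(s\vee v)$ other than $H_s$ cut out the same face of $H_s$ as the walls of $\Cone_c(v)$ other than $H_s$. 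With this gluing established, the fan axioms in $\Int(\Tits(W))$ reduce to the two inductive statements on either side of $H_s$.
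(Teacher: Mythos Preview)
Your first paragraph --- establishing $\Cone_c(v)\cap\Tits(W)=\bigcup_{\pidown^c(w)=v}wD$ and deducing the coarsening statement --- is correct and matches how the paper views the situation (this is Theorem~\ref{pidown fibers} plus the observation that every facet hyperplane of $\Cone_c(v)$ is a reflecting hyperplane).

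Your inductive argument for the fan property, however, has a genuine gap. You handle $c$-sortable $v$ with $v\not\ge s$ by passing to $W_{\br{s}}$ via the recursion $C_c(v)=C_{sc}(v)\cup\{\alpha_s\}$, which indeed reduces rank. For $v\ge s$ you write $\Cone_c(v)=s\cdot\Cone_{scs}(sv)$ with $sv$ an $scs$-sortable element satisfying $sv\not\ge s$, and then invoke ``the previous case, applied to the Coxeter element $scs$.'' But $s$ is \emph{final} in $scs$, not initial, so the recursion you used in the previous case does not apply: an $scs$-sortable element $v'$ with $v'\not\ge s$ need not lie in $W_{\br{s}}$. Concretely, in $A_2$ with $c=s_1s_2$ and $s=s_1$, the element $v'=s_2s_1$ is $scs$-sortable with $v'\not\ge s_1$, yet $v'\notin W_{\{s_2\}}$. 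Thus the case $v\ge s$ does not reduce to lower rank; it reduces to a statement about $\F_{scs}$ at the \emph{same} rank, and the induction does not close. Inducting simultaneously over all Coxeter elements at fixed rank does not help, for the same reason.

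The paper avoids this by arguing locally rather than recursively. A general lemma (Lemma~\ref{local fan}) reduces the fan property in an open set $U$ to checking that $\{\Lin_x(F):x\in F\in\F_c\}$ is a fan for each $x\in U$. For $x\in\Int(\Tits(W))$ the stabilizer is a \emph{finite} parabolic subgroup $W'$ (Lemma~\ref{cov canon}); the Coxeter chambers through $x$ form a weak-order interval isomorphic to $W'$; Theorem~\ref{semilattice hom} makes the fibers of $\pidown^c$ a lattice congruence on that interval; and the general result Theorem~\ref{finite fan} (from~\cite{con_app}) says any lattice congruence on a finite Coxeter group gives a fan. This pointwise reduction to the finite case is what closes the argument.
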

The first assertion of the theorem is made precise below.
Informally, the assertion is that cones in $\F_c$ intersect nicely inside the Tits cone.

After proving Theorem~\ref{fan in Tits}, we gather some results (Propositions~\ref{fan para}, \ref{fan s} and~\ref{recursive fan}) on the structure of the fan as it relates to standard parabolic subgroups and reflection functors (i.e.\ replacing $c$ by $scs$ for $s$ initial).
These results all follow from results of Sections~\ref{skip sec} and~\ref{pidown sec} concerning $C_c$ and $\pidown^c$.
To conclude the section, we show that locally in the interior of $\Tits(W)$, the fan $\F_c$ looks like a Cambrian fan for a finite standard parabolic subgroup of $W$.
For a very precise statement, see Theorem~\ref{stars}.

%We now make the statement of Theorem~\ref{fan in Tits} precise and prove the theorem.
Let $\F_c$ be the collection of polyhedral cones that occur as the faces of the cones $\Cone_c(v)$ as $v$ ranges over all of the $c$-sortable elements of $W$.
Theorem~\ref{fan in Tits} will imply in particular that when $W$ is finite, $\F_c$ is a fan in $V^*$.
We conjecture that $\F_c$ is a fan even when $W$ is infinite.
In Section~\ref{rank3sec}, we draw several examples of $\F_c$.
In all of these examples (and many others), we have checked that $\F_c$ is in fact a fan. 
For the moment we optimistically refer to $\F_c$ as the $c$-\newword{Cambrian fan}.

Showing that $\F_c$ is a fan in $V^*$ is difficult because we have no control over the intersections of cones outside $\Tits(W)$.
In particular, we do not even know that the interiors of $\Cone_c(v)$ and $\Cone_c(v')$ are disjoint for $v \neq v'$.
However, in light of Theorem~\ref{pidown fibers}, we have great control of the intersections of cones inside $\Tits(W)$.
The following definition is useful in describing the situation.

Let $U$ be an open convex set in $\RR^n$. 
We say that a collection $\F$ of cones in $\RR^n$ is a \newword{fan in $U$} if 
\begin{enumerate}
\item[(i) ] for every $F\in\F$, the faces of $F$ are also in $\F$ and 
\item[(ii$'$) ] for every subset $\X\subset\F$ and every $F\in\X$, there is a face $F'$ of $F$ such that $\bigcap_{G\in\X}G \cap U= F' \cap U$.
\end{enumerate}
When $U=\RR^n$, condition (ii$'$) reduces to condition (ii) in the definition of a fan, given in Section~\ref{Root subsec}.

The proof of Theorem~\ref{fan in Tits} proceeds by first showing that the fan property only needs to be checked locally (Lemma~\ref{local fan}, below) and then applying results of~\cite{con_app}.
For any cone $F\in\RR^n$ and any $x\in F$, the \newword{linearization $\Lin_x(F)$ of $F$ at $x$} is the set of vectors $x'\in\RR^n$ such that $x+\epsilon x'$ is in $F$ for any sufficiently small nonnegative~$\epsilon$.
If $\F$ is a fan in $U$ and $x$ is any point in the union of the cones of $\F$ then the \newword{star} of $x$ in $\F$ is the fan $\Star_x(\F)=\set{\Lin_x(F): x \in F\in\F}$.  
For any cone $F$ of $\F$ such that $U$ intersects the relative interior of $F$, the \newword{star} of $F$ in $\F$ is the fan $\Star_F(\F)=\Star_x(\F)$ for any $x$ in the intersection of $U$ with the relative interior of $F$.
This is well-defined because $\F$ is a fan in $U$:  Condition (ii$'$) implies that any cone in $\F$ that intersects $U\cap\mathrm{relint}(F)$ must contain $U\cap F$.

\begin{lemma}\label{local fan}
Let $\F$ be a set of pointed polyhedral cones in $\RR^n$ and let $U$ be an open convex set in $\RR^n$. 
Suppose that $\F$ satisfies condition (i) of the definition of a fan.
Suppose further that, for every $x \in U$, the set of cones $\Lin_x(\F)=\set{\Lin_x(F):F\in\F}$ is a fan. 
Then $\F$ is a fan in $U$. 
\end{lemma}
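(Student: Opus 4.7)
The plan is to check condition (ii$'$) directly. Fix $\X\subseteq\F$ and $F\in\X$, and put $I:=\bigcap_{G\in\X}G$; I need to produce a face $F'$ of $F$ with $I\cap U=F'\cap U$. If $I\cap U=\emptyset$, I would take $F'$ to be the minimal face of $F$ whose intersection with $U$ is empty, whose existence follows from the pointedness and polyhedrality of the cones in $\F$. Otherwise I choose a point $x\in I\cap U$ that lies in the relative interior of $I$; such an $x$ exists because $U$ is open and meets $I$.

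At such a point $x$, a direct computation gives $\Lin_x(I)=\bigcap_{G\in\X}\Lin_x(G)$, since every $G\in\X$ contains $x$. The local fan hypothesis asserts that $\Lin_x(\F)$ is a fan, so this intersection is a face of $\Lin_x(F)$. Under the standard correspondence $F''\mapsto\Lin_x(F'')$ between faces of $F$ containing $x$ and faces of $\Lin_x(F)$, this face pulls back to a face $F'$ of $F$ with $\Lin_x(F')=\Lin_x(I)$; moreover $x$ lies in the relative interior of $F'$.

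To finish I must upgrade this local statement to $F'\cap U=I\cap U$. The equality of tangent cones at $x$ gives an open neighborhood $V\subseteq U$ of $x$ on which $I$ and $F'$ coincide. I would then propagate by a connectedness argument: the set of points $y\in U$ at which $I$ and $F'$ agree on some open neighborhood of $y$ is open by definition, and one shows it is also closed in $U$ by invoking the local fan hypothesis at an arbitrary boundary point together with the polyhedrality of both sets. Since $U$ is convex and hence connected, this would force $F'\cap U=I\cap U$.

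The main obstacle will be the closedness half of the propagation step. At an arbitrary $y\in U$, the local fan at $y$ produces some face of $F$ matching $\bigcap_{G\in\X}\Lin_y(G)$, but a priori nothing identifies this face with $\Lin_y(F')$. The crucial lemma to establish is that two polyhedral cones contained in $F$, sharing a common relative-interior point $x$ and having the same tangent cone there, must be equal; this will force the face identified locally at $y$ to coincide with $F'$. This is essentially the classical fact that a polyhedral cone is determined by its tangent cone at any relative-interior point, and I expect this together with the local fan hypothesis to complete the argument.
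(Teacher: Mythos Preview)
Your setup and the identification of the candidate face $F'$ are sound, and in fact your $F'$ coincides with the one the paper uses (the paper phrases it as the unique maximal face of $F$ whose relative interior meets $I\cap U$). The one-direction containment $I\cap U\subseteq F'$ also falls out of your description: since $x\in\relint(I)$ gives $\Lin_x(I)=\Span(I)$, and a face of the polyhedral cone $\Lin_x(F)$ that is a linear subspace must be its minimal face, your $F'$ is exactly the face of $F$ whose relative interior contains $x$; then $F'=F\cap\Span(F')=F\cap\Span(I)\supseteq I$.

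The genuine gap is the other containment $F'\cap U\subseteq I$, which you try to obtain by a connectedness/propagation argument. That argument does not close. Concretely, your proposed ``crucial lemma''---that two polyhedral cones contained in $F$, sharing a relative-interior point and having the same tangent cone there, must be equal---is false: in $\RR^2$ take $F$ the first quadrant and $C_1=\{0\le y\le x\}$, $C_2=\{0\le y\le 2x\}$; both are pointed, lie in $F$, share the interior point $(2,1)$, and have tangent cone $\RR^2$ there, yet $C_1\neq C_2$. Even restricting to faces of $F$ does not rescue the propagation step, because at a boundary point $y$ you only learn that $\bigcap_{G\in\X}\Lin_y(G)$ equals $\Lin_y(F_y)$ for \emph{some} face $F_y\ni y$, and nothing in your argument identifies $F_y$ with the globally chosen $F'$ (indeed $y$ need not lie in $\relint(F')$).

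The paper avoids all of this with a single, direct contradiction. Having established $I\cap U\subseteq F'$, it assumes there is $x_1\in (F'\cap U)\setminus I$, takes $x_0\in\relint(F')\cap I\cap U$, and lets $x$ be the last point of the segment $\overline{x_0x_1}$ that lies in $I$. Then $x\in\relint(F')$, so every face of $\Lin_x(F)$ contains the direction $x_1-x$; but $\bigcap_{G\in\X}\Lin_x(G)$ does not contain $x_1-x$, so it cannot be a face of $\Lin_x(F)$, contradicting the local fan hypothesis at $x$. This replaces your open--closed propagation with a single well-chosen point, and no auxiliary lemma about equality of cones is needed.
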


\begin{proof}
We check condition (ii$'$) of the definition.
Fix a subset $\X\subseteq\F$ and a cone $F\in\X$ and set $I=\bigcap_{G\in\X} G$. 
We need to find a face $F'$ of $F$ such that $I \cap U= F' \cap U$.
Notice that, since $F \in \X$, we know that $I \cap U \subseteq F$.

If $I \cap U$ is empty, then $U$ must not contain zero. 
In that case, we can take $F'$ to be the face $\{ 0 \}$.
So, from now on, we assume that $I \cap U$ is nonempty.

Suppose that $\sigma$ and $\tau$ are two faces of $F$ whose relative interiors meet $I \cap U$.
Let $s$ be a point in the intersection of $I\cap U$ with the relative interior of $\sigma$ and let $t$ be a point in the intersection of $I\cap U$ with the relative interior of $\tau$.
%sinceTAMS:  Changed "interior" to "relative interior" in the following sentence.  (Not a crucial change.)
Then, since $I \cap U$ is convex, $I \cap U$ contains the relative interior of the line segment $st$ and thus meets the relative interior of the minimal face of $F$ containing $\sigma$ and $\tau$.
Thus, there is a unique maximal face $F'$ of $F$, among those faces of $F$ whose relative interior meets $I \cap U$.
Moreover, $I \cap U \subseteq F'$.
Let $x_0$ be a point in the intersection of $I\cap U$ with the relative interior of $F'$.

Suppose for the sake of contradiction that $F' \cap U \neq I \cap U$. 
Let $x_1$ be a point in $F' \cap U$ but not $I \cap U$. 
Consider the line segment $\overline{x_0 x_1}$. 
By convexity, it lies in $F' \cap U$, but the endpoint~$x_0$ is in $I$ while the other endpoint $x_1$ is not. 
%sinceTAMS:  Changed "interior" to "relative interior" in the following sentence.  (Not a crucial change.)
Since $I$ is closed and convex,  $\overline{x_0 x_1} \cap I=\overline{x_0 x}$ for some point $x$ in the relative interior of the line segment $\overline{x_0 x_1}$. 

Now the cone $\bigcap_{G\in\X}\Lin_x(G)=\Lin_x(I)$ does not contain the vector $x_1-x$. 
However, $x$ is in the relative interior of $F'$, so for any face $F''$ of $F$ containing $x$, we have $F' \subseteq F''$ and, thus,
$x_1-x \in\Lin_x(F')\subseteq\Lin_x(F'')$. 
So, $\bigcap_{G\in\X}\Lin_x(G)$ is not equal to any face of $\Lin_x(F)$ and thus $\set{\Lin_x(F):F\in\F}$ is not a fan.
This contradiction shows that $F'\cap U=I\cap U$.
 \end{proof}
 
Each cone $\Cone_c(v)$ of $\F_c$ is defined by the $n$ linear inequalities specified by the roots $C_c(v)$.
Since the roots $C_c(v)$ are linearly independent, the cones $\Cone_c(v)$ are pointed.
To check the local condition in Lemma~\ref{local fan} in the case where $\F$ is $\F_c$ and $U$ is the interior of $\Tits(W)$, we apply a result of~\cite{con_app}.
Consider for the moment the case where $W$ is finite, and temporarily relax the requirement that the reflection representation of $W$ have dimension equal to the rank of $W$.
The Coxeter fan $\F(W)$ has a finite number of cones and so the weak order on $W$ is a lattice.
Recall that the elements of~$W$ are in bijection with the maximal cones of $\F(W)$. 
For any lattice congruence~$\Theta$ with congruence classes denoted $[w]_\Theta$, consider the collection of cones 
\[\Cone_\Theta(w)=\bigcup_{x\in[w]_\Theta}xD.\]
\emph{A priori} these cones may not be convex, but in fact they are convex as explained in \cite[Section~5]{con_app}.
Let $\F_\Theta$ be the collection of all cones $\Cone_\Theta(w)$ for $w\in W$ together with their faces.
The following theorem is a restatement of a very small part of \cite[Theorem~1.1]{con_app}.

\begin{theorem}\label{finite fan}
Let $W$ be finite and let $\Theta$ be any lattice congruence of the weak order on $W$.
Then $\F_\Theta$ is a complete fan which coarsens $\F(W)$.
\end{theorem}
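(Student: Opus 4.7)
The plan is to reduce the assertion to the convexity of each class cone, which is the substantive content established in \cite{con_app}, and then to verify the fan axioms directly from that convexity. First I would invoke the main geometric input from \cite[Section~5]{con_app}: for any lattice congruence $\Theta$ of the weak order on a finite Coxeter group $W$, each union $\Cone_\Theta(w) = \bigcup_{x \in [w]_\Theta} xD$ is in fact a convex polyhedral cone. This is the only nontrivial step and is where the lattice-congruence hypothesis does real work; a union of chambers is generically not convex, but the property that $\Theta$ respects meets and joins forces the congruence classes to be intervals in weak order, and a careful analysis of which reflecting hyperplanes can enter the interior of $\Cone_\Theta(w)$ (they are precisely the ones associated to cover relations internal to $[w]_\Theta$) gives convexity.

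Second, I would identify the facets of each $\Cone_\Theta(w)$ with cover relations crossing the congruence: a wall of $\Cone_\Theta(w)$ lies in $H_t$ exactly when there is a cover $u \covered v$ in $W$ with $u \in [w]_\Theta$, $v \notin [w]_\Theta$ (or vice versa), where $t$ is the reflection labelling that cover. This lets us read off the normal vectors to each class cone and verify that two adjacent class cones share a complete facet: if the facets of $\Cone_\Theta(w)$ and $\Cone_\Theta(w')$ lie in a common hyperplane $H_t$, the cover reflections along that wall pair up symmetrically across the congruence. Closure under faces is built into the definition of $\F_\Theta$.

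Third, completeness and the fan axioms then follow readily. Completeness holds because $W$ is finite so $\Tits(W) = V^*$, and the class cones partition the chambers of $\F(W)$ and thus exhaust $V^*$. To check condition (ii) of the fan definition for two cones $\Cone_\Theta(w)$ and $\Cone_\Theta(w')$, one observes that their intersection is a union of faces of $\F(W)$; by the facet description above, this union is in fact a single common face of both class cones, since the interior walls cancel and only the boundary walls shared by both classes survive. The coarsening property is immediate from the construction.

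The main obstacle is the convexity of $\Cone_\Theta(w)$, which is precisely what is proved in \cite[Section~5]{con_app}; once convexity is in hand, the rest of the argument is essentially a bookkeeping exercise with cover relations and congruence classes. This is why the present statement is framed as a restatement of a part of \cite[Theorem~1.1]{con_app} rather than as a separate result requiring its own proof.
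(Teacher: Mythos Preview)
The paper does not prove this theorem at all: it is stated as a direct citation of \cite[Theorem~1.1]{con_app}, with no argument given here. You have correctly recognized this, as your final paragraph makes explicit. So there is no ``paper's own proof'' to compare against; the paper treats the result as a black box imported from \cite{con_app} and immediately applies it in the proof of Theorem~\ref{fan in Tits}.

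Your sketch of how the argument in \cite{con_app} goes is broadly accurate in spirit---convexity of the class cones is indeed the substantive point, and the rest is structural. One caution: your step~3 verification of fan axiom~(ii) is too quick. Knowing that $\Cone_\Theta(w)\cap\Cone_\Theta(w')$ is a union of faces of $\F(W)$, and that each class cone is convex, does not by itself force that intersection to be a single common face of both. In \cite{con_app} this is handled with more care, using the fact that congruence classes are intervals and analyzing exactly which hyperplanes bound each class cone (the ``lower'' and ``upper'' walls coming from covers that leave the class). Your facet description in step~2 is the right ingredient, but the passage from ``the walls are described by external covers'' to ``any two class cones meet in a common face'' still needs an argument---for instance, showing that the intersection lies entirely in the boundary of each class cone and then invoking convexity of that boundary face. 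This is not a gap in your overall strategy, just a place where ``bookkeeping exercise'' undersells the work.
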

The term ``complete'' here means that the union of the cones is all of $V^*$.

We now have all the necessary tools to prove the theorem.
\begin{proof}[Proof of Theorem~\ref{fan in Tits}]
Let $x$ be in the interior of $\Tits(W)$.
Let $W'$ be the stabilizer of $x$ in $W$. 
By Lemma~\ref{cov canon}, $W'$ is finite.

%Let $W'$ be the stabilizer of $x$ in $W$.
%Since $x$ is in the interior of $\Tits(W)$, there is a cone $wD$ containing~$x$. 
%The stabilizer of $w^{-1}x$ is a standard parabolic subgroup $W_J$ for some $J$, so $W'=wW_Jw^{-1}$ is a parabolic subgroup.
%Furthermore, we can choose $w$ so that $wD$ is separated from $D$ by every reflecting hyperplane in $W'$.
%Since $\inv(w)$ is finite, $W'$ must also be finite.

The fan $\Star_x(\F(W))$, the star of $x$ in the Coxeter fan of $W$, coincides with the Coxeter fan $\F(W')$ of $W'$, where we again temporarily relax the requirement that the reflection representation of $W'$ have dimension equal to the rank of $W'$.
The cones in $\Star_x(\F(W))$ are naturally in bijection with the cones $wD$ such that $x\in wD$.  
Let $w_1$ be such that $x\in w_1D$ and $w_1D$ is not separated from $D$ by any reflecting hyperplane in $W'$ and let $w_2$ be such that $x\in w_2D$ and $w_2D$ is separated from $D$ by every reflecting hyperplane in $W'$.  
Then a cone $wD$ contains $x$ if and only if $w$ is in the weak-order interval $[w_1,w_2]$.
This interval is isomorphic to the weak order on $W'$.
Thus by Theorem~\ref{semilattice hom}, the fibers of $\pidown^c$ define a lattice congruence on the weak order on $W'$.
Theorem~\ref{finite fan} implies that the fibers of $\pidown^c$ define a fan which coarsens $\F(W')=\Star_x(\F(W))$.
The maximal cones of this coarsening are the unions, over congruence classes, of the maximal cones of $\F(W')$.
This fan is exactly the set $\set{\Lin_x(F):x\in F\in\F_c}$.
\end{proof}

We now establish some important structural facts about the intersection of the $c$-Cambrian fan with $\Tits(W)$.
For any collection $\F$ of cones, define $\F\cap\Tits(W)$ to be the set $\set{F\cap\Tits(W):F\in\F}$.  

\begin{prop}\label{fan para}
Let $J\subseteq S$ and let $c'$ be the restriction of $c$ to $W_J$.
Then $\left[(P_J)^{-1}(\F_{c'})\right]\cap\Tits(W)$ is a coarsening of $\F_c\cap\Tits(W)$.
\end{prop}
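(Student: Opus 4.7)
The plan is to deduce the proposition from two tools proved earlier: Proposition~\ref{pidown para}, which says $\pidown^{c'}(w_J)=\pidown^c(w)_J$ for every $w\in W$, and Theorem~\ref{pidown fibers}, which identifies the fibers of $\pidown^c$ with the maximal cones of $\F_c$ intersected with $\Tits(W)$. The entire reduction is formal once one establishes the key inclusion
\[\Cone_c(v)\cap\Tits(W)\;\subseteq\;(P_J)^{-1}\bigl(\Cone_{c'}(v_J)\bigr)\]
for every $c$-sortable $v\in W$; note that $v_J$ is $c'$-sortable by Proposition~\ref{sort para}, so the cone on the right is a genuine maximal cone of $\F_{c'}$.

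To prove this inclusion, I would use the description of $\Cone_c(v)\cap\Tits(W)$ as the union $\bigcup_{\pidown^c(w)=v} wD$ furnished by Theorem~\ref{pidown fibers}. For each such $w$, the standard fact $P_J(wD)\subseteq w_JD_J$ recorded in Section~\ref{Parabolic Subsec} applies. By Proposition~\ref{pidown para}, $\pidown^{c'}(w_J)=v_J$, so Theorem~\ref{pidown fibers} applied inside $W_J$ gives $w_JD_J\subseteq\Cone_{c'}(v_J)\cap\Tits(W_J)$. Hence $wD\subseteq(P_J)^{-1}\bigl(\Cone_{c'}(v_J)\bigr)$, and taking the union over all $w$ with $\pidown^c(w)=v$ establishes the claim.

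Once the inclusion is in hand, any face $F$ of a maximal cone $\Cone_c(v)$ inherits the containment $F\cap\Tits(W)\subseteq(P_J)^{-1}(\Cone_{c'}(v_J))$, so every cone of $\F_c\cap\Tits(W)$ sits inside a cone of $(P_J)^{-1}(\F_{c'})\cap\Tits(W)$, which is the definition of a coarsening. In fact the chain of equivalences runs in both directions: for each $c'$-sortable $v'\in W_J$,
\[(P_J)^{-1}\bigl(\Cone_{c'}(v')\bigr)\cap\Tits(W)\;=\bigcup_{\substack{v\text{ $c$-sortable}\\ v_J=v'}}\!\!\Cone_c(v)\cap\Tits(W),\]
so each maximal cone of the coarser decomposition is exactly a disjoint union of maximal cones of $\F_c\cap\Tits(W)$.

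There is no real obstacle: all the hard work was done in Proposition~\ref{pidown para} (which itself rests on the full machinery developed earlier, in particular Theorem~\ref{order preserving} and Proposition~\ref{sort para}). The only point requiring care is that the intersection with $\Tits(W)$ is essential on both sides of the key inclusion, since outside the Tits cone we do not have a description of $\Cone_c(v)$ in terms of fibers of $\pidown^c$, nor the analogous statement in $W_J$; inside the Tits cone the fiber description converts the geometric question into the purely combinatorial identity $\pidown^{c'}(w_J)=\pidown^c(w)_J$.
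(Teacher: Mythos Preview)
Your proof is correct and follows essentially the same route as the paper's own argument: both reduce the coarsening statement to the inclusion $\Cone_c(v)\cap\Tits(W)\subseteq (P_J)^{-1}(\Cone_{c'}(v_J))$, establish it by writing $\Cone_c(v)\cap\Tits(W)$ as the union of the $wD$ with $\pidown^c(w)=v$ via Theorem~\ref{pidown fibers}, and then invoke $P_J(wD)\subseteq w_J D_J$ together with Proposition~\ref{pidown para}. Your additional displayed equality at the end is a pleasant supplement but is not needed for the proposition as stated.
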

Less formally, restrict the representation of $W$ on $V$ to $W_J$ and build the $c'$-Cambrian fan $\F_{c'}$ in this representation of $W_J$. Then, within $\Tits(W)$, the walls of $\F_{c'}$ are respected by the cones of $\F_c$.
That is, each cone $\Cone_c(v)$ has $\Cone_c(v)\cap\Tits(W)\subseteq\Cone_{c'}(v')$ for some $c'$-sortable element $v'$ of $W_J$.
\begin{proof}
Let $v$ be a $c$-sortable element of $W$.
We will show that $\Cone_c(v)\cap\Tits(W)\subseteq\Cone_{c'}(v_J)\cap\Tits(W)$.
Theorem~\ref{pidown fibers} implies that 
\[\Cone_c(v)\cap\Tits(W)=\bigcup_{w\in(\pidown^c)^{-1}(v)}wD\]
for any $c$-sortable element $v$ of $W$.
Thus 
\[P_J\left(\Cone_c(v)\cap\Tits(W)\right)\subseteq\bigcup_{w\in(\pidown^c)^{-1}(v)}w_JD_J.\]
By Proposition~\ref{pidown para}, the latter is contained in 
\[\bigcup_{x\in(\pidown^{c'})^{-1}(v_J)}xD_J=\Cone_{c'}(v_J)\cap\Tits(W_J).\qedhere\]
\end{proof}

In the special case where $J$ is a singleton, the fan $(P_J)^{-1}(\F_{c'})$ has two cones, both halfspaces.  
In this case, Proposition~\ref{fan para} becomes the following.
\begin{prop}\label{fan s}
For any $s\in S$ and any $c$-sortable element $v$, the interior of $\Cone_c(v)\cap\Tits(W)$ is disjoint from $H_s$.
\end{prop}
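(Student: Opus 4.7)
The plan is to derive this directly from Proposition~\ref{fan para} applied to $J = \{s\}$, exactly as the preceding paragraph of the paper indicates. Since Proposition~\ref{fan para} has already been proved, essentially no substantive work remains; the task is just to unpack what the coarsening statement says when $J$ is a singleton.

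First I would identify the ingredients of Proposition~\ref{fan para} in this special case. The parabolic subgroup $W_{\{s\}}$ is the rank-one Coxeter group $\{e, s\}$, and the restriction $c'$ of $c$ to $W_{\{s\}}$, obtained by deleting from a reduced word for $c$ every letter other than $s$, is just $s$. Both elements of $W_{\{s\}}$ are trivially $s$-sortable, so $\F_{c'}$ has exactly two maximal cones: $\Cone_{c'}(e) = D_{\{s\}} = \set{\lambda \in V^*_{\{s\}} : \br{\lambda, \alpha_s} \geq 0}$ and $\Cone_{c'}(s) = s \cdot D_{\{s\}} = \set{\lambda \in V^*_{\{s\}} : \br{\lambda, \alpha_s} \leq 0}$. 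Pulling these back through the projection $P_{\{s\}} : V^* \to V^*_{\{s\}}$, the two maximal cones of $(P_{\{s\}})^{-1}(\F_{c'})$ are the two closed half-spaces of $V^*$ bounded by $H_s$.

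Next I would invoke Proposition~\ref{fan para}, which asserts that $(P_{\{s\}})^{-1}(\F_{c'}) \cap \Tits(W)$ is a coarsening of $\F_c \cap \Tits(W)$. In particular, for every $c$-sortable element $v$, the cone $\Cone_c(v) \cap \Tits(W)$ is contained in one of the two closed half-spaces of $V^*$ bounded by $H_s$. Taking topological interiors in $V^*$, the interior of any subset of a closed half-space is contained in the interior of that half-space, namely the corresponding open half-space, and this open half-space is disjoint from $H_s$. Hence the interior of $\Cone_c(v) \cap \Tits(W)$ is disjoint from $H_s$, as required. The only ``hard part'' here is the one already handled in proving Proposition~\ref{fan para}; the remainder is the elementary computation of $\F_{c'}$ in the rank-one case and a one-line topological observation.
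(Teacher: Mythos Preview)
Your proposal is correct and takes essentially the same approach as the paper, which simply states that Proposition~\ref{fan s} is the special case $J=\{s\}$ of Proposition~\ref{fan para} and gives no further argument. You have just filled in the straightforward details of that specialization.
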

In other words, no $c$-Cambrian cone crosses the hyperplane $H_s$ within $\Tits(W)$.
We conjecture that the interior of $\Cone_c(v)$ is disjoint from $H_s$, and in fact this conjecture is the only missing step in a proof, by the usual induction on length and rank, of the conjecture that $\F_c$ is a fan in $V$.

The recursive definition of $C_c(v)$ translates into the following geometric statement about the $c$-Cambrian fan.
For any collection $\F$ of cones, define 
\[\F_{\ge s}=\set{F\cap\set{x^*\in V^*:\br{x^*,\alpha_s}\le 0}:F\in\F}.\]
Thus, in $\F_{\geq s} \cap \Tits(W)$, each cone is intersected with the portion of $\Tits(W)$ consisting those of cones $wD$ for which $w\ge s$. 
Define $\F_{>s}$ by making the inequality in the definition strict, and define $\F_{\le s}$ and $\F_{<s}$ by reversing the inequality.

\begin{prop}\label{recursive fan}
Let $s$ be initial in $c$.
Then
\begin{enumerate}
\item[(i) ]$(\F_c)_{\ge s}\cap\Tits(W) = s\left((\F_{scs})_{\le s}\right)\cap\Tits(W)$, and
\item[(ii) ]$(\F_c)_{<s}\cap\Tits(W)=\left[(P_{\br{s}})^{-1}(\F_{sc})\right]_{<s}\cap\Tits(W)$.
\end{enumerate}
\end{prop}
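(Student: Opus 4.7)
The plan is to derive both parts from the recursive definitions of $C_c(v)$ (hence of $\Cone_c(v)$), combined with Proposition~\ref{CSortRecursive}. The key preliminary observation I would set up first is that, using Theorem~\ref{order preserving} and the identity $\Cone_c(v)\cap\Tits(W)=\bigcup_{w\in(\pidown^c)^{-1}(v)}wD$ (from Theorem~\ref{pidown fibers}), each maximal Cambrian cone lies, within $\Tits(W)$, entirely on one side of $H_s$: on $\{\br{x^*,\alpha_s}\le 0\}$ when $v\ge s$, and on $\{\br{x^*,\alpha_s}\ge 0\}$ when $v\not\ge s$. Indeed, every $w$ in the fiber $(\pidown^c)^{-1}(v)$ satisfies $w\ge s\iff v\ge s$, and analogously for $\pidown^{scs}$.

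For part (i), I would apply the recursive formula $C_c(v)=s\,C_{scs}(sv)$ for $v\ge s$, which gives $\Cone_c(v)=s\,\Cone_{scs}(sv)$, together with the bijection from Proposition~\ref{CSortRecursive} between $c$-sortable $v\ge s$ and $scs$-sortable $u\not\ge s$ via $v\mapsto sv$. The halfspace observation makes $F\mapsto F\cap\{\br{x^*,\alpha_s}\le 0\}$ the identity on $\Cone_c(v)\cap\Tits(W)$ when $v\ge s$, and symmetrically on the $scs$-side; since $s\,\Tits(W)=\Tits(W)$, this matches the two sides of (i) on all cones not collapsing into $H_s$, and face structure matches automatically because the recursion is face-preserving. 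For part (ii), I would use the recursive formula $C_c(v)=C_{sc}(v)\cup\{\alpha_s\}$ for $v\not\ge s$, which gives $\Cone_c(v)=(P_{\br{s}})^{-1}(\Cone_{sc}(v))\cap\{\br{x^*,\alpha_s}\ge 0\}$, and hence $\Cone_c(v)\cap\{\br{x^*,\alpha_s}>0\}=(P_{\br{s}})^{-1}(\Cone_{sc}(v))\cap\{\br{x^*,\alpha_s}>0\}$. Since Proposition~\ref{CSortRecursive} identifies these $v$ precisely with the $sc$-sortable elements of $W_{\br{s}}$, the cones $\Cone_{sc}(v)$ exhaust the maximal cones of $\F_{sc}$ and face structures match; cones $\Cone_c(v)$ with $v\ge s$ contribute nothing to $(\F_c)_{<s}\cap\Tits(W)$ by the halfspace observation, completing (ii).

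The main obstacle I foresee is in (i): I must account for cones that collapse into $H_s\cap\Tits(W)$ upon intersection---namely $\Cone_c(v)\cap H_s\cap\Tits(W)$ for $v\not\ge s$ on the left, and $s(\Cone_{scs}(u)\cap H_s)\cap\Tits(W)=\Cone_{scs}(u)\cap H_s\cap\Tits(W)$ for $u\ge s$ on the right (using that $s$ fixes $H_s$ pointwise on $V^*$). These two collections must coincide as subsets of $H_s\cap\Tits(W)$, which amounts to saying that the restrictions of $\F_c$ and $\F_{scs}$ to $H_s\cap\Tits(W)$ agree. I plan to establish this by invoking Proposition~\ref{fan para} with $J=\br{s}$ on both sides: both restrictions should coarsen to $(P_{\br{s}})^{-1}(\F_{sc})\cap H_s\cap\Tits(W)$, and a short inductive check on rank should force the two coarsenings to coincide.
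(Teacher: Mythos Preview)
Your overall strategy matches the paper's: the proposition is stated there as a direct translation of the recursion $C_c(v)=sC_{scs}(sv)$ (for $v\ge s$) and $C_c(v)=C_{sc}(v)\cup\{\alpha_s\}$ (for $v\not\ge s$), combined with Proposition~\ref{CSortRecursive} and the identification $\Cone_c(v)\cap\Tits(W)=\bigcup_{\pidown^c(w)=v}wD$ from Theorem~\ref{pidown fibers}. The paper gives no further argument, so your write-up is a reasonable expansion of what the authors regard as immediate.

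The weak point is your handling of the ``obstacle'' in~(i). You are right that one must match the pieces lying in $H_s\cap\Tits(W)$ on the two sides. But your proposed fix---invoking Proposition~\ref{fan para} to see that both restrictions coarsen $(P_{\br{s}})^{-1}(\F_{sc})\cap H_s\cap\Tits(W)$, then an unspecified ``inductive check on rank''---does not close the gap: two fans that coarsen a common fan need not coincide, and you have not said what the induction would actually prove. A cleaner argument uses Theorem~\ref{fan in Tits} directly. Since $\F_c$ is a fan inside the interior of $\Tits(W)$, any face of $\F_c$ contained in $H_s$ is a face of a maximal cone on \emph{each} side of $H_s$; in particular it is a face of some $\Cone_c(v')$ with $v'\ge s$. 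Thus the $H_s$-contributions from cones with $v\not\ge s$ are already present among the faces of the cones with $v'\ge s$, and (by the bijection $v'\leftrightarrow sv'$) among the faces of $s\Cone_{scs}(u')$ with $u'\not\ge s$. The symmetric argument in $\F_{scs}$ handles the $H_s$-contributions on the right side, and the two collections in $H_s\cap\Tits(W)$ agree. With this replacement, your proof is complete.
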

The conjecture that $\Cone_c(v)$ is disjoint from $H_s$ for any $c$-sortable element $v$ would imply that both statements in the proposition remain true when ``$\cap\,\Tits(W)$'' is removed throughout. 

Proposition~\ref{Cc final} is a combinatorial statement about $C_c(v)$.
However, it can be understood geometrically in the following manner:   
\begin{prop} \label{recursive fan final}
 Let $s$ be final in $c$. Then $(\F_c)_{> s} \cap \Tits(W)=\left[(P_{\br{s}})^{-1}(\F_{cs})\right]_{>s}\cap\Tits(W)$.
\end{prop}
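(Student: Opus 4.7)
The plan is to directly compare the defining inequalities of the cones on the two sides, with Proposition~\ref{Cc final} as the key algebraic input. For any $c$-sortable $v$ with $v \geq s$, combining Propositions~\ref{walls} and~\ref{lower walls} with Proposition~\ref{Cc final}(ii)--(iii) yields the identity $C_c(v) = \set{-\alpha_s} \cup C_{cs}(v_{\br{s}})$. Since every $\beta \in C_{cs}(v_{\br{s}})$ lies in $V_{\br{s}}$, we have $\br{x^*,\beta} = \br{P_{\br{s}}(x^*),\beta}$, so the defining halfspaces of $\Cone_c(v)$ coming from $C_{cs}(v_{\br{s}})$ are exactly the pullbacks under $P_{\br{s}}$ of the defining halfspaces of $\Cone_{cs}(v_{\br{s}})$. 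Consequently
\[
\Cone_c(v) \cap \set{x^* : \br{x^*,\alpha_s}<0} \;=\; P_{\br{s}}^{-1}\bigl(\Cone_{cs}(v_{\br{s}})\bigr) \cap \set{x^* : \br{x^*,\alpha_s}<0},
\]
and an analogous identity holds for every face of $\Cone_c(v)$ obtained without tightening the inequality coming from $-\alpha_s$.

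Next, I would dispose of the case $v \not\geq s$: by Proposition~\ref{fan s} the interior of $\Cone_c(v) \cap \Tits(W)$ misses $H_s$, and since $\Cone_c(v) \cap \Tits(W)$ is a union of chambers $wD$ with $w \not\geq s$, the entire set lies in $\set{x^* : \br{x^*,\alpha_s}\ge 0}$. Such cones and their faces therefore contribute nothing to $(\F_c)_{>s} \cap \Tits(W)$. For the converse direction, I would verify that every $cs$-sortable $u \in W_{\br{s}}$ whose associated cone on the right-hand side meets $\Tits(W) \cap \set{x^* : \br{x^*,\alpha_s}<0}$ arises as $v_{\br{s}}$ for some $c$-sortable $v \geq s$: picking $x^*$ in that intersection and the chamber $wD$ containing it, we have $w \geq s$, and Theorem~\ref{pidown fibers} applied inside $W_{\br{s}}$ together with Proposition~\ref{pidown para} give $\pidown^c(w)_{\br{s}} = \pidown^{cs}(w_{\br{s}}) = u$. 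Theorem~\ref{order preserving} combined with $\pidown^c(s) = s$ then guarantees that $v := \pidown^c(w)$ satisfies $v \geq s$, so $u = v_{\br{s}}$ as required.

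These observations pair up the maximal cones of the two sides via the bijection $v \leftrightarrow v_{\br{s}}$ and match their faces as well, yielding the equality of collections. There is really no serious obstacle here: the entire geometric content is repackaged from Proposition~\ref{Cc final}, so the combinatorial heavy lifting (concerning cover reflections and unforced skips) has already been done. What remains is routine bookkeeping of the open halfspace $\set{x^* : \br{x^*,\alpha_s}<0}$ against the hyperplane $H_s$ itself, where the defining inequality from $-\alpha_s$ becomes tight and must be excluded.
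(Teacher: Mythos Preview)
Your proposal is correct and matches the paper's approach: the paper does not give a separate proof of this proposition but presents it explicitly as the geometric reformulation of Proposition~\ref{Cc final}, and your argument is precisely the detailed unpacking of that reformulation, with the identity $C_c(v)=\{-\alpha_s\}\cup C_{cs}(v_{\br{s}})$ as the key step. The only thing worth noting is that the paper leaves all of your bookkeeping (the $v\not\ge s$ case via Proposition~\ref{fan s}, the surjectivity onto $cs$-sortable $u$ via $\pidown^c$ and Proposition~\ref{pidown para}) entirely implicit, so you have supplied more than the paper does rather than less.
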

See Section~\ref{aff sec} for illustrations of Proposition~\ref{recursive fan} and~\ref{recursive fan final}. 

We conclude the section with a precise statement about the local structure of~$\F_c$ inside $\Tits(W)$. 
Let $F$ be a cone in $\F_c$ whose relative interior intersects the interior of $\Tits(W)$.
Let $x$ be any point in the intersection of the interior of $\Tits(W)$ with the relative interior of $F$.
If $b$ is a vector in the interior of $D$ then as $\epsilon$ approaches zero from above, the point $x-\epsilon b$ remains in the interior of a certain $c$-Cambrian cone $\Cone_c(v)$, and also is in the interior of $\Tits(W)$.
Since~$\F_c$ is a fan in the interior of $\Tits(W)$, this $v$ is independent of the choice of $x$.
For the same reason, the intersection of $F$ with the interior of $\Tits(W)$ equals the intersection of some collection of facets of $\Cone_c(v)$ with the interior of $\Tits(W)$.
All of these facets of $\Cone_c(v)$ are associated to cover reflections of $v$, and indeed,~$F$ is specified uniquely by the choice of the $c$-sortable element $v$ and some subset of the cover reflections of $v$.
Let $J$ be such that this subset of the cover reflections of~$v$ is $\set{vsv^{-1}:s\in J}$.
Call $v$ \newword{the $c$-sortable element above $F$} and call $J$ the \newword{set of simple generators through $F$}. By Lemma~\ref{cov canon}, the group generated by the cover reflections of $v$ is finite. This group contains $v W_J v^{-1}$, so $W_J$ is finite and has a maximal element $w_0(J)$. Let $w=v w_0(J)$.
We call $w$ the \newword{element below $F$}. 
The element $w$ is characterized by the condition that $wD \cap F=vD \cap F$ and that $wD$ and $D$ are on the same side of the hyperplane $v H_s$ for each $s\in J$.

Let $v$, $w$, $F$ and $J$ be as in the previous paragraph. 
Since conjugation by $w_0(J)$ permutes $J$, we know that $w J w^{-1}=v w_0(J) J w_0(J)^{-1} v^{-1}$ is a subset of the cover reflections of $v$. 
Let $\Cox_c(F)$ or $\Cox_c(v,J)$ stand for the element $s_1 s_2 \cdots s_k$, where $J=\{ s_1, s_2, \ldots, s_k \}$ and the elements of $J$ are numbered in the order that the cover reflections $w s_i w^{-1}$ appear in the reflection sequence for the $c$-sorting word of $v$. 
In particular, if $v J v^{-1}=\cov(v)$, then $\Cox_c(F)=w^{-1} \nc_c(v) w$.
We can also describe $\Cox_c$ without any need to compute $c$-sorting words. 
In light of Proposition~\ref{InversionOrdering}, if $\omega_c(\beta_{w s_i w^{-1}},\beta_{w s_j w^{-1}}) > 0$ then $s_i$ must come before $s_j$ and if $\omega_c(\beta_{w s_i w^{-1}},\beta_{w s_j w^{-1}}) = 0$ then $s_i$ and $s_j$ commute.

\begin{theorem}\label{stars}  
%sinceTAMS:  Changed "interior" to "relative interior" in the following sentence. 
If $F$ is a cone in $\F_c$ whose relative interior intersects the interior of $\Tits(W)$ then $\Star_F(\F_c)$ is combinatorially isomorphic to a finite Cambrian fan.
Specifically, $\Star_F(\F_c)=w P_J^{-1}(\F_{\Cox_c(F)})$, where $v$ is the $c$-sortable element above~$F$, $J$ is the set of simple generators through $F$ and $w$ is the element below $F$.
\end{theorem}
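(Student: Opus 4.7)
The plan is to analyze the local structure of $\F_c$ at a point $x$ in $\relint(F) \cap \mathrm{int}(\Tits(W))$, identifying the stabilizer of $x$ and then matching the Cambrian congruence restricted near $x$ to a Cambrian congruence of the stabilizer. First I would verify that $\Stab(x) = wW_Jw^{-1}$: the walls of $\Cone_c(v)$ containing $F$ are exactly $\{H_{wsw^{-1}} : s \in J\}$ (from the setup preceding the theorem, since $vJv^{-1} = wJw^{-1}$), giving $wW_Jw^{-1} \subseteq \Stab(x)$; for the reverse inclusion, Theorem~\ref{fan in Tits} guarantees that $\F_c$ coarsens $\F(W)$ inside $\Tits(W)$, so every reflecting hyperplane of $W$ through $x$ must contain $F$ and hence be one of these walls. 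The cones $wuD$ of $\F(W)$ containing $x$, indexed by $u \in W_J$, form the weak-order interval $[w, v]$ (where $v = ww_0(J)$), and since $w$ is the minimal-length representative of the coset $wW_J$, this interval is isomorphic to $W_J$ via $u \mapsto wu$. Linearizing at $x$ then yields $\Star_x(\F(W)) = wP_J^{-1}(\F(W_J))$.

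By Theorems~\ref{order preserving} and~\ref{semilattice hom}, the restriction of $\pidown^c$ to $[w, v]$ induces a lattice congruence on $W_J \cong [w, v]$, and $\Star_x(\F_c)$ is the coarsening of $\Star_x(\F(W))$ by this congruence. The theorem thus reduces to identifying this congruence with the $\Cox_c(F)$-Cambrian congruence on $W_J$. Via Theorem~\ref{sortable is aligned}, this reduces in turn to showing that for $u \in W_J$, the element $wu$ is $c$-aligned if and only if $u$ is $\Cox_c(F)$-aligned. For a rank-two parabolic $W''$ of $W_J$, the conjugate $wW''w^{-1}$ is a rank-two parabolic of $W$; iterated use of Lemma~\ref{OmegaInvariance}, combined with the defining property of $\Cox_c(F)$ via the ordering of $\{ws_iw^{-1}\}$ in the reflection sequence of the $c$-sorting word of $v$ (via Proposition~\ref{InversionOrdering}), shows that the $c$-orientation of $wW''w^{-1}$ matches the $\Cox_c(F)$-orientation of $W''$. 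The length-additive decomposition $\inv(wu) = \inv(w) \sqcup w\inv(u)w^{-1}$ (which holds because $w$ is a minimal coset representative) then directly translates alignment of $wu$ relative to $wW''w^{-1}$ into alignment of $u$ relative to $W''$, handling all rank-two parabolics inside $W' = wW_Jw^{-1}$.

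For rank-two parabolics $W''$ of $W$ not contained in $W'$, I would apply Lemma~\ref{int} to $w^{-1}W''w$ and $W_J$ to conclude that $W'' \cap W'$ is trivial, equal to all of $W''$, or equal to $\{e, q\}$ where $q = wpw^{-1}$ is a canonical generator of $W''$. In the trivial case, $\inv(wu) \cap W'' = \inv(w) \cap W'' = \inv(v) \cap W''$ for every $u \in W_J$, so alignment follows from the $c$-alignment of $v$. The main obstacle is the rank-one case: here $\inv(wu) \cap W''$ is either $\inv(w) \cap W''$ or $(\inv(w) \cap W'') \cup \{q\}$ depending on whether $w^{-1}qw \in \inv(u)$, and I need to verify that both possibilities remain $c$-aligned. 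The alignment of $v$ controls one of them (since $\inv(v) \cap W'' = (\inv(w) \cap W'') \cup \{q\}$), so the delicate task is to show that the other possibility is also aligned, which requires exploiting the precise structure of which initial/final segments of reflections of $W''$ can arise as $\inv(w) \cap W''$ given that $q$ is a canonical generator of $W''$. Once this alignment transfer is established in all cases, the two congruences coincide, and the identification $\Star_F(\F_c) = wP_J^{-1}(\F_{\Cox_c(F)})$ follows.
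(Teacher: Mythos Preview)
Your reduction to showing that the congruence induced by $\pidown^c$ on $[w,v]\cong W_J$ coincides with the $\Cox_c(F)$-Cambrian congruence is sound, and matches the paper's own reformulation. The gap is in the mechanism you propose to establish that coincidence.

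The orientation transfer via Lemma~\ref{OmegaInvariance} does not work as you describe. That lemma relates $\omega_c$ to $\omega_{scs}$ for a single $s$ initial or final in $c$; iterating along a reduced word $a_1\cdots a_\ell$ for $w$ would require each $a_i$ to be initial or final in the successively modified Coxeter element, which there is no reason to expect. Even if it held, the output would be some Coxeter element of $W$, not $\Cox_c(F)$, which is a Coxeter element of $W_J$. What the definition of $\Cox_c(F)$ actually gives you (via Proposition~\ref{InversionOrdering}) is the sign of $\omega_c(\beta_{ws_iw^{-1}},\beta_{ws_jw^{-1}})$ for $s_i,s_j\in J$, i.e.\ the sign of the pullback form $\tilde\omega(\alpha,\beta):=\omega_c(w\alpha,w\beta)$ on \emph{simple} roots of $W_J$. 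But for alignment you need the sign of $\tilde\omega$ on the canonical generators of an arbitrary rank-two parabolic $W''\subseteq W_J$, and two skew-symmetric forms on $V_J$ agreeing in sign on simple root pairs need not agree in sign on all positive root pairs. So the claim that the $c$-orientation of $wW''w^{-1}$ matches the $\Cox_c(F)$-orientation of $W''$ is unproven.

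The treatment of rank-two parabolics meeting $W'=wW_Jw^{-1}$ in a single reflection $q$ is also not just a case check you can close independently of $u$. If $q$ is the \emph{first} canonical generator of $W''$ in the $c$-orientation and $\inv(v)\supseteq W''\cap T$, then $\inv(w)\cap W''=(W''\cap T)\setminus\{q\}$ is \emph{not} $c$-aligned with respect to $W''$. So the ``other possibility'' you hope to verify can genuinely fail; whether $wu$ is aligned then depends on whether $w^{-1}qw\in\inv(u)$, and you are forced back to relating this to $\Cox_c(F)$-sortability of $u$. Your partition into ``inside $W'$'' and ``outside $W'$'' parabolics is thus too coarse to decouple the two sides.

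The paper avoids all of this. It never compares $\omega_c$ to $\omega_{\Cox_c(F)}$ beyond what is needed to identify $\Cox_c(F)$ in each inductive step. Instead it proves directly that $\pidown^c(wx)=\pidown^c(wy)\iff\pidown^{\Cox_c(F)}(x)=\pidown^{\Cox_c(F)}(y)$ by a triple induction on rank, on $\ell(v)$, and on the smallest index $k$ such that $s_k\notin\cov(v)$, peeling off an initial $s$ from $c$ in each step and using the recursive definition of $\pidown^c$ together with Lemma~\ref{pidown join lemma} to handle the case where $s\in\cov(v)$ but $w\not\ge s$.
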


This theorem generalizes \cite[Proposition~8.12]{camb_fan} and also strengthens the result by giving an explicit nonrecursive rule for computing $\Cox_c(F)$.  
The following lemma will be used in the proof.
\begin{lemma}\label{pidown join lemma}
Let $s$ be initial in $c$, let $x$ be an element of $W$ such that $x \not\ge s$ but $s\join x$ exists, and let $v$ be a $c$-sortable element of $W$ such that $v\not\ge s$ but $s\join v$ exists.
Then $\pidown^c(x)=v$ if and only if $\pidown^{scs}(s\join x)=s\join v$.
\end{lemma}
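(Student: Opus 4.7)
The plan is to prove both directions of the equivalence simultaneously by applying the combinatorial characterization of fibers (Theorem~\ref{comb classes}) to each side and showing that the two sets of conditions reduce to the same statement about $\inv(x)$.

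First, I would verify that both sides of the claimed equivalence fit the hypotheses of Theorem~\ref{comb classes}. Since $v$ is $c$-sortable with $v\not\ge s$ and $s$ initial in $c$, Proposition~\ref{CSortRecursive} gives $v\in W_{\br{s}}$ and $v$ is $sc$-sortable, and then Theorem~\ref{c to scs} gives that $s\vee v$ is $scs$-sortable. Next, I would compute the cover reflections and unforced skips of $s\vee v$: Lemma~\ref{cov w br s} yields $\cov(s\vee v)=\{s\}\cup\cov(v)$, and Proposition~\ref{Cc final}(iii), applied to the Coxeter element $scs$ (in which $s$ is final) with $(s\vee v)_{\br{s}}=v$ (by Proposition~\ref{para hom}, since $s_{\br{s}}=e$), yields $\ufs_{scs}(s\vee v)=\ufs_{sc}(v)$. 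Observe that all reflections in $\cov(v)\cup\ufs_{sc}(v)$ lie in $W_{\br{s}}$.

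The key additional ingredient is that Proposition~\ref{para hom} gives $(s\vee x)_{\br{s}}=s_{\br{s}}\vee x_{\br{s}}=x_{\br{s}}$, and hence
\[
\inv(s\vee x)\cap W_{\br{s}}=\inv((s\vee x)_{\br{s}})=\inv(x_{\br{s}})=\inv(x)\cap W_{\br{s}}.
\]
Consequently, for every reflection $t\in W_{\br{s}}$ we have $t\in\inv(x)$ if and only if $t\in\inv(s\vee x)$.

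To finish, I would note that since the $c$-sorting word of $v$ coincides with its $sc$-sorting word and contains no $s$, the generator $s$ is skipped at position $1$ and that skip is unforced; therefore $\ufs_c(v)=\{s\}\cup\ufs_{sc}(v)$. Apply Theorem~\ref{comb classes} to both sides: the condition $\pidown^c(x)=v$ is $\cov(v)\subseteq\inv(x)$ together with $\ufs_c(v)\cap\inv(x)=\emptyset$, which (using $s\notin\inv(x)$, automatic because $x\not\ge s$) is equivalent to
\[
\cov(v)\subseteq\inv(x)\quad\text{and}\quad\ufs_{sc}(v)\cap\inv(x)=\emptyset.
\]
The condition $\pidown^{scs}(s\vee x)=s\vee v$ is $\cov(s\vee v)\subseteq\inv(s\vee x)$ together with $\ufs_{scs}(s\vee v)\cap\inv(s\vee x)=\emptyset$, which (using $s\in\inv(s\vee x)$, automatic because $s\vee x\ge s$) becomes
\[
\cov(v)\subseteq\inv(s\vee x)\quad\text{and}\quad\ufs_{sc}(v)\cap\inv(s\vee x)=\emptyset.
\]
Since all the reflections involved lie in $W_{\br{s}}$, the displayed equivalence $\inv(s\vee x)\cap W_{\br{s}}=\inv(x)\cap W_{\br{s}}$ identifies the two pairs of conditions, completing the proof. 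There is no serious obstacle: all the ingredients have been built up in previous sections, and the argument is purely a careful bookkeeping of the combinatorial conditions supplied by Theorem~\ref{comb classes} on the two sides.
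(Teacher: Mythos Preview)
Your argument is correct, but it follows a different route from the paper's. The paper gives a two-line proof using Corollary~\ref{max} and Theorem~\ref{c to scs}: by Proposition~\ref{para hom}, the bijection of Theorem~\ref{c to scs} restricts to an order-preserving bijection between the $c$-sortable elements below $x$ and the $scs$-sortable elements in the interval $[s,s\vee x]$; since $\pidown^c(x)$ is the maximal element of the former set and $\pidown^{scs}(s\vee x)$ is the maximal element of the latter (it lies above $s$ by Theorem~\ref{order preserving}), they correspond under the bijection. Your proof instead unpacks both fibers via Theorem~\ref{comb classes}, computing $\cov(s\vee v)$ and $\ufs_{scs}(s\vee v)$ explicitly from Lemma~\ref{cov w br s} and Proposition~\ref{Cc final}(iii), and then matching inversion conditions through the identity $\inv(s\vee x)\cap W_{\br{s}}=\inv(x)\cap W_{\br{s}}$. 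The paper's approach is shorter and more conceptual, exploiting the lattice-theoretic machinery already built; yours is more hands-on and has the virtue of making the equivalence visible at the level of individual walls of the Cambrian cones, which is arguably closer in spirit to the geometric content of Section~\ref{fan sec} where the lemma is used.
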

\begin{proof}
In light of Proposition~\ref{para hom}, the bijection of Theorem~\ref{c to scs} restricts to a bijection between $c$-sortable elements $v$ with $v\le x$ and $scs$-sortable elements $v'$ with $s\le v'\le s\join x$.
Corollary~\ref{max} now implies the lemma.
\end{proof}

\begin{proof}[Proof of Theorem~\ref{stars}]
Let $v$, $J$ and $w$ be as above.
Write $c=s_1\cdots s_n$ and let $k$ be the smallest nonnegative integer $\le n$ such that $s_k$ is not a cover reflection of $v$.
If every $s_i$ is a cover reflection of $v$ then $W$ is finite, $v$ is the longest element of $W$ and~$w$ is the identity, so that $\Star_F(\F_c)=\F_c$.
Thus whenever $k$ is not well-defined, the theorem holds trivially.
We argue by induction on the rank of $W$, on the length of~$v$ (keeping $W$ fixed), and on $k$ (keeping $W$ and $v$ fixed).

The intersection of all cones $xD$ with $x \in [w,v]$ is a face $F'$ of $\F$ with $F'\subseteq F$ and $\dim(F')=\dim(F)$.
For any point $p$ in the relative interior of $F'$ we have $\Star_F(\F_c)=\Star_p(\F_c)$.
Since $[w,v]=\set{x\in W:p\in xD}$, the star $\Star_F(\F_c)$ depends only on the equivalence relation induced on $[w,v]$ by the fibers of $\pidown^c$. 
The map $z\mapsto wz$ is an isomorphism from the weak order on $W_J$ to the interval $[w,v]$. 
Thus the theorem is equivalent to the assertion that, for all pairs $x,y\in W_J$, 
\[\pidown^{\Cox_c(v,J)}(x)=\pidown^{\Cox_c(v,J)}(y)\mbox{ if and only if }\pidown^c(wx)=\pidown^c(wy).\]

Let $s=s_1$.
If $w\ge s$ then also $v\ge s$.
The element $sv$ is $scs$-sortable and $w^{-1}sw$ is not in $J$, so that the set $\set{svrv^{-1}s:r\in J}$ is a subset of $\cov(sv)$.
Thus $sv$ and $sw=svw_0(J)$ are respectively the sortable element above and the element below some face of $\F_{scs}$.
Furthermore $\Cox_{scs}(sv,J)=\Cox_c(v,J)$, so the theorem follows in this case by induction on $\ell(v)$ and the recursive definition of $\pidown^c$.

If $v\not\ge s$ then $v\in W_{\br{s}}$, so $w\in W_{\br{s}}$ and $J\subseteq\br{s}$.
The theorem follows in this case by induction on the rank of $W$.

If $v\ge s$ but $w\not\ge s$ then $s\in\cov(v)$ so that $k>1$.
Furthermore, $v$ is also $scs$-sortable by Proposition~\ref{Cc initial}(i), Proposition~\ref{sort para easy} and Theorem~\ref{meets and joins}.
Thus $(v,J)$ encodes a face in $\F_{scs}$ and $w$ is the element below that face. 
Let $\cov(v)=\{ s, t_2, t_3, \ldots, t_l \}$. By Lemma~\ref{Cc together}, all of the $t_i$ lie in $W_{\br{s}}$. 
Thus, for $2 \leq i,j \leq l$, we have $\omega_c(\beta_{t_i}, \beta_{t_j}) = \omega_{sc}(\beta_{t_i}, \beta_{t_j})= \omega_{scs}(\beta_{t_i}, \beta_{t_j})$. 
Also, for $2 \leq i \leq l$, we have $\omega_c(\alpha_s, \beta_{t_i}) \geq 0 $ and $\omega_{scs}(\alpha_s, \beta_{t_i}) \leq 0$.
So, setting $r=w^{-1}sw$, we know that $r$ is initial in $\Cox_c(v,J)$ and $\Cox_{scs}(v,J)=r\Cox_c(v,J)r$.

If $x\ge r$ and $y\ge r$, then $wx\ge s$ and $wy\ge s$, so $\pidown^c(wx)=\pidown^c(wy)$ if and only if $\pidown^{scs}(swx)=\pidown^{scs}(swy)$.
Since $swx=wrx$ and $swy=wry$, the latter equation is $\pidown^{scs}(wrx)=\pidown^{scs}(wry)$ which by induction on $k$ holds if and only if $\pidown^{\Cox_{scs}(v,J)}(rx)=\pidown^{\Cox_{scs}(v,J)}(ry)$, or equivalently $\pidown^{\Cox_c(v,J)}(x)=\pidown^{\Cox_c(v,J)}(y)$.

If exactly one of $x$ and $y$ is above $r$ then exactly one of $wx$ and $wy$ is above $s$, and Proposition~\ref{AboveS} implies that 
$\pidown^{\Cox_c(v,J)}(x)\neq\pidown^{\Cox_c(v,J)}(y)$ and $\pidown^c(wx)\neq\pidown^c(wy)$.

If $x\not\ge r$ and $y\not\ge r$ then $wx\not\ge s$ and $wy\not\ge s$ so Lemma~\ref{pidown join lemma} implies that $\pidown^c(wx)=\pidown^c(wy)$ if and only if $\pidown^{scs}(s\join(wx))=\pidown^{scs}(s\join(wy))$.
We have $s\join wx=w(r\join x)$ and $s\join wy=w(r\join y)$.
Thus by induction on $k$, $\pidown^{scs}(s\join(wx))=\pidown^{scs}(s\join(wy))$ if and only if $\pidown^{r\Cox_c(v,J)r}(r\join x)=\pidown^{r\Cox_c(v,J)r}(r\join y)$.
By Lemma~\ref{pidown join lemma} again, $\pidown^{r\Cox_c(v,J)r}(r\join x)=\pidown^{r\Cox_c(v,J)r}(r\join y)$ if and only if $\pidown^{\Cox_c(v,J)}(x)=\pidown^{\Cox_c(v,J)}(y)$.
\end{proof}

\begin{remark} 
It is possible for a face $F$ of the Cambrian fan to lie in the boundary of the Tits cone. 
It seems plausible that Theorem~\ref{stars} could be extended to describe this case.
The primary challenge would be to generalize the definition of $\Cox_c$.
In addition, the parabolic subgroup $W_J$ would have to be replaced by a generalized parabolic subgroup, so developing this theory would require developing a theory of 
generalized parabolic subgroups of rank other than $2$.
This has some subtleties, as explained in Remark~\ref{gen high rank}.
%It is possible for a face $F$ of the Cambrian fan to lie in the boundary of the Tits cone. 
%It seems plausible that Theorem~\ref{stars} could be extended to describe this case.
%The role of the subgroup $W_J$ would have to be replaced by a generalized parabolic subgroup, so developing this theory would require developing a theory of generalized parabolic subgroups of rank other than $2$.
%This has some subtleties, see Remark~\ref{gen high rank}.
%In addition, it is not clear how to define $\Cox_c$, as the description in terms of sorting words is inapplicable and there may be no ordering of the canonical generators such that $\omega_c(\beta_i, \beta_j) \geq 0$ for $i > j$.
%We have done some computations that suggest both these possibilities can be overcome, but we do not have enough data to formulate a conjecture.
\end{remark}

\section{Rank-three examples}\label{rank3sec}
An irreducible Coxeter group $W$ of rank three acts as a reflection group on a two-dimensional manifold of constant curvature.  
Thus the rank-three case is well-suited to illustrations on the page.
In this section, we illustrate many of the results of this paper using examples of rank three.
We give examples of sortable elements and Cambrian fans in each of the three possible geometries: spherical, Euclidean and hyperbolic.
As an aid to the reader scanning this section, the major concepts being illustrated at each point are written in \textbf{boldface}.

\subsection{Type~$\widetilde{G}_2$}\label{aff sec}
The Coxeter groups with Euclidean geometry are the affine Coxeter groups (as well as some degenerate examples, such as the product of a finite group and an affine group).
See e.g.\ \cite[Sections~4.7, 6.5]{Humphreys}.
In rank three, the affine Coxeter groups are $\widetilde{A}_2$, $\widetilde{B}_2$ and~$\widetilde{G}_2$.
The Tits cone of an affine Coxeter group is an open halfspace plus one additional point (the origin) on the boundary.
The action of the group on the Tits cone restricts to an action on a hyperplane parallel to the boundary of the halfspace. 
In this hyperplane, the group acts by Euclidean transformations.
In particular, an affine Coxeter group of rank three is generated by reflections through certain lines in the plane.

The group of type~$\widetilde{G}_2$ has simple generators $\set{r,s,t}$ with $m(r,s)=6$, $m(s,t)=3$ and $m(r,t)=2$.
This is the symmetry group of the regular tiling of the plane by regular hexagons.
Figures~\ref{bij1} and~\ref{bij2} show part of two Cambrian fans for $W$ of type~$\widetilde{G}_2$.
In Figure~\ref{bij1}, the Coxeter element $c$ is $tsr$, while $c=srt$ in Figure~\ref{bij2}.
The fine lines show the underlying Coxeter fan and the bold lines indicate the Cambrian cones. 
The shaded triangles mark cones in the Coxeter fan corresponding to $c$-sortable elements.
To the extent practical, the $c$-sortable elements are labeled with their $c$-sorting words.

\begin{figure}[p]
\psfrag{1}[cc][cc]{\LARGE$e$}
\psfrag{t}[cc][cc]{\LARGE$r$}
\psfrag{s}[cc][cc]{\LARGE$s$}
\psfrag{st}[cc][cc]{\LARGE$sr$}
\psfrag{sts}[cc][cc]{\LARGE$srs$}
\psfrag{stst}[cc][cc]{\LARGE$srsr$}
\psfrag{r}[cc][cc]{\LARGE$t$}
\psfrag{rt}[cc][cc]{\LARGE$tr$}
\psfrag{rs}[cc][cc]{\LARGE$ts$}
\psfrag{rsr}[cc][cc]{\LARGE$tst$}
\psfrag{rst}[cc][cc]{\LARGE$tsr$}
\psfrag{rstr}[cc][cc]{\LARGE$tsrt$}
\psfrag{rsts}[cc][cc]{\LARGE$tsrs$}
\psfrag{rstst}[cc][cc]{\LARGE$tsrsr$}
\centerline{\scalebox{.65}{\includegraphics{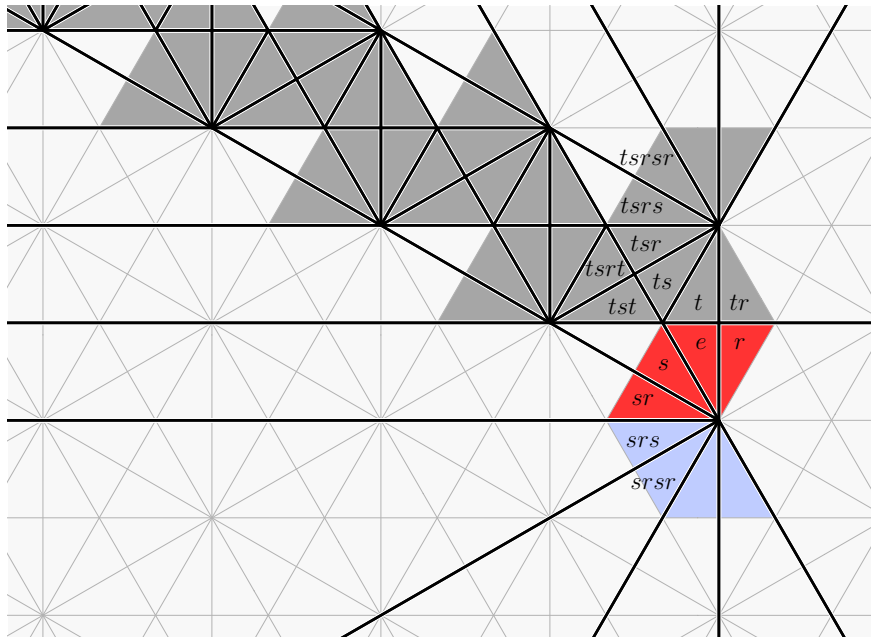}}}
\caption{The Cambrian fan for $W$ of type~$\widetilde{G}_2$ with $c=tsr$.}
\label{bij1}
\vspace{30 pt}
\end{figure}

\begin{figure}[p]
\psfrag{1}[cc][cc]{\LARGE$e$}
\psfrag{t}[cc][cc]{\LARGE$r$}
\psfrag{s}[cc][cc]{\LARGE$s$}
\psfrag{st}[cc][cc]{\LARGE$sr$}
\psfrag{sts}[cc][cc]{\LARGE$srs$}
\psfrag{stst}[cc][cc]{\LARGE$srsr$}
\psfrag{r}[cc][cc]{\LARGE$t$}
\psfrag{tr}[cc][cc]{\LARGE$rt$}
\psfrag{sr}[cc][cc]{\LARGE$st$}
\psfrag{str}[cc][cc]{\LARGE$srt$}
\psfrag{srs}[cc][cc]{\LARGE$sts$}
\psfrag{strststs}[cc][cc]{\LARGE$srtsrsrs$}

\centerline{\scalebox{.65}{\includegraphics{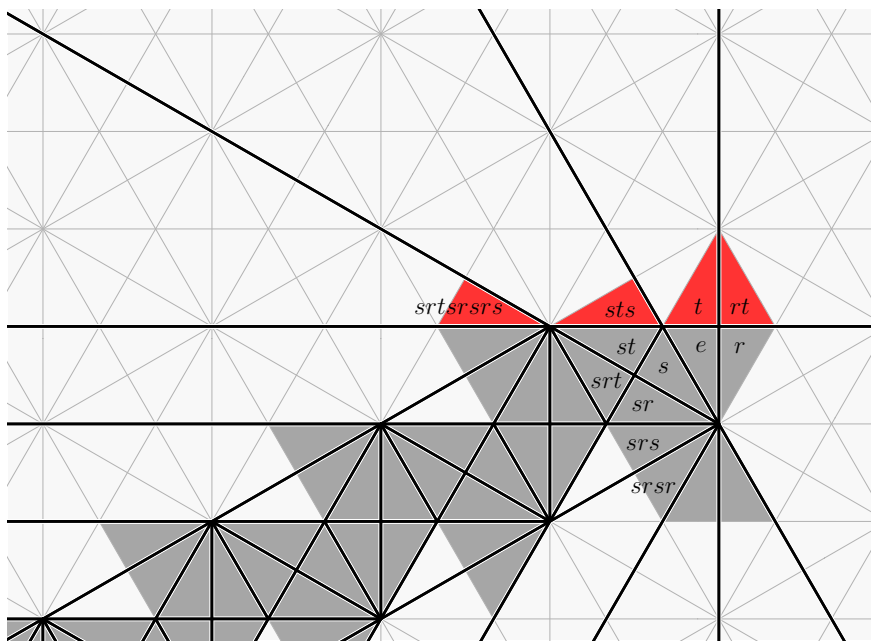}}}
\caption{The Cambrian fan for $W$ of type~$\widetilde{G}_2$ with $c=srt$.}
\label{bij2}
\end{figure}

To begin with, we focus on the shaded triangles in order to visualize results about sortable elements.
The colors of the shaded triangles help to illustrate \textbf{Theorem~\ref{c to scs}}.
Notice that $t$ is initial in $c=tsr$.  
In Figure~\ref{bij1}, the gray shaded triangles are $c$-sortable elements above $t$ in the weak order.
The red (or darker gray) shaded triangles are $c$-sortable elements $v$ not above $t$ such that $t\join v$ exists.
The blue (or lighter gray) shaded triangles are $c$-sortable elements $v$ such that $t\join v$ does not exist.
The bijection of Theorem~\ref{c to scs} maps the gray and red triangles in Figure~\ref{bij1} to the full set of triangles in Figure~\ref{bij2}.
The gray triangles are reflected through the horizontal line representing $H_t$, while each red triangle representing $v$ is mapped to $t\join v$.

One can also see the \textbf{canonical join representations} of $c$-sortable elements in these figures. 
For example, \textbf{Proposition~\ref{csort canon}} is illustrated in Figure~\ref{bij2}, where the $srt$-sortable element $srtsrsrs$ is canonically the join of $srt$-sortable elements $sr$ and~$t$.
This picture also illustrates the proof of \textbf{Theorem~\ref{W can join rep}}:  
The element $sr$ is the unique smallest element below $srtsrsrs$ having the reflection $srs$ as an inversion, and $t$ is the unique smallest element below $srtsrsrs$ having the reflection $t$ as an inversion.
\textbf{Theorem~\ref{exactly one ji}} manifests itself in these pictures as the fact that, for each bold line segment in the picture, there is a unique minimal sortable element $j$ of $W$ that is above the corresponding hyperplane $H$.
This element $j$ is join-irreducible, that is, exactly one of the edges of its triangle $jD$ separates $jD$ from a triangle closer to $D$.
This edge of $jD$ is contained in $H$, and $j$ is the only join-irreducible element such that $jD$ has an edge contained in $H$ that separates $jD$ from $D$.

We now consider the Cambrian cones in the figures.
Theorem~\ref{c to scs} is a statement about sortable elements; \textbf{Proposition~\ref{recursive fan}} is the corresponding statement about \textbf{Cambrian fans}. 
In accordance with Proposition~\ref{recursive fan}(i), since $t$ is initial in $tsr$, the partial fans $(\F_{tsr})_{\ge t}\cap\Tits(W)$ and $(\F_{srt})_{\le t}\cap\Tits(W)$ are related by reflection through $H_t$.  
The partial fan $(\F_{tsr})_{<t}$ agrees with the $sr$-Cambrian fan in $W_{\br{t}}=W_{\set{r,s}}$ in accordance with Proposition~\ref{recursive fan}(ii). 
\textbf{Proposition~\ref{fan para}} specializes to the following statement about these pictures:  For each vertex $x$ of the shaded triangle representing the identity element $e$, the bold line segments emanating from $x$ extend indefinitely, so that no Cambrian cone crosses them.
In particular (\textbf{Proposition~\ref{fan s}}), the line segments bounding the identity element extend indefinitely.

Figures~\ref{bij1} and~\ref{bij2} also illustrate our main lemmas regarding \textbf{forced and unforced skips}: \textbf{Propositions~\ref{walls}}, \textbf{\ref{lower walls}}, \textbf{\ref{Cc final}} and~\textbf{\ref{Cc initial}}. All of these, due to Theorem~\ref{pidown fibers}, can be interpreted as assertions about the Cambrian cone $\boldsymbol{\Cone_c(v)}$. 
Proposition~\ref{lower walls} states that the lower walls (the walls separating the cone from the dominant chamber $D$) of each Cambrian cone $\Cone_c(v)$ correspond to the cover reflections of $v$.
These cover reflections appear as the lower walls of the triangle corresponding to~$v$.
Similarly, with a bit of calculation, we observe that the upper walls are given by unforced skips in the $c$-sorting word. Consider for example the Cambrian cone $\Cone_{srt}(srtsrsrs)$.
Its lower walls are given by its cover reflections $t$ and $srs$.
The letter $t$ is skipped in the word $srtsr\! \cdot \! srs$ (as indicated by the dot in the word).
This skip is unforced, with corresponding reflection $srtsrtrstrs=srsrs$, so the hyperplane $H_{srsrs}$ defines the upper wall of $\Cone_{srt}(srtsrsrs)$. 
In Figure~\ref{bij2}, the hyperplane $H_{srsrs}$ is below, and parallel to, the hyperplane $H_t$, reflecting the fact that these two hyperplanes do not intersect within the Tits cone but rather intersect on the boundary of the Tits cone. Within the Tits cone,   $\Cone_{srt}(srtsrsrs)$ is bounded by $H_t$ and $H_{srs}$ while, in the negation of the Tits cone,   $\Cone_{srt}(srtsrsrs)$ is bounded by $H_{srsrs}$ and $H_{srs}$.
The remainder of the cone $\Cone_{srt}(srtsrsrs)$ is pictured below in Figures~\ref{bij4} and~\ref{bij3}.

The geometric content of Proposition~\ref{Cc final} appears explicitly as \textbf{Proposition~\ref{recursive fan final}}.
This geometry is apparent, for example, in $\Cone_{tsr}(tr)$ in Figure~\ref{bij1}. 
One wall of $\Cone_{tsr}(tr)$ is given by the hyperplane $H_r$ and the other walls are obtained by extending the walls of $\Cone_{ts}((tr)_{\br{r}})=\Cone_{ts}(t)$. 
Similarly, Proposition~\ref{Cc initial} is exemplified by $\Cone_{tsr}(tst)$ in Figure~\ref{bij1}. 
One lower wall of $\Cone_{tsr}(tst)$ is the hyperplane $H_t$. 
The other lower wall is obtained by extending the lower wall of $\Cone_{sr}( (tst)_{\br{t}})=\Cone_{sr}(s)$, while the upper wall is obtained by reflecting the upper wall of $\Cone_{sr}( (tst)_{\br{t}})$ across $H_t$.

We now point out how the content of \textbf{Theorem~\ref{stars}} can be seen in these pictures. 
Consider in Figure~\ref{bij2} the two-dimensional cone $F$ represented in this planar slice by the line segment separating $\Cone_{srt}(sts)$ from $\Cone_{srt}(srtsrsrs)$.
Taking any $x$ in the relative interior of this cone and moving a short distance away from the interior of $D=eD$, we land in $\Cone_{srt}(srtsrsrs)$.
Thus $srtsrsrs$ is the $c$-sortable element above $F$.
The face $F$ of $\Cone_{srt}(srtsrsrs)$ is the intersection of $\Cone_{srt}(srtsrsrs)$ with $H_{srs}$ and $srs$ is a cover reflection $srtsrsrs\!\cdot\!r\!\cdot\!srsrstrs$ of $srtsrsrs$.
Thus $\set{r}$ is the set of simple generators through $F$, so $w_0({\set{r}})=r$ and $srtsrsrs\!\cdot\!r=stsrsrs$ is the element below $F$.
(The cone $stsrsrsD$ is not labeled, but is immediately adjacent to $srtsrsrsD$ through the face $F$.)
The assertion of Theorem~\ref{stars} is that the star of $F$ in $\F_{srt}$ is $stsrsrsP_{\set{r}}^{-1}(\F_r)$.
The fan $\F_r$ consists of the origin and two opposite rays inside the one-dimensional space $V_{\set{r}}^*$, and $P_{\set{r}}^{-1}(\F_r)$ in $V^*$ consists of the hyperplane $H_r$ and the two halfspaces it defines.
Thus $\Star_F(\F_{srt})$ consists of the hyperplane $stsrsrsH_r=H_{srs}$ and the two halfspaces it defines.

As another example, now take $F$ to be the ray defined as the intersection of $\Cone_{srt}(srtsrsrs)$ with $H_{srs}$ and $H_t$.
Note that $t=srtsrsrs\!\cdot\!s\!\cdot\!srsrstrs$ is the other cover reflection of $srtsrsrs$.
Again $srtsrsrs$ is the $c$-sortable element above $F$ and now $\set{r,s}$ is the set of simple generators through $F$, with $w_0({\set{r,s}})=srsrsr$.
The element below $F$ is $srtsrsrs\!\cdot\!srsrsr=st$.
The reflection sequence of $srtsrsrs$ is 
\[s,srs,sts,srtstrs,srtsrstrs,stsrsrsts,tsrst,t\]
and in particular $srs$ occurs before $t$.
Thus $\Cox_{srt}(F)$ is $rs$, rather than $sr$.
The assertion of Theorem~\ref{stars} can be stated as follows in the context of Figure~\ref{bij2}:
Take the lower vertex of the triangle representing $D$ as an origin for the plane and build the $rs$-Cambrian fan around this origin.
(Our Euclidean plane has no natural origin; the plane in which our pictures are drawn does not contain the origin of $V^*$.)
%(Both Figures~\ref{bij1} and~\ref{bij2} show the $sr$-Cambrian fan around this origin, not the $rs$-Cambrian fan.)
Then the rotation in the plane corresponding to $st$ maps the $rs$-Cambrian fan to the star of $F$.

We now offer some additional illustrations of the $srt$-Cambrian fan for $W$ of type~$\widetilde{G}_2$ which illustrate \textbf{how the Cambrian fan extends beyond the Tits cone}.
First, we show the intersection of the $srt$-Cambrian fan with the negative Tits cone $-\Tits(W)$.
To interpret this picture, recall that Figure~\ref{bij2} shows the intersection of the $srt$-Cambrian fan with a plane in $\Tits(W)$.
We are looking at that plane from far above (i.e.\ far into $\Tits(W)$).
Figure~\ref{bij4} shows, from the same vantage point, the intersection of the $srt$-Cambrian fan with a parallel
plane in $-\Tits(W)$.
The fan is shaded gray and its maximal cones are labeled, as are the reflecting hyperplanes bounding these cones.
The antipodal dominant chamber $-D$, located outside the Cambrian fan, is also labeled.

\begin{figure}[p]
\psfrag{1}[cc][cc]{\LARGE$-D$}
\psfrag{b}[cc][cc]{\LARGE$\Cone_{c}(rt)$}
\psfrag{ab}[cc][cc]{\LARGE$\Cone_{c}(srsrsr)$}
\psfrag{aba}[cc][cc]{\LARGE$\Cone_{c}(r)$}
\psfrag{acbcbcb}[cc][cc]{\LARGE$\Cone_{c}(srsrs)$}
\psfrag{cbcbcb}[cc][cc]{\LARGE$\Cone_{c}(t)$}
\psfrag{cbcbc}[cc][cc]{\LARGE$\Cone_{c}(sts)$}
\psfrag{cbcb}[cc][cc]{\LARGE$\Cone_{c}(srtsrsrs)$}

\psfrag{Hr}[cc][cc]{\LARGE$H_t$}
\psfrag{Ht}[cc][cc]{\LARGE$H_r$}
\psfrag{Hs}[cc][cc]{\LARGE$H_s$}
\psfrag{Htst}[cc][cc]{\LARGE$H_{rsr}$}
\psfrag{Hsts}[cc][cc]{\LARGE$H_{srs}$}
\psfrag{Hststs}[cc][cc]{\LARGE$H_{srsrs}$}
\centerline{\scalebox{.65}{\includegraphics{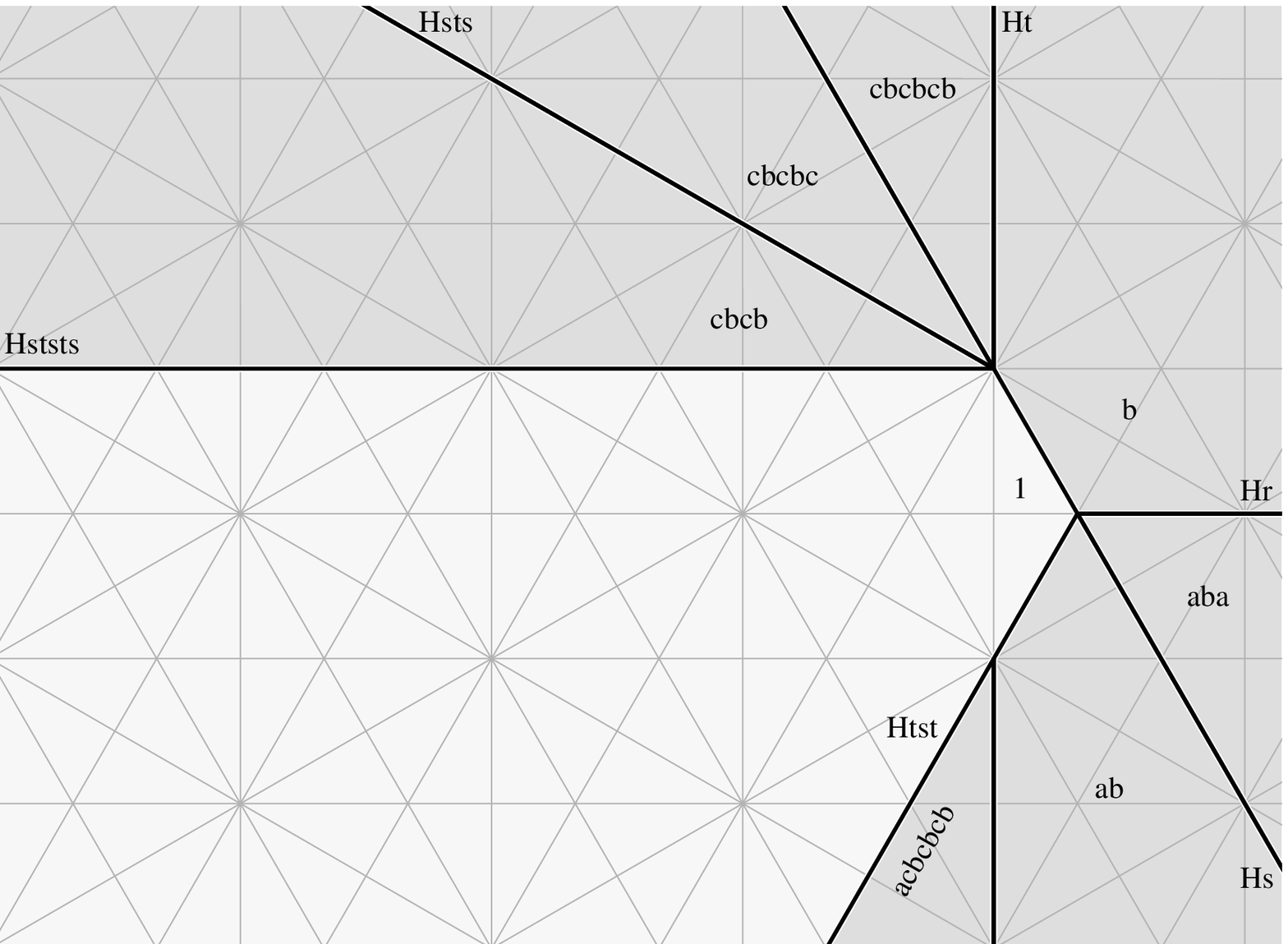}}}
\psfrag{1}[cc][cc]{\LARGE$e$}
\psfrag{c}[cc][cc]{\LARGE$r$}
\psfrag{b}[cc][cc]{\LARGE$s$}
\psfrag{a}[cc][cc]{\LARGE$t$}
\caption{The Cambrian fan as it intersects the negative Tits cone, for $W$ of type~$\widetilde{G}_2$ with $c=srt$.}
\label{bij4}
\vspace{30 pt}
\end{figure}

\begin{figure}[p]
\centerline{\scalebox{.6}{\includegraphics{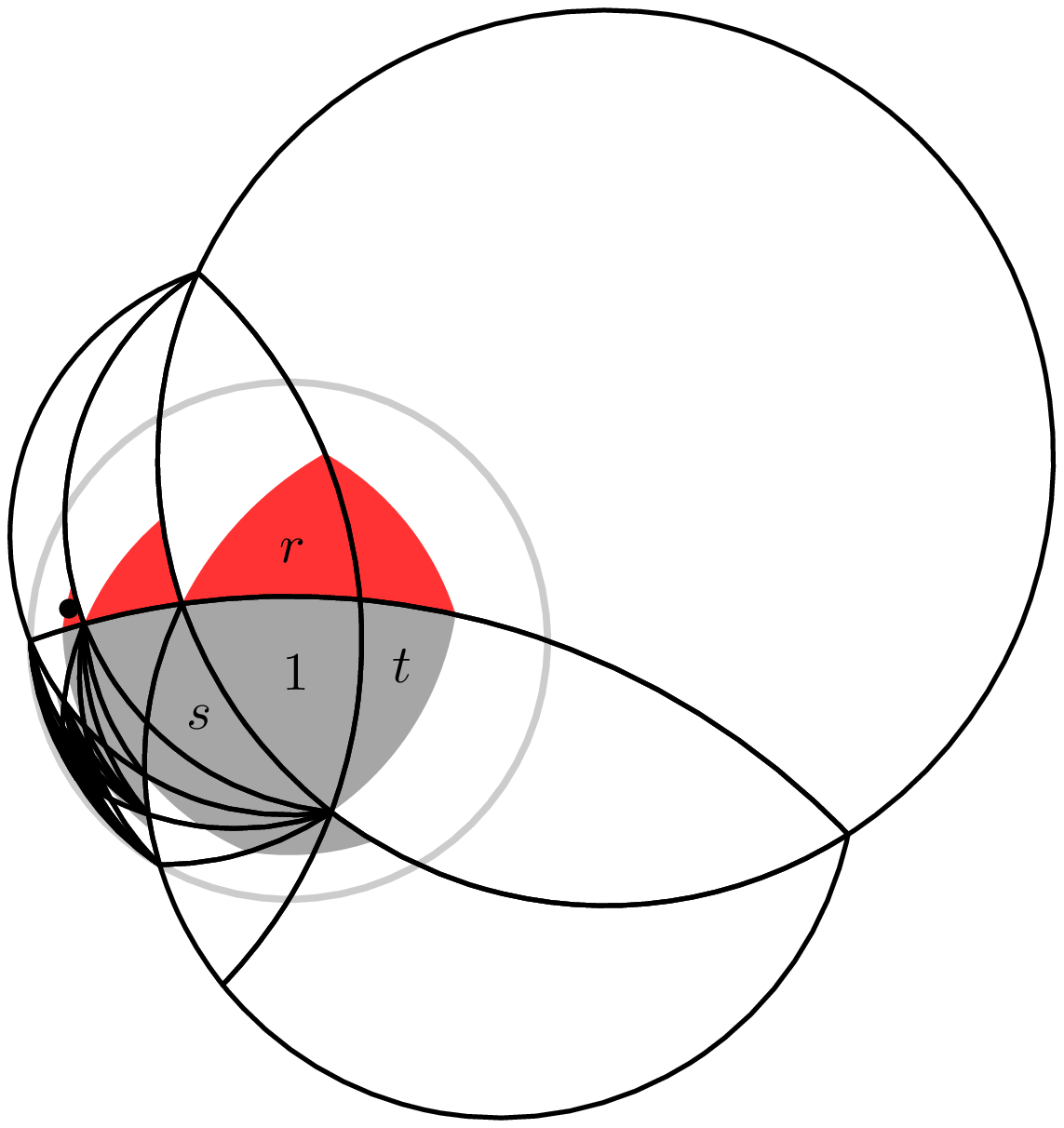}}}
\caption{A stereographic view of the Cambrian fan for $W$ of type~$\widetilde{G}_2$ with $c=srt$.}
\label{bij3}
\end{figure}

To unify these two views, Figure~\ref{bij3} shows the Cambrian fan in stereographic projection.
That is, the intersection of the Cambrian fan with the unit sphere in $V^*$ defines a complex of spherical simplices.
Figure~\ref{bij3} shows this simplicial complex in stereographic projection.
The point at infinity is the image of a point in the antipodal dominant chamber $-D$.
The boundary of the Tits cone projects to the gray circle.
The three walls of $\Cone_{srt}(strsrsrs)$ are now visible. 
(The label $srtsrsrs$ has been replaced by a black dot.)

The form $\omega_c$ is a key ingredient in the proofs presented in this paper.
However, the precise value of $\omega_c$ is not important, while the sign of $\omega_c$ is decisive.
There is an elegant way to visualize \textbf{the sign of} $\boldsymbol{\omega_c}$ for Coxeter groups of rank $3$. 
Every skew symmetric form $\omega$ on $\RR^3$  is of the form $\omega(\alpha,\beta)=\Vol(\alpha \wedge \beta \wedge \zeta)$ for some $\zeta \in \RR^3$, where $\Vol$ is the volume form.  
Then $\omega(\alpha, \beta)$ is positive or negative according to whether the angle from $\alpha$ to $\beta$ circles the line spanned by $\zeta$ in a clockwise or counterclockwise manner.  
For a rank $3$ Coxeter group, let $\zeta_c$ be such that $\omega_c(\alpha,\beta)=\Vol(\alpha\wedge\beta\wedge\zeta_c)$. 
From $\omega_c(\alpha, \beta)=\omega_c(c \alpha, c \beta)$ (and the fact that $c$ acts by a matrix of determinant $-1$ in rank $3$) we deduce that $\zeta_c$ is a $(-1)$-eigenvector of $c$. 
This makes it easy to compute $\zeta_c$. 
Figure~\ref{G2tildezeta} illustrates the example where $W$ is of type~$\widetilde{G}_2$ and $c=tsr$.
Each positive root generates a ray, and the vector~$\zeta_c$ generates a ray.
The plane of the picture is a plane that intersects all of these rays, and each ray is represented by its intersection with the plane.
Since there are infinitely many roots, they are shown as black dots whose size becomes smaller as the simple-root coordinates of the roots increase.
The ray for $\zeta_c$ is shown in red (or gray).
In this planar picture, $\omega_c(\beta, \beta') > 0$ if the angle $\beta \zeta_c \beta'$ (projected to the plane) is clockwise. 
In Section~\ref{hyp sec}, a similar example will be taken a step further to illustrate Proposition~\ref{InversionOrdering}.

\begin{figure}
\psfrag{r}[rc][cc]{\Huge$\alpha_t$}
\psfrag{s}[lc][cc]{\Huge$\alpha_s$}
\psfrag{t}[ct][cc]{\Huge$\alpha_r$}
\centerline{\scalebox{.6}{\includegraphics{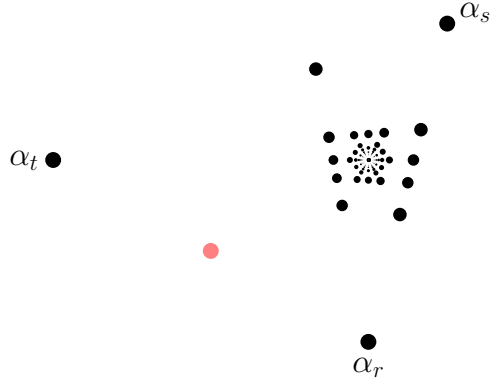}}}
\caption{The~$\widetilde{G}_2$ root system with $\zeta_{tsr}$.}
\label{G2tildezeta}
\end{figure}

\subsection{Type $B_3$}\label{B3 sec}
When $W$ is finite, the Tits cone $\Tits(W)$ is all of $V^*$.
Thus the Cambrian fan also occupies all of $V^*$.
In particular, intersecting with the unit sphere, the Cambrian fan defines a simplicial sphere.
Figure~\ref{B3pic} shows a Cambrian fan (as a simplicial sphere in stereographic projection) in the case where $W$ is of type $B_3$.
Here $W$ is generated by $\set{r,s,t}$ with $m(r,s)=4$, $m(s,t)=3$ and $m(r,t)=2$, and the figure shows the case where $c=rst$.
The stereographic projection, despite being difficult to interpret at first, has the advantage of showing essentially the entire fan in one picture.
The three largest circles represent the simple generators, with~$t$ being the largest, followed by $r$, then $s$.
The dominant chamber $D$, representing the identity element, is the triangle inside all three of these circles.
As in previous figures, the sortable elements are represented by shaded regions.
The unbounded region represents the longest element $w_0$.

\begin{figure}[ht]
\centerline{\scalebox{.7}{\includegraphics{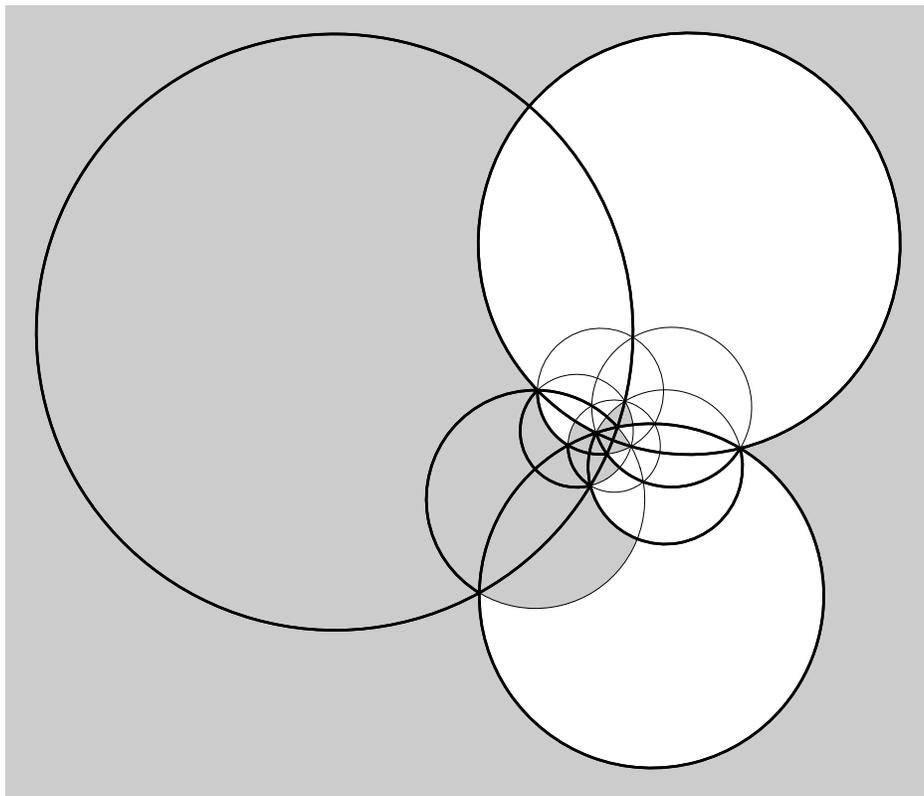}}}
\caption{A stereographic view of the Cambrian fan for $W$ of type $B_2$ with $c=rst$.}
\label{B3pic}
\vspace{20 pt}
\end{figure}

\begin{figure}[ht]
\psfrag{r}[rc][cc]{\Huge$\alpha_r$}
\psfrag{s}[rc][cc]{\Huge$\alpha_s$}
\psfrag{t}[lc][cc]{\Huge$\alpha_t$}
\centerline{\scalebox{.5}{\includegraphics{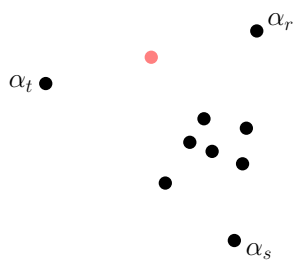}}}
\caption{The $B_3$ root system, with $\zeta_{rst}$.}
\label{B3Zeta}
\end{figure}

The Cambrian fan shown in Figure~\ref{B3pic} is isomorphic, as a simplicial complex, to the dual of the generalized associahedron of type $B_3$.
(See~\cite{ga}.)
More generally, as proved in~\cite{camb_fan}, for any finite Coxeter group and any Coxeter element $c$, the $c$-Cambrian fan is isomorphic to the normal fan of the generalized associahedron of the appropriate type.

Figure~\ref{B3Zeta} illustrates $\omega_{rst}$ for $W$ of type $B_3$ and $c=rst$.
As in Figure~\ref{G2tildezeta}, each represents a root, except for the red dot, which represents $\zeta_c$.
Again $\omega_c(\beta, \beta') > 0$ if the angle $\beta \zeta_c \beta'$ is clockwise. 

\begin{figure}
\psfrag{1}[cc][cc]{\Large$e$}
\psfrag{b}[cc][cc]{\Large$s$}
\psfrag{bc}[cc][cc]{\Large$st$}
\psfrag{bcb}[cc][cc]{\Large$sts$}
\psfrag{bcbc}[cc][cc]{\Large$stst$}
\psfrag{c}[cc][cc]{\Large$t$}
\psfrag{a}[cc][cc]{\Large$r$}
\psfrag{ac}[cc][cc]{\Large$rt$}
\psfrag{ab}[cc][cc]{\Large$rs$}
\psfrag{abc}[cc][cc]{\Large$rst$}
\psfrag{abca}[cc][cc]{\Large$rstr$}
\psfrag{abcab}[cc][cc]{\Large$rstrs$}
\psfrag{aba}[cc][cc]{\Large$rsr$}
\psfrag{abab}[cc][cc]{\Large$rsrs$}
\psfrag{ababa}[cc][cc]{\Large$rsrsr$}
\psfrag{abcb}[cc][cc]{\Large$rsts$}
\psfrag{abcbc}[cc][cc]{\Large$rstst$}
\centerline{\scalebox{.8}{\includegraphics{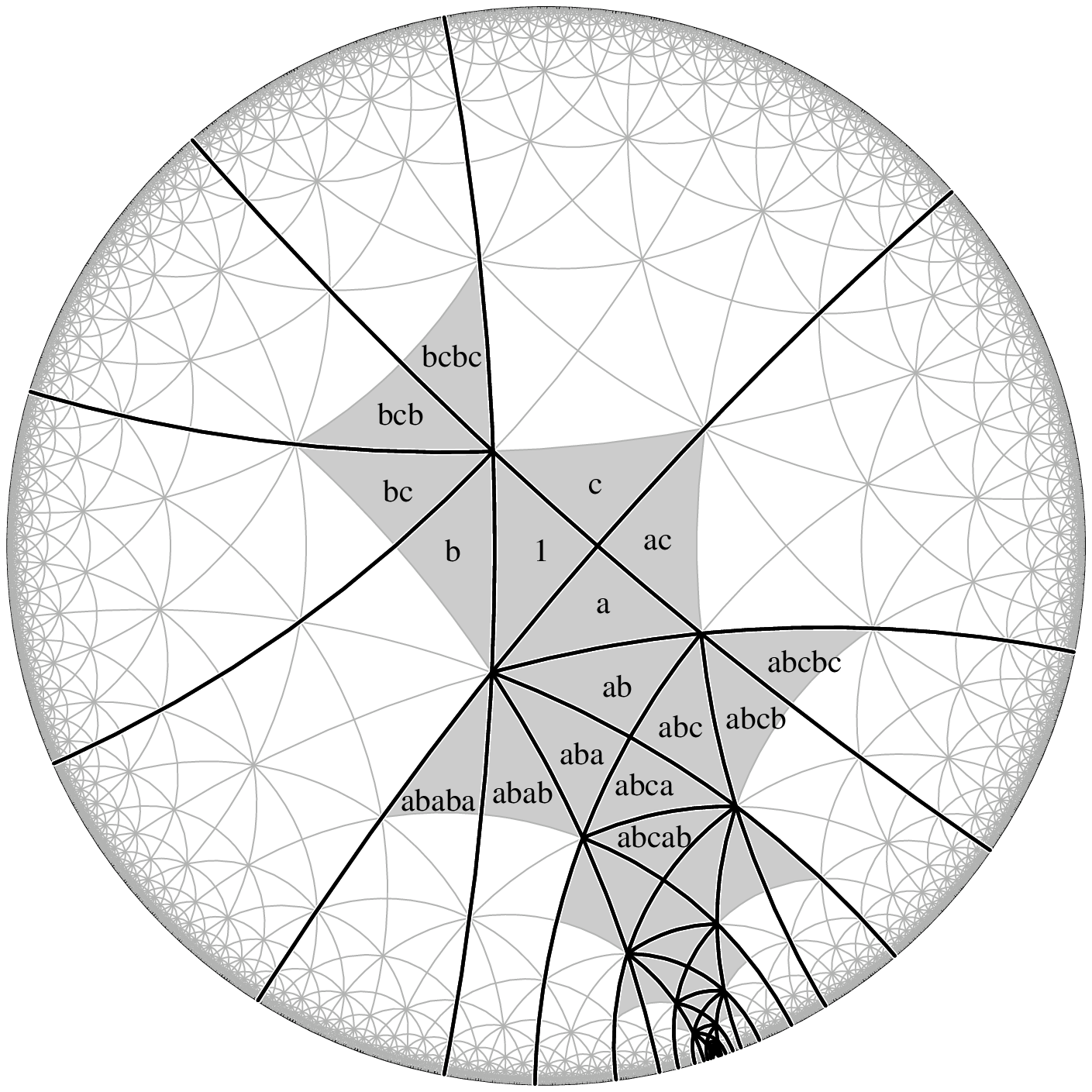}}}
\caption{A Cambrian fan in the Poincar\'e disk.}
\label{542pic}
\end{figure}

\subsection{A hyperbolic example}\label{hyp sec}
Whenever a rank-three Coxeter group is neither finite nor affine, it is always possible to choose a reflection representation of $W$ such that $\Tits(W)$ is a circular cone.
The reflecting hyperplanes, intersected with one sheet of an appropriate hyperboloid, are lines in the hyperboloid model of the hyperbolic plane.
These lines can further be projected to a disk so as to become lines in the Poincar\'{e} model of the hyperbolic plane.
In particular, the intersection of the Cambrian fan with $\Tits(W)$ defines a decomposition of the Poincar\'{e} disk into convex regions.
Figure~\ref{542pic} shows this decomposition in the case where $W$ is generated by $\set{r,s,t}$ with $m(r,s)=5$, $m(s,t)=4$ and $m(r,t)=2$.
The Coxeter element is $c=rst$.

As in the affine example of~$\widetilde{G}_2$, the Cambrian fan in this example extends beyond the Tits cone.  
Figure~\ref{542pic2} shows the stereographic projection of the Cambrian fan.
(Some parts of $-D$ and nearby regions have been cropped to make the image fit on the page.)
A representative group of reflecting hyperplanes (also extending outside $\Tits(W)$) is also shown.
These reflecting hyperplanes also serve to define the boundaries of $\Tits(W)$ and $-\Tits(W)$:  
The area outside of $\Tits(W)$ and $-\Tits(W)$ is dense with reflecting hyperplanes.
Figure~\ref{542pic3} is a magnified view of the neighborhood of $\Tits(W)$.

\begin{figure}[p]
\centerline{\scalebox{.57}{\includegraphics{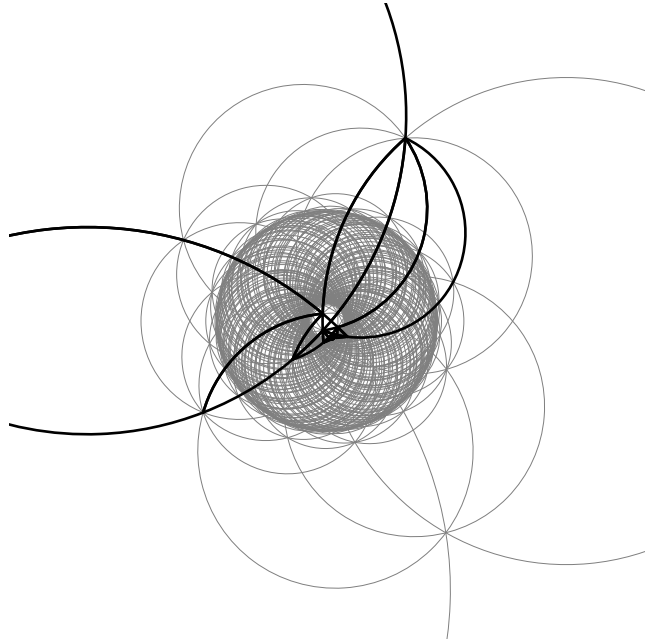}}}
\caption{A stereographic view of a hyperbolic Cambrian fan.}
\label{542pic2}
\vspace{20 pt}
\end{figure}

\begin{figure}[p]
\psfrag{1}[cc][cc]{\huge$e$}
\psfrag{b}[cc][cc]{\huge$s$}
\psfrag{bc}[cc][cc]{\huge$st$}
\psfrag{bcb}[cc][cc]{\huge$sts$}
\psfrag{bcbc}[cc][cc]{\huge$stst$}
\psfrag{c}[cc][cc]{\huge$t$}
\psfrag{a}[cc][cc]{\huge$r$}
\psfrag{ac}[cc][cc]{\huge$rt$}
\psfrag{ab}[cc][cc]{\huge$rs$}
\psfrag{abc}[cc][cc]{\huge$rst$}
\psfrag{abca}[cc][cc]{\huge$rstr$}
\psfrag{aba}[cc][cc]{\huge$rsr$}
\psfrag{abab}[cc][cc]{\huge$rsrs$}
\psfrag{ababa}[cc][cc]{\huge$rsrsr$}
\psfrag{abcb}[cc][cc]{\huge$rsts$}
\psfrag{abcbc}[cc][cc]{\huge$rstst$}
\centerline{\scalebox{.57}{\includegraphics{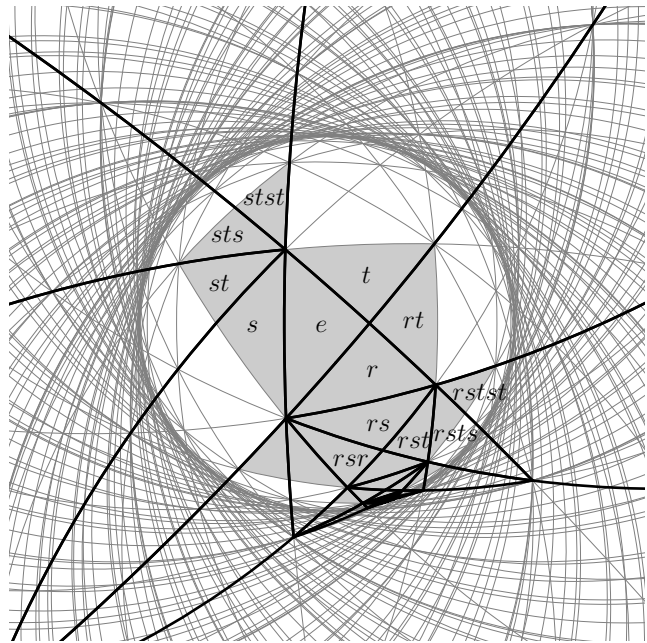}}}
\caption{A magnified view of the fan from Figure~\ref{542pic2}.}
\label{542pic3}
\end{figure}

Figure~\ref{542Zeta} illustrates the root system for this hyperbolic Coxeter group.
The infinitely many roots are projected to the page in a manner similar to the~$\widetilde{G}_2$ example in Section~\ref{aff sec}, except that certain roots are represented by circled numbers, for reasons we will explain below.
The small circle containing no dots is the dual cone in $V$ to $\Tits(W)\subset V^*$.
Also shown is $\zeta_{rst}$.
Once again, $\omega_{rst}(\beta, \beta') > 0$ if the angle $\beta \zeta_c \beta'$ is clockwise. 

\begin{figure}
\psfrag{r}[cc][cc]{\huge $\alpha_r$}
\psfrag{s}[cc][cc]{\huge $\alpha_s$}
\psfrag{t}[cc][cc]{\huge $\alpha_t$}
\psfrag{1}[cc][cc]{\LARGE$1$}
\psfrag{2}[cc][cc]{\LARGE$2$}
\psfrag{3}[cc][cc]{\LARGE$3$}
\psfrag{4}[cc][cc]{\LARGE$4$}
\psfrag{5}[cc][cc]{\LARGE$5$}
\psfrag{6}[cc][cc]{\LARGE$6$}
\psfrag{7}[cc][cc]{\LARGE$7$}
\centerline{\scalebox{.65}{\includegraphics{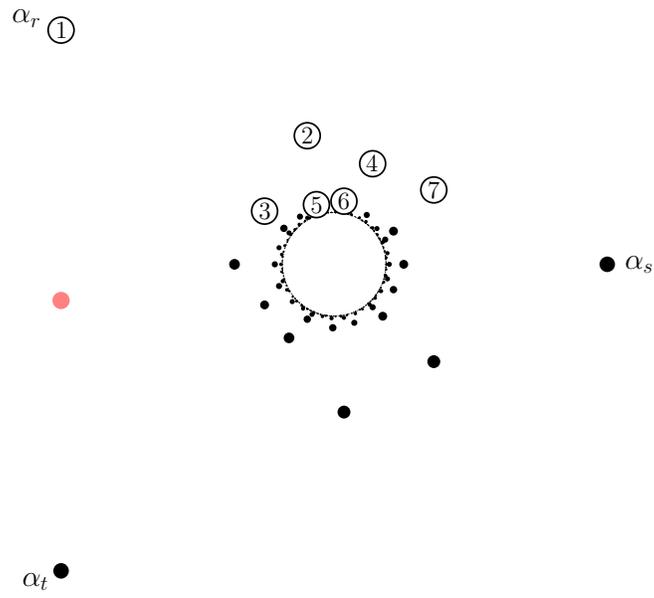}}}
\caption{The reflection sequence of an $rst$-sorting word.}
\label{542Zeta}
\vspace{20 pt}
\end{figure}

\begin{figure}
\psfrag{0}[cc][cc]{\LARGE$0$}
\psfrag{1}[cc][cc]{\LARGE$1$}
\psfrag{2}[cc][cc]{\LARGE$2$}
\psfrag{3}[cc][cc]{\LARGE$3$}
\psfrag{4}[cc][cc]{\LARGE$4$}
\psfrag{5}[cc][cc]{\LARGE$5$}
\psfrag{6}[cc][cc]{\LARGE$6$}
\psfrag{7}[cc][cc]{\LARGE$7$}
\centerline{\scalebox{.65}{\includegraphics{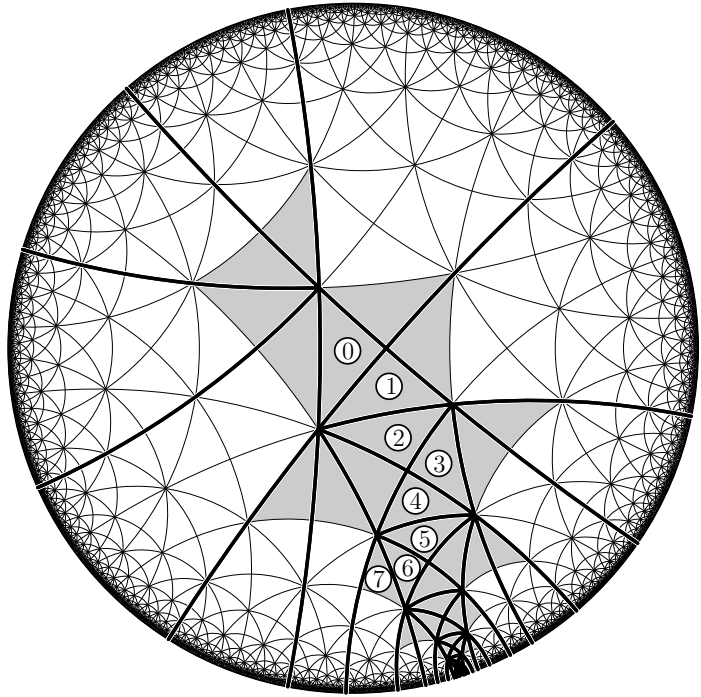}}}
\caption{An $rst$-sorting word.}
\label{word fig}
\end{figure}

The example of Figure~\ref{542Zeta} can be taken further to provide an illustration of \textbf{Proposition~\ref{InversionOrdering}}.
Figure~\ref{word fig} is a representation of the $rst$-sorting word $rstrsts$ which represents an $rst$-sortable element.
A word in the generators $S$ of $W$ describes a walk through the maximal cones in $\Tits(W)$, starting at the cone $D$ representing the identity.
In Figure~\ref{word fig}, the cones visited on this walk are numbered, starting with $0$ on the identity element and ending with $7$ on the shaded triangle representing $rstrsts$.
Each number $i$ labels the element given by the first $i$ letters of the word $rstrsts$.
Each of these elements is $rst$-sortable.

Recall that a reduced word $a_1\cdots a_k$ in the generators $S$ defines a reflection sequence $t_1,\ldots,t_k$ with $t_i=a_1\cdots a_i\cdots a_1$.
The circled numbers in Figure~\ref{542Zeta} represent the reflection sequence associated to $rstrsts$ (as a sequence of positive roots).
For example, the root labeled $1$ is associated to the reflection $r$, the root labeled $2$ is associated to the reflection $rsr$, and so forth.
Proposition~\ref{InversionOrdering} asserts that this reflection sequence must have $\omega_{c}(\beta_{t_i}, \beta_{t_j}) \geq 0$ for all $i\le j$ with strict inequality holding unless $t_i$ and $t_j$ commute.
Indeed, the labeling of the roots in Figure~\ref{542Zeta} is consistent with the order in which these roots are met by a clockwise line through $\zeta_{rst}$, initially vertical.
The roots marked $3$ and $4$ are collinear with $\zeta_{rst}$, reflecting the fact that the reflections $t_3$ and $t_4$ in the reflection sequence commute.  
The word $rsrtsts$, obtained from $rstrsts$ by transposing the adjacent commuting letters $r$ and $t$, also satisfies condition (i) of Proposition~\ref{InversionOrdering}.

The circled numbers are precisely the roots corresponding to inversions of  $rstrsts$. By viewing them as a set, while ignoring the ordering, we can see that  $rstrsts$  is $\boldsymbol{rst}$-\textbf{aligned}.

\addtocontents{toc}{\mbox{ }}

\section*{Acknowledgments}

We are grateful to Matthew Dyer, John Stembridge and Hugh Thomas for helpful conversations.
We are also grateful to anonymous referees for helpful comments.

\end{document}